\definecolor{trueblue}{rgb}{0.0, 0.45, 0.81}
\definecolor{ForestGreen}{rgb}{0.01, 0.75, 0.24}
\newcommand{\ZZZ}{\color{blue}} 
\newcommand{\BBB}{\color{black}}
\newcommand{\EEE}{\color{black}} 
\newcommand{\OOO}{\color{orange}} 
\newcommand{\AAA}{\color{black}}
\newcommand{\eps}{\varepsilon}
\theoremstyle{plain}
\newtheorem{theorem}{Theorem}[section]
\newtheorem{lemma}[theorem]{Lemma}
\newtheorem{remark}[theorem]{Remark}
\newtheorem{proposition}[theorem]{Proposition}
\newenvironment{step}[1]{\underline{Step #1}.}{}
\theoremstyle{definition}
\newtheorem{definition}[theorem]{Definition}
\renewcommand{\tilde}{\widetilde}
\renewcommand{\d}{ \mathrm{d}}
\DeclareMathOperator{\dist}{dist}
\numberwithin{equation}{section}
\newcommand{\N}{\mathbb{N}}
\newcommand{\Z}{\mathbb{Z}}
\newcommand{\R}{\mathbb{R}}
\renewcommand{\S}{\mathbb{S}}
\renewcommand{\L}{\mathcal{L}}
\renewcommand{\H}{\mathcal{H}}
\newcommand{\Q}{\mathcal{Q}}
\newcommand{\M}{\mathfrak{M}}
\begin{document}

\title[From atomistic systems to continuum \EEE models for elastic materials with voids]{From atomistic systems to  linearized continuum \EEE models for elastic materials with voids}

\author[M. Friedrich]{Manuel Friedrich} 
\address[Manuel Friedrich]{Department of Mathematics, Friedrich-Alexander Universit\"at Erlangen-N\"urnberg. Cauerstr.~11,
D-91058 Erlangen, Germany, \& Mathematics M\"{u}nster,  
University of M\"{u}nster, Einsteinstr.~62, D-48149 M\"{u}nster, Germany}
\email{manuel.friedrich@fau.de}

\author{Leonard Kreutz}
\address[Leonard Kreutz]{Applied Mathematics M\"unster, University of M\"unster\\
Einsteinstrasse 62, 48149 M\"unster, Germany}
\email{lkreutz@uni-muenster.de}

\author{Konstantinos Zemas}
\address[Konstantinos Zemas]{Applied Mathematics M\"unster, University of M\"unster\\
Einsteinstrasse 62, 48149 M\"unster, Germany}
\email{konstantinos.zemas@uni-muenster.de}

\begin{abstract}
We study an atomistic model that describes the microscopic formation of material voids inside elastically stressed solids under  an additional curvature regularization at the discrete level. Using a discrete-to-continuum analysis, by means of 
a recent geometric rigidity result in variable domains \cite{KFZ:2021} \EEE and $\Gamma$-convergence tools, we rigorously derive effective linearized continuum models for elastically stressed solids with material voids in three-dimensional elasticity.
\end{abstract}

\BBB

\subjclass[2010]{26A45, 49J45, 49Q20, 70G75, 74G65}
\keywords{Atomistic systems, discrete-to-continuum limits,  $\Gamma$-convergence,    free discontinuity problems, functions of bounded  deformation, material voids}

 \EEE

\maketitle

\section{Introduction}\label{Introduction}

In the last years, the understanding of \emph{stress driven rearrangement instabilities} (SDRI) has attracted huge interest both from the physics and the mathematics community \cite{Bourdin-Francfort-Marigo:2008,Grin86,Grin93,KhoPio19,SieMikVoo04}. These morphological instabilities of interfaces are generated by the competition between elastic bulk and surface
energies, including many different phenomena such as brittle fracture, formation of material voids
inside elastically stressed solids, or hetero-epitaxial growth of elastic thin films.

From a mathematical point of view, the common feature of functionals describing SDRI is the
presence of both stored elastic bulk and surface energies. In the static setting, problems arise
concerning existence, regularity, and stability of equilibrium configurations obtained by energy
minimization. 
In the framework of linearized elasticity, these issues are by now mostly dealt with in dimension two \cite{FonFusLeoMil11, FonFusLeoMor07, KhoPio19,KhoPio20} and only recently in dimension three  \cite{Crismale,KFZ:2021}. In this work, we focus on the formation of material voids inside elastically stressed solids. At the continuum level and in the regime of linear elasticity\EEE, the variational formulation consists in considering functionals defined on pairs of function-set, namely 
\begin{align}\label{intro eq:F}
F(u,E) := \int_{\Omega\setminus E} \mathbb{C}\, e(u): e(u)\,\mathrm{d}x + \int_{\partial E \cap \Omega} \varphi(\nu_E)\,\mathrm{d}\mathcal{H}^{d-1}\,,
\end{align}
where $E \subset \Omega$ represents the (sufficiently regular) void set within an elastic body with reference configuration $ \Omega \subset \R^d \EEE$, \EEE and $u$ is the corresponding elastic displacement field. The first part of the functional represents the elastic energy depending on the linear strain $e(u) :=\frac{1}{2}(\nabla u+(\nabla u)^{ T}\EEE)$, 
where $\mathbb{C} \colon\R^{d\times d}\mapsto\R^{d\times d}$ denotes the fourth-order positive semi-definite tensor of elasticity coefficients, whose kernel contains 
\EEE the subspace of skew-symmetric matrices.  The surface energy depends on
a possibly anisotropic density $\varphi$ evaluated at the outer  unit normal $\nu_E$ to $ \partial E\cap\Omega$.

From an analytical point of view, in order to guarantee existence of equilibrium configurations for Dirichlet boundary value problems, it is necessary to consider an effective relaxation of the energy given in \eqref{intro eq:F}. Indeed, even though for fixed $E$ the functional  $F(\cdot, E)$ is weakly lower semicontinuous in $H^1(\Omega\setminus E;\R^d)$ and, for fixed $u$, $F(u, \cdot)$ can be regarded as a lower semicontinuous functional on sets of
finite perimeter with respect to the $L^1$-convergence of sets, the energy defined on pairs $(u, E)$ is not lower semicontinuous. This is due to the fact that, along a sequence, the voids may collapse in the limit into a discontinuity of the limiting displacement, which lies in the space of \textit{generalized functions of bounded deformation} ($GSBD$) \cite{DalMaso:13}. \EEE The relaxation has to take this phenomenon into account, in particular collapsed surfaces need to be
counted twice in the relaxed energy. Another  interesting and challenging  question is to establish a link between the relaxation of \eqref{intro eq:F} and the corresponding model of nonlinear  elasticity via a simultaneous linearization-relaxation procedure. In that situation, the linear elastic energy  is replaced by an integral functional of a nonlinear elastic energy density of the full deformation gradient $\nabla y$ (where $y\colon\Omega\setminus E\to \R^d$ represents the deformation), which is rotationally invariant and is minimized on $SO(d)$. This analysis has been carried out in the physically relevant dimensions $d=2,3$ under the addition of a 
curvature regularization term in \cite{KFZ:2021}, where the main novelty is a \emph{quantitative geometric rigidity result for variable domains} (see Theorem 2.1 therein). 

The scope of this paper is to derive the relaxation of the model in \eqref{intro eq:F} as an effective continuum limit of physically relevant atomistic models. We will present here only the case $d=3$, since the case $d=2$ is completely analogous, and some arguments along the proofs can actually even be simplified. The passage from atomistic to continuum models (see \cite{BraGel}) via $\Gamma$-convergence \cite{Braides:02,DalMaso:93} has been carried out in various contexts, including elasticity \EEE \cite{AliCic, BraKre, Braides-Solci-Vitali:07, BraSch, Schmidt:2009}, fracture \cite{AliFocGel, BraLewOrt, Crismale-Scilla-Solombrino, FriSch1,FriSch2,FriSch3, Neg, Scardia:2010}, or more 
general \EEE problems containing free discontinuities \cite{BraBacCic,KrePio19,Ruf}. 

The analysis in some of these contributions, see \cite{BraLewOrt, FriSch3, KrePio19}, \EEE shows that for standard mass-spring models (e.g., governed by Lennard-Jones-type interactions) the effective  bulk and surface energies are of the same order only in the case of infinitesimal elastic strains. In this case, the  discrete-to-continuum limits are coupled with a \EEE simultaneous passage from nonlinear to linear elasticity. We follow the approach of {\sc Schmidt}  \cite{Schmidt:2009}, which can be motivated by regrouping the interactions of a mass-spring model, cf.~for instance~\cite{Braides-Solci-Vitali:07}. More precisely, \EEE given the reference set $\Omega$, we denote by $\varepsilon >0$ the lattice spacing of the underlying reference lattice $\mathbb{Z}_\varepsilon(\Omega):=\varepsilon\mathbb{Z}^3\cap \Omega$ and  by  $\delta_\varepsilon>0$  the linearization parameter depending on the lattice spacing, which represents the typical order of the elastic strain and satisfies $\delta_\eps \to 0$ as $\eps \to 0$. \EEE  Here, we would like to point out that our analysis extends to general Bravais lattices. For expository reasons, however,  we only consider the case of $\mathbb{Z}^3$. To a given pair $(y,E)$, where the lattice points $E \subset \mathbb{Z}_\varepsilon(\Omega)$  represent  the presence of voids inside the material at the microscopic level and $y \colon \mathbb{Z}_\varepsilon(\Omega)\setminus E \to \mathbb{R}^3$ represents its deformation\EEE, we associate an elastic energy
\begin{align}\label{intro eq: discrete elastic energy}
F_\varepsilon^{\mathrm{el}}(y,E) := \delta_\varepsilon^{-2}\sum_{i \in \mathbb{Z}_\varepsilon(\Omega)\setminus E} \varepsilon^{3} W_{\varepsilon, \mathrm{cell}}^{\mathrm{el}}(i,y,E)\,,
\end{align} 
where the \emph{cell energy} $W_{\varepsilon, \mathrm{cell}}^{\mathrm{el}}(i,y,E)$ (specified more precisely in Subsection \ref{Assumptions on the elastic energy}) represents the elastic energy needed in order to locally deform the neighborhood of $i \in \mathbb{Z}_\varepsilon(\Omega) \setminus E$ through the deformation $y$. Additionally, the presence of voids induces an extra energy contribution of \emph{discrete-perimeter type} due to the missing bonds at the boundary of the configuration, namely, 
\begin{align}\label{introeq: discrete perimeter}
F_\varepsilon^{\mathrm{ per }}(E):= \sum_{i \in 
E}\ \underset{i+\varepsilon\xi \in \mathbb{Z}_\varepsilon(\Omega)}{\sum_{\xi \in V}}\varepsilon^{2} c_\xi \big(1-\chi_{E}(i+\varepsilon\xi)\big)\,,
\end{align}
where $V \subset \mathbb{Z}^3$ describes the set of neighbors of a lattice point (possibly containing also next-nearest and next-next-nearest neighbors) and $c_\xi >0$ is the energy cost ``per unit area'' in the reference lattice with respect to the neighboring point $i+\varepsilon \xi$. This energy keeps track of the local valence of each atom in $ E$, i.e., the number of atoms missing in order to complete its  neighborhood.

In \cite{KrePio19}, the discrete-to-continuum limit of the energy given \EEE by the sum of the two terms in \eqref{intro eq: discrete elastic energy} and \eqref{introeq: discrete perimeter}  has been derived in  the two-dimensional setting of elastic thin films, which from a modeling viewpoint, in the continuum setting, corresponds to the simplifying assumption that in \eqref{intro eq:F} the void $E$ is given by a supergraph of a function. This analysis also includes the reasoning why the elastic strain needs to be of order $\delta_\eps \sim \sqrt{\eps}$, as then \eqref{intro eq: discrete elastic energy} and \eqref{introeq: discrete perimeter} are of the same order in $\eps$, see also \cite{BraLewOrt, FriSch3}. The main ingredient for the proof in \cite{KrePio19} is a (by now classical) rigidity estimate, see \cite{FrieseckeJamesMueller:02, Schmidt:2009}, which allows  to infer that configurations $y$ with bounded elastic energy have to be close in a suitable quantitative  sense to a global rotation on the set $\mathbb{Z}_\varepsilon(\Omega)\setminus E$.  This fact then permits to linearize around the respective rotation and in this way to obtain a linearized elastic energy in the limit. However, the analogous rigidity result in dimension $d=3$, or for the more general problem of elastic materials with voids, is missing due to the possibly highly complex geometry  of the interfaces between bulk and void. In \cite{KFZ:2021}, this issue has been circumvented in a continuum  setting by considering an additional curvature regularization term in the energy fuctional, allowing for a novel, refined rigidity estimate (cf.~Theorem 2.1 therein). In order to employ this result in the present paper for our discrete-to-continuum analysis, we add an additional \emph{discrete curvature term} in the functional, of the form  
\begin{align}\label{intro eq: curvature energy}
F_\varepsilon^{\mathrm{curv}}( E) := \sum_{i \in \mathbb{Z}_{\varepsilon}( \Omega)\EEE} \varepsilon^3 W_{\varepsilon,\mathrm{cell}}^{\mathrm{curv}}(i, E)\,.
\end{align}
The specific form of the curvature term is introduced in Subsection \ref{subsec:surface_energy}. 
It \EEE can be understood as a suitable discretization of an integral of the second fundamental form of surfaces on a mesoscopic scale $\eta_\eps>0$, satisfying $\eps \ll \eta_\eps \ll 1$. Moreover, we justify the use of this term by presenting an example in the framework of the \emph{Embedded-Atom Model} (EAM), see Subsection \ref{example:EAM}. This is a semi-empirical, many-atom potential aiming at describing the atomic structure of metallic systems by including a nonlocal electronic correction \cite{Daw1, Daw2}. \EEE The total energy of a  pair $(y,E)$ is then given by
\begin{align*}
F_\varepsilon(y,E) := F_\varepsilon^{\mathrm{el}}(y,E)+ F_\varepsilon^{\mathrm{per}}(E) + F_\varepsilon^{\mathrm{curv}}( E)\,,
\end{align*}
where the  terms are introduced in \eqref{intro eq: discrete elastic energy}--\eqref{intro eq: curvature energy}, 
respectively. \EEE Under suitable scaling assumptions on the curvature 
energy and under prescribed Dirichlet boundary conditions, we show that the energies $(F_{\varepsilon})_{\varepsilon>0}$  $\Gamma$-converge with respect to an appropriate topology to the relaxation of \eqref{intro eq:F}. We refer to Subsection \ref{subsec: main result} for the detailed statements, and in particular to \eqref{def:F} for the definition of the relaxed energy. Let us highlight that, as an additional feature of possibly independent interest, our effective  continuum and  linearized limit shows the validity of the so-called \emph{Cauchy-Born rule} \cite{kirchheim, FT}, namely that, loosely speaking, individual atoms follow the macroscopic deformation gradient. Mathematically, this corresponds to the fact that suitably defined discrete gradients of the atomic displacements reduce to classical gradients in the continuum limit \cite{Schmidt:2009}.

The proof follows a semi-discrete approach. First of all, to each pair $(y, E)$ we associate a piecewise constant interpolation of $y$ on $\eps$-cubes and a continuum set $Q_\eps(E)$ being the union of $\eps$-cubes, i.e., $Q_\varepsilon(E)=\bigcup_{i \in E} (i+ \eps\EEE[-\frac{1}{2},\frac{1}{2})^3)$. In order to apply the results obtained in \cite{KFZ:2021}, it is necessary to  modify and smoothen the continuum void set $Q_\varepsilon(E)$, see  Lemmata \ref{lemma : eta lattice}--\ref{lemma:eta scale smoothening}. More precisely, we first replace \EEE
$Q_\varepsilon(E)$ by a set $E_{\eta_\eps}$, being a union of $\eta_\eps$-cubes, where $\eta_\eps>0$ is the mesoscopic scale related to the curvature term \eqref{intro eq: curvature energy}. Afterwards, we smoothen the set $E_{\eta_\eps}$ in a suitable way to pass to a smooth set $\hat{E}_\eps$. The core of  our proof lies in the fact that the specific form of \eqref{intro eq: curvature energy} ensures that our modifications on the $\eta_\eps$-scale are energetically convenient and lead to sharp estimates on the perimeter and curvature energy of $\hat{E}_\eps$. After having  associated to the pair $(y,E)$ an interpolation and a smooth continuum set, we are in the position to apply the compactness result for materials with voids in the continuum setting, see  \cite[Proposition 3.1]{KFZ:2021}. We point out that this result yields compactness for \emph{rescaled displacement fields} $u := \delta_\eps^{-1}(y-{\rm id})$  in the space $GSBD$ \cite{DalMaso:13}, and that its proof fundamentally relies on the rigidity estimate in \cite[Theorem 2.1]{KFZ:2021}. At this point, a further step  consists in proving the validity of the Cauchy-Born rule in the setting of $GSBD$ functions. As the analogous result for Sobolev functions \cite{Schmidt:2009} is not directly applicable, beforehand we need to perform a delicate approximation of $GSBD$ functions by Sobolev functions, using recent blow-up techniques for $GSBD$ functions \cite{Conti-Focardi-Iurlano:15, newvito, Crismale-Friedrich-Solombrino, FriSolPer} and generalizing them to a discrete setting. \EEE

With the compactness result and the Cauchy-Born rule in $GSBD$ at hand, the $\Gamma$-liminf inequality for the elastic energy follows from  standard arguments by employing the strategy devised in \cite{Schmidt:2009}. For the surface energy, it is essential to perform the above modifications carefully, in order to have a sharp estimate on the continuum (anisotropic) surface energy in terms of the discrete perimeter and discrete curvature energy. This, together with the lower bound for the elastic energy, allows to conclude the $\Gamma$-liminf inequality. The $\Gamma$-limsup is performed via a density argument, which allows to reduce the problem to constructing recovery sequences for smooth functions and sets. 

The paper is structured as follows. After fixing some notation at the end of the Introduction, we present our model and the main results in Section \ref{sec: 2}. In particular, we introduce the various terms of the discrete energy functional in Subsections \ref{subsection 2.1}--\ref{subsec:surface_energy}. Our main results are stated in Subsection~\ref{subsec: main result} and their proofs are the content of Sections \ref{sec: 3}--\ref{sec: 6}. In particular, in Section \ref{sec: 3} we prove the compactness result,  while in Section \ref{sec: 4} we address the Cauchy-Born rule for discrete symmetrized gradients. Since this result can be of independent interest when dealing with discrete-to-continuum problems for elastic materials with surface 
discontinuities, \EEE we formulate and prove it in any dimension. \EEE Sections \ref{sec: 5}--\ref{sec: 6} are devoted to the proofs of the $\Gamma$-liminf \EEE and the $\Gamma$-limsup \EEE inequality, respectively.

\subsection*{Notation}\label{sec:notation}
We close the Introduction with some basic notation.  Let $d\in \N$, $d\geq 2$, denote the dimension of the ambient Euclidean space. For the most part of the paper we will focus on the case $d=3$, except for Section \ref{sec: 4}, the content of which is for any dimension $d$. \EEE Given an open bounded Lipschitz set \EEE $\Omega \subset \R^{ d\EEE}$, we denote by $\mathfrak{M}(\Omega)$ the collection of all measurable subsets of $\Omega$. By $\mathcal{A}(\Omega)$ we indicate the collection of open subsets of $\Omega$ and by $\mathcal{A}_{\rm reg}(\Omega)$ the subfamily consisting of those open sets $E \subset \Omega$ such that $\partial E \cap \Omega $ is a  ${ (d-1)\EEE}$-dimensional $C^2$-submanifold of $\mathbb{R}^{ d\EEE}$. Manifolds and functions of $C^2$-regularity will be called \emph{regular} in the following. Given $A \in \mathfrak{M}(\Omega)$, we denote by ${\rm int}(A)$  its interior, by $\overline{A}$ its closure, and by  $A^{\rm c}:= \Omega\setminus A$  its  complement  in $\Omega$. The diameter of $A$  is denoted by ${\rm diam}(A)$.  Moreover, for $r>0$ we let
\begin{align}\label{eq: thick-def}
(A)_r := \lbrace x\in \R^{ d\EEE}\colon \, \dist(x,A) < r\rbrace\,.
\end{align}  
Given $A, B \in \mathfrak{M}(\Omega)$, we write $A \subset \subset B$ if $\overline{A} \subset B$. The Hausdorff distance of $A$ and $B$ is denoted by ${\rm dist}_\mathcal{H}(A,B)$ and we write $A\triangle B := (A\setminus B) \cup (B\setminus A)$ for the symmetric difference. Given $A \in \mathfrak{M}(\Omega)$, we denote by $\chi_{A}$ the characteristic function of the set $A$, defined by $\chi_{A} =1$ on $A$ and $\chi_{A}=0$ otherwise. By $\mathcal{L}^{ d\EEE}$ and $\mathcal{H}^{ d-1\EEE}$ we denote the ${ d\EEE}$-dimensional Lebesgue measure and the ${ (d-1)\EEE}$-dimensional Hausdorff measure, respectively.  Given $x_0\in\R^{ d\EEE}$ and $\rho>0$, we denote by $B_{\rho}(x_0)$ the open ball in $\R^{ d\EEE}$ centered at $x_0$ of radius $\rho$, and we set for brevity $B_1:=B_1(0)$. By ${\rm id}$ we denote the identity mapping on $\R^{ d\EEE}$ and by ${\rm Id} \in \R^{{ d\EEE}\times { d\EEE}}$  the identity matrix. For each $F \in \R^{{ d\EEE} \times { d\EEE}}$, we let ${\rm sym}(F) := \frac{1}{2}\left(F+F^T\right)$, and $SO({ d\EEE}) := \lbrace F\in\R^{{ d\EEE} \times { d\EEE}}\colon F^TF = {\rm Id}, \, \det F = 1\rbrace$. Moreover, we denote by $\R^{{ d\EEE} \times { d\EEE}}_{\rm sym}$ and $\R^{{ d\EEE} \times { d\EEE}}_{\rm skew}$ the set of symmetric and skew-symmetric matrices, respectively. We further write $\mathbb{S}^{{d-1}} := \lbrace \nu \in \R^{ d\EEE}\colon \, |\nu|=1\rbrace$. 

Let us also denote by $\mathbb{Z}^{ d\EEE}$ the integer lattice in ${ d\EEE}$ dimensions, i.e.,
$$\mathbb{Z}^{ d\EEE}:=\{\lambda_1e_1+ \dots
\EEE+\lambda_{ d\EEE}e_{ d\EEE}\colon \ \lambda_1,\dots\EEE
,\lambda_{ d\EEE}\in \mathbb{Z}\}\,,$$
where $\{e_1,\dots \EEE
,e_{ d\EEE}\}$ denotes the standard orthonormal basis in $\mathbb{R}^{ d\EEE}$. We also set 
\begin{equation}\label{Z_points}
Z:=(z_1,\dots,z_{{ 2^d\EEE}})\in \mathbb{R}^{{ d\EEE}\times { 2^d\EEE}}, \ \ \mathrm{where} \ \ \mathcal{Z}:=\{z_1,\dots,z_{{ 2^d\EEE}}\} =\{-\tfrac{1}{2},\tfrac{1}{2}\}^{ d\EEE}
\end{equation}
denotes the set of vertices of the unit cube of $\R^{ d\EEE}$ centered in $0$. We also set $\bar{SO}({ d\EEE}):=SO({ d\EEE})Z \subset \R^{{ d\EEE} \times { 2^d\EEE}}$. We denote by $\R^{{ d\EEE}\times { 2^d\EEE}}_{\mathrm{s,t}}$ the subspace of $\R^{{ d\EEE}\times { 2^d\EEE}}$ spanned by infinitesimal translations and rotations, namely
\begin{equation}\label{infinitesimal_translation_rotation}
\R^{{ d\EEE}\times { 2^d\EEE}}_{\mathrm{s,t}}:=\{SZ+(v,\dots,v)\colon S\in \R^{{ d\EEE}\times { d\EEE}}_{\mathrm{skew}}, \ v\in \R^{ d\EEE}\}\,.
\end{equation} 
By $P:\R^{{ d\EEE}\times { 2^d\EEE}}\to\R^{{ d\EEE}\times { 2^d\EEE}}$ 
we define \EEE the orthogonal projection onto the space orthogonal to $\R^{{ d\EEE}\times { 2^d\EEE}}_{\mathrm{s,t}}$. \EEE

Let $\varepsilon >0$ represent the interatomic distance between neighboring atoms in the lattice reference configuration.   For $ A\in \mathfrak{M}(\Omega)\EEE$ we set $\mathbb{Z}_\varepsilon(A) := \varepsilon\mathbb{Z}^{ d\EEE} \cap A$.  For any $i\in \varepsilon\mathbb{Z}^{ d\EEE}$, we 
define \EEE $\hat{i}:=i+\varepsilon(\tfrac{1}{2}, \dots \EEE
,\tfrac{1}{2})$. Moreover, for every $i\in \mathbb{Z}_\varepsilon(\Omega)$ and any set of lattice points $ E \subset \mathbb{Z}_\varepsilon(\Omega)$ (which in the sequel will represent the void set at the microscopic level), we set  
\begin{equation}\label{def_set_of_neighbours}
\mathcal{I}_\varepsilon(i,E):=\big\{l \in \{1,\dots, { 2^d\EEE}\}\colon \,  \hat i+\varepsilon z_l\in\mathbb{Z}_\varepsilon(\Omega)\setminus E\big\}\,.
\end{equation}
We also define \EEE 
\begin{align}\label{def:Aeps}
Q_\varepsilon(E) := \bigcup_{i \in E} Q_\varepsilon(i)\,,
\end{align}
where, for every $i\in \R^{ d\EEE}$ and $\rho>0$, $Q_{\rho}(i):=i+\rho[-\tfrac{1}{2},\tfrac{1}{2})^{ d\EEE}$ is the half-open cube in $\mathbb{R}^{ d\EEE}$ of sidelength $\rho$ \EEE centered at $i$.  Given also $\lambda>0$ and a cube $Q:=Q_{\rho}(i)$, we set $Q_{\lambda\rho}:=Q_{\lambda\rho}(i)$. When $\rho=1$, we omit the dependence on the sidelength and simply denote $Q(i):=Q_1(i)$. 
For a set of finite perimeter $A\subset \R^d$ \cite{Ambrosio-Fusco-Pallara:2000}\EEE, we also define 
the family of cubes with respect to the shifted lattice $\varepsilon\big(\mathbb{Z}^{ d\EEE}+(\frac{1}{2}, \dots \EEE
\frac{1}{2})\big)$ by \EEE  
\begin{equation}\label{cubic_set_of_A}
\hat\Q_{\eps,A}  :=\big\{Q_\eps(\hat i)\colon i\in \eps\Z^d, \mathcal{L}^d( 
  Q_\eps(\hat i) \EEE \cap A) > 0\big\}\,.
\end{equation}
For simplicity, we   write $\hat\Q_\eps:= \hat\Q_{\eps,{\R^d}}$.
\EEE We also introduce  cubes of sidelength  $2\varepsilon$, 
namely \EEE
\begin{align}\label{def: Qeps}
\hat{\mathcal{Q}}_{\eps,2} \EEE:= \big\{Q_{2\varepsilon}
(\hat{i})\colon \,  i \EEE \in \varepsilon\mathbb{Z}^{ d\EEE} \big\}\,.
\end{align} 
\EEE The following fact will be used several times in the sequel: for every $\eta>0$ and $i\in \eta\mathbb{Z}^{ d\EEE}$, 
it holds that \EEE
\begin{align}\label{ineq:Qintersectbound}
\#\{Q \EEE_\eta(j)  \colon \, \EEE  j\in \eta\mathbb{Z}^{ d\EEE}, Q_{4\eta}(i) \cap Q \EEE_{4\eta}(j) \neq \emptyset \} \leq  7^{ d\EEE}\,.
\end{align} 
Finally, unless  stated otherwise,  $0<c \le 1 \le C$ \EEE will denote generic  dimensional \EEE constants, whose values will be allowed to vary from line to line.\EEE

\section{Setting and main results}\label{sec: 2}

\subsection{Admissible deformations in the discrete setting and discrete gradients}\label{subsection 2.1}

Adopting the notation that was introduced in the previous subsection, we denote the void set at a microscopic level by  $E\subset \mathbb{Z}_\varepsilon(\Omega)$. We consider the associated collection of atoms $\mathbb{Z}_\varepsilon(\Omega)\setminus E$ and corresponding (discrete) deformations $y \colon \mathbb{Z}_\varepsilon(\Omega)\setminus E\to\mathbb{R}^{ d\EEE}$. We start by introducing \emph{admissible deformations} satisfying Dirichlet boundary values in a specific sense. \EEE

In order to define (thickened) Dirichlet boundary values for discrete mappings, we introduce some further terminology and notation. 
First of all, we make the following geometrical assumption on the Dirichlet boundary $\partial_D\Omega$: there exists a decomposition $\partial \Omega = \partial_D \Omega \cup \partial_N\Omega \cup N$ with 
\begin{align*}
\partial_D \Omega, \partial_N\Omega \text{ relatively open}, \ \  \  \mathcal{H}^{{ d-1\EEE}}(N) = 0\,, \ \ \  \partial_D\Omega \cap \partial_N \Omega = \emptyset\,, \ \ \  \partial (\partial_D \Omega) = \partial (\partial_N \Omega)\,,
\end{align*}
(the \EEE outermost boundary has to be understood in the relative sense with respect to $\partial \Omega$), and there exist $\bar{\sigma}>0$ small enough and $x_0 \in\R^{ d\EEE}$ such that for all $\sigma \in (0,\bar{\sigma})$ it holds that
\begin{align*}
O_{\sigma,x_0}(\partial_D   \Omega   ) \subset \Omega\,,
\end{align*}
where $O_{\sigma,x_0}(x) := x_0 + (1-\sigma)(x-x_0)$. (These assumptions are related to a suitable density result for displacements, see \cite[Lemma 5.7]{Crismale}.) In order to fix boundary values, we introduce an open set $U\supset \Omega$ such that $U$ and $U \setminus \overline{\Omega}$ are Lipschitz sets and $U \cap \partial \Omega = \partial_D \Omega$. 
\begin{definition}[Admissible deformations]\label{boundary values}
Given $y_0 \in W^{1,\infty}(\mathbb{R}^{ d\EEE};\mathbb{R}^{ d\EEE})$ and $E \subset \mathbb{Z}_\varepsilon(\Omega)$, a discrete deformation $y \colon \mathbb{Z}_\varepsilon(\R^{ d\EEE})\to \mathbb{R}^{ d\EEE}$  is said to be \emph{admissible}, written $y \in \mathcal{Y}_\varepsilon(y_0,\partial_D\Omega,E)$, if and only if
\begin{align}\label{def:Dirichlet_boundary_values}
\begin{split}
{\rm (i)} & \ \ y(i)=y_0(i) \ \ \mathrm{for\ \ every\ \ } i\in \mathbb{Z}_\varepsilon(U\setminus \Omega\EEE
)\, , \\ 
{\rm (ii)} & \ \ y(i)=i \ \ \mathrm{for\ \ every\ \ } i\in   \big( \mathbb{Z}_\varepsilon( \R^{ d\EEE}) \setminus  \mathbb{Z}_\varepsilon(U\EEE
) \big)  \cup E\,.  
\end{split}
\end{align}
\end{definition}
The definition in (ii) is for definiteness only. Given $y \in \mathcal{Y}_\varepsilon(y_0,\partial_D\Omega,E)$, recalling the notation in \eqref{Z_points}, we define the discrete gradient of $y$ at each  point   $i\in 
\mathbb{Z}_\varepsilon(\R^d)\EEE$ as 
\begin{equation}\label{discrete_gradient_def}
\overline{\nabla}y( i ):=\frac{1}{\varepsilon}\Big(y(\hat{i} + \eps z_1)-\bar y(\hat{i}),\dots,y(\hat{i} + \eps z_{ 2^d \EEE} )-\bar y(\hat{i})\Big), \ \mathrm{where} \ \ \bar y(\hat{i}):=\frac{1}{{ 2^d \EEE}}\sum_{l=1}^{{ 2^d \EEE}}y(\hat{i} + \eps z_l )\,.
\end{equation}
 Moreover, we set $\bar{e}(y)
(i) \EEE :=P(\overline{\nabla}y
(i) \EEE )$ for the orthogonal projection of $\overline{\nabla} y$ onto the orthogonal complement of $\R^{{ d \EEE}\times { 2^d \EEE}}_{\mathrm{s,t}}$, cf.\ \eqref{infinitesimal_translation_rotation}. \EEE In the following, when no confusion arises, we will also identify $\overline{\nabla}y$  and $\bar{e}(y)$ \EEE  with  their \EEE piecewise constant interpolation, being equal to $\overline{\nabla}y(i)$ and  $\bar{e}(y)(i)$ respectively, \EEE on each $Q_\varepsilon(\hat{i})$. We now  focus on the case $d=3$ and \EEE proceed by introducing the discrete energy. An example fitting to our assumptions is given at the end of the section in  Subsection \ref{example:EAM}. \EEE 

\subsection{The elastic cell energy}\label{Assumptions on the elastic energy}

Following \cite{Schmidt:2009}, the elastic energy of $y$ is defined in terms of a discrete elastic cell energy associated to each lattice point $i\in \mathbb{Z}_\varepsilon(\Omega)\setminus E$. In particular, given $E\subset \mathbb{Z}_{\varepsilon}(\Omega)$, $y \in \mathcal{Y}_\varepsilon(y_0,\partial_D\Omega,E)$ 
and $A\in \mathfrak{M}(\R^3)$, \EEE we define 
\begin{align}\label{def: discrete elastic energy}
F_\varepsilon^{\mathrm{el}}(y,E,A) := \delta_\varepsilon^{-2}\sum_{i \in\mathbb{Z}_\varepsilon(A)\setminus E} \varepsilon^{3} W_{\varepsilon, \mathrm{cell}}^{\mathrm{el}}(i,y,E)\,,
\end{align} 
where $W_{\varepsilon, \mathrm{cell}}^{\mathrm{el}}$ denotes the cell energy specified below in \eqref{definition_elastic_cell_energy} and $\delta_\varepsilon>0$ \EEE  describes the typical order of the elastic strain. The factor $\eps^3$ corresponds to a \emph{bulk scaling} of the elastic energy. We suppose that    $\delta_\varepsilon \to 0$ as $\eps \to 0$, i.e., in the discrete-to-continuum limit, we  simultaneously pass to the limit of  infinitesimal strains. The most natural choice for $\delta_\eps$ in discrete energies featuring bulk and surface terms is $\delta_\eps \sim \sqrt{\eps}$, see e.g.\ mass-spring models with interaction potentials of Lennard-Jones type as mentioned in the Introduction.

The main assumption is that the elastic cell energy can be split into a bulk and a surface part: for each  $i\in \eps\Z^3\EEE
\setminus E$ the cell energy is defined by \EEE
\begin{align}\label{definition_elastic_cell_energy}
W^{\mathrm{el}}_{\varepsilon,\mathrm{cell}}(i,y,E):=\begin{cases} W^{\mathrm{el}}_{\mathrm{bulk}}(\overline{\nabla} y(i)) &\text{if } \mathcal{I}_\varepsilon(i,E)=\{1,\dots,8\}\,,\\
W^{\mathrm{el}}_{\mathrm{surf}}(\mathcal{I}_\varepsilon(i,E),\overline \nabla y(i)\EEE) &\text{if } \mathcal{I}_\varepsilon(i,E)\neq\{1,\dots,8\}\,,
\end{cases} 
\end{align}
where $\mathcal{I}_\eps$ is defined in \eqref{def_set_of_neighbours}. \EEE 
If $A=\Omega$, we omit the dependence on $A$ and simply write $F_\varepsilon^{\mathrm{el}}(y,E)$. As in \cite[Assumption 2.1]{Schmidt:2009}, our general assumptions on the cell energy $W_{\varepsilon,\mathrm{cell}}^{\mathrm{el}}$ are the following\AAA.\EEE 
\begin{itemize}
\item[(i)] Each of the energies $W^{\mathrm{el}}_{\mathrm{bulk}}(\cdot)$ and $W^{\mathrm{el}}_{\mathrm{surf}}(I,\cdot)\colon\mathbb{R}^{3\times 8}\to[0,\infty]$, $I \subset \{1,\OOO \dots \EEE,8\}$, is translational and rotational invariant, i.e., for every $F\in \mathbb{R}^{3\times 8}, R\in SO(3)$, and $v\in \mathbb{R}^3$ we have
$$W^{\mathrm{el}}_{\mathrm{bulk}}(RF+ (v,\dots,v)) =W^{\mathrm{el}}_{\mathrm{bulk}}(F), \  \ \ \ \ W^{\mathrm{el}}_{\mathrm{surf}}(I,RF+ (v,\dots,v)) =W^{\mathrm{el}}_{\mathrm{surf}}(I,F)\,;$$
\item [(ii)] $W^{\mathrm{el}}_{\mathrm{bulk}}(F)$ for $F \in \R^{3 \times 8}$  is minimal ($= 0$) if and only if there exists $R\in SO(3)$ and $v\in \mathbb{R}^3$ such that $F_l=Rz_l+v \quad \forall\ l= 1, \dots,8$; 
\item[(iii)] $W^{\mathrm{el}}_{\mathrm{bulk}}$  is $C^{3}$-regular in a neighborhood of $\bar{SO}(3)$ and the Hessian $Q_{\mathrm{ bulk}}:=D^2W^{\mathrm{el}}_{\mathrm{bulk}}(Z)$ at the discrete identity  $Z$ is 
positive-definite \EEE on the orthogonal complement of the subspace $\R^{3 \times 8}_{\mathrm{s,t}}$, see \eqref{infinitesimal_translation_rotation};
\item[(iv)] $W^{\mathrm{el}}_{\mathrm{bulk}}$ grows at infinity at least quadratically on the orthogonal complement of the subspace spanned by infinitesimal translations, i.e.,
$$ \liminf_{\underset{F\in K}{|F|\to\infty}} \frac{W^{\mathrm{el}}_{\mathrm{bulk}}(F)}{|F|^2}>0\,,$$
where $K:=\{F\in \mathbb{R}^{3\times 8}:F_1+...+F_{8}= 0\}$; 
\item[(v)]  $W^{\mathrm{el}}_{\mathrm{surf}}$ depends only on the positions of atoms related to $\mathcal{I}_{\varepsilon}(i,E)$ in the reference configuration, i.e., $W^{\mathrm{el}}_{\mathrm{surf}}(\mathcal{I}_\varepsilon(i,E), F)$,  for $F\in \R^{3 \times 8}$, depends on the second variable only through the columns $F_l$, $l\in\mathcal{I}_{\varepsilon}(i,E)$;
\item[(vi)] The discrete identity $Z$ \EEE is an equilibrium configuration also for $W^{\mathrm{el}}_{\mathrm{surf}}$. In particular, we assume that there exists a constant $C>0$ such that for every $I\subset\{1,\dots,8\}$ it holds that
\begin{equation*}
W^{\mathrm{el}}_{\mathrm{surf}}(I,\cdot)\leq CW^{\mathrm{el}}_{\mathrm{bulk}}(\cdot)
\end{equation*}
in a neighborhood of $\bar{SO}(3)$. 
\end{itemize}
Note that the assumptions (i) and (iii) imply that $Q_{\mathrm{bulk}}(v,\OOO \dots \EEE,v) = 0$   and $Q_{\mathrm{bulk}}(Az_1,\dots\EEE,Az_{8}) = 0$ for all $v\in \mathbb{R}^3$ and $A\in \mathbb{R}^{3\times 3}_{\rm skew}$. Assumption (v) ensures that the atomic positions at lattice points in $( \mathbb{Z}_\varepsilon( \R^3 ) \setminus  \mathbb{Z}_\varepsilon(U\EEE) )  \cup E$  do not affect the elastic energy, i.e., \eqref{def:Dirichlet_boundary_values}(ii) is indeed for definiteness only. The last condition  in (vi) was referred to as  \emph{compatibility condition} between the surface and the bulk elastic energy density in \cite[Definition 2.2]{Schmidt:2009}.

\subsection{\EEE The surface energy}\label{subsec:surface_energy}
  Let $V\subset \mathbb{Z}^3  \setminus \{0\}$ be a finite set of vectors describing the neighbors of a point on the lattice.  We assume that 
\begin{itemize}
\item[(i)] $\|\xi\|_\infty:=\max_{l=1,2,3}|\xi_l|\EEE \leq 1$ for $\xi \in V$\,;
\item[(ii)] $ \{\pm e_1, \pm e_2,\pm e_3\}\subset V$\,;
\item[(iii)] $\xi \in V$ if and only if $-\xi \in V$.
\end{itemize}
An example of such a set of neighbors is $V=\big\{\sum_{l=1}^3\mu_le_l\colon \mu_l\in \{-1,0,1\}\big\}$. This would correspond to a model taking into account nearest, next-nearest, and next-next-nearest neighbor interactions.

For any $E \subset \mathbb{Z}_\varepsilon(\Omega)\EEE$ and $A \in \mathfrak{M}(\R^3)$, we consider an \textit{anisotropic discrete perimeter energy}, consisting of pair-interactions among neighboring atoms subordinate to the set $V$. More precisely, we define 
\begin{align}\label{def: discrete perimeter}
F_\varepsilon^{\mathrm{per}}(E,A):= \sum_{i \in \mathbb{Z}_\varepsilon(A)\cap E}\ \underset{i+\varepsilon\xi \in \mathbb{Z}_\varepsilon(U)\EEE}{\sum_{\xi \in V}}\varepsilon^{2} c_\xi \big(1-\chi_{E}(i+\varepsilon\xi)\big)\,,
\end{align}
where $U$ is the auxiliary set introduced in Definition \hyperref[boundary values]{2.1} in order to incorporate Dirichlet boundary conditions for the discrete deformations. Here, $c_\xi >0$ and  $c_\xi=c_{-\xi}$ for all $\xi \in V$. 
Since $E\subset \Z_\eps(\Omega)$, it is clear by the definition that $F_\eps^{\mathrm{per}}(E,U)=F_\eps^{\mathrm{per}}(E,\Omega)$. Thus, when \EEE $A=U\EEE$, we omit the dependence on $U\EEE$ in the  second variable.  Note that positive contributions correspond to the situation that an atom at position $i + \eps \xi \in\Z_\eps(U) \setminus E$ in the reference configuration misses a neighbor $i \in E$ lying in the void set $E$. \EEE The factor $\eps^2$ corresponds to the \emph{surface scaling}.

We complement the discrete perimeter energy with an additional \textit{discrete curvature energy}, associated to each pair $(E,A)$. This should be thought of as a discrete version of the continuum curvature energy considered in \cite[Equation (2.1)]{KFZ:2021}. It will allow us to implement the compactness result   \cite[Proposition 3.1]{KFZ:2021}, obtained through the rigidity result in Theorem 2.1 therein. The  discrete curvature energy will be considered at a mesoscale $\eta_{\varepsilon}>0$, satisfying
\begin{align}\label{eq: ratio}
\eta_\varepsilon / \varepsilon \in \mathbb{N}, \quad \eta_\eps \to 0,  \quad \text{and} \quad \eta_{\eps} / \eps \to +\infty \quad \text{as $\varepsilon\to 0$}\,.
\end{align}
More details why $\eta_\eps$ indeed needs to be chosen as a mesoscale are given below \eqref{eq:rate gammadelta}. \EEE 
\EEE Before introducing the specific form of the energy, we will first discuss which configurations of atoms will be considered as \emph{flat}, in the sense that they will not pay any curvature energy.
\begin{definition}[Local flatness]\label{locally_flat_configurations}
Let $i \in \varepsilon\mathbb{Z}^3$ and $\eta\in [\eta_\varepsilon,+\infty)$\EEE. We say that $E\subset\mathbb{Z}_\varepsilon(\Omega)$ is \textit{locally flat}  in $Q_{\eta
}(i)$ (with respect to $U\EEE$) if and only if for  all $Q := Q_{2\varepsilon}(\hat{k}) \in \hat{\mathcal{Q}}_{\eps,2}$, $k\in \varepsilon\mathbb{Z}^3$, such that $Q_{\eta}(i) \cap Q \neq \emptyset$, one of the following is true (see Figure \hyperref[flat configurations]{1}):
\begin{itemize}
\item[(i)] (Complete void) $E\EEE\cap Q= \mathbb{Z}_\varepsilon(Q) \cap U\EEE$\,;
\item[(ii)] (No void) $E\cap Q= \emptyset$\,;
\item[(iii)] (Half-cube void) $E\cap Q = \{k, k+\varepsilon e_1, k+\varepsilon e_2,k+\varepsilon (e_1+e_2)\} \cap U\EEE$, up to a  rotation of a multiple of $\pi/2$ with respect to an axis passing through $\hat k$ in the direction of one of the coordinate vectors.
\end{itemize}
\end{definition}

\EEE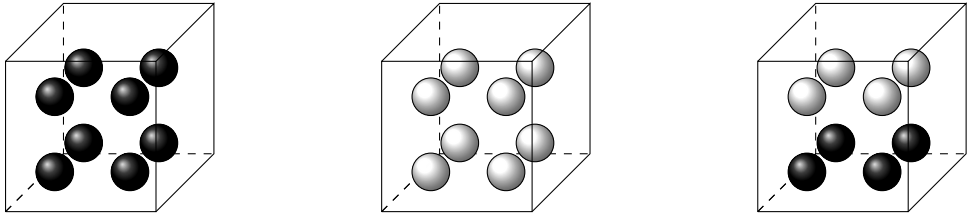
\begin{figure}[H]\label{flat configurations}
\begin{tikzpicture}

\begin{scope}[shift={(-10,0,0)}]
\draw[dashed](0,0,0)--++(0,0,-2)--++(0,2,0);
\draw[dashed](0,0,0)--++(0,0,-2)--++(2,0,0);
\shadedraw [ball color=black] (.75,.625,.25) circle (0.25);
\shadedraw [ball color=black] (1.75,.625,.25) circle (0.25);
\shadedraw [ball color=black] (.75,.625,-.75) circle (0.25);
\shadedraw [ball color=black] (1.75,.625,-.75) circle (0.25);

\shadedraw [ball color=black] (.75,1.625,.25) circle (0.25);
\shadedraw [ball color=black] (1.75,1.625,.25) circle (0.25);
\shadedraw [ball color=black] (.75,1.625,-.75) circle (0.25);
\shadedraw [ball color=black] (1.75,1.625,-.75) circle (0.25);

\draw(0,0,0)--++(2,0,0)--++(0,2,0)--++(-2,0,0)--++(0,-2,0);
\draw(2,0,0)--++(0,0,-2)--++(0,2,0)--++(0,0,2);
\draw(0,2,0)--++(0,0,-2)--++(2,0,0);

\end{scope}

\begin{scope}[shift={(-5,0,0)}]
\draw[dashed](0,0,0)--++(0,0,-2)--++(0,2,0);
\draw[dashed](0,0,0)--++(0,0,-2)--++(2,0,0);
\shadedraw [ball color=white] (.75,.625,.25) circle (0.25);
\shadedraw [ball color=white] (1.75,.625,.25) circle (0.25);
\shadedraw [ball color=white] (.75,.625,-.75) circle (0.25);
\shadedraw [ball color=white] (1.75,.625,-.75) circle (0.25);

\shadedraw [ball color=white] (.75,1.625,.25) circle (0.25);
\shadedraw [ball color=white] (1.75,1.625,.25) circle (0.25);
\shadedraw [ball color=white] (.75,1.625,-.75) circle (0.25);
\shadedraw [ball color=white] (1.75,1.625,-.75) circle (0.25);

\draw(0,0,0)--++(2,0,0)--++(0,2,0)--++(-2,0,0)--++(0,-2,0);
\draw(2,0,0)--++(0,0,-2)--++(0,2,0)--++(0,0,2);
\draw(0,2,0)--++(0,0,-2)--++(2,0,0);

\end{scope}

\draw[dashed](0,0,0)--++(0,0,-2)--++(0,2,0);
\draw[dashed](0,0,0)--++(0,0,-2)--++(2,0,0);
\shadedraw [ball color=black] (.75,.625,.25) circle (0.25);
\shadedraw [ball color=black] (1.75,.625,.25) circle (0.25);
\shadedraw [ball color=black] (.75,.625,-.75) circle (0.25);
\shadedraw [ball color=black] (1.75,.625,-.75) circle (0.25);

\shadedraw [ball color=white] (.75,1.625,.25) circle (0.25);
\shadedraw [ball color=white] (1.75,1.625,.25) circle (0.25);
\shadedraw [ball color=white] (.75,1.625,-.75) circle (0.25);
\shadedraw [ball color=white] (1.75,1.625,-.75) circle (0.25);
\draw(0,0,0)--++(2,0,0)--++(0,2,0)--++(-2,0,0)--++(0,-2,0);
\draw(2,0,0)--++(0,0,-2)--++(0,2,0)--++(0,0,2);
\draw(0,2,0)--++(0,0,-2)--++(2,0,0);    
  
\end{tikzpicture}
\caption{The (up to rotation) three possibilities of locally flat configurations in the cubes $ Q\in \hat{\mathcal{Q}}_{\eps,2}$. Here, the black spheres denote the lattice points in $E$ and the white spheres denote the lattice points in the complement of $E$.}
\label{fig:flat configuration}
\end{figure}

We point out that locally flat sets in a cube $Q_{\eta}(i)$, $\eta\in [\eta_\varepsilon,+\infty)$, $i\in \varepsilon\mathbb{Z}^3$, are locally \textit{simple coordinate laminates}. We refer to Figure \ref{laminate_picture} for an illustration, Definition \ref{def: lam} below for an exact definition, and Lemma \ref{lemma : laminate} for a proof of this fact. 
Due to the fact that $\eta/\eps \ge \eta_\eps/\eps \to +\infty$ as  $\varepsilon\to 0$, \EEE the number of these laminates inside the cube $Q_{\eta}(i)$ can possibly tend to infinity, and hence there can be an arbitrarily large number of possibilities for the geometry of the locally flat set $E$ inside $Q_{\eta}(i)$. In view of this fact, we will introduce sets that have a bounded number of possibilities for their geometry inside $Q_{\eta}(i)$, independently of $\varepsilon$. \EEE

\begin{figure}[H]
\begin{tikzpicture}
\draw[dashed](0,0,0)--++(0,0,-4)--++(0,4,0);
\draw[dashed](0,0,0)--++(0,0,-4)--++(4,0,0);
	
\draw(0,0,0)--++(4,0,0)--++(0,4,0)--++(-4,0,0)--++(0,-4,0);
\draw(4,0,0)--++(0,0,-4)--++(0,4,0)--++(0,0,4);
\draw(0,4,0)--++(0,0,-4)--++(4,0,0);
	
\draw(0,1,0)--++(4,0,0)--++(0,0,-4);
\draw[dashed](0,1,0)++(4,0,0)++(0,0,-4)--++(-4,0,0)--++(0,0,4);
	
\draw(0,.6,0)--++(4,0,0)--++(0,0,-4);
\draw[dashed](0,.6,0)++(4,0,0)++(0,0,-4)--++(-4,0,0)--++(0,0,4);

\draw[fill=gray,opacity=.3](0,0,0)++(0,1,0)++(4,0,0)--++(0,-.4,0)--++(0,0,-4)--++(0,.4,0);
	
\draw[fill=gray,opacity=.3](0,0,0)++(0,1,0)--++(4,0,0)--++(0,-.4,0)--++(-4,0,0)--++(0,.4,0);
	
\draw[fill=gray,opacity=.3](0,0,0)++(0,1,0)--++(4,0,0)--++(0,0,-4)--++(-4,0,0)--++(0,0,4);

	
\draw(0,2.2,0)--++(4,0,0)--++(0,0,-4);
\draw[dashed](0,2.2,0)++(4,0,0)++(0,0,-4)--++(-4,0,0)--++(0,0,4);
	
\draw(0,1.6,0)--++(4,0,0)--++(0,0,-4);
\draw[dashed](0,.6,0)++(4,0,0)++(0,0,-4)--++(-4,0,0)--++(0,0,4);

\draw[fill=gray,opacity=.3](0,0,0)++(0,2.2,0)++(4,0,0)--++(0,-.6,0)--++(0,0,-4)--++(0,.6,0);
	
\draw[fill=gray,opacity=.3](0,0,0)++(0,2.2,0)--++(4,0,0)--++(0,-.6,0)--++(-4,0,0)-++(0,.6,0);
	
\draw[fill=gray,opacity=.3](0,0,0)++(0,2.2,0)--++(4,0,0)--++(0,0,-4)--++(-4,0,0)--+(0,0,4);
	
		
\draw(0,4,0)--++(4,0,0)--++(0,0,-4);
\draw[dashed](0,4,0)++(4,0,0)++(0,0,-4)--++(-4,0,0)--++(0,0,4);
	
\draw(0,2.8,0)--++(4,0,0)--++(0,0,-4);
\draw[dashed](0,2.8,0)++(4,0,0)++(0,0,-4)--++(-4,0,0)--++(0,0,4);
		
\draw[fill=gray,opacity=.3](0,0,0)++(0,4,0)++(4,0,0)--++(0,-1.2,0)--++(0,0,-4)--++(0,1.2,0);
	
\draw[fill=gray,opacity=.3](0,0,0)++(0,4,0)--++(4,0,0)--++(0,-1.2,0)--++(-4,0,0)--++(0,1.2,0);
	
\draw[fill=gray,opacity=.3](0,0,0)++(0,4,0)--++(4,0,0)--++(0,0,-4)--++(-4,0,0)--++(0,0,4);

\end{tikzpicture}
\caption{A laminate-configuration depicted schematically. The lattice points
in $\varepsilon\mathbb{Z}^3$ that lie in the gray set should be thought of as the set $E$.}
\label{laminate_picture}
\end{figure}
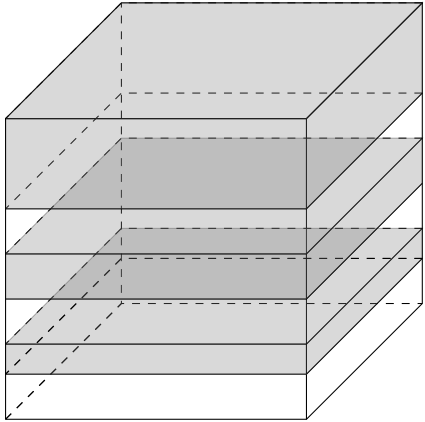

\begin{definition}[Cubic sets]\label{corners}  Let $i\in \varepsilon\mathbb{Z}^3$ and $\eta\in [\eta_\varepsilon,+\infty)$. We say that $ E \subset \mathbb{Z}_\varepsilon(\Omega)$ is \textit{cubic} in $Q_{\eta
}(i)$ (with respect to $U$) if and only if there exist $i_0 \in \mathbb{Z}_{\varepsilon}(Q_{\eta_\varepsilon
})$ and $\{k_1,\ldots,k_n\} \subset i_0 + \eta_\varepsilon \mathbb{Z}^3$ 
(possibly empty) such that
\begin{align*}
E\cap Q_{\eta
}(i) = \mathbb{Z}_\varepsilon\left(\bigcup_{m=1}^n Q_{\eta_\varepsilon
}(k_m)\right)\cap Q_{\eta
}(i)\cap U\EEE\,.
\end{align*}
\end{definition}

For an illustration of cubic sets we refer to Figure~\ref{fig:corner}. Note that cubic sets can be locally flat but might also not. 

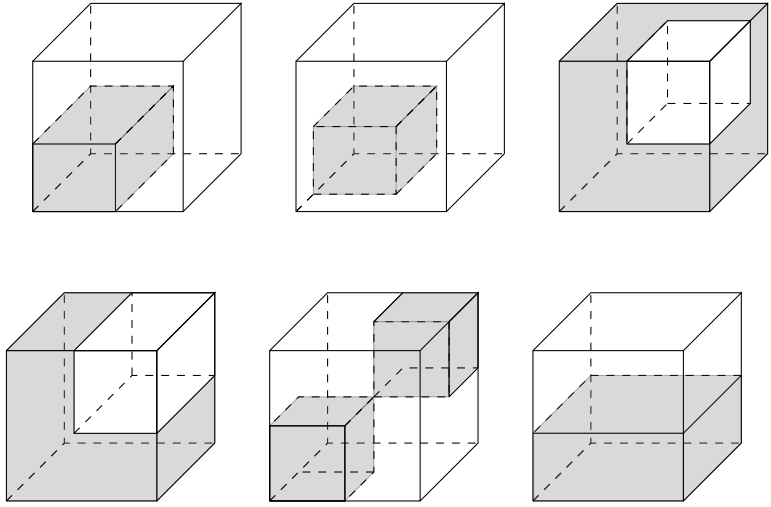
\begin{figure}[H]\label{locally flat configurations}
\EEE
\begin{tikzpicture}
		
\draw[dashed](0,0,0)--++(0,0,-2)--++(0,2,0);
\draw[dashed](0,0,0)--++(0,0,-2)--++(2,0,0);

\draw[fill=gray,opacity=.3](0,0,0)--++(1.1,0,0)--++(0,.9,0)--++(-1.1,0,0)--++(0,-.9,0);
\draw(0,0,0)--++(1.1,0,0)--++(0,.9,0)--++(-1.1,0,0)--++(0,-.9,0);
		
		
\draw[fill=gray,opacity=.3](0,0,0)++(1.1,0,0)++(0,.9,0)--++(0,0,-2)--++(0,-.9,0)--++(0,0,2);

\draw[fill=gray,opacity=.3](0,0,0)++(1.1,0,0)++(0,.9,0)--++(0,0,-2)--++(-1.1,0,0)--++(0,0,2);
		
\draw[dashed](0,0,0)++(1.1,0,0)++(0,.9,0)--++(0,0,-2)--++(0,-.9,0)--++(0,0,2);
\draw[dashed](0,0,0)++(1.1,0,0)++(0,.9,0)--++(0,0,-2)--++(-1.1,0,0)--++(0,0,2);

\draw(0,0,0)--++(2,0,0)--++(0,2,0)--++(-2,0,0)--++(0,-2,0);
\draw(2,0,0)--++(0,0,-2)--++(0,2,0)--++(0,0,2);
\draw(0,2,0)--++(0,0,-2)--++(2,0,0);
		
\begin{scope}[shift={(3.5,0,0)}]
\draw[dashed](0,0,0)--++(0,0,-2)--++(0,2,0);
\draw[dashed](0,0,0)--++(0,0,-2)--++(2,0,0);		
		
\draw[fill=gray,opacity=.3](0,0,-.6)--++(1.1,0,0)--++(0,.9,0)--++(-1.1,0,0)--++(0,-.9,0);
		
\draw[fill=gray,opacity=.3](0,0,-.6)++(1.1,0,0)++(0,.9,0)--++(0,0,-1.4)--++(0,-.9,0)--++(0,0,1.4);
		
\draw[fill=gray,opacity=.3](0,0,-.6)++(1.1,0,0)++(0,.9,0)--++(0,0,-1.4)--++(-1.1,0,0)--++(0,0,1.4);

\draw[dashed](0,0,-.6)--++(1.1,0,0)--++(0,.9,0)--++(-1.1,0,0)--++(0,-.9,0);
		
\draw[dashed](0,0,-.6)++(1.1,0,0)++(0,.9,0)--++(0,0,-1.4)--++(0,-.9,0)--++(0,0,1.4);
		
\draw[dashed](0,0,-.6)++(1.1,0,0)++(0,.9,0)--++(0,0,-1.4)--++(-1.1,0,0)--++(0,0,1.4);
		
\draw(0,0,0)--++(2,0,0)--++(0,2,0)--++(-2,0,0)--++(0,-2,0);
\draw(2,0,0)--++(0,0,-2)--++(0,2,0)--++(0,0,2);
\draw(0,2,0)--++(0,0,-2)--++(2,0,0);
\end{scope}
				
\begin{scope}[shift={(7,0,0)}]
\draw[dashed](0,0,0)--++(0,0,-2)--++(0,2,0);
\draw[dashed](0,0,0)--++(0,0,-2)--++(2,0,0);
				
\draw[fill=gray,opacity=.3](0,0,0)--++(2,0,0)--++(0,.9,0)--++(-1.1,0,0)--++(0,1.1,0)--++(-.9,0,0);
	
\draw[fill=gray,opacity=.3](0,0,0)++(2,0,0)++(0,.9,0)++(-1.1,0,0)--++(0,0,-1.4)--++(0,1.1,0)--++(0,0,1.4);
\draw[fill=gray,opacity=.3](0,0,0)++(2,0,0)++(0,.9,0)++(-1.1,0,0)++(0,0,-1.4)--++(1.1,0,0)--++(0,1.1,0)--++(-1.1,0,0);
		
\draw[fill=gray,opacity=.3](0,0,0)++(2,0,0)++(0,.9,0)++(-1.1,0,0)++(0,0,-1.4)--++(1.1,0,0)--++(0,0,1.4)--++(-1.1,0,0);
		
\draw[fill=gray,opacity=.3](0,0,0)++(2,0,0)--++(0,.9,0)--++(0,0,-1.4)--++(0,1.1,0)--++(0,0,-.6)--++(0,-2,0)--++(0,0,2);
		
\draw[fill=gray,opacity=.3](0,2,0)--++(.9,0,0)--++(0,0,-1.4)--++(1.1,0,0)--++(0,0,-.6)--++(-2,0,0)--++(0,0,2);
		
\draw[fill=white](0,0,0)++(2,0,0)++(0,.9,0)--++(-1.1,0,0)--++(0,1.1,0)--++(1.1,0,0)--++(0,-1.1,0);
		
\draw[fill=white](0,0,0)++(2,0,0)++(0,.9,0)--++(0,0,-1.4)--++(0,1.1,0)--++(0,0,1.4)--++(0,-1.1,0);
				
\draw[fill=white](0,0,0)++(2,0,0)++(0,2,0)--++(0,0,-1.4)--++(-1.1,0,0)--++(0,0,1.4)--++(1.1,0,0);
		
\draw[dashed](0,0,0)++(2,0,0)++(0,.9,0)++(-1.1,0,0)--++(0,0,-1.4)--++(0,1.1,0)--++(0,0,1.4)--++(0,-1.1,0);
		
\draw[dashed](0,0,0)++(2,0,0)++(0,.9,0)++(-1.1,0,0)++(0,0,-1.4)--++(1.1,0,0);
		
%
%
%
%
		
\draw(0,0,0)--++(2,0,0)--++(0,2,0)--++(-2,0,0)--++(0,-2,0);
\draw(2,0,0)--++(0,0,-2)--++(0,2,0)--++(0,0,2);
\draw(0,2,0)--++(0,0,-2)--++(2,0,0);
\end{scope}

\begin{scope}[shift={(3.5,0,10)}]
\draw[dashed](0,0,0)--++(0,0,-2)--++(0,2,0);
\draw[dashed](0,0,0)--++(0,0,-2)--++(2,0,0);
		
\draw[fill=gray,opacity=.3](0,0,0)--++(2,0,0)--++(0,.9,0)--++(-1.1,0,0)--++(0,1.1,0)--++(-.9,0,0);
		
\draw[fill=gray,opacity=.3](0,0,0)++(2,0,0)++(0,.9,0)++(-1.1,0,0)--++(0,0,-2)--++(0,1.1,0)--++(0,0,2);
		
\draw[fill=gray,opacity=.3](0,0,0)++(2,0,0)++(0,.9,0)++(-1.1,0,0)--++(0,0,-2)--++(1.1,0,0)--++(0,0,2);
		
\draw[fill=gray,opacity=.3](0,0,0)++(2,0,0)--++(0,.9,0)--++(0,0,-2)--++(0,-.9,0)--++(0,0,2);
				
\draw[fill=gray,opacity=.3](0,0,0)++(0,2,0)--++(.9,0,0)--++(0,0,-2)--++(-.9,0,0)--++(0,0,2);
		
\draw[fill=white](0,0,0)++(2,0,0)++(0,.9,0)--++(-1.1,0,0)--++(0,1.1,0)--++(1.1,0,0)--++(0,-1.1,0);
		
\draw[fill=white](0,0,0)++(2,0,0)++(0,.9,0)--++(0,1.1,0)--++(0,0,-2)--++(0,-1.1,0)--++(0,0,2);
		
\draw[fill=white](0,0,0)++(2,0,0)++(0,2,0)--++(-1.1,0,0)--++(0,0,-2)--++(1.1,0,0)--++(0,0,2);
		
\draw[dashed](0,0,0)++(2,0,0)++(0,.9,0)++(-1.1,0,0)--++(0,0,-2)--++(1.1,0,0);
		
\draw[dashed](0,0,0)++(2,0,0)++(0,.9,0)++(-1.1,0,0)++(0,0,-2)--++(0,1.1,0);
		
\draw(0,0,0)--++(2,0,0)--++(0,2,0)--++(-2,0,0)--++(0,-2,0);
\draw(2,0,0)--++(0,0,-2)--++(0,2,0)--++(0,0,2);
\draw(0,2,0)--++(0,0,-2)--++(2,0,0);
\end{scope}

\begin{scope}[shift={(7,0,10)}]
\draw[dashed](0,0,0)--++(0,0,-2)--++(0,2,0);
\draw[dashed](0,0,0)--++(0,0,-2)--++(2,0,0);
		
\draw[fill=gray,opacity=.3](0,0,0)--++(1,0,0)--++(0,1,0)--++(-1,0,0)--++(0,-1,0);
		
\draw(0,0,0)--++(1,0,0)--++(0,1,0)--++(-1,0,0)--++(0,-1,0);
				
\draw[fill=gray,opacity=.3](0,0,0)++(1,0,0)++(0,1,0)--++(0,0,-1)--++(0,-1,0)--+(0,0,1);
		
\draw[dashed](0,0,0)++(1,0,0)++(0,1,0)--++(0,0,-1)--++(0,-1,0)--++(0,0,1);
				
\draw[fill=gray,opacity=.3](0,0,0)++(1,0,0)++(0,1,0)--++(0,0,-1)--++(-1,0,0)--++(0,0,1);
		
\draw[dashed](0,0,0)++(1,0,0)++(0,1,0)--++(0,0,-1)--++(-1,0,0)--++(0,0,1);		

\draw[dashed](0,0,0)++(1,0,0)++(0,0,-1)--++(-1,0,0);
		
\draw(0,0,0)--++(1,0,0)--++(0,1,0)--++(-1,0,0)--++(0,-1,0);
		
\draw[fill=gray,opacity=.3](0,0,0)++(1,0,0)++(0,1,0)++(0,0,-1)--++(1,0,0)--++(0,1,0)--++(-1,0,0)--++(0,-1,0);
\draw[dashed](0,0,0)++(1,0,0)++(0,1,0)++(0,0,-1)--++(1,0,0)--++(0,1,0)--++(-1,0,0)--++(0,-1,0);
				
\draw[fill=gray,opacity=.3](0,0,0)++(2,0,0)++(0,1,0)++(0,0,-1)--++(0,1,0)--++(0,0,-1)--++(0,-1,0);
		
\draw(0,0,0)++(2,0,0)++(0,1,0)++(0,0,-1)--++(0,1,0)--++(0,0,-1)--++(0,-1,0)--++(0,0,1);
		
\draw(0,0,0)++(1,0,0)++(0,2,0)++(0,0,-1)--++(0,0,-1);
		
\draw(0,0,0)++(1,0,0)++(0,2,0)++(0,0,-1)--++(0,0,-1);
		
\draw[dashed](0,0,0)++(1,0,0)++(0,1,0)++(0,0,-1)--++(0,0,-1)--++(1,0,0);
		
\draw[fill=gray,opacity=.3](0,0,0)++(1,0,0)++(0,2,0)++(0,0,-1)--++(0,0,-1)--++(1,0,0)--++(0,0,1);
				
\draw(0,0,0)--++(2,0,0)--++(0,2,0)--++(-2,0,0)--++(0,-2,0);
\draw(2,0,0)--++(0,0,-2)--++(0,2,0)--++(0,0,2);
\draw(0,2,0)--++(0,0,-2)--++(2,0,0);
\end{scope}

\begin{scope}[shift={(10.5,0,10)}]
\draw[dashed](0,0,0)--++(0,0,-2)--++(0,2,0);
\draw[dashed](0,0,0)--++(0,0,-2)--++(2,0,0);
			
\draw(0,.9,0)--++(2,0,0)--++(0,0,-2);
		
\draw[dashed](0,.9,0)++(2,0,0)++(0,0,-2)--++(-2,0,0)--++(0,0,2);
		
\draw[fill=gray,opacity=.3](0,.9,0)--++(2,0,0)--++(0,0,-2)--++(-2,0,0)--++(0,0,2);
		
\draw[fill=gray,opacity=.3](0,0,0)--++(0,.9,0)--++(2,0,0)--++(0,-.9,0)--++(-2,0,0);
		
\draw[fill=gray,opacity=.3](0,0,0)++(0,.9,0)++(2,0,0)--++(0,-.9,0)--++(0,0,-2)--++(0,.9,0);
		
\draw(0,0,0)--++(2,0,0)--++(0,2,0)--++(-2,0,0)--++(0,-2,0);
\draw(2,0,0)--++(0,0,-2)--++(0,2,0)--++(0,0,2);
\draw(0,2,0)--++(0,0,-2)--++(2,0,0);
\end{scope}
		
\end{tikzpicture}
\caption{Some of the possible configurations for cubic sets depicted schematically. The lattice points in $\varepsilon\mathbb{Z}^3$ that lie in the gray set should be thought of as the set $E$.  Note that for the various pictures $i_0 \in \mathbb{Z}_\varepsilon(Q_{\eta_\varepsilon})$ may be chosen differently.}
\label{fig:corner}
\end{figure}

\begin{definition}[Flatness]\label{flatness-new}  Let $i\in \varepsilon\mathbb{Z}^3$ and $\eta\in [\eta_\varepsilon,+\infty)$. We say that $ E \subset \mathbb{Z}_\varepsilon(\Omega)$ is \textit{flat} in $Q_{\eta
}(i)$ (with respect to $U\EEE$) if and only if it is cubic and locally flat in $Q_{\eta}(i)$.  
\end{definition}

It is elementary to check that 
flat \EEE sets in $Q_{\eta
}(i)$ are a halfspace intersected with  $\mathbb{Z}_\varepsilon(Q_{\eta}(i))$, see the last example in Figure \ref{fig:corner}, i.e., in such a case, there exists $x_0\in Q_{\eta}(i)$ and $\nu\in\{\pm e_1,\pm e_2,\pm e_3\}$, such that 
\begin{equation*}
E\cap Q_{\eta}(i)=\mathbb{Z}_\varepsilon(Q_{\eta}(i))\cap\{y\in \R^3\colon (y-x_0) \cdot \nu  \geq 0\}\cap U\EEE.
\end{equation*}
We now introduce a discrete  version of the curvature regularization energy that we used in \cite{KFZ:2021}, expressed through a curvature cell energy at the mesoscale $\eta_\varepsilon$. In particular, let $q\in [2,+\infty)$ be fixed from now on. Given any $ E \subset \mathbb{Z}_\varepsilon(\Omega)$ and $A\in \mathfrak{M}(\R^3)$, we define
\begin{align}\label{def: curvature energy}
F_\varepsilon^{\mathrm{curv}}(E, A) := \sum_{i \in \mathbb{Z}_{\varepsilon}(A
)} \varepsilon^3 \, W_{\varepsilon,\mathrm{cell}}^{\mathrm{curv}}(i,E)\,.
\end{align}
We  again omit the dependence on the set in the second variable 
for \EEE $A=U\EEE$. Our structural assumptions on the curvature cell energy are the following: 
\begin{itemize}
\item[(i)] (Flatness) $W_{\varepsilon,\mathrm{cell}}^\mathrm{curv}(i, E) = 0$ \quad if $E$ is flat in $Q_{\eta_{ \varepsilon}}(i)$\,;
\item[(ii)] (Lower bound for local non-flatness) There exist $c>0$ and $\gamma_\varepsilon>0$, $\gamma_{\varepsilon}\to 0$ as $\varepsilon\to 0$, such that $W_{\varepsilon,\mathrm{cell}}^\mathrm{curv}(i, E) \geq c \gamma_{\varepsilon}\eta_{\varepsilon}^{-1-q}$ if $E$ is not locally flat in $Q_{\eta_{\varepsilon}}(i)$\,;
\item[(iii)] (Upper bound for cubic sets) There exists $C>0$ such that $W_{\varepsilon,\mathrm{cell}}^\mathrm{curv}(i,E\EEE) \leq C\gamma_{\varepsilon} \eta_{\varepsilon}^{-1-q}$ \ if $ E$ is cubic in $Q_{\eta_{\varepsilon}}(i)$\,\OOO.\EEE
\end{itemize}
Let us comment on the scaling of the energy in (ii) and (iii): roughly speaking, local non-flatness corresponds to an energy per atom of order $\gamma_{\varepsilon}\eta_{\varepsilon}^{-1-q}$, where $\gamma_\eps$ represents a curvature regularization parameter. In this case, in a cube $Q_{\eta_\eps}(i)$, $i\in\varepsilon\mathbb{Z}^3$, \EEE the overall energy  $F_\varepsilon^{\mathrm{curv}}(E, Q_{\eta_\eps}(i))$ is typically of the order $\gamma_{\varepsilon}\eta_{\varepsilon}^{2-q}$, since $\# \mathbb{Z}_{\varepsilon}(Q_{\eta_\eps}(i))=\eta_\eps^3/\eps^3$. Up to the prefactor $\gamma_\eps$, this corresponds to  the integral of  $q$-th power of the second fundamental form of a 
round sphere in $\R^3$ \EEE with radius $\eta_\eps$. This effectively relates our choice of discrete curvature to classical continuum curvature notions. \EEE 

\begin{remark}\label{U_independence}
{\normalfont
Note that, since $E\subset Z_\eps(\Omega)$, the Definitions \ref{locally_flat_configurations}--\ref{flatness-new} as well as the curvature energy $F_\eps^{\rm{curv}}(E,U)$ are independent of the choice of the auxiliary set $U$ for $\eps>0$ sufficiently small (and thus $\eta_\eps$ small accordingly, see \eqref{eq: ratio}).}
\end{remark}
\EEE
\begin{remark}[Different assumptions]\label{rem: diffassu}
{\normalfont
For later purposes in the example of Subsection \ref{example:EAM}, let us comment that our results are still valid if  in (ii) of the above assumptions, the cubes $Q_{\eta_{\varepsilon}}(i)$ are replaced by smaller cubes $Q_{\eta}(i)$ for $\eta \in [\eta_\eps/2, \eta_\eps]$ with $\eta /\eps \in \N$. Our choice of the exact sidelength $\eta_{\varepsilon}$  is  only for expository reasons, in order to formulate  the above assumptions in a simpler way. 
}
\end{remark}

We also assume that the linearization parameter $\delta_\varepsilon$, the mesoscale $\eta_\varepsilon$,  and the curvature regularization parameter $\gamma_{\varepsilon}$  are related to each other via the rates 
\begin{align} \label{eq:rate gammadelta}
\gamma_{\varepsilon}\eta^{-q}_\varepsilon \to +\infty, \quad  \gamma_{\varepsilon}\eta_\varepsilon^{1-q} \to 0 \quad \text{and} \quad \gamma_{\varepsilon}\delta_{\varepsilon}^{-q/9}\to +\infty \EEE\quad \text{as} \quad \varepsilon\to 0.
\end{align}
The last condition is \cite[Equation (3.4)]{KFZ:2021}, and is required in order to apply the compactness result \cite[Proposition 3.1]{KFZ:2021}. \EEE The first two conditions, which relate the mesoscale on which the curvature energy is defined and the curvature regularization parameter, are necessary in order to suitably modify the void sets at $\varepsilon$-scale to geometrically and energetically more convenient sets at the $\eta_\varepsilon$-scale. More precisely, the first condition allows for sharp bounds on the perimeter and curvature energy, as well as the cardinality of the modified sets in terms of the energies and cardinality of the original ones, see Lemmata \ref{lemma : eta lattice} and \ref{lemma:eta scale smoothening}. The second condition is needed to ensure that the curvature energy for recovery sequences is negligible.

We remark that requiring $\delta_\eps^{1/9} \eta_\eps^{-1} \to + \infty$ and $\delta_\eps^{1/9} \eta_\eps^{-1 + \frac{1}{q}} \to 0$ as $\varepsilon\to 0$ are sufficient conditions to find $(\gamma_\eps)_{\varepsilon>0}$ such that \eqref{eq:rate gammadelta} holds true. In view of \eqref{eq: ratio}, this forces the choice of the sequence $(\delta_\eps)_{\eps>0}$ to satisfy $ \eps^{9} \delta_\eps^{-1} \to 0$ as $\varepsilon\to 0$. In particular, the most relevant scaling $\delta_\eps \sim \sqrt{\eps}$ is admissible, and in this case we have $ (\eps^{q/(q-1)})^{1/18} \ll \eta_\eps \ll \eps^{1/18}$, i.e., $\eta_\eps$ represents indeed a mesoscale. 

\subsection{Effective continuum $\Gamma$-limit of atomistic systems}\label{subsec: main result}

We suppose that the boundary data are imposed  on a relatively open subset  $\partial_D \Omega \subset \partial \Omega$ and are close to the identity. More precisely, as mentioned in Definition \ref{boundary values}, we impose the boundary data on $\Z_\eps(U\setminus\Omega)$, where $U\supset \Omega$ is an artificially introduced open Lipschitz set with $U\cap\partial\Omega=\partial_D\Omega$. \EEE To this end, let $u_0 \in W^{1,\infty}(\R^3;\R^3)$ and, given $\varepsilon>0$ and the strain parameter \EEE $\delta_\varepsilon>0$, we define $y_0^\varepsilon := {\rm id} + \delta_\varepsilon u_0$. Recalling \eqref{def: discrete elastic energy}, \eqref{def: discrete perimeter}, \eqref{def: curvature energy}, as well as Definition \hyperref[boundary values]{2.1}, we let $F_\varepsilon$ be the functional defined by
\begin{align*}
F_\varepsilon(y,E) :=\begin{cases} F_\varepsilon^\mathrm{el}(y,E) + F_\varepsilon^\mathrm{per}(E) +F_\varepsilon^\mathrm{curv}(E)\, \quad  \mathrm{if} \ E\subset \mathbb{Z}_{\varepsilon}(\Omega) \text{ and }  y\in \mathcal{Y}_\varepsilon(y_0^{\varepsilon},\partial_D\Omega,E)\,,\\
+\infty \quad \mathrm{otherwise}.
\end{cases}
\end{align*}
From now on, given $y\in \mathcal{Y}_\varepsilon(y_0^{\varepsilon},\partial_D\Omega,E)$, we define the corresponding   \emph{discrete rescaled displacement}  $u \in \mathcal{U}_\varepsilon(u_0,\partial_D\Omega,E) $ by $u := \delta_\varepsilon^{-1}(y - {\rm id})$, and for notational convenience \EEE we set
\begin{equation}\label{def: energy_u}
\mathcal{F}_\varepsilon(u,E) := F_\varepsilon({\rm id} + \delta_\varepsilon u, E).
\end{equation}
Our goal is to identify the effective continuum limit of the above discrete energies as $\eps \to 0$. It turns out that the limiting energy functional coincides with the relaxed,  linearized model for material voids in elastically stressed solids  studied in \cite{Crismale}. First of all, we introduce the continuum surface energy density $\varphi \colon \mathbb{R}^{3} \to [0,+\infty)$  by 
\begin{align}\label{def:varphi}
\varphi(\nu) := \sum_{k=1}^3 c_k |\nu_k|\,, \quad \text{where } c_k :=  \sum_{\xi \in V,\, \xi_k >0} c_\xi\,
\end{align}
for $\nu \in \R^3$, where $V$ was defined before \eqref{def: discrete perimeter}. 

Given $u \in GSBD^2(\Omega)$ (see \cite{DalMaso:13}  for the definition and details on this function space) and a set of finite perimeter $E\in\M(\Omega)$, we first define the \emph{boundary energy term} by
\begin{align}\label{eq: bdaypart}
\mathcal{F}^{\rm \partial_D}(u,E) :=   \int_{\partial^* E\cap\partial_D\Omega}  \varphi  (\nu_E) \,{\rm d}\mathcal{H}^{2}  +  \int\limits_{ \{ \mathrm{tr}(u)  \neq \mathrm{tr}(u_0)  \} \cap  (\partial_D \Omega \setminus \partial^* E) }  \hspace{-0.5cm} 2 \, \varphi(  \nu_\Omega  ) \, {\rm d}\mathcal{H}^{2}, 
\end{align}
which is non-trivial if the void goes up to the Dirichlet part of the boundary, or the  mapping   $u$ does not satisfy the imposed boundary conditions. Here, $\nu_E$ denotes the measure-theoretical outer unit normal to $\partial^{\AAA * \EEE} E$, \EEE $\nu_\Omega$ denotes the outer unit normal to $\partial \Omega$, and ${\rm tr}(u)$ indicates the trace of $u$ at $\partial \Omega$, which  is well-defined for functions in $GSBD^2(\Omega)$, see \cite[Section 5]{DalMaso:13}.\\ The limiting linearized elastic energy for pairs $(u,E)$ as above is defined by
\begin{align}\label{def:contel}
\mathcal{F}^{\mathrm{el}}(u,E) := \frac{1}{2}\int_{\Omega\setminus E} Q_\mathrm{\mathrm{bulk}}(e(u)Z)\,\mathrm{d}x\,, 
\end{align}
where $e(u) := \mathrm{sym}(\nabla u)$ is the approximate symmetric gradient of $u$ and $Q_\mathrm{bulk}:= D^2W^{\mathrm{el}}_{\mathrm{bulk}}(Z)$, see Assumption (iii) in Subsection \ref{Assumptions on the elastic energy}. The limiting surface energy is given by
\begin{align}\label{def:contsurf}
\mathcal{F}^\mathrm{surf}(u,E) := \int_{\partial^* E\cap \Omega} \varphi(\nu_E)\,\mathrm{d}\mathcal{H}^2+\int_{J_u \setminus \partial^* E} 2\varphi(\nu_u)\,\mathrm{d}\mathcal{H}^2\,,
\end{align}
where $\nu_{u}$ is the approximate unit normal vector to $J_u$. The total limiting energy is defined by
\begin{equation}\label{def:F}
\mathcal{F}_0\EEE(u,E) := \mathcal{F}\EEE^\mathrm{el}(u,E) +\mathcal{F}\EEE^\mathrm{surf}(u, E)+\mathcal{F}^{\mathrm{\partial_D}}(u,E)\,,
\end{equation}
if $\mathcal{H}^2(\partial^{*}E\cap \Omega)<+\infty,u = \chi_{\Omega \setminus E} u \in GSBD^2(\Omega)$, and $\mathcal{F}_0(u,E):=+\infty$ otherwise.

In order to formulate the main result of this paper, we proceed with the definition of convergence for sequences of discrete displacements and void subsets of lattice points. To that end, given $E\subset \mathbb{Z}_{\varepsilon}(\Omega)$ and a discrete displacement $u \in \mathcal{U}_\varepsilon(u_0,\partial_D\Omega,E)$,  we denote by $\bar u$ 
the piecewise constant interpolation of $u$, being equal to $\delta_\eps^{-1}(\bar y(\hat{i}) - {\rm id})$  on  $Q_\varepsilon(\hat{i})$ for each $i \in \eps\Z^3$, where $\bar y(\hat{i})$ is defined in \eqref{discrete_gradient_def}. 
For the next definition, we denote by $L^0(\Omega;\R^3)$ the space of measurable maps in $\Omega$ (equipped with the topology of convergence in measure) and we \EEE recall also the notation introduced in \eqref{def:Aeps}.

%
	
%

\begin{definition}(Discrete-to-continuum convergence)\label{def:convergence} We say that a sequence 
$(u_\varepsilon,E_\varepsilon)_{\varepsilon>0}$ with $E_\varepsilon \subset\mathbb{Z}_\varepsilon(\Omega)$ and $u_\varepsilon \in \mathcal{U}_\varepsilon(u_0,\partial_D\Omega,E_\eps)$ converges to a pair $(u,E)\in L^0(\Omega;\mathbb{R}^{3}) \times \mathfrak{M}(\Omega)$ 
\EEE as $\varepsilon\to 0$ in the d-sense, and we write $(u_\varepsilon,E_\varepsilon) \overset{\d}{\to} (u,E)$, if there exists a set of finite perimeter $\omega_u\in \mathfrak{M}(\Omega)$ such that  $\chi_{Q_\varepsilon(E_\varepsilon)\cap \Omega} \to \chi_E \text{ in }L^1(\Omega)$, $\bar u_\varepsilon \to u$ in measure on $\Omega \setminus \omega_u$, and $u\equiv 0$ on $E \cup \omega_u$.
\end{definition}

Note that this convergence of $(\chi_{Q_\varepsilon(E_\varepsilon)\cap \Omega},\bar u_\varepsilon)_{\varepsilon>0}$ is referred to as $\tau$-convergence in \cite{KFZ:2021}, see the paragraph below \cite[Proposition 3.1]{KFZ:2021}. There, also the necessity of the set $\omega_u$ is discussed, which here is related to the fact that $\Z_\eps(U\EEE
)$ might be disconnected into connected components by $E_\eps$ (in a  graph-theoretical  sense), and on the 
components \EEE not intersecting the Dirichlet boundary, the behavior of the displacement cannot be controlled. \EEE 

After these preparations, we now present the main results of the paper.

\begin{theorem}[Compactness] \label{theorem:compactness} Let $(u_\varepsilon,E_\varepsilon)_{\varepsilon>0}$, where $E_\varepsilon\subset\mathbb{Z}_{\varepsilon}(\Omega)$ and $u_\varepsilon\in \mathcal{U}_\varepsilon(u_0,\partial_D\Omega,E_\eps)$, be such that
\begin{align*}
\sup_{\varepsilon >0} \mathcal{F}_\varepsilon(u_\varepsilon\EEE,E_\varepsilon) <+\infty\,.
\end{align*} 
(i) Then, there exists a subsequence (not relabeled) converging to a pair $(u,E)$ in the 
${\rm d}$-sense, \EEE where $u\in GSBD^2(\Omega)$ and $E\in \mathfrak{M}(\Omega)$ is a set of finite perimeter. \\
(ii) Let $\omega_u$ be the set of finite perimeter in Definition~\ref{def:convergence}.  \EEE There exist sets of finite perimeter $(V_\eps)_{\eps>0}\subset \mathfrak{M}({\R}^3)$ 
with $Q_\eps(E_\varepsilon)\cap \Omega \subset V_\eps
$, such that\\[-10pt] 
\begin{equation}\label{ineq:small difference in measures}
\lim_{\varepsilon \to 0 }\mathcal{L}^3(V_\eps\triangle (E\cup \omega_u))=0\,,
\end{equation}
and 
\begin{equation}\label{symmetric_Cauchy-Born}
\chi_{\Omega \setminus V_\eps}
\bar{e}(u_\varepsilon) \rightharpoonup \chi_{\Omega \setminus (E \cup \omega_u)} e(u)Z  \ \ \text{weakly in $L^2_{\rm loc}(\Omega;\R^{3\times 8})$}\,.
\end{equation}
\end{theorem}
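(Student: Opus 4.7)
The plan is to reduce the discrete compactness problem to the continuum compactness statement \cite[Proposition 3.1]{KFZ:2021} through a careful two-step modification of the void set at the mesoscale $\eta_\varepsilon$, and then to invoke the discrete symmetric Cauchy-Born rule for $GSBD^2$ (the content of Section~\ref{sec: 4}) to identify the weak limit in \eqref{symmetric_Cauchy-Born}. First, I would associate to $E_\varepsilon$ its cubic interpolation $Q_\varepsilon(E_\varepsilon)$, whose relative perimeter in $\Omega$ is controlled by $F_\varepsilon^{\mathrm{per}}(E_\varepsilon)$. Using the energy bound together with assumption (ii) on $W_{\varepsilon,\mathrm{cell}}^{\mathrm{curv}}$, the number of mesoscopic cells $Q_{\eta_\varepsilon}(i)$ in which $E_\varepsilon$ fails to be locally flat is at most of order $\gamma_\varepsilon^{-1} \eta_\varepsilon^{q-2}$, which, thanks to \eqref{eq:rate gammadelta}, is small enough to be negligible in perimeter. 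I would then apply Lemmata \ref{lemma : eta lattice}--\ref{lemma:eta scale smoothening} to replace $Q_\varepsilon(E_\varepsilon)$ first by a union $E_{\eta_\varepsilon}$ of $\eta_\varepsilon$-cubes, and then by a smooth set $\hat E_\varepsilon$ with $Q_\varepsilon(E_\varepsilon)\cap\Omega \subset \hat E_\varepsilon$ up to a vanishing-measure discrepancy, sharp bounds $\mathcal H^2(\partial \hat E_\varepsilon \cap \Omega) \le C F_\varepsilon^{\mathrm{per}}(E_\varepsilon) + o(1)$, and an $L^q$-bound on the second fundamental form of $\partial \hat E_\varepsilon$ in terms of $F_\varepsilon^{\mathrm{curv}}(E_\varepsilon)/\gamma_\varepsilon$, which matches the continuum curvature scaling of \cite{KFZ:2021}.

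Next, with the smoothened sets $\hat E_\varepsilon$ at hand and $\bar u_\varepsilon$ the piecewise constant interpolation, the elastic energy bound, combined with \eqref{definition_elastic_cell_energy} and the quadratic growth assumption (iv) on $W^{\mathrm{el}}_{\mathrm{bulk}}$, produces the quantitative closeness of $\overline\nabla y_\varepsilon$ to $\bar{SO}(3)$ needed to feed \cite[Proposition 3.1]{KFZ:2021}. Applying that proposition yields, up to a subsequence, a set of finite perimeter $E\in \mathfrak M(\Omega)$ with $\chi_{\hat E_\varepsilon \cap \Omega}\to \chi_E$ in $L^1$, an exceptional set of finite perimeter $\omega_u$ collecting the components of $\Omega\setminus\hat E_\varepsilon$ detached from the Dirichlet boundary, and a limit $u\in GSBD^2(\Omega)$ with $\bar u_\varepsilon \to u$ in measure on $\Omega\setminus\omega_u$ and $u\equiv 0$ on $E\cup\omega_u$. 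Together with $\chi_{Q_\varepsilon(E_\varepsilon)\cap\Omega}-\chi_{\hat E_\varepsilon \cap \Omega}\to 0$ in $L^1$ from the previous step, this already proves (i). For (ii), the natural candidate is $V_\varepsilon := \hat E_\varepsilon \cap \Omega$ (enlarged by the components producing $\omega_u$), which by construction contains $Q_\varepsilon(E_\varepsilon)\cap\Omega$ and satisfies \eqref{ineq:small difference in measures}.

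It remains to identify the weak limit of $\chi_{\Omega\setminus V_\varepsilon}\bar e(u_\varepsilon)$. The rigidity estimate underlying \cite[Proposition 3.1]{KFZ:2021} provides, on each connected component of $\Omega\setminus\hat E_\varepsilon$ of non-negligible size, a rotation $R_\varepsilon$ such that $\overline\nabla y_\varepsilon$ is close in $L^2$ to $R_\varepsilon Z$; linearizing around $R_\varepsilon$, projecting onto the orthogonal complement of $\mathbb R^{3\times 8}_{\mathrm{s,t}}$ via $P$, and using coercivity of $Q_{\mathrm{bulk}}$ from assumption (iii), one obtains an $L^2_{\mathrm{loc}}$-bound on $\chi_{\Omega\setminus V_\varepsilon}\bar e(u_\varepsilon)$. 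The identification of the weak limit with $\chi_{\Omega\setminus(E\cup\omega_u)} e(u)Z$ is exactly the discrete symmetric Cauchy-Born rule for $GSBD^2$-displacements developed in Section~\ref{sec: 4}, which I will invoke as a black box here.

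The genuinely hard part is the Cauchy-Born identification in the last step, since the Sobolev-based argument of \cite{Schmidt:2009} does not transfer directly to $GSBD^2$: as outlined in the Introduction, one needs to first approximate the limit $u$ by Sobolev functions using blow-up techniques in the spirit of \cite{Conti-Focardi-Iurlano:15,newvito,Crismale-Friedrich-Solombrino,FriSolPer} adapted to the discrete mesh, and only then pass to the limit in the projection $P(\overline\nabla u_\varepsilon)$. A second technical obstacle is the bookkeeping in the modification Lemmata \ref{lemma : eta lattice}--\ref{lemma:eta scale smoothening}, where the scaling relations \eqref{eq: ratio} and \eqref{eq:rate gammadelta} must be used sharply so that the smoothing procedure neither inflates the surface contribution beyond $F_\varepsilon^{\mathrm{per}}(E_\varepsilon)+o(1)$ nor destroys the $L^q$-curvature bound required to apply the rigidity estimate of \cite[Theorem 2.1]{KFZ:2021}.
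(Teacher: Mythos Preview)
Your outline matches the paper's strategy: smoothen $E_\varepsilon$ via Lemmata~\ref{lemma : eta lattice}--\ref{lemma:eta scale smoothening}, apply the continuum compactness \cite[Proposition~3.1]{KFZ:2021}, and then invoke the Cauchy--Born rule of Section~\ref{sec: 4} for part~(ii). Two technical points, however, are off.

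First, to feed into \cite[Proposition~3.1]{KFZ:2021} you must work with the \emph{piecewise affine} interpolation $\tilde u_\varepsilon$, which lies in $H^1(U\setminus\hat E_\varepsilon;\R^3)$, not the piecewise constant $\bar u_\varepsilon$; the paper passes the distance bound $\mathrm{dist}(\mathrm{Id}+\delta_\varepsilon\nabla\tilde u_\varepsilon,SO(3))$ through \cite[Lemmata~3.6--3.7]{Schmidt:2009} and only afterwards uses that measure convergence of $\tilde u_\varepsilon$ and $\bar u_\varepsilon$ are equivalent.

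Second, your route to the $L^2_{\mathrm{loc}}$-bound on $\chi_{\Omega\setminus V_\varepsilon}\bar e(u_\varepsilon)$ via rigidity plus coercivity of $Q_{\mathrm{bulk}}$ is not what the paper does and would need care (the per-component rotation $R_\varepsilon$ need not be close to $\mathrm{Id}$). Instead, \cite[Proposition~3.1]{KFZ:2021} already outputs the weak $L^2$-convergence of $\chi_{\Omega\setminus W_\varepsilon} e(\tilde u_\varepsilon)$ with $W_\varepsilon=(E_\varepsilon^*\cup\hat\omega_u^\varepsilon\cup\hat\omega_u)\cap\Omega$; this is hypothesis~(H.3) of Theorem~\ref{discrete_Cauchy_Born_any_d}, and the discrete bound is then obtained inside Lemma~\ref{weak_limit_for_discrete_multies} by applying Korn's inequality on each $\varepsilon$-cube together with the pointwise equivalence~\eqref{eq: equivalence gradients}. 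The sets $V_\varepsilon$ are likewise produced inside that lemma as an $\varepsilon$-cubic modification of $W_\varepsilon$, rather than being taken as $\hat E_\varepsilon\cap\Omega$ directly.
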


The property stated in (ii) above \EEE can be understood as a manifestation of the \emph{Cauchy-Born rule for symmetrized gradients}: for a given linear macroscopic deformation gradient $F$, the corresponding symmetrized discrete gradient is given by ${\rm sym}(F)Z$.  Note that this rule does not 
enter as an assumption, but is rather  a consequence of our analysis.

\begin{theorem}[Discrete-to-continuum $\Gamma$-convergence]\label{Main G-convergence theorem}
Under the above assumptions, the sequence of functionals $(\mathcal{F}_\varepsilon)_{\varepsilon>0}$  $\Gamma$-converges to $\mathcal{F}_0$ as $\varepsilon\to 0$, with respect to the convergence $\mathrm{d}$.   
\end{theorem}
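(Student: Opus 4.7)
The plan is to prove the $\Gamma$-liminf and $\Gamma$-limsup inequalities separately, both via the semi-discrete bridge outlined in the introduction: to a discrete pair $(u_\eps, E_\eps)$ one associates the piecewise constant interpolation $\bar u_\eps$ and the cube set $Q_\eps(E_\eps)$, then modifies to the $\eta_\eps$-scale to obtain $E_{\eta_\eps}$ and finally smoothens it to $\hat E_\eps$ as constructed in Lemmata \ref{lemma : eta lattice}--\ref{lemma:eta scale smoothening}. The conditions \eqref{eq:rate gammadelta} are tailored precisely so that the discrete curvature energy $F_\eps^{\rm curv}$ pays for these modifications while the perimeter estimate remains sharp, and the continuum rigidity result \cite[Theorem 2.1]{KFZ:2021} applies on each connected component of $\Omega \setminus \hat E_\eps$.

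For the $\Gamma$-liminf, starting from $(u_\eps, E_\eps) \overset{\rm d}{\to} (u, E)$ with uniformly bounded energy, Theorem \ref{theorem:compactness} delivers $u \in GSBD^2(\Omega)$ together with the weak Cauchy--Born convergence $\chi_{\Omega \setminus V_\eps}\,\bar e(u_\eps) \rightharpoonup \chi_{\Omega \setminus (E \cup \omega_u)}\, e(u)\,Z$. A Taylor expansion of $W^{\rm el}_{\rm bulk}$ around $Z$, together with the positive-definiteness of $Q_{\rm bulk}$ on the complement of $\R^{3\times 8}_{\rm s,t}$ and the quadratic growth (iv), gives the elastic lower bound in the spirit of \cite{Schmidt:2009}, via weak $L^2$-lower semicontinuity of the nonnegative quadratic form $F \mapsto \int Q_{\rm bulk}(F)\, \mathrm d x$. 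For the surface part, the discrete perimeter $F_\eps^{\rm per}$ bounds the anisotropic cubic perimeter of $Q_\eps(E_\eps)$; after the $\eta_\eps$-scale modification (which is energetically cheap thanks to $F_\eps^{\rm curv}$), one obtains a lower bound by $\int_{\partial^* \hat E_\eps} \varphi(\nu_{\hat E_\eps})\, \mathrm d\mathcal H^2$, and passing to the limit with correct bookkeeping of collapsed void layers and boundary mismatches yields $\mathcal F^{\rm surf}(u,E) + \mathcal F^{\partial_D}(u,E)$.

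For the $\Gamma$-limsup, I would first reduce by a density argument (in the spirit of \cite{Crismale} and \cite{Crismale-Friedrich-Solombrino}) to pairs $(u, E)$ with $E \in \mathcal A_{\rm reg}(\Omega)$, $u$ smooth on $\Omega \setminus \overline E$ (and on both sides of a smooth jump set when present), and compatible with $u_0$ near $\partial_D \Omega$. For such smooth data, the recovery sequence is built by direct discretization: take $E_\eps$ to be a cubical modification of $\eps\mathbb Z^3 \cap E$ at the $\eta_\eps$-scale, and $u_\eps$ the nodal sampling of $u$, duplicated across $J_u$ so that the doubled jump is realized at the discrete level. Taylor expansion and dominated convergence give $F_\eps^{\rm el} \to \mathcal F^{\rm el}(u, E)$; classical convergence of anisotropic pair-interaction energies with weights $c_\xi$ yields $F_\eps^{\rm per} \to \int_{\partial^* E \cap \Omega} \varphi(\nu_E)\, \mathrm d\mathcal H^2$ together with the doubled jump and boundary contributions; and the second condition in \eqref{eq:rate gammadelta}, $\gamma_\eps \eta_\eps^{1-q} \to 0$, forces $F_\eps^{\rm curv} \to 0$, since for a smooth $E$ only the corner cubes in the $\eta_\eps$-cubification contribute, and their number is $O(\eta_\eps^{-1})$ with cost $\gamma_\eps \eta_\eps^{2-q}$ each.

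The main obstacle will be the $\Gamma$-liminf inequality for the surface energy, specifically extracting the factor $2$ in $\int_{J_u \setminus \partial^* E} 2\varphi(\nu_u)\, \mathrm d\mathcal H^2$ and the corresponding $2\varphi(\nu_\Omega)\,\mathcal H^2 \mres \{{\rm tr}(u) \neq {\rm tr}(u_0)\}$ in $\mathcal F^{\partial_D}$. Families of thin voids $E_\eps$ may collapse into a zero-volume crack of $u$ in the limit, and the argument must show that each atom of such a collapsed layer contributes twice to the anisotropic perimeter -- once from each side -- and that this doubling survives the mesoscopic smoothening, the $L^1$-passage to the continuum, and the subsequent limit to the jump set of the $GSBD^2$ displacement. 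A related delicate point is the coupling between interior voids and boundary mismatches through the auxiliary set $U$ at $\partial_D \Omega$; handling both simultaneously will rely crucially on Theorem \ref{theorem:compactness} and on the continuum relaxation analysis carried out in \cite{Crismale,KFZ:2021}.
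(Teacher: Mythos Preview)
Your proposal is essentially correct and follows the paper's strategy closely: the $\Gamma$-liminf is split into an elastic part (via Taylor expansion of $W^{\rm el}_{\rm bulk}$ around $Z$, the Cauchy--Born convergence of Theorem \ref{theorem:compactness}(ii), and weak lower semicontinuity of $Q_{\rm bulk}$) and a surface part (via the smoothening Lemmata \ref{lemma : eta lattice}--\ref{lemma:eta scale smoothening} and the continuum lower bound from \cite[Remark 3.9]{KFZ:2021}, which indeed delivers the factor $2$ and the boundary term simultaneously); the $\Gamma$-limsup goes through density and $\eta_\eps$-cubification, with $\gamma_\eps\eta_\eps^{1-q}\to 0$ killing the curvature.

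One point where the paper streamlines your plan: in the limsup, the density result invoked (\cite[Theorem 2.2]{Crismale}) already absorbs $J_u$ into $\partial E_k$, producing approximants with $u_k\in W^{1,\infty}(\Omega\setminus\overline{E_k};\R^3)$ and \emph{no} residual jump set. Hence there is nothing to ``duplicate across $J_u$'' at the discrete level; one only needs to recover a smooth void set and a Lipschitz displacement. Your alternative of keeping a smooth $J_u$ and inserting a thin $\eta_\eps$-void layer around it is workable but strictly more laborious. Relatedly, your appeal to ``classical convergence of anisotropic pair-interaction energies'' is slightly misleading: the density $\varphi$ in \eqref{def:varphi} is \emph{not} the usual cell-formula relaxation of the discrete perimeter (the curvature term prevents microscopic oscillation), so the paper inserts an explicit intermediate step reducing to polyhedral $E$ with coordinate normals, after which the identity $F_\eps^{\rm per}(E_\eps)\le {\rm Per}_\varphi(Q_\eps(E_\eps))$ and $\limsup {\rm Per}_\varphi$ of the cubification $\le {\rm Per}_\varphi(E)$ are verified by direct projection arguments.
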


The \AAA proofs \EEE of the above theorems will be given 
in the next sections. Concerning the elastic energy, we will follow the strategy devised in \cite{Schmidt:2009}, combined with the compactness result in \cite[Proposition 3.1]{KFZ:2021}, which uses the novel rigidity estimate in \cite[Theorem 2.1]{KFZ:2021}. 
A further ingredient is the Cauchy-Born rule for symmetrized discrete gradients, see Section \ref{sec: 4}. \EEE For the surface part, our idea relies on replacing the discrete void set by a continuum representative, and then using the result in the continuum version, i.e., \cite[Theorem 3.2]{KFZ:2021}. 

Finally, we want to stress that the form \eqref{def:varphi} of the surface energy density is justified 
by \EEE the curvature energy term: the latter ensures that energetically convenient configurations will locally be only along coordinate directions. Thus, no microscopic relaxation takes place and it suffices to calculate the energy density for half-spaces with coordinate vectors as outer unit normal, 
see the following figure.\EEE 

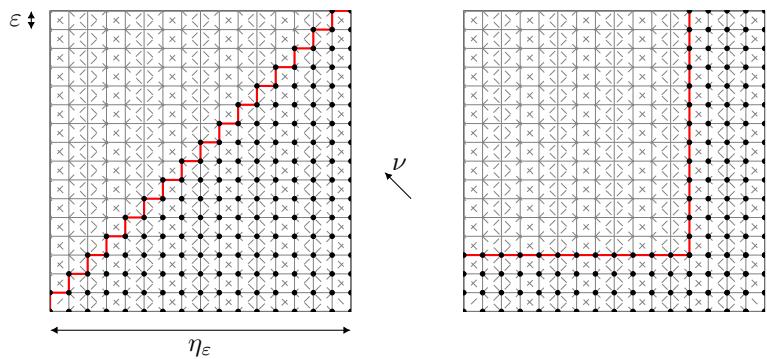
\begin{figure}[H]\label{lastfigure}
\begin{tikzpicture}[scale=.50]

\tikzset{>={Latex[width=1mm,length=1mm]}};
	
\draw[->](9.6,3)--++(135:1);

\draw(10,2.8)++(135:1) node[anchor=south]{$\nu$};
	
\draw[<->](-.5,7.5)--++(0,.5);

\draw(-.5,7.5)++(0,.25 ) node[anchor=east]{$\varepsilon$};

\draw(0,0)--++(8,0)--++(0,8)--++(-8,0)--++(0,-8);
	
\draw[<->](0,-.5)--++(8,0);
	
\draw(4,-.5) node[anchor=north]{$\eta_\varepsilon$};
	
\begin{scope}
	
\clip(0,0)--++(8,0)--++(0,8)--++(-8,0)--++(0,-8);
	
\foreach \j in {-16,...,16}{
\draw[gray,ultra thin,dashed] (-1,.5*\j)--++(18,18);
		
\draw[gray,ultra thin,dashed] (9,.5*\j)--++(-18,18);
}
	
\foreach \j in {0,...,16}{
		
\draw[gray,ultra thin] (-1,.5*\j)--++(9,0);
		
\draw[gray,ultra thin] (.5*\j,.-2)--++(0,11);
}
	
\foreach \j in {-1,...,17}{
\draw[red, thick](.5*\j,.5*\j+.5) --++(.5,0)--++(0,.5);
}
	
\foreach \j in {-1,...,17}{
		
\foreach \i in {-1,...,17}{
\draw[fill=black](.5*\j+.5*\i,.5*\j) circle(.06);
}
		
}
\end{scope}

\begin{scope}[shift={(11,0)}]


%
%
	
\draw(0,0)--++(8,0)--++(0,8)--++(-8,0)--++(0,-8);

\begin{scope}
	
\clip(0,0)--++(8,0)--++(0,8)--++(-8,0)--++(0,-8);
	
\foreach \j in {-16,...,16}{
\draw[gray,ultra thin,dashed] (-1,.5*\j)--++(18,18);
		
\draw[gray,ultra thin,dashed] (9,.5*\j)--++(-18,18);
}
	
\foreach \j in {0,...,16}{
		
\draw[gray,ultra thin] (-1,.5*\j)--++(9,0);
		
\draw[gray,ultra thin] (.5*\j,.-2)--++(0,11);
}
	
\draw[red, thick](0,1.5)--++(6,0)--++(0,9);
	
\foreach \j in {-1,...,3}{
		
\foreach \i in {-1,...,17}{
\draw[fill=black](.5*\i,.5*\j) circle(.06);
\draw[fill=black](6.5+.5*\j,.5*\i) circle(.06);
}
		
}
\end{scope}
	
\end{scope}
	
\end{tikzpicture}
\caption{Two microscopic approximations of an interface with normal $\nu=(-1/\sqrt{2},1/\sqrt{2},0)$ to illustrate that no microscopic relaxation takes place. The one on the left, although in general having less perimeter energy at the $\eps$-level compared to the right, \EEE has high curvature energy and therefore is not energetically convenient in our model\EEE. Note that for specific interaction ranges $V$, for \EEE the one on the  right,  the interactions in direction $\xi=(1,1,0)$ have a positive contribution to the energy, while for the one on the  left  they do not. The sidelength of the large cubes is $\eta_\varepsilon$, so that the configuration on the right is cubic inside the cube.} 
\end{figure}


\subsection{Example}\label{example:EAM}

We close this section with an example of a model with nearest-neighbors and next-nearest-neighbors  atomic interactions  to which our main  results  apply. For the elastic cell energy, given $K_1,K_2 >0$, we consider the bulk cell energy
$${W_{\rm bulk}^{\rm el}(F) := \frac{1}{8} \sum_{|z_i-z_j| = 1} \frac{K_1}{2} (|F_i-F_j|-1   )^2 + \frac{1}{4} \sum_{|z_i-z_j| = \sqrt{2}} \frac{K_2}{2} (|F_i-F_j|-\sqrt{2}   )^2  + \chi(F)}  $$
for each $F \in \R^{3\times 8}$, where $F_i := Fz_i$ for $i=1,\ldots,8$. The nonnegative term $\chi$ is supposed to be nonzero only for discrete gradients which are not locally orientation preserving. In particular, $\chi$ is zero in a neighborhood  of $\bar{SO}(3) = SO(3)Z$ and larger than some $c>0$ on $O(3)Z \setminus SO(3)Z$. The surface cell energy $W_{\rm surf}^{\rm el}$ is chosen appropriately. In \cite[Subsection 5.3]{Schmidt:2009}, it is discussed that this cell energy satisfies all assumptions given in Subsection \ref{Assumptions on the elastic energy}. 

For the discrete perimeter energy, we consider $V = \lbrace \pm e_k\colon k=1,2,3\rbrace \cup \lbrace \pm e_k \pm e_l \colon 1 \le k < l \le 3 \rbrace$ and choose any $c_\xi = c_{-\xi} 
>0 \EEE$ for $\xi \in V$. The sum of the elastic and the perimeter energy can be understood as an idealization of interactions of Lennard-Jones-type, cf.~\cite{BraLewOrt}. Finally, we show that we can choose a curvature cell energy in the framework of the \textit{Embedded Atom Model} \EEE (EAM) satisfying  the conditions (i)-(iii) below \eqref{def: curvature energy}.

Atomic positions induce electronic-cloud distributions. In 
the \EEE EAM, this is modeled by a  multi-body interaction term of the following form: for every  $E\subset \varepsilon\mathbb{Z}^d$, $d \in \N$, let  
\begin{align}\label{eq:EAM}
\Phi_\varepsilon(E):= \sum_{i \in \varepsilon\mathbb{Z}^d \setminus  E} \Psi_\varepsilon(\overline{\rho}_i)\,,\text{ where } \overline{\rho}_i\AAA:\EEE= \sum_{j \in \varepsilon\mathbb{Z}^d\setminus E }\rho\left(\frac{|j-i|}{\varepsilon}\right)\,.
\end{align}
Here,  $ \rho \colon \R_+ \to \R_+$ models the {\it electron-cloud contribution} of an atom placed at $j$ on an atom placed at
$i$.  The sum  $\overline \rho_i$ describes the  cumulative  effect on the atom placed at
$i$ of the electronic clouds  related to all other atoms.  Eventually,
the function $\Psi_\varepsilon \colon\R_+ \to\R_+$ describes the energy needed
to embed an atom at position $i$ in the host electron gas
created  by the other atoms. \EEE For the sake of simplifying the exposition, we prefer to consider an example for $d=2$ and  for $\Omega = \R^2$. 
We consider  
\begin{align*}
\rho(r) :=\begin{cases} 10 &\text{if } r=1\,,\\
1 &\text{if } r=\sqrt{2}\,,\\
0 &\text{otherwise}.
\end{cases}
\end{align*}
\EEE
With this particular choice, which corresponds to a two-dimensional EAM where only the cardinality of the nearest-neighbors and next-nearest-neighbors \EEE are taken into account, \EEE we observe that we can find $G_\varepsilon \colon\mathbb{N}\times \mathbb{N} \to \mathbb{R}$ such that
\begin{align*}
\Phi_\varepsilon(E) = \sum_{i \in \varepsilon\mathbb{Z}^2 \setminus E} G_\varepsilon(\#\mathcal{N}_\varepsilon(i),\#\mathcal{NN}_\varepsilon(i))\,,
\end{align*} 
where
\begin{align*}
\mathcal{N}_\varepsilon(i):=\{ j \in \varepsilon\mathbb{Z}^2\setminus E\colon |j-i|=\varepsilon\}\,, \quad 
\mathcal{NN}_\varepsilon(i)\EEE:=\{ j \in \varepsilon\mathbb{Z}^2\setminus E \colon |j-i|=\sqrt{2}\varepsilon\}\,.
\end{align*}
In fact, with our choice of $\rho$ we have that $\overline{\rho}_i \in \{k+ 10 j \colon j= \#\mathcal{N}_\varepsilon(i), k= \#\mathcal{NN}_\varepsilon(i) \}$ and $\#\mathcal{N}_\varepsilon(i),\#\mathcal{NN}_\varepsilon(i) \leq 4$. Hence, there is a one-to-one correspondence 
between $G_\varepsilon$ and $\Psi_\varepsilon$. 
We now choose 
\begin{align}\label{exdef:Geps}
G_\varepsilon(n_1,n_2):= \begin{cases} 0 &\text{if } (n_1,n_2) \in \{(3,2),(4,4)\}\,;\\
\gamma_{\varepsilon}\eta_\varepsilon^{1-q} &\text{otherwise}\,,
\end{cases}
\end{align}
and define 
\begin{align*}
F_\varepsilon^{\mathrm{curv}}(E):= \sum_{i \in \varepsilon\mathbb{Z}^2} \varepsilon^2 W_{\varepsilon,\mathrm{cell}}^\mathrm{curv}(i,E)\,,
\end{align*}
where 
\begin{align}\label{exdef:Wcellcurv} 
W_{\varepsilon,\mathrm{cell}}^\mathrm{curv}(i,E):= \sum_{j \in \mathbb{Z}_\varepsilon(Q_{\eta_\varepsilon}(i))\setminus E} 
\eta^{-2}_\varepsilon \, G_\varepsilon(\#\mathcal{N}_\varepsilon(j),\#\mathcal{NN}_\varepsilon(j))\,.
\end{align}
Then, \EEE recalling that $\eta_\varepsilon / \varepsilon\in \mathbb{N}$, by changing the order of summation, since for fixed $j \in \varepsilon\mathbb{Z}^2\setminus E$ there holds $\#\{i \in \varepsilon\mathbb{Z}^2 \colon j \in Q_{\eta_\eps}(i)\}=\eta_\varepsilon^2\varepsilon^{-2}$, 
we have
\begin{align*}
F_\varepsilon^{\mathrm{curv}}(E)=\sum_{i \in \varepsilon\mathbb{Z}^2} \varepsilon^2 W_{\varepsilon,\mathrm{cell}}^\mathrm{curv}(i,E) &= \sum_{i \in \varepsilon\mathbb{Z}^2} \varepsilon^2\sum_{j \in \mathbb{Z}_\varepsilon(
Q_{\eta_\varepsilon}(i))\setminus E}
\eta^{-2}_\varepsilon \, G_\varepsilon(\#\mathcal{N}_\varepsilon(j),\#\mathcal{NN}_\varepsilon(j)) \\&= \sum_{j \in \varepsilon\mathbb{Z}^2\setminus E}G_\varepsilon(\#\mathcal{N}_\varepsilon(j),\#\mathcal{NN}_\varepsilon(j)) =\Phi_\varepsilon(E)\,.
\end{align*}
This shows that 
$W_{\varepsilon,\mathrm{cell}}^\mathrm{curv}$ can be chosen such that $F_\varepsilon^{\mathrm{curv}}$ coincides with $\Phi_\eps$ given in \eqref{eq:EAM}\EEE. \EEE  

\begin{lemma} 
In this example, the analogous conditions {\rm (i)}, {\rm(iii)} for $d=2$ given below \eqref{def: curvature energy} are satisfied, and {\rm(ii)} holds for $Q_{\eta_{\varepsilon}-4\varepsilon}(i)$ in place of $Q_{\eta_{\varepsilon}}(i)$, cf.\ Remark \ref{rem: diffassu}. \EEE
\end{lemma}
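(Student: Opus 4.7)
The plan is to verify the $d=2$ analogues of the three conditions on $W^{\mathrm{curv}}_{\eps,\mathrm{cell}}$ stated below \eqref{def: curvature energy} (with (ii) weakened as in Remark \ref{rem: diffassu}) by direct inspection of the definition of $G_\eps$ in \eqref{exdef:Geps}. The guiding observation is that $G_\eps(n_1,n_2)=0$ precisely for $(n_1,n_2)\in\{(3,2),(4,4)\}$, which are exactly the counts $(\#\mathcal{N}_\eps(j),\#\mathcal{NN}_\eps(j))$ of a non-void lattice point $j$ sitting on the edge of, or in the bulk of, a halfspace-shaped lattice set; every other $(n_1,n_2)$ returns the value $\gamma_\eps\eta_\eps^{1-q}$. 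Each of the three conditions therefore reduces to a geometric count of which pairs $(\#\mathcal{N}_\eps(j),\#\mathcal{NN}_\eps(j))$ arise for the non-void lattice points $j\in\mathbb{Z}_\eps(Q_{\eta_\eps}(i))\setminus E$.

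For condition (i), I would combine the halfspace characterization of flat sets in $Q_{\eta_\eps}(i)$ with the observation that the local flatness of Definition \ref{locally_flat_configurations} extends the halfspace description $2\eps$ beyond $Q_{\eta_\eps}(i)$: every pair $(j, j+\eps\xi)$ with $j\in Q_{\eta_\eps}(i)$ and $|\xi|_\infty\le 1$ sits inside a common cube $Q_{2\eps}(\hat k)\in\hat{\mathcal{Q}}_{\eps,2}$ intersecting $Q_{\eta_\eps}(i)$, so the $E$-status of every neighbor of every $j\in Q_{\eta_\eps}(i)$ is dictated by the halfspace. An elementary bulk/edge case split then gives $(\#\mathcal{N}_\eps(j),\#\mathcal{NN}_\eps(j))\in\{(4,4),(3,2)\}$, whence $W^{\mathrm{curv}}_{\eps,\mathrm{cell}}(i,E)=0$. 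For (iii), a cubic $E$ inside $Q_{\eta_\eps}(i)$ is a union of at most $2^2=4$ shifted $\eta_\eps$-cubes and the lattice points $j$ with $G_\eps(j)\neq 0$ lie within a universally bounded neighborhood of the finitely many corners of those cubes; since $G_\eps\le \gamma_\eps\eta_\eps^{1-q}$, this yields $W^{\mathrm{curv}}_{\eps,\mathrm{cell}}(i,E)\le C\eta_\eps^{-2}\gamma_\eps\eta_\eps^{1-q}=C\gamma_\eps\eta_\eps^{-1-q}$.

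For (ii) with $Q_{\eta_\eps-4\eps}(i)$ in place of $Q_{\eta_\eps}(i)$, the $4\eps$-buffer is chosen precisely so that any non-flat $Q=Q_{2\eps}(\hat k)\in\hat{\mathcal{Q}}_{\eps,2}$ intersecting $Q_{\eta_\eps-4\eps}(i)$, together with the nearest- and next-nearest-neighbor shell of all its lattice points, still sits inside $Q_{\eta_\eps}(i)$. The aim is to produce one non-void lattice point $j$ in this shell with $(\#\mathcal{N}_\eps(j),\#\mathcal{NN}_\eps(j))\notin\{(3,2),(4,4)\}$, which contributes $\eta_\eps^{-2}\gamma_\eps\eta_\eps^{1-q}=\gamma_\eps\eta_\eps^{-1-q}$ to $W^{\mathrm{curv}}_{\eps,\mathrm{cell}}(i,E)$. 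For the non-flat patterns ``diagonal-pair void'' and ``isolated non-void'' in $Q$ such a $j$ can be taken directly in $Q$: the chosen $j\in Q\setminus E$ has two void nearest neighbors inside $Q$, so $\#\mathcal{N}_\eps(j)\le 2$, automatically excluding both $(3,2)$ and $(4,4)$. The main obstacle is the remaining ``isolated-void'' pattern: I expect that here one can set up $E$ outside $Q$ so that all three non-void $j\in Q$ have flat pairs, and the resolution will be a short propagation argument showing that any such outside configuration must create voids in the $2\eps$-shell immediately outside $Q$, whence a non-void lattice point $j'$ in that shell receives too many void nearest neighbors and gets $\#\mathcal{N}_\eps(j')\le 2$, producing the bad pair inside $Q_{\eta_\eps}(i)$.
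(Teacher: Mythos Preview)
Your treatment of conditions (i) and (iii) matches the paper's argument essentially verbatim.

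For condition (ii), your strategy differs from the paper's. You organize the argument as a case split on the three non-flat $2\times 2$ patterns (diagonal pair, isolated non-void, isolated void), and for the first two you correctly exhibit a point $j\in Q\setminus E$ with $\#\mathcal{N}_\eps(j)\le 2$ directly inside $Q$. The paper instead argues by global contrapositive: assuming every $j\in\mathbb{Z}_\eps(Q_{\eta_\eps}(i))\setminus E$ has its pair in $\{(3,2),(4,4)\}$, it classifies the possible $(3,2)$ neighborhood arrangements (up to rotation/reflection there are the aligned ``halfspace-edge'' pattern and three non-aligned ones (a)--(c)), and for each non-aligned arrangement exhibits a nearby point with $\#\mathcal{N}_\eps\le 2$, contradicting the assumption. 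Once every $(3,2)$ is forced to be aligned, local flatness in $Q_{\eta_\eps-4\eps}(i)$ follows.

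The place where your approach is incomplete is precisely the isolated-void case, which you leave as a sketch. If you try to carry it out, you are led to look at $j=k+\eps(e_1+e_2)$: since $k$ is a void next-nearest neighbor of $j$, the pair cannot be $(4,4)$, and an easy check shows that whatever the void nearest neighbor of $j$ is, the resulting $(3,2)$ arrangement is \emph{not} of the aligned halfspace-edge form. At that point you need exactly the paper's (a)--(c) propagation to produce a nearby bad point, so your ``short propagation argument'' ends up being the paper's case analysis entered from a different door. The paper's organization buys uniformity: it never looks at the non-flat $2\times 2$ cube at all and handles all cases through the single classification of $(3,2)$ neighborhoods.
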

\begin{proof}
Condition {\rm (i)} follows from the fact that flat configurations inside a square $Q_{\eta_\varepsilon}(i)$ are locally half-spaces intersected with $\mathbb{Z}_{\varepsilon}(Q_{\eta_\varepsilon}(i))$ (see Picture~ \ref{laminate_picture}, as well as Lemma \ref{lemma : laminate}), which corresponds to neighborhood cardinality  $(3,2)$ (at the flat interface) or $(4,4)$ (inside the set). Condition {\rm (iii)} follows since, if $E$ is cubic in $Q_{\eta_\varepsilon}(i)$, then $$\#\{j \in \mathbb{Z}_{\varepsilon}(Q_{\eta_\varepsilon}(i))\setminus E\EEE\colon (\#\mathcal{N}_\varepsilon(j),\#\mathcal{NN}_\varepsilon(j)) \notin \{((3,2),(4,4)\}\}  \leq 4\EEE\,.$$ 
Indeed, if $E$ is cubic in $Q_{\eta_\varepsilon}(i)$, then
\begin{align*}
E \cap Q_{\eta_{ \varepsilon}}(i) = \mathbb{Z}_\varepsilon\left(\bigcup_{m=1}^n Q_{\eta_\varepsilon}(k_m)\right)\cap Q_{\eta_{ \varepsilon}}(i)\,,
\end{align*}
for some $\{k_1,\ldots,k_n\} \subset i_0 + 
\eta_{ \varepsilon} \mathbb{Z}^2$ 
and $i_0 \in \mathbb{Z}_{\varepsilon}(Q_{\eta_{ \varepsilon}}(i))$. Note that $n\leq 4$ 
and the atoms $j$ such that $(\#\mathcal{N}_\varepsilon(j),\#\mathcal{NN}_\varepsilon(j)) \notin \{((3,2),(4,4)\}$ can only occur at the corner of $Q_{\eta_\varepsilon}(k_{ m})$ inside $Q_{\eta_\varepsilon}(i)$.  
By \eqref{exdef:Geps} this shows (iii). \EEE

Finally, we show (ii) for $\eta_\eps-4\eps$ in place of $\eta_\eps$. To this end, in view of \eqref{exdef:Geps} and \EEE\eqref{exdef:Wcellcurv}, it suffices to show that, if $E$ is not locally flat in $Q_{\eta_\varepsilon-4\varepsilon}(i)$, then there exists $j \in \mathbb{Z}_\varepsilon(Q_{\eta_\varepsilon}(i))\setminus E$ with $(\#\mathcal{N}_\varepsilon(j),\#\mathcal{NN}_\varepsilon(j)) \notin \{(3,2),(4,4)\}$. By contradiction we assume that 
\begin{align}\label{eq: contradici}
(\#\mathcal{N}_\varepsilon(j),\#\mathcal{NN}_\varepsilon(j)) \in \{(3,2),(4,4)\} \quad \text{for all $j \in \mathbb{Z}_\varepsilon(Q_{\eta_\varepsilon}(i))\setminus E$}\,.
\end{align}
The main step of the proof is then to prove that  (up to a single rotation of a multiple of $\pi/2$  independently of $j$) 
\begin{align}\label{eq:possibleN}
(\mathcal{N}_\varepsilon(j),\mathcal{NN}_\varepsilon(j)) =\begin{cases}
(\varepsilon\{-e_1,e_1,e_2\},\varepsilon\{-e_1+e_2,e_1+e_2\}) &\text{if } (\#\mathcal{N}_\varepsilon(j),\#\mathcal{NN}_\varepsilon(j))=(3,2)\,,\\
(\varepsilon\{\pm e_1, \pm e_2\},\varepsilon\{\pm(e_1-e_2),\pm (e_1+e_2)\})&\text{if } (\#\mathcal{N}_\varepsilon(j),\#\mathcal{NN}_\varepsilon(j))=(4,4)
\end{cases}
\end{align}
for all $j \in \mathbb{Z}_\varepsilon(Q_{\eta_\varepsilon}(i))\setminus E$, see also Figure~\ref{fig:adm}, where the two possible configurations (up to rotation) are depicted. 
Indeed,  
\eqref{eq:possibleN} 
would then contradict the assumption that $E$ is not locally flat in $Q_{\eta_\eps-4\eps}(i)$. 
	
It  now remains to prove \eqref{eq:possibleN}. The case $(\#\mathcal{N}_\varepsilon(j),\#\mathcal{NN}_\varepsilon(j))=(4,4)$ is clear, so we only need to consider the case $(\#\mathcal{N}_\varepsilon(j),\#\mathcal{NN}_\varepsilon(j))=(3,2)$. Assume by contradiction that, even up to rotation,
\begin{align*}
(\mathcal{N}_\varepsilon(j),\mathcal{NN}_\varepsilon(j))\neq (\varepsilon\{-e_1,e_1, e_2\},\varepsilon\{-e_1+e_2,e_1+e_2\})\,,
\end{align*}
cf.~Figure \ref{fig:adm}. Then, up to rotation and reflection, there are three cases to consider, depicted in Figure~\ref{fig:contr}, where the argument that follows is illustrated:
\begin{itemize}
\item[(a)] $(\mathcal{N}_\varepsilon(j),\mathcal{NN}_\varepsilon(j))= (\varepsilon\{-e_1,e_1,-e_2\},\varepsilon\{-e_1+e_2,e_1+e_2\})$;
\item[(b)] $(\mathcal{N}_\varepsilon(j),\mathcal{NN}_\varepsilon(j))= (\varepsilon\{-e_1,e_1,-e_2\},\varepsilon\{-e_1+e_2,e_1-e_2\})$;
\item[(c)] $(\mathcal{N}_\varepsilon(j),\mathcal{NN}_\varepsilon(j))= (\varepsilon\{-e_1,e_1,-e_2\},\varepsilon\{-e_1+e_2,-e_1-e_2\})$.
\end{itemize}
Case (a)\EEE: It is easy to see that $\#\mathcal{N}_\varepsilon(j-\varepsilon e_2) \leq 2$. Hence,  $(\#\mathcal{N}_\varepsilon(j-\varepsilon e_2),\#\mathcal{NN}_\varepsilon(j-\varepsilon e_2)) \notin \{(3,2),(4,4)\}$. This contradicts \eqref{eq: contradici}. 
\\ 
Case (b)\EEE: Here, we see that  either  $(\#\mathcal{N}_\varepsilon(j-\varepsilon e_2),\#\mathcal{NN}_\varepsilon(j-\varepsilon e_2)) \notin (3,2)$ or, if $(\#\mathcal{N}_\varepsilon(j-\varepsilon e_2),\#\mathcal{NN}_\varepsilon(j-\varepsilon e_2)) = (3,2)$, then  $\#\mathcal{N}_\varepsilon(j-2\varepsilon e_2) \leq 2$. In both cases, we get a contradiction to \eqref{eq: contradici}. 
\\
Case (c)\EEE: Here, we see that   $\#\mathcal{N}_\varepsilon(j+\varepsilon e_1) \leq 2$ which again contradicts \eqref{eq: contradici}. This concludes the proof of \eqref{eq:possibleN} and of the lemma. \qedhere
	
 \EEE


\end{proof}


\begin{figure}[H]
	\begin{tikzpicture}
	
	
\begin{scope}[shift={(4,0)}]
\draw[ultra thin, gray](-1,1)--++(2,0);
\draw[ultra thin, gray](-1,1)--++(0,-1)--++(2,0)--++(0,1);
\draw[ultra thin, gray](0,0)--++(0,1);
\draw[fill=black](0,0) circle(.05);
\draw[fill=black](1,0) circle(.05);
\draw[fill=black](-1,0) circle(.05);
\draw[fill=black](1,1) circle(.05);
\draw[fill=black](-1,1) circle(.05);
\draw[fill=black](0,1) circle(.05);

\draw(0,0) node[anchor=north]{$j$};

\end{scope}
	
\begin{scope}[shift={(8,0)}]
	
\draw[ultra thin, gray](-1,-1)--++(2,0);
\draw[ultra thin, gray](-1,0)--++(0,-1)--++(2,0)--++(0,1);
\draw[ultra thin, gray](0,0)--++(0,-1);
	
\draw[ultra thin, gray](-1,1)--++(2,0);
\draw[ultra thin, gray](-1,1)--++(0,-1)--++(2,0)--++(0,1);
\draw[ultra thin, gray](0,0)--++(0,1);
\draw[fill=black](0,0) circle(.05);
\draw[fill=black](1,0) circle(.05);
\draw[fill=black](-1,0) circle(.05);
\draw[fill=black](1,1) circle(.05);
\draw[fill=black](-1,1) circle(.05);
\draw[fill=black](0,1) circle(.05);
\draw[fill=black](1,-1) circle(.05);
\draw[fill=black](-1,-1) circle(.05);
\draw[fill=black](0,-1) circle(.05);

\draw(0,0) node[anchor=north]{$j$};

\end{scope}

\end{tikzpicture}
\caption{The two different neighborhoods for atoms $j \in Q_{\eta_{\varepsilon}}(i)\setminus E$ such that $W_{\varepsilon,\mathrm{cell}}^\mathrm{curv}(i,E)=0$.} 
\label{fig:adm}
\end{figure}
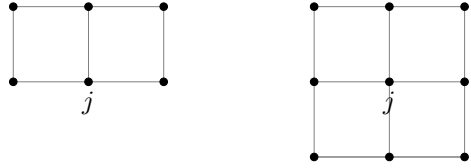

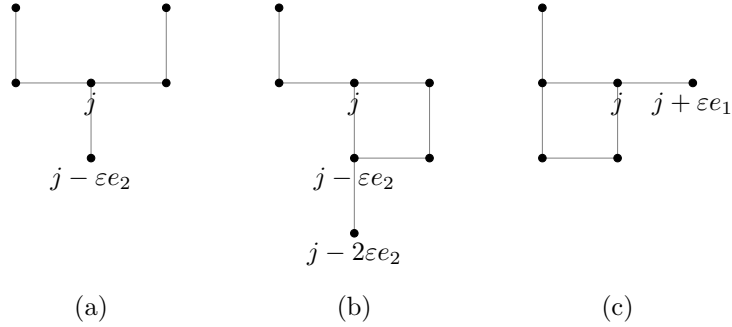
\begin{figure}[H]
\begin{tikzpicture}

\begin{scope}[shift={(2.5,0)}]
\draw[ultra thin, gray](-1,1)--++(0,-1)--++(1,0)--++(1,0)--++(0,1);
\draw[ultra thin, gray](0,0)--++(0,-1);
	
\draw[fill=black](0,0) circle(.05);
\draw[fill=black](1,0) circle(.05);
\draw[fill=black](-1,0) circle(.05);
\draw[fill=black](0,-1) circle(.05);
\draw[fill=black](1,1) circle(.05);
\draw[fill=black](-1,1) circle(.05);
\draw(0,0) node[anchor=north]{$j$};
\draw(0,-1) node[anchor=north]{$j-\varepsilon e_2$};
\draw(0,-3) node{(a)};

\end{scope}

\begin{scope}[shift={(6,0)}]
\draw[ultra thin, gray](-1,1)--++(0,-1)--++(1,0)--++(1,0)--++(0,-1);
\draw[ultra thin, gray](0,0)--++(0,-1)--++(1,0);
\draw[ultra thin, gray](0,-1)--++(0,-1);
	
\draw[fill=black](0,0) circle(.05);
\draw[fill=black](1,0) circle(.05);
\draw[fill=black](-1,0) circle(.05);
\draw[fill=black](0,-1) circle(.05);
\draw[fill=black](0,-2) circle(.05);
\draw[fill=black](1,-1) circle(.05);
\draw[fill=black](-1,1) circle(.05);
\draw(0,0) node[anchor=north]{$j$};
\draw(0,-1) node[anchor=north]{$j-\varepsilon e_2$};
\draw(0,-2) node[anchor=north]{$j-2\varepsilon e_2$};
\draw(0,-3) node{(b)};

\end{scope}
	
\begin{scope}[shift={(9.5,0)}]
\draw[ultra thin, gray](-1,1)--++(0,-1)--++(1,0)--++(1,0);
\draw[ultra thin, gray](0,0)--++(0,-1)--++(-1,0)--++(0,1);
	
\draw[fill=black](0,0) circle(.05);
\draw[fill=black](1,0) circle(.05);
\draw[fill=black](-1,0) circle(.05);
\draw[fill=black](0,-1) circle(.05);

\draw[fill=black](-1,-1) circle(.05);
\draw[fill=black](-1,1) circle(.05);
\draw(0,0) node[anchor=north]{$j$};
\draw(1,0) node[anchor=north]{$j+\varepsilon e_1$};
	
\draw(0,-3) node{(c)};

\end{scope}
	
\end{tikzpicture}
\caption{The different cases of atoms $j$ having low energy but having an atom nearby with high energy, illustrating cases (a)-(c) above, respectively. 
}
\label{fig:contr}
\end{figure}



\EEE

\section{Compactness}\label{sec: 3}

The goal of this section is to prove Theorem \ref{theorem:compactness}(i). Before we come to the actual proof, we present some auxiliary lemmata which will be essential for the following. Let $\eta_\eps>0$ be the mesoscale  introduced in Subsection \ref{subsec:surface_energy}, which  satisfies \eqref{eq: ratio} and \eqref{eq:rate gammadelta}. Let $\Omega\subset U\EEE$ be bounded open  Lipschitz  sets  in $\mathbb{R}^3$,  as introduced before Definition \ref{boundary values}, and recall also Remark \ref{U_independence}. \EEE

\begin{definition}\label{def: lam}  Let $A\in \mathfrak{M}(U\EEE)$ and let $E\subset \mathbb{Z}_{\varepsilon}(\Omega)$. We say that $E$ is a \textit{simple coordinate laminate} in $A$ if there exists $\nu \in \{e_1,e_2,e_3\}$ and a function $h \colon \varepsilon\mathbb{Z} \to \{0,1\}$ such that  for every $i\in \mathbb{Z}_{\varepsilon}(U\EEE)$, 
\begin{align*}
\chi_{\mathbb{Z}_\varepsilon(A)\cap E}(i) = h(  i \cdot \nu) \, \chi_{\mathbb{Z}_{\varepsilon}(A)\EEE}(i)\,.
\end{align*}
\end{definition}

\begin{lemma}[Laminates]\label{lemma : laminate}  Let $i\in \mathbb{Z}_{\varepsilon}(U\EEE)$ and $\eta\in [\eta_\eps,+\infty)$ be such that $Q_{\eta+ 4\varepsilon}(i)\subset U\EEE$.  
If $E \subset \mathbb{Z}_\varepsilon(\Omega)$ is locally  flat in $Q_{\eta}(i)$ in the sense of Definition~\ref{locally_flat_configurations}, then $E$ is a simple coordinate laminate in $Q_{\eta}(i)$.
\end{lemma}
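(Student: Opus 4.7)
The plan is to show that once a single $2\varepsilon$-cube in $\hat{\mathcal{Q}}_{\eps,2}$ intersecting $Q_\eta(i)$ exhibits a half-cube configuration of type~(iii) in Definition~\ref{locally_flat_configurations}, the coordinate axis normal to its separating face is inherited consistently by every other such $2\varepsilon$-cube covering $Q_\eta(i)$. The geometric input that makes this possible is that two cubes $Q_{2\varepsilon}(\hat k)$ and $Q_{2\varepsilon}(\hat{k'})$ with $k'-k = \pm\varepsilon e_j$ share exactly the four lattice points of a common face, and that the collection $\mathcal{C} := \{Q \in \hat{\mathcal{Q}}_{\eps,2} : Q \cap Q_\eta(i) \neq \emptyset\}$ is connected under this lattice adjacency, since $Q_\eta(i)$ is convex. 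The assumption $Q_{\eta + 4\varepsilon}(i) \subset U$ guarantees that every $Q \in \mathcal{C}$ is contained in $U$, so the local-flatness hypothesis applies to each.

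First I would dispose of the degenerate case in which every $Q \in \mathcal{C}$ is of type~(i) or~(ii): the values of $\chi_E$ on the four shared lattice points of any adjacent pair force both cubes to be of the same type, and propagating along the connected adjacency graph shows that $E \cap \mathbb{Z}_\varepsilon(Q_\eta(i))$ is either all of $\mathbb{Z}_\varepsilon(Q_\eta(i)) \cap U$ or empty, a trivial laminate compatible with any $\nu \in \{e_1, e_2, e_3\}$.

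Assuming instead that some $Q^* = Q_{2\varepsilon}(\hat{k^*}) \in \mathcal{C}$ is of type~(iii), I would associate to it the coordinate direction $\nu^* \in \{e_1, e_2, e_3\}$ perpendicular to its separating face, and prove by induction along paths in the adjacency graph that every type-(iii) cube in $\mathcal{C}$ has normal axis parallel to $\nu^*$. The inductive step is a case analysis on the direction of adjacency: if $Q'$ is obtained from a type-(iii) cube $Q$ with normal parallel to $\nu^*$ by shifting along a direction $\pm e_j$ perpendicular to $\nu^*$, then exactly two of the four shared lattice points belong to $E$, and this $2$--$2$ split on a face of $Q'$ orthogonal to $e_j$ is accommodated only by the two half-cube patterns of $Q'$ whose normal is parallel to $\nu^*$, so $Q'$ is forced to be of type~(iii) with the same normal axis. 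If instead the shift is along $\pm \nu^*$, then the four shared points are either all in $E$ or all not, so $Q'$ can be of type~(i), (ii), or again of type~(iii) with normal parallel to $\nu^*$, but in any case no new normal axis is introduced.

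With this structural result in hand, I would finish by defining $h : \varepsilon \mathbb{Z} \to \{0, 1\}$ by $h(t) := 1$ if some lattice point $j \in \mathbb{Z}_\varepsilon(Q_\eta(i)) \cap U$ with $j \cdot \nu^* = t$ lies in $E$, and $h(t) := 0$ otherwise. The slab-by-slab consistency established above ensures that $h$ is well-defined and that $\chi_{\mathbb{Z}_\varepsilon(Q_\eta(i)) \cap E}(i) = h(i \cdot \nu^*) \, \chi_{\mathbb{Z}_\varepsilon(Q_\eta(i))}(i)$, exactly as required by Definition~\ref{def: lam}. The main obstacle will be the perpendicular-adjacency subcase of the inductive step: one must exclude that $Q'$ could adopt a half-cube with a different normal axis despite matching the $2$--$2$ split on the shared face, and this is handled by enumerating the six admissible half-cube faces in~(iii) and checking that only the two with normal parallel to $\nu^*$ produce the required split on a face orthogonal to $e_j$.
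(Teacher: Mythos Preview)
Your overall strategy matches the paper's: show that the normal direction of any type-(iii) cube is globally constant, then read off the laminate structure slab by slab. Your two local observations are correct and are precisely the paper's Step~1: shifting a type-(iii) cube $Q$ with normal $\nu^*$ in a direction perpendicular to $\nu^*$ forces the neighbor to be type-(iii) with the same normal (so the whole lattice plane through $Q^*$ perpendicular to $\nu^*$ consists of such cubes), while shifting along $\nu^*$ yields (i), (ii), or (iii)-with-$\nu^*$.

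The gap is in the \emph{global organization}: induction along paths in the adjacency graph does not prove that every type-(iii) cube has normal $\nu^*$. Your inductive step applies only when the current cube on the path is itself type-(iii) with normal $\nu^*$. As soon as the path enters a type-(i) or type-(ii) cube, the normal information is lost. A further step from a type-(i) cube in a direction $e_j\perp\nu^*$ shares a face whose four points are all in $E$; the only type-(iii) neighbor compatible with this has normal $e_j$, not $\nu^*$. So path induction by itself cannot exclude a type-(iii) cube with a different normal sitting behind a region of type-(i)/(ii) cubes, and your phrase ``no new normal axis is introduced'' is true only for cubes \emph{adjacent to} a (iii)-with-$\nu^*$ cube, not globally.

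The paper closes this differently (its Step~2): assume by contradiction there is a second type-(iii) cube $Q^{**}$ with normal $\nu^{**}\neq\nu^*$. Apply the perpendicular-propagation observation to $Q^{**}$ as well, filling the lattice plane through $Q^{**}$ perpendicular to $\nu^{**}$ with (iii)-with-$\nu^{**}$ cubes. Since $\nu^*$ and $\nu^{**}$ are distinct coordinate directions, these two lattice planes intersect inside $\mathcal{C}$, and the cube at the intersection would carry both normals, which is impossible. Once the single-normal property is secured this way, your final step (defining $h$ and checking slab consistency via the trichotomy on each $2\varepsilon$-cube) goes through and is essentially the paper's Step~3.
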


\begin{proof}  We divide the proof into three steps.\\
\begin{step}{1}(Laminate structure of the ``interface cubes'')
Assume that $E\cap Q_{\eta}(i) \neq \emptyset$, since otherwise we can choose $h\equiv0$ and $\nu=e_1$. 
Let $Q := Q_{2\varepsilon}(\hat{k}) \in \hat{\mathcal{Q}}_{\eps,2}$ (cf. \eqref{def: Qeps}) be such that $Q_\eta(i)\cap Q \neq \emptyset$, $E\cap Q \neq \emptyset$, and $E\cap Q \neq \mathbb{Z}_\varepsilon(Q)$. Then, by  Definition~\ref{locally_flat_configurations}, up to rotation, we may assume that 
\begin{align}\label{eq:k}
E\cap Q = \{k, k+\varepsilon e_1, k+\varepsilon e_2,k+\varepsilon (e_1+e_2)\}\,.
\end{align}
We show that for all $q:= k  +  k_1\varepsilon e_1 + k_2\varepsilon e_2$, $k_1,k_2 \in \mathbb{Z}$, such that $Q_{\eta}(i)\cap Q_{2\varepsilon}(\hat{q})\neq\emptyset$, we have that
\begin{align}\label{eq: q}
E\cap Q_{2\varepsilon}(\hat{q})= \{q, q+\varepsilon e_1, q+\varepsilon e_2,q+\varepsilon (e_1+e_2)\}\,.
\end{align}
We prove this property inductively. First, assume that for $l:= k +\varepsilon e_1$ it holds $Q_\eta(i)\cap Q_{2\varepsilon}(\hat{l}) \neq\emptyset$.
We claim that 
\begin{align}\label{eq: l}
E\cap Q_{2\varepsilon}(\hat{l})= \{l, l+\varepsilon e_1, l+\varepsilon e_2,l+\varepsilon (e_1+e_2)\}\,.
\end{align}
Indeed, assume by contradiction that this was  not the case. Then, since $E$ is locally flat in $Q_{\eta}(i)$ and $\{l,l+\varepsilon e_2\}\subset E\cap Q_{2\varepsilon}(\hat{l})$ by \eqref{eq:k}, one of the following three possibilities holds true: 
\begin{align*}
&\mathrm{(a)}\quad \, E\cap Q_{2\varepsilon}(\hat{l}) = \mathbb{Z}_\varepsilon(Q_{2\varepsilon}(\hat{l}))\,;\  \mathrm{or}\ \\
&\mathrm{(b)}\quad \, E\cap Q_{2\varepsilon}(\hat{l}) = \{l, l+\varepsilon e_2, l+\varepsilon e_3,l+\varepsilon (e_{2}+e_{3})\}\,;\  \mathrm{or}\ \\
&\mathrm{(c)}\quad \, E\cap Q_{2\varepsilon}(\hat{l}) = \{l, l+\varepsilon e_1, l+\varepsilon e_2,l+\varepsilon (e_1+e_2)\}\,.
\end{align*}
However, due to the local flatness of $E$ in $Q_{\eta}(i)$ and \eqref{eq:k}, (a) and (b) are not possible, since in both of these cases the local flatness of $E$ with respect to $Q$ would be violated. Therefore, (c) applies, i.e., \eqref{eq: l} holds true. The same argument shows that for $m:= k+ \varepsilon e_2$, if $Q_{\eta}(i)\cap Q_{2\varepsilon}(\hat m)\neq\emptyset$, then
\begin{align*}
E\cap Q_{2\varepsilon}(\hat{m})=\{m, m+\varepsilon e_1, m+\varepsilon e_2,m+\varepsilon (e_1+e_2)\}\,.
\end{align*}
Arguing inductively in exactly the same manner, we indeed get \eqref{eq: q}.
\end{step}\\
\begin{step}{2}(One normal to the interface) Next, we show  
the following, \EEE up to a rotation of a multiple of $\pi/2$ with respect to an axis passing through a point in the shifted lattice $\varepsilon(\mathbb{Z}^3+(\tfrac{1}{2},\tfrac{1}{2},\tfrac{1}{2}))$ in the direction of one of the coordinate vectors: 
for \EEE all $n \in \varepsilon\mathbb{Z}^3$ such that for ${\hat Q\EEE} := Q_{2\varepsilon}(\hat{n})$ 
satisfying \EEE $Q_{\eta}(i)\cap \hat Q\EEE\neq \emptyset$, $E\cap {\hat Q\EEE}\neq \emptyset$, and $E\cap\hat Q\EEE \neq \mathbb{Z}_\varepsilon(\hat Q\EEE)$, we have that 
\begin{align}\label{eq:n}
E\cap\hat Q\EEE= \{n, n+\varepsilon e_1, n+\varepsilon e_2,n+\varepsilon (e_1+e_2)\}\,.
\end{align}
Without restriction we assume  by  contradiction that for $j=1,2$ there exist cubes $ \hat Q\EEE^j := Q_{2\varepsilon}(\hat{n}^j)$  with $Q_{\AAA\eta\EEE}(i)\cap \hat Q\EEE^j \neq \emptyset$, $E\cap \hat Q\EEE^j\neq \emptyset$, $E\cap\hat Q\EEE^j  \neq \mathbb{Z}_\varepsilon(\hat Q\EEE^j)$, such that 
\begin{align*}
E\cap \hat Q\EEE^1& = \{n^1, n^1+\varepsilon e_1, n^1+\varepsilon e_2,n^1+\varepsilon (e_1+e_2)\}\,, \\ E\cap \hat Q\EEE^2 & = \{n^2, n^2+\varepsilon e_1, n^2+\varepsilon e_3,n^2+\varepsilon (e_1+e_3)\}\,.
\end{align*}
 By Step 1, we get   for all $p^1:=n^1+ k_1\varepsilon e_1 + k_2\varepsilon e_2$, $k_1,k_2 \in \mathbb{Z}$, and all $p^2:=n^2+ m_1\varepsilon e_1 + m_2\varepsilon e_3$, $m_1,m_2 \in \mathbb{Z}$, such that $Q_{\eta}(i)\cap Q_{2\varepsilon}(\hat{p}^j)\neq\emptyset$ for $j=1,2$, respectively, that
\begin{align}\label{eq: p}
\begin{split}
E\cap Q_{2\varepsilon}(\hat{p}^1)&= \{p^1, p^1+\varepsilon e_1, p^1+\varepsilon e_2,p^1+\varepsilon (e_1+e_2)\}\,,  \\ E\cap Q_{2\varepsilon}(\hat{p}^2) &= \{p^2, p^2+\varepsilon e_1, p^2+\varepsilon e_3,p^2+\varepsilon (e_1+e_3)\}\,.
\end{split}
\end{align}
Now choose $q_0 \in \{ n^1+k_1\varepsilon e_1+k_2\varepsilon e_2, \ k_1, k_2 \in \mathbb{Z}\} \cap \{n^2+m_1\varepsilon e_1 + m_2\varepsilon e_3,\ m_1,m_2 \in \mathbb{Z}\}$ such that $q_0 \in \mathbb{Z}_\varepsilon(Q_{\eta}(i))$. Then \eqref{eq: p} yields  a contradiction. Therefore, \eqref{eq:n} must hold true.
\end{step}\\
\begin{step}{3}(Conclusions) We claim that (up to a possible rotation of a multiple of $\pi/2$ as before) 
\begin{align}\label{eq:conclusion} 
k \in E\cap Q_{\eta}(i)\implies \{k +\varepsilon \mathbb{Z}^2 \times\{0\}\}\cap Q_{\eta}(i) \subset E\cap Q_{\eta}(i)\,.
\end{align}
Indeed, if $k\in E\cap Q_{\eta}(i)$ is such that $E\cap  Q_{2\varepsilon}(\hat{k}) \neq \mathbb{Z}_\varepsilon(Q_{2\varepsilon}(\hat{k}))$, then  due to Step 2 and the fact that $E$ is locally flat in $Q_{\eta}(i)$, we have (up to rotation) \eqref{eq:k}. In this case, due to Step 1, \eqref{eq:conclusion} holds true. On the other hand, if $k\in  E\cap Q_{\eta}(i)$ is such that $E\cap  Q_{2\varepsilon}(\hat{k}) = \mathbb{Z}_\varepsilon(Q_{2\varepsilon}(\hat{k}))$, then again  due to Step 2 and the  fact that $E$ is locally flat in $Q_{\eta}(i)$, we have that $E\cap  Q_{2\varepsilon}(\hat{m}) = \mathbb{Z}_\varepsilon(Q_{2\varepsilon}(\hat{m}))$ for all $m \in\{k +\varepsilon \mathbb{Z}^2 \times\{0\}\} \cap Q_{\eta}(i)$. This finishes the proof of the lemma. 
\end{step}
\end{proof}


\begin{definition} Given $E \subset \mathbb{Z}_{\varepsilon}(\Omega)$, we define the \emph{$\eta_{\varepsilon}$-scale replacement} $E_{\eta_{\varepsilon}}$ of $E$  by
\begin{align}\label{def: Eeta}
E_{\eta_{\varepsilon}} := \bigcup_{i \in \eta_{ \varepsilon\EEE}\mathbb{Z}^3} \{ \mathbb{Z}_\varepsilon(Q_{\eta_{\varepsilon}}(i)) \colon E\cap Q_{\eta_{\varepsilon\EEE}}(i) \neq \emptyset\}\cap \Omega \EEE\,.
\end{align}
\end{definition}

By definition, we have $E \subset E_{\eta_\varepsilon}$.  In what follows, we will use the following fact several times: given bounded, open \EEE Lipschitz sets \EEE 
$\tilde U\EEE\subset \subset U\EEE$ in $\mathbb{R}^3$, there exists $\varepsilon_0 = \varepsilon_0(\tilde U\EEE, U\EEE)\in (0,1)$ sufficiently small so that for all $\varepsilon\in(0,\varepsilon_0]$,
\begin{equation}\label{locality of the argument}
\tilde U\EEE\subset\subset \bigcup_{Q_{\eta_{\varepsilon}}(i) \in \mathcal{Q}_{\eta_{\varepsilon},\tilde U\EEE}} Q_{4\eta_{\varepsilon}}(i)\subset\subset U\EEE\,,
\end{equation}
where $\mathcal{Q}_{\eta_{\varepsilon},\tilde U\EEE}:=\lbrace Q_{\eta_{\varepsilon}}(i) \colon i \in \eta_{\varepsilon}\mathbb{Z}^3,\ Q_{\eta_\varepsilon}(i)\cap\tilde U\EEE\neq \emptyset \rbrace$. This follows from \eqref{eq: ratio}. 
As a first step towards the proof of the compactness result, we replace a set $E\subset \mathbb{Z}_{\varepsilon}(\Omega)$ by a set that essentially can only vary its shape at the mesoscale $\eta_{\varepsilon}$. This is formulated in a local version in the next lemma.

\begin{lemma}[$\eta_{\varepsilon}$-scale replacement]\label{lemma : eta lattice}  There exists a constant  $C= C(\{c_\xi\}_{\xi \in V})>0$ such that for every $E \subset \mathbb{Z}_\varepsilon(\Omega)$, the set $E_{\eta_{\varepsilon}} \subset \mathbb{Z}_{\varepsilon}(\Omega)$ given by \eqref{def: Eeta} satisfies the following. For every open subset $\tilde U\subset \subset U$ with Lipschitz boundary, there exists $\varepsilon_0 = \varepsilon_0(\tilde U, U)\in (0,1)$, so that for every  $\varepsilon\in (0,\varepsilon_0]$ the following three properties are satisfied:\EEE
\begin{itemize}
\item[$\mathrm{(i)}$] The discrete perimeter energy of $E_{\eta_{\varepsilon}}
$, see  \eqref{def: discrete perimeter}, satisfies the estimate
\begin{align}\label{lemma ineq: eta lattice 1}
F_{\varepsilon}^\mathrm{per}(E_{\eta_{\varepsilon}},\tilde U\EEE) \leq F_\varepsilon^\mathrm{per}(E) + \omega(\varepsilon) F_\varepsilon^\mathrm{curv}(E)\,,
\end{align}
where $\omega(\varepsilon) \to 0$ as $\varepsilon\to 0$.\EEE
\item[$\mathrm{(ii)}$] The discrete curvature energy of $E_{\eta_{\varepsilon}}
$, see \eqref{def: curvature energy}, satisfies the estimate 
\begin{align}\label{lemma ineq: eta lattice 2}
F_{\varepsilon}^\mathrm{curv}(E_{\eta_{\varepsilon}},\tilde U\EEE) \leq CF_\varepsilon^\mathrm{curv}(E)\,.\EEE
\end{align}
\item[$\mathrm{(iii)}$] The cardinality of $(E_{\eta_{ \varepsilon}}
\setminus E)\cap\tilde U\EEE$ satisfies the estimate
\begin{align}\label{lemma ineq: eta lattice 3}
\#((E_{\eta_{\varepsilon}}
\setminus E)\cap \tilde U\EEE) \leq  \EEE C\eta_{\varepsilon}\EEE\varepsilon^{-3}\left( F_\varepsilon^\mathrm{per}(E) +F_\varepsilon^\mathrm{curv}(E)  \right)\,.
\end{align}
\end{itemize}
\end{lemma}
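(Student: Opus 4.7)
The plan is to handle the three claims in the order (iii), (ii), (i), exploiting the mesoscale partition $\{Q_{\eta_\varepsilon}(i_0)\colon i_0\in \eta_\varepsilon\Z^3\}$ of $\tilde U$ and classifying each partition cube as \emph{empty} if $E\cap Q=\emptyset$, \emph{full} if $E\cap Q=\mathbb{Z}_\varepsilon(Q)$, or \emph{partial} otherwise. Only partial cubes contribute atoms to $E_{\eta_\varepsilon}\setminus E$, each one contributing at most $(\eta_\varepsilon/\varepsilon)^3$ atoms. To prove \eqref{lemma ineq: eta lattice 3}, I would count partial cubes via a dichotomy: if $E$ is not locally flat in $Q_{\eta_\varepsilon}(i_0)$, then by assumption (ii) of Subsection~\ref{subsec:surface_energy} the lower bound $W^{\mathrm{curv}}_{\varepsilon,\mathrm{cell}}(i,E)\geq c\gamma_\varepsilon\eta_\varepsilon^{-1-q}$ holds not only at $i_0$ but at the $\sim(\eta_\varepsilon/\varepsilon)^3$ lattice points $i$ whose cube $Q_{\eta_\varepsilon}(i)$ contains the violating $2\varepsilon$-cube, contributing at least $c\gamma_\varepsilon\eta_\varepsilon^{2-q}$ to $F^{\mathrm{curv}}_\varepsilon(E)$; if instead $E$ is locally flat but partial, Lemma~\ref{lemma : laminate} identifies $E\cap Q_{\eta_\varepsilon}(i_0)$ as a non-trivial simple coordinate laminate whose microscopic interface costs $\geq c\eta_\varepsilon^2$ in $F^{\mathrm{per}}_\varepsilon(E)$. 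Summing and using $\gamma_\varepsilon\eta_\varepsilon^{-q}\to\infty$ from \eqref{eq:rate gammadelta} to bound $\gamma_\varepsilon^{-1}\eta_\varepsilon^{q-2}\leq\eta_\varepsilon^{-2}$ for $\varepsilon$ small, I obtain at most $C\eta_\varepsilon^{-2}(F^{\mathrm{per}}_\varepsilon(E)+F^{\mathrm{curv}}_\varepsilon(E))$ partial cubes in total, which after multiplication by $(\eta_\varepsilon/\varepsilon)^3$ yields \eqref{lemma ineq: eta lattice 3}.

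For \eqref{lemma ineq: eta lattice 2}, I would exploit that $E_{\eta_\varepsilon}$ is by construction cubic in every $Q_{\eta_\varepsilon}(i)$, so assumption (iii) provides the uniform upper bound $W^{\mathrm{curv}}_{\varepsilon,\mathrm{cell}}(i,E_{\eta_\varepsilon})\leq C\gamma_\varepsilon\eta_\varepsilon^{-1-q}$, which vanishes (by assumption (i)) whenever $E_{\eta_\varepsilon}$ is locally flat in $Q_{\eta_\varepsilon}(i)$. The estimate thus reduces to the geometric claim that whenever $E_{\eta_\varepsilon}$ fails to be locally flat at $Q_{\eta_\varepsilon}(i)$, there exists a $2\varepsilon$-cube at distance $O(\eta_\varepsilon)$ on which $E$ itself fails to be locally flat, and hence assumption (ii) gives a matching lower bound for $W^{\mathrm{curv}}_{\varepsilon,\mathrm{cell}}(\cdot,E)$ at $\sim(\eta_\varepsilon/\varepsilon)^3$ lattice points. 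This claim will be the first main technical obstacle: because the laminate of $E$ inside an ``in'' cube can be arbitrarily shallow, deep, or multi-slab, $E$ may well be locally flat at individual 2-cubes near a non-flat mesoscale corner. The resolution, which I would establish by case analysis on how a $2\varepsilon$-cube straddles an edge or a vertex of the mesoscale partition, is that any forbidden $E_{\eta_\varepsilon}$-membership pattern around a mesoscale corner (e.g.\ three in, five out in an L-shape) necessarily produces a forbidden $E$-pattern on the 2-cube located at the intersection of the laminate's topmost interface with the $\eta_\varepsilon$-cube corner; in the degenerate subcase that the laminate is absent (i.e.\ the ``in'' cube is completely full), the same 2-cube at the corner reproduces the forbidden $E_{\eta_\varepsilon}$-pattern directly.

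For \eqref{lemma ineq: eta lattice 1}, I would write $F^{\mathrm{per}}_\varepsilon(E_{\eta_\varepsilon},\tilde U)-F^{\mathrm{per}}_\varepsilon(E) \le \mathrm{NEW}-\mathrm{LOST}$, where $\mathrm{NEW}$ sums weighted pairs $(i,i+\varepsilon\xi)$ with $i\in E_{\eta_\varepsilon}\setminus E$ and $i+\varepsilon\xi\notin E_{\eta_\varepsilon}$ (newly visible interfaces at mesoscale faces) and $\mathrm{LOST}$ sums pairs with $i\in E$ and $i+\varepsilon\xi\in E_{\eta_\varepsilon}\setminus E$ (interior interfaces of $E$ now hidden inside $E_{\eta_\varepsilon}$-cubes). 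On a mesoscale cube $Q$ where $E$ is full, both terms vanish; on a $Q$ where $E$ is a half-space with normal parallel to an out-facing cube face, the interior interface compensates $\mathrm{NEW}$ exactly $\xi$-by-$\xi$. The hard part is the remaining case, where $E$ is a half-space in $Q$ with normal \emph{perpendicular} to an out-facing face; here $\mathrm{NEW}-\mathrm{LOST}$ can be of order $\eta_\varepsilon^2$, but inspection of the 2-cube at the intersection of the interior interface with the out-facing face reveals a forbidden $E$-pattern, and assumption (ii) then yields a curvature contribution of at least $c\gamma_\varepsilon\eta_\varepsilon^{2-q}$. Setting $\omega(\varepsilon):=C\eta_\varepsilon^q\gamma_\varepsilon^{-1}$, which tends to zero by the first rate in \eqref{eq:rate gammadelta}, absorbs the excess into $\omega(\varepsilon)F^{\mathrm{curv}}_\varepsilon(E)$; cubes where $E$ is not locally flat are handled identically. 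A bookkeeping argument based on the bounded-overlap estimate \eqref{ineq:Qintersectbound} then prevents overcounting of contributions shared between neighboring cubes and delivers \eqref{lemma ineq: eta lattice 1}.
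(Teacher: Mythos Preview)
Your strategy is essentially the same as the paper's: partition $\tilde U$ into $\eta_\varepsilon$-cubes, classify them according to whether $E$ is locally flat there, invoke the curvature lower bound for the non-flat cubes, and use a column-matching argument for the laminate cubes. Your count of partial cubes in (iii) and your use of $\omega(\varepsilon)=C\gamma_\varepsilon^{-1}\eta_\varepsilon^q$ in (i) match the paper exactly.

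The one substantive difference is how the good/bad classification is set up, and it affects precisely the two places you flag as ``main technical obstacles''. The paper does \emph{not} classify by local flatness of $E$ in $Q_{\eta_\varepsilon}(i_0)$ itself, but in the enlarged cube $Q_{\frac{3}{2}\eta_\varepsilon}(i_0)$ (and, for (ii), $Q_{\frac{5}{2}\eta_\varepsilon}(i)$). This buys two things. First, for (i), if $E$ is a laminate in $Q_{\frac{3}{2}\eta_\varepsilon}(i_0)$ with normal $e_3$, then so is $E_{\eta_\varepsilon}$, and in particular the only $\xi\in V$ contributing to $F_\varepsilon^{\mathrm{per}}(E_{\eta_\varepsilon},Q_{\eta_\varepsilon}(i_0))$ have $\xi_3\neq 0$; one then matches each boundary atom $j$ of $E_{\eta_\varepsilon}$ with the highest atom $k\in E$ in the same $e_3$-column and shows $F_\varepsilon^{\mathrm{per}}(E_{\eta_\varepsilon},Q_{\eta_\varepsilon}(i_0))\leq F_\varepsilon^{\mathrm{per}}(E,Q_{\eta_\varepsilon}(i_0))$ directly, without ever invoking curvature on good cubes. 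Your ``hard case'' (laminate normal perpendicular to an out-facing face) simply does not arise for good cubes under this definition. Second, for (ii), the paper avoids your case analysis entirely via the contrapositive: if $E$ is locally flat in $Q_{\frac{5}{2}\eta_\varepsilon}(i)$, then by Lemma~\ref{lemma : laminate} $E$ is a laminate there, hence $E_{\eta_\varepsilon}$ is flat in every $Q_{\eta_\varepsilon}(j)\subset Q_{2\eta_\varepsilon}(i)$, and the curvature cell energies vanish by assumption (i). So non-zero curvature of $E_{\eta_\varepsilon}$ forces $E$ to be non-locally-flat in the enlarged cube, and the matching lower bound follows immediately.

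Your approach would also work, but the enlarged-cube device replaces two pieces of geometric casework (your $2\varepsilon$-cube-at-corner analysis) by a single application of Lemma~\ref{lemma : laminate} on a slightly larger domain; this is worth adopting.
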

\begin{proof} 
In the proof, $C>0$ denotes a generic constant that possibly depends on $\{c_\xi\}_{\xi\in V}$ and may vary from line to line.  \EEE We will assume without loss of generality that $E_{\eta_\varepsilon}\cap\tilde U\EEE\neq \emptyset$, otherwise there is trivially nothing to prove. In the following, we suppose that $\varepsilon_0=\varepsilon_0(\tilde U, U\EEE)
\in (0,1)$ is chosen sufficiently small so that all our subsequent statements are valid for every $\varepsilon\in (0,\varepsilon_0]$.  In particular, we assume that \eqref{locality of the argument} holds true. We divide the proof into several steps. We first prove \eqref{lemma ineq: eta lattice 1} by distinguishing two different types of cubes (depending on the local geometry of $E$) and showing the estimate for each one of them separately (Step 1). Then we prove  \eqref{lemma ineq: eta lattice 2} (Step 2) and finally \eqref{lemma ineq: eta lattice 3} (Step 3). The two subfamilies of cubes in $\mathcal{Q}_{\eta_{\varepsilon},\tilde U\EEE}$ are defined as \begin{align}\label{def:good cubes}
\mathcal{Q}^{\mathrm{good}}_{\eta_{\varepsilon},\tilde U\EEE}=\left\{Q_{\eta_{\varepsilon}}(i) \in \mathcal{Q}_{\eta_{\varepsilon},\tilde U\EEE}\colon \, 
\text{$E$ simple coordinate laminate in $Q_{\frac{3}{2}\eta_\varepsilon}(i)$}\right\}\,, \  \mathcal{Q}^{\mathrm{bad}}_{\eta_{\varepsilon},\tilde U\EEE}= \mathcal{Q}_{\eta_{\varepsilon},\tilde U\EEE}\EEE \setminus\mathcal{Q}^{\mathrm{good}}_{\eta_{\varepsilon},\tilde U\EEE}\,.
\end{align}
\begin{step}{1}(Perimeter Estimate) In order to obtain estimate \eqref{lemma ineq: eta lattice 1}, we treat the good and bad cubes separately. We first start with the case of bad cubes.\\
\begin{step}{1.1}(Bad cubes) For $Q_{\eta_{\varepsilon}}(i) \in \mathcal{Q}^{\mathrm{bad}}_{\eta_{\varepsilon},\tilde U\EEE}$ we will  prove that 
\begin{align}\label{ineq:Step11}
F_\varepsilon^\mathrm{per}(E_{\eta_{\varepsilon}}
,Q_{\frac{3}{2}\eta_{\varepsilon}}(i))  \leq \omega(\varepsilon) F_\varepsilon^\mathrm{curv}(E,Q_{3\eta_{\varepsilon}}(i))\,,
\end{align}
for some $\omega(\varepsilon)\to 0$ as $\varepsilon \to 0$.

Indeed, in view of \eqref{locality of the argument}, for $Q_{\eta_{\varepsilon}}(i) \in \mathcal{Q}^{\mathrm{bad}}_{\eta_{\varepsilon},\tilde U\EEE}$  we have that $Q_{
\frac{3}{2}\EEE\eta_\varepsilon+4\varepsilon}(i)\subset U\EEE$ for all $\eps \in 
(0,\eps_0]\EEE$, given $\eps_0>0$ is small enough. (Here, recall that \EEE  $\eta_\eps/\eps \to +\infty$ as $\eps \to 0$ by \eqref{eq: ratio}.)  Then, the negation of Lemma \ref{lemma : laminate} shows that $E$ is not locally flat in  $Q_{\frac{3}{2}\eta_\varepsilon}(i)$. Thus, there exists $k\in \varepsilon\mathbb{Z}^3$ such that for $Q_{2\varepsilon}(\hat{k}) \in\hat{\mathcal{Q}}_{\eps,2}$, 
$Q_{\frac{3}{2}\eta_{\varepsilon}}(i)\cap Q_{2\varepsilon}(\hat k) \neq \emptyset$, and 
$E\cap Q_{2\varepsilon}(\hat{k})$ does not satisfy  any of the properties (i)-(iii) of Definition  \ref{locally_flat_configurations}. \EEE We actually have that $k\in \mathbb{Z}_{\varepsilon}\big(Q_{2 \eta_{\varepsilon}}(i)\big)$, as long as we take $\varepsilon_0>0$ small enough.
 
Now, for all $j\in \mathbb{Z}_{\varepsilon}\big(Q_{\frac{\eta_{\varepsilon}}{4}}(k)\big)$ we have that $Q_{2\varepsilon}(\hat{k}) \subset Q_{\frac{\eta_{\varepsilon}}{2}}(j)$ and therefore, due to assumption (ii) of the curvature energy \eqref{def: curvature energy} (see also Remark \ref{rem: diffassu}), we get 
\begin{align}\label{ineq: Wcellk}
W_{\varepsilon,\mathrm{cell}}^\mathrm{curv}(j,E) \geq c\gamma_{\varepsilon} \eta_{\varepsilon}^{-1-q}\,.
\end{align}
Notice that, since $j\in \mathbb{Z}_{\varepsilon}\big(Q_{\frac{\eta_{\varepsilon}}{4}}(k)\big)$ and $k\in \mathbb{Z}_{\varepsilon}\big(Q_{2\eta_{\varepsilon}}(i)\big)$, we get that $j\in \mathbb{Z}_{\varepsilon}\big(Q_{3\eta_{\varepsilon}}(i)\big)$. This, together with the simple fact that  
$\# \mathbb{Z}_\varepsilon(Q_{\frac{\eta_{\varepsilon}}{4}}(k)) \geq C\frac{\eta^3_{\varepsilon}}{\varepsilon^3}$ 
(for an absolute constant $C>0$)  and \eqref{ineq: Wcellk}, allows us to deduce that
\begin{align}\label{ineq:curv}
\begin{split}
F_\varepsilon^\mathrm{curv}\big(E,Q_{3 \eta_{\varepsilon}}(i)\big) &=\sum_{j\in \mathbb{Z}_{\varepsilon}(Q_{ 3\eta_{\varepsilon}}(i))}\varepsilon^3W^{\mathrm{curv}}_{\varepsilon,\mathrm{cell}}(j,E) \EEE \geq \sum_{j \in \mathbb{Z}_\varepsilon(Q_{\frac{\eta_{\varepsilon}}{4}}(k))} \varepsilon^3 W_{\varepsilon,\mathrm{cell}}^\mathrm{curv}(j,E)\\
&\geq \# \mathbb{Z}_\varepsilon(Q_{\frac{\eta_{\varepsilon}}{4}}(k))\varepsilon^3\cdot c\gamma_{\varepsilon}\eta_{\varepsilon}^{-1-q} \geq c\gamma_{\varepsilon}\eta_{\varepsilon}^{2-q}\,.
\end{split}
\end{align}
On the other hand, we claim that 
\begin{align}\label{ineq:surfacebadcubes}
\begin{split}
F_\varepsilon^\mathrm{per}(E
_{\eta_{\varepsilon}},Q_{\frac{3}{2}\eta_{\varepsilon}}(i)) \leq C\eta_{\varepsilon}^2\,.
\end{split}
\end{align}
Indeed, due to the assumptions on the set of neighbors $V$, see the beginning of Subsection \hyperref[subsec:surface_energy]{2.3}, we have $\#V \leq 27$ and therefore for $\varepsilon_0\in (0,1)$ sufficiently small, for every $\varepsilon\in (0,\varepsilon_0]$ (and hence also for $\eta_\varepsilon>0$ small accordingly) we can  estimate  
\begin{align*}
F_\varepsilon^\mathrm{per}(E_{\eta_{\varepsilon}}, Q_{\frac{3}{2}\eta_{\varepsilon}}(i)) &=\sum_{j\in \mathbb{Z}_{\varepsilon}(Q_{\frac{3}{2}\eta_{\varepsilon}}(i))\cap E_{\eta_{\varepsilon}}
}\underset{j+\varepsilon\xi \in U}{\sum_{\xi\in V}}\varepsilon^2c_{\xi}\big(1-\chi\EEE_{E_{\eta_{\varepsilon}}
}(j+\varepsilon\xi)\big)\EEE \\
& \leq 27\big(\max_{\xi \in V} c_\xi\big)\varepsilon^2\cdot \#\mathbb{Z}_\varepsilon( \BBB (\partial Q_{
\eta_{\varepsilon}}(i))_{2\eps} \EEE ) \leq C  \varepsilon^{-1} \mathcal{L}^3( \BBB (\partial Q_{
\eta_{\varepsilon}}(i))_{2\eps} \EEE) \leq C\eta^2_{ \varepsilon}\,,
\end{align*}
where we used the fact that $0<\varepsilon\leq 
\eta_\varepsilon$, and that by the construction of the set $E_{\eta_\varepsilon}$, missing neighbors can only occur at an $\varepsilon$-layer of atoms around $\partial Q_{\eta_\varepsilon}(i)$. The last constant $C>0$ depends on $\{c_\xi\}_{\xi\in V}$ here. \EEE This implies \eqref{ineq:surfacebadcubes}. Noting that, due to \eqref{eq:rate gammadelta}, we have that $\gamma_{\varepsilon} \eta_{\varepsilon}^{-q} \to +\infty$ as $\varepsilon\to 0$, \EEE and using \eqref{ineq:curv} together with \eqref{ineq:surfacebadcubes}, we obtain \eqref{ineq:Step11} for $\omega(\varepsilon):=C\gamma_{\varepsilon}^{-1}\eta_\varepsilon^q$.  This concludes Step 1.1.
\end{step} 
Next, we treat the case of good cubes.\\
\begin{step}{1.2}(Good cubes) For $Q_{\eta_{\varepsilon}}(i) \in\mathcal{Q}^{\mathrm{good}}_{\eta_{\varepsilon},\tilde U\EEE}$ we will prove that
\begin{align}\label{ineq:Step12}
F_\varepsilon^\mathrm{per}(E_{\eta_{\varepsilon}}, Q_{\eta_{\varepsilon}}(i)) \leq F_\varepsilon^\mathrm{per}(E,Q_{\eta_{\varepsilon\EEE}}(i))\,.
\end{align}
By \eqref{def: discrete perimeter} and \eqref{def: Eeta} we can assume without restriction that $E\cap Q_{\eta_{\varepsilon}}(i)
\neq \emptyset$. Additionally, due to the definitions in \eqref{def:good cubes}, $E$ is a simple coordinate laminate in $Q_{\frac{3}{2}\eta_{\varepsilon}}(i)$ with respect to a unit normal vector $\nu\in\{e_1,e_2,e_3\}$. We assume without loss of generality that $\nu=e_3$. Since $ E\cap Q_{\eta_{\varepsilon}}(i)\neq \emptyset$, we have that $E_{\eta_{\varepsilon}}
\cap Q_{\eta_{\varepsilon}}(i) = \mathbb{Z}_\varepsilon(Q_{\eta_{\varepsilon}}(i))$. As $E$ is a simple laminate in $Q_{\frac{3}{2}\eta_{\varepsilon}}(i)$ with respect to $e_{3}$, also $E_{\eta_\eps}$ is a simple laminate in $Q_{\frac{3}{2}\eta_{\varepsilon}}(i)$ with respect to $e_3$. Thus, we have that in the formula \eqref{def: discrete perimeter} for $F_{\varepsilon}^{\mathrm{per}}(E_{\eta_\varepsilon}, Q_{\eta_{\varepsilon}}(i))$, the addends $\xi\in V$ yielding a positive contribution satisfy 
$\xi_3\neq 0$\OOO. \EEE Without restriction we can  consider the case $\xi_3>0$. Let $j \in \mathbb{Z}_\varepsilon(Q_{\eta_{\varepsilon}}(i))$ and $\xi \in V$ be such that $j+\varepsilon\xi \notin E_{\eta_{\varepsilon}}
$, hence, 
$|(j+\varepsilon\xi)_3-i_3|=\frac{\eta_\varepsilon}{2}$.  Let now  $k \in  E\cap Q_{\eta_{\varepsilon}}(i)$ be the point with maximal $e_3$-coordinate among all other points in $E\cap Q_{\eta_\varepsilon}(i)$ in the same $e_3$-column as $j$, i.e., \EEE  $k_1=j_1,k_2=j_2$, and  $ m_3\leq k_3\leq j_3$ for all $m:=(j_1,j_2,m_3) \in  E\cap Q_{\eta_{\varepsilon}}(i)$. Now either $k=j$, in which case we have that  $k+\varepsilon\xi \notin E$ since $ E\subset E_{\eta_\varepsilon}\EEE$. Otherwise, if $k \neq j$, then $-\frac{\eta_{\varepsilon}}{2}\leq  (k+\varepsilon\xi)_3 -i_3 \le \EEE \frac{\eta_{\varepsilon}}{2}- \varepsilon$. Therefore, due to the choice of $k$, we again have that $k+\varepsilon\xi \notin  E $. Note that in the previous procedure, since $\|\xi\|_\infty\leq 1$, for each $\xi \in V$ we have that each $k \in E\cap Q_{\eta_{\varepsilon}}(i)$ (with $k+\varepsilon\xi\notin E$)  is chosen at most once for a point $j \in \mathbb{Z}_\varepsilon(Q_{\eta_{\varepsilon}}(i)) \cap  E_{\eta_{\varepsilon}}$ with $j+\varepsilon\xi\notin E_{\eta_\varepsilon}$. Therefore, in \eqref{def: discrete perimeter}, summing over all $j$ for $E_{\eta_\varepsilon}$ \EEE and the corresponding $k$ for $E$, we obtain \eqref{ineq:Step12}.
\end{step}\\
\begin{step}{1.3}(Conclusion) Combining now \eqref{ineq:Step11} and \eqref{ineq:Step12}, and recalling the notation in \eqref{def:good cubes}, we obtain
\begin{align*}
F_{\varepsilon}^\mathrm{per}(E_{\eta_{\varepsilon}}, \tilde U\EEE)&
\leq \sum_{Q_{\eta_{\varepsilon}}(i)\in \mathcal{Q}^{\mathrm{good}}_{\eta_{\varepsilon},\tilde U\EEE}}
F_{\varepsilon}^\mathrm{per}(E_{\eta_{\varepsilon}}, Q_{\eta_\varepsilon}(i))+\sum_{Q_{\eta_\varepsilon}(i)\in \mathcal{Q}^{\mathrm{bad}}_{\eta_{\varepsilon},\tilde U\EEE}
}F_{\varepsilon}^\mathrm{per}(E_{\eta_{ \varepsilon}},Q_{\frac{3}{2}\eta_\varepsilon}(i))\\
&\leq \sum_{Q_{\eta_{\varepsilon}}(i)\in \mathcal{Q}^{\mathrm{good}}_{\eta_{\varepsilon}, \tilde U\EEE}
}F_{\varepsilon}^\mathrm{per}(E,Q_{\eta_\varepsilon}(i))+ \omega(\eps)\EEE \sum_{Q_{\eta_\eps}(i)\EEE\in \mathcal{Q}^{\mathrm{bad}}_{\eta_{\varepsilon}, \tilde U\EEE}
}F_{\varepsilon}^\mathrm{curv}(E,Q_{3\eta_\varepsilon}(i))\\
&\leq F_{\varepsilon}^\mathrm{per}(E)+C\omega(\varepsilon)F_{\varepsilon}^\mathrm{curv}(E)\,,
\end{align*}
where we used \eqref{ineq:Qintersectbound} and \eqref{locality of the argument}. This finishes the proof of \eqref{lemma ineq: eta lattice 1}.
\end{step}
\end{step}\\
\begin{step}{2}(Curvature Estimate) Next, we prove \eqref{lemma ineq: eta lattice 2}. 
This follows from the property that for every $Q_{\eta_\eps}(i)\in \mathcal{Q}_{\eta_\eps, \tilde U\EEE}
$ \EEE
we have
\begin{align}\label{ineq:curvcube}
F_\varepsilon^\mathrm{curv}(E_{\eta_{\varepsilon}},Q_{\eta_{\varepsilon}}(i)) \leq C F_\varepsilon^\mathrm{curv}(E, Q_{3\eta_{\varepsilon}}(i))\,,
\end{align}
for an absolute constant $C>0$. 
Indeed, once the above inequality \EEE is shown, by \EEE summing over all $Q_{\eta_{\varepsilon}}(i)\in \mathcal{Q}_{\eta_\eps,\tilde U\EEE}\EEE$, \eqref{lemma ineq: eta lattice 2} follows from \eqref{ineq:Qintersectbound}, \eqref{locality of the argument}, and \eqref{ineq:curvcube}. 

We now show \eqref{ineq:curvcube}. \EEE  To this end, let $Q_{\eta_{\varepsilon}}(i)\in \mathcal{Q}_{\eta_\eps,\tilde U\EEE}$ \EEE
and assume without loss of generality that $E_{\eta_{\varepsilon}}$ is not flat in $Q_{2\eta_{\varepsilon}}(i)$, since otherwise the statement is trivial by \eqref{def: curvature energy} and assumption (i) on $W_{\varepsilon,\mathrm{cell}}^{\mathrm{curv}}$. Since $E_{\eta_\eps}$ is cubic in $Q_{2\eta_{\varepsilon}}(i)$\EEE, this also implies that $E_{\eta_\eps}$ is not locally flat in $Q_{2\eta_{\varepsilon}}(i)$\EEE, see Definition \ref{flatness-new}. We can also assume that $E$ is not locally flat in $Q_{\frac{5}{2} \eta_{\varepsilon}}(i)$, since otherwise $E_{\eta_\varepsilon}$ would be flat in $Q_{\frac{5}{2}\eta_{\varepsilon}}(i)$, and again the statement would be trivial. 
Consequently, as  $E_{\eta_{\varepsilon}}$ is not  locally flat in $Q_{\frac{5}{2}\eta_{\varepsilon}}(i)$, \EEE we have that, up to rotation, there exists $\nu \in \R^3$ with $\nu_i\in\{0,1\}$ for all $i=1,2,3$, such that $ E_{\eta_{ \varepsilon}} \cap Q_{2\varepsilon}(i+\frac{\eta_{\varepsilon}}{2}\nu)$ 
\EEE does not satisfy any of the conditions (i)-(iii) of Definition \ref{locally_flat_configurations}. Moreover, since $E$ is not locally flat in $Q_{\frac{5}{2}\eta_{\varepsilon}}(i)$, there exists $Q_{2\varepsilon}(\hat{k}) \in \hat{\mathcal{Q}}_{\eps,2}$ such that $Q_{\frac{5}{2}\eta_{\varepsilon}}(i)\cap Q_{2\varepsilon}(\hat{k})\neq \emptyset$, but in $Q_{2\varepsilon}(\hat{k})$ the set $E$  does not satisfy any of the conditions  (i)-(iii) of Definition \ref{locally_flat_configurations}. We can now proceed similarly to Step~1.1: for all $j\in \Z_\eps(Q_{\frac{\eta_{ \varepsilon}}{4}}(k))\EEE$ we have that $Q_{2\varepsilon}(\hat{k}) \subset Q_{\frac{\eta_{\varepsilon}}{2}}(j)\subset Q_{3\eta_\eps}(i)\EEE$ and therefore, due to  property (ii) below \eqref{def: curvature energy} (see also Remark \ref{rem: diffassu}), we have 
\begin{align*}
W_{\varepsilon,\mathrm{cell}}^\mathrm{curv}(j,E) \geq c\gamma_{\varepsilon} \eta^{-1-q}_{\varepsilon}\,.
\end{align*}
As before in Step 1.1, along with the fact that $\# \mathbb{Z}_\varepsilon(Q_{\frac{\eta_{\varepsilon}}{4}}(k))\geq C\frac{\eta_\varepsilon^3}{\varepsilon^3}$, 
this yields \EEE  
\begin{align}\label{ineq:curv12}
F_\varepsilon^\mathrm{curv}(E,Q_{3\eta_{\varepsilon}}(i)) \geq \sum_{j \in \mathbb{Z}_\varepsilon(Q_{\frac{\eta_{\varepsilon}}{4}}(k))} \varepsilon^3 W_{\varepsilon,\mathrm{cell}}^\mathrm{curv}(j,E) \geq 
c\gamma_{\varepsilon}\eta_{\varepsilon}^{2-q}\,.
\end{align}
On the other hand, due to \eqref{def: Eeta}, we have that $E_{\eta_\varepsilon}$ is cubic in $Q_{\eta_\varepsilon}(j)$ for all $j \in \mathbb{Z}_\varepsilon(Q_{\eta_{\varepsilon}}(i))$, \EEE 
and therefore by property (iii) below \eqref{def: curvature energy} and  the fact that $\#\mathbb{Z}_\varepsilon(Q_{\eta_{\varepsilon}}(i)) =\eta^3_{\varepsilon}/\varepsilon^{3}$, we have  
\begin{align}\label{ineq:curv3}
F_\varepsilon^\mathrm{curv}(E_{\eta_{\varepsilon}},Q_{\eta_{\varepsilon}}(i)) =\sum_{j \in \mathbb{Z}_\varepsilon(Q_{\eta_{\varepsilon}}(i))} \varepsilon^3 W_{\varepsilon,\mathrm{cell}}^\mathrm{curv}(j, E_{\eta_{\varepsilon}})
\leq C\gamma_{\varepsilon}\eta^{2-q}_{\varepsilon}\,.
\end{align}
Hence, \eqref{ineq:curv12} and \eqref{ineq:curv3} imply \eqref{ineq:curvcube}. 
This \EEE concludes Step 2.
\end{step}\\
\begin{step}{3}(Cardinality of the difference) The goal of this final step is to prove \eqref{lemma ineq: eta lattice 3}. Recall the notation introduced in \eqref{def:good cubes}. Due to \eqref{ineq:Qintersectbound}, \eqref{eq:rate gammadelta}, \eqref{locality of the argument}, and \eqref{ineq:curv}, for all $\varepsilon\in (0,\varepsilon_0]$ we have 
\begin{align}\label{ineq:curvaturemeasure}
\begin{split}
\hspace{-1em}\#\mathcal{Q}^{\mathrm{bad}}_{\eta_\varepsilon,\tilde U\EEE} \EEE &\leq C\gamma_{\varepsilon}^{-1}\eta^{-2+q}_{\varepsilon} \sum_{Q_{\eta_{\varepsilon}}(i)\in \mathcal{Q}^{\mathrm{bad}}_{\eta_\varepsilon,\tilde U\EEE} \EEE} F_\varepsilon^\mathrm{curv}(E,Q_{3 \eta_{\varepsilon}}(i))\\&\leq C\eta^{-2}_{\varepsilon} \sum_{Q_{\eta_{\varepsilon}}(i)\in\mathcal{Q}^{\mathrm{bad}}_{\eta_\varepsilon,\tilde U \EEE}} F_\varepsilon^\mathrm{curv}(E,Q_{ 3\eta_{\varepsilon}}(i))\leq C\eta_\varepsilon^{-2} F_\varepsilon^\mathrm{curv}(E)\,.
\end{split}
\end{align}\EEE
\EEE Now, let $Q_{\eta_{\varepsilon}}(i) \in \mathcal{Q}_{\eta_\varepsilon,\tilde U}^{\mathrm{good}} \EEE$ be such that $(E_{\eta_{\varepsilon}}\setminus E)\cap Q_{\eta_{\varepsilon}}(i)\EEE\neq \emptyset$, i.e., $E\cap Q_{\eta_{ \varepsilon\EEE}}(i) \neq \mathbb{Z}_\varepsilon(Q_{\eta_{\varepsilon}}(i))$ and also $ E\cap Q_{\eta_{\varepsilon}}(i) \neq \emptyset$. Then, since $E$ is a simple coordinate laminate in $Q_{\frac{3}{2}\eta_{\varepsilon}}(i)$, assuming again without loss of generality that $\nu=e_3$, \EEE we can find $k \in  E\cap\EEE Q_{\eta_{\varepsilon\EEE}}(i)$ such that $k+\varepsilon e_3 \notin E\EEE$ or $k- \varepsilon  e_3 \notin E$. By the laminate structure of $E$, the same holds true for all $m \in \{k+(\varepsilon\mathbb{Z}^2\times\{0\})\} \cap Q_{\eta_{\varepsilon}}(i)$. Thus,
\begin{align*}
F_\varepsilon^\mathrm{per}(E,Q_{\eta_{\varepsilon}}(i)) \geq c_{e_3} \eps^2   (\eta_\eps^2/\eps^2)= c_{e_3}\eta_{\varepsilon}^2\,,
\end{align*}
and therefore, for a constant $C>0$ that additionally depends on $\{c_\xi\}_{\xi\in V}$,
\begin{align}\label{ineq:surfacemeasuregood}
\begin{split}
\#\big\{Q_{\eta_{\varepsilon}}(i)\in\mathcal{Q}_{\eta_\varepsilon,\tilde U}^{\mathrm{good}} \EEE \colon (E_{\eta_{\varepsilon}}\setminus E)\cap Q_{\eta_{\varepsilon}}(i)\neq \emptyset\big\}&\leq C\eta_{\varepsilon}^{-2} \sum_{Q_{\eta_{\varepsilon}}(i)\in\mathcal{Q}_{\eta_\varepsilon,\tilde U}^{\mathrm{good}} \EEE}\EEE F_\varepsilon^\mathrm{per}(E\EEE,Q_{\eta_{\varepsilon}}(i))\\&\leq C\eta_\varepsilon^{-2}F_\varepsilon^\mathrm{per}(E)\,,
\end{split}
\end{align}
where we again used \eqref{locality of the argument}. 
Since on each cube we have that $\#((E_{\eta_{\varepsilon}}\setminus E)\cap Q_{\eta_{\varepsilon}}(i))\leq \eta^3_{\varepsilon}/\varepsilon^3$, by \eqref{ineq:curvaturemeasure} and \eqref{ineq:surfacemeasuregood},   we deduce that
\begin{align*}
\#((E_{\eta_{\varepsilon}}\setminus E)\cap \tilde U\EEE) &
\leq \frac{\eta_{\varepsilon}^3}{\varepsilon^3}\left(\#\mathcal{Q}_{\eta_\varepsilon,\tilde U}^{\mathrm{bad}}\EEE + \#\big\{Q_{\eta_{\varepsilon}}(i)\in \mathcal{Q}_{\eta_\varepsilon,\tilde U}^{\mathrm{good}} \EEE \colon (E_{\eta_{\varepsilon}}\setminus E)\cap Q_{\eta_{\varepsilon}}(i)\neq \emptyset\big\} \right) \\&\leq C\eta_{\varepsilon} \varepsilon^{-3}\left( F_\varepsilon^\mathrm{per}(E)+F_\varepsilon^\mathrm{curv}(E)\right)\,.
\end{align*}
This proves \eqref{lemma ineq: eta lattice 3} and concludes the proof of the lemma.
\end{step}
\end{proof}
 
For a set $\hat E \EEE\subset U\EEE$ of finite perimeter, we will also denote by 
\begin{equation*}
\mathrm{Per}_\varphi(\hat E \EEE, A):=\int_{\partial^* \hat E \EEE\cap A}\varphi(\nu_{\hat E \EEE})\,\mathrm{d}\mathcal{H}^2\, 
\end{equation*}
its anistropic perimeter in $A \subset U\EEE$ with respect to the density $\varphi$ defined in $\eqref{def:varphi}$. Again, if $A=U\EEE$, we omit the dependence on $A$ in the notation. Moreover, for a regular set $\hat E \EEE \in  \mathcal{A}_{\mathrm{reg}}(U\EEE)$, we denote by $\bm{A}$ the second fundamental form  of $\partial \hat E \EEE\cap U\EEE$, i.e., $|\bm{A}| = \sqrt{\kappa_1^2 + \kappa_2^2}$, where $\kappa_1$ and $\kappa_2$  are the principal curvatures of $\partial\hat E \EEE\cap U\EEE$. As a second preliminary step, we  replace the set $E_{\eta_{\varepsilon}}$ by a regular set, whose perimeter and curvature energy can be controlled by the discrete perimeter and curvature energy of $E_{\eta_{\varepsilon}}$, respectively. This is formulated, again in a local fashion, in the following lemma. 

\begin{lemma}[Smoothening of cubic sets]  \label{lemma:eta scale smoothening}  There exist constants $C= C(\{c_\xi\}_{\xi \in V})>0$,  $c_0>0$, and for every $E \subset \mathbb{Z}_\varepsilon(\Omega)$ there exists a set $\hat E\in \mathcal{A}_{\mathrm{reg}}(U\EEE)$ with $ Q_\varepsilon(E)\cap U\EEE\subset\hat E\subset U\EEE$ (see \eqref{def:Aeps}), so that the following holds true. For every open subset $\tilde U\EEE\subset \subset U\EEE$ with Lipschitz boundary, there exists  $\varepsilon_0=\varepsilon_0(\tilde U\EEE,U\EEE)\in (0,1)$, 
such that: \EEE

\begin{itemize}
\item[\rm (i)] For the anisotropic perimeter of $\hat E$ with density $\varphi$, we have 
\begin{align}\label{lemma:perimeterdis-cont}
\mathrm{Per}_{\varphi}(\hat E,\tilde U\EEE) \leq F_\varepsilon^\mathrm{per}( E) + \omega(\varepsilon) F_\varepsilon^\mathrm{curv}(E)\,,
\end{align}
 where $\omega(\varepsilon)\to 0$ as $\varepsilon\to 0$\OOO.\EEE
\item[\rm (ii)]  For the curvature energy of 
$\hat E$, \EEE we have
\begin{align}\label{lemma:curvaturedis-cont}
\gamma_{\varepsilon}\int_{\partial \hat E\cap \tilde U\EEE} |\bm{A}|^q\,\mathrm{d}\mathcal{H}^{2} \leq CF_\varepsilon^\mathrm{curv}(E)\,.\EEE
\end{align}
\item[\rm (iii)] For the difference in volumes, we have 
\begin{align}\label{lemma:measuredis-cont}
 \mathcal{L}^3((\hat E\setminus Q_\varepsilon(E))\cap\tilde U\EEE) \leq C\eta_{\varepsilon} \left(F_\varepsilon^\mathrm{per}(E) + F_\varepsilon^\mathrm{curv}(E)\right)\,.\EEE
\end{align}
\item[\rm (iv)] We have
\begin{align}\label{lemma:distance-cont}
\mathrm{dist}(\tilde U\EEE\setminus\hat E, Q_\eps(E)\cap U\EEE) \geq c_0\eta_\varepsilon\,.
\end{align}
\end{itemize}
\end{lemma}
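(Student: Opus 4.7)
The plan is to first apply Lemma \ref{lemma : eta lattice} to replace $E$ by its $\eta_\varepsilon$-scale regularization $E_{\eta_\varepsilon}$, then associate to it the continuum cubic set
\[\tilde E := \bigcup\big\{Q_{\eta_\varepsilon}(i_0) : i_0 \in \eta_\varepsilon\mathbb{Z}^3,\ \mathbb{Z}_\varepsilon(Q_{\eta_\varepsilon}(i_0))\subset E_{\eta_\varepsilon}\big\}\cap U,\]
and finally construct $\hat E$ as a $C^2$-smoothening of $\tilde E$ at scale $c_0\eta_\varepsilon$ for a small universal $c_0\in(0,1)$. Concretely, mollifying $\chi_{(\tilde E)_{c_0\eta_\varepsilon}}$ with a $C^{\infty}$-kernel of width $c_0\eta_\varepsilon/10$ and picking, via Sard's theorem, a regular level near $1/2$ produces a $C^2$-set with $(\tilde E)_{c_0\eta_\varepsilon/2}\subset\hat E\subset(\tilde E)_{2c_0\eta_\varepsilon}$, a pointwise bound $|\bm A|\leq C/\eta_\varepsilon$ on $\partial\hat E$, and, crucially, $\partial\hat E$ agrees with a translate of a flat coordinate face of $\partial\tilde E$ at every point at distance $\geq 5c_0\eta_\varepsilon$ from the one-dimensional skeleton (edges and corners) of $\partial\tilde E$. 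Near $\partial U$ one shrinks the smoothing radius to keep $\hat E\subset U$; since $\tilde U\subset\subset U$, this affects nothing on $\tilde U$. Because $E\subset E_{\eta_\varepsilon}$, one has $Q_\varepsilon(E)\cap U\subset(\tilde E)_{\varepsilon/2}\subset\hat E$ for $\varepsilon$ small, and (iv) is immediate: any $x\in\tilde U\setminus\hat E$ satisfies $\mathrm{dist}(x,\tilde E)\geq c_0\eta_\varepsilon/2$, whence $\mathrm{dist}(x,Q_\varepsilon(E)\cap U)\geq c_0\eta_\varepsilon/2-\varepsilon/2\geq c_0\eta_\varepsilon/4$.

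To verify (i) and (ii), I decompose $\partial\hat E\cap\tilde U=\Sigma_{\mathrm{flat}}\cup\Sigma_{\mathrm{cv}}$, where $\Sigma_{\mathrm{flat}}$ is the union of the locally coordinate-flat pieces and $\Sigma_{\mathrm{cv}}$ is the smoothed region, contained in the $10c_0\eta_\varepsilon$-neighborhood of the skeleton of $\partial\tilde E$. By Lemma \ref{lemma : laminate}, in every $Q_{\eta_\varepsilon}(i)\in\mathcal{Q}^{\mathrm{good}}_{\eta_\varepsilon,\tilde U}$ the set $E$ is a simple coordinate laminate in $Q_{3\eta_\varepsilon/2}(i)$, so $\tilde E\cap Q_{\eta_\varepsilon}(i)$ is a coordinate half-space; moreover, adjacent good cubes must share this laminate direction since their $\tfrac{3}{2}\eta_\varepsilon$-neighborhoods overlap. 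Hence the skeleton of $\partial\tilde E\cap\tilde U$ lies in the union of (neighborhoods of) bad cubes, giving $\mathcal{H}^2(\Sigma_{\mathrm{cv}})\leq C\eta_\varepsilon^{2}\cdot\#\mathcal{Q}^{\mathrm{bad}}_{\eta_\varepsilon,\tilde U}$. On $\Sigma_{\mathrm{flat}}$, each face of $\tilde E$ with normal $\pm e_k$ carries anisotropic density $c_k=\varphi(\pm e_k)$, which is exactly the per-unit-area contribution of the corresponding flat interface to the discrete perimeter of $E_{\eta_\varepsilon}$ (as explained below \eqref{def:varphi}); hence, by Lemma \ref{lemma : eta lattice}(i),
\[\int_{\Sigma_{\mathrm{flat}}}\varphi(\nu_{\hat E})\,\mathrm{d}\mathcal{H}^2\leq F_\varepsilon^{\mathrm{per}}(E_{\eta_\varepsilon},\tilde U)\leq F_\varepsilon^{\mathrm{per}}(E)+\omega_1(\varepsilon)F_\varepsilon^{\mathrm{curv}}(E).\]
On $\Sigma_{\mathrm{cv}}$, the bound $\varphi\leq\|\varphi\|_\infty$ together with \eqref{ineq:curvaturemeasure} and $\gamma_\varepsilon^{-1}\eta_\varepsilon^{q}\to 0$ from \eqref{eq:rate gammadelta} yields a contribution $\leq C\gamma_\varepsilon^{-1}\eta_\varepsilon^{q}F_\varepsilon^{\mathrm{curv}}(E)=\omega_2(\varepsilon)F_\varepsilon^{\mathrm{curv}}(E)$, proving (i). For (ii), $|\bm A|\equiv 0$ on $\Sigma_{\mathrm{flat}}$ and $|\bm A|\leq C\eta_\varepsilon^{-1}$ on $\Sigma_{\mathrm{cv}}$ give
\[\gamma_\varepsilon\int_{\partial\hat E\cap\tilde U}|\bm A|^{q}\,\mathrm{d}\mathcal{H}^2\leq C\gamma_\varepsilon\eta_\varepsilon^{2-q}\cdot\#\mathcal{Q}^{\mathrm{bad}}_{\eta_\varepsilon,\tilde U}\leq CF_\varepsilon^{\mathrm{curv}}(E),\]
once again by \eqref{ineq:curvaturemeasure}.

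For (iii), write $\hat E\setminus Q_\varepsilon(E)\subset(\hat E\setminus\tilde E)\cup(\tilde E\setminus Q_\varepsilon(E))$. The first piece lies in the tubular neighborhood $(\tilde E)_{2c_0\eta_\varepsilon}\setminus\tilde E$, so by a standard coarea estimate its $\mathcal{L}^3$-measure in $\tilde U$ is $\leq C\eta_\varepsilon\,\mathrm{Per}(\tilde E,\tilde U')\leq C\eta_\varepsilon(F_\varepsilon^{\mathrm{per}}(E)+F_\varepsilon^{\mathrm{curv}}(E))$ on a slightly enlarged Lipschitz set $\tilde U\subset\subset\tilde U'\subset\subset U$, using part (i). The second piece consists (up to an $O(\varepsilon)$-boundary layer absorbable into the first) of the union of $\varepsilon$-cubes $Q_\varepsilon(j)$ with $j\in(E_{\eta_\varepsilon}\setminus E)\cap\tilde U$, whose measure is $\leq\varepsilon^{3}\,\#((E_{\eta_\varepsilon}\setminus E)\cap\tilde U)\leq C\eta_\varepsilon(F_\varepsilon^{\mathrm{per}}(E)+F_\varepsilon^{\mathrm{curv}}(E))$ by Lemma \ref{lemma : eta lattice}(iii). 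The main delicate point of the argument is the smoothing step itself: one has to produce a genuinely $C^2$ set that, on the one hand, inherits exactly the coordinate-aligned flat structure of $\tilde E$ on the bulk of $\partial\tilde E$ (so that the sharp matching between the discrete and the continuum $\varphi$-perimeters through \eqref{def:varphi} survives), and on the other hand rounds the edges and corners at the precise scale $\eta_\varepsilon$, so that the extra $\varphi$-perimeter and the curvature functional are both simultaneously absorbed by the bad-cube count from \eqref{ineq:curvaturemeasure}; the scale hierarchy \eqref{eq:rate gammadelta} is exactly what makes these two competing constraints compatible.
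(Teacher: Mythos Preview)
Your approach is essentially the paper's: reduce to an $\eta_\varepsilon$-scale set via Lemma~\ref{lemma : eta lattice}, smoothen the resulting cubic continuum set by mollification plus a super-level set (the paper does this at scale $1$ after rescaling, without the preliminary thickening, but that is cosmetic), and control $\partial\hat E$ by splitting it into flat and curved pieces using a good/bad cube decomposition. Two points you gloss over deserve mention. First, the regular value in Sard's theorem must be chosen \emph{uniformly} in $E$, with a definite lower bound on the gradient along the level set, so that the curvature bound $|\bm A|\le C/\eta_\varepsilon$ and the perimeter bounds hold with universal constants; the paper secures this by noting that there are only $2^{27}$ possible local shapes $Q(E)\cap Q_3(i)$, so one regular value can be chosen avoiding all critical values simultaneously. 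Second, the claim ``the skeleton of $\partial\tilde E$ lies in (neighborhoods of) bad cubes'' does not follow from ``adjacent good cubes share the laminate direction'' --- they need not when $E$ is trivial (empty or full) on the overlap of the $\tfrac{3}{2}\eta_\varepsilon$-cubes. The correct argument, which is how the paper proceeds in its Steps~3--4, is the contrapositive of Lemma~\ref{lemma : laminate}: an edge of $\partial\tilde E$ means $E_{\eta_\varepsilon}$ is not flat in a $2\eta_\varepsilon$-cube there, hence $E$ is not locally flat in a slightly larger cube, hence that cube is bad and the estimate \eqref{ineq:curvaturemeasure} applies.
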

\begin{proof} We divide the proof into several steps. In the first step, we explain how to reduce the problem to the case that $E$ is an $\eta_\varepsilon$-scale set in the sense of \eqref{def: Eeta}. In Step 2 we show how to construct the set $\hat{E}$ for $\eta_\varepsilon=1$. In Step 3 we discuss the shape of the interface of the smooth replacement set and derive the estimates in the case that $\eta_\varepsilon=1$. In Step 4 we perform a scaling argument and derive the corresponding estimates for general $\eta_\varepsilon>0$, which allows us to conclude.\\
\begin{step}{1}(Reduction to $\eta_\varepsilon$-scale sets) 
First of all, without loss of generality we can assume that $E$ is an $\eta_\varepsilon$-scale set in the sense of \eqref{def: Eeta}. To this end, let $U'\EEE \subset\subset U\EEE$ be an open Lipschitz set \EEE such that $\tilde{U}\EEE\subset\subset U'\EEE$. Let $\varepsilon_0>0$ be small enough such that Lemma \ref{lemma : eta lattice} is applicable for $U'\EEE$ in place of $U\EEE$. 
We  \EEE apply Lemma~\ref{lemma : eta lattice} for $ U'\EEE$ to obtain $E_{\eta_\eps}$  such that $E \subset E_{\eta_\eps}$ and \eqref{lemma ineq: eta lattice 1}--\eqref{lemma ineq: eta lattice 3} hold true for $E_{\eta_\eps}$. We perform the construction presented in Steps \EEE 2--4 below for $E_{\eta_\eps}$ in place of $E$ and for $U'\EEE$ in place of $U\EEE$ to get \eqref{lemma:perimeterdis-cont}--\eqref{lemma:distance-cont} for $E_{\eta_\eps}$ 
in place of $E$. Then, we see \EEE that \eqref{lemma:perimeterdis-cont}--\eqref{lemma:distance-cont} also hold for $E$ due to \eqref{lemma ineq: eta lattice 1}--\eqref{lemma ineq: eta lattice 3}, $E \subset E_{\eta_\eps}$, and $\mathcal{L}^3((Q_\eps(E_{\eta_\eps}) \setminus Q_\eps(E)) \cap\tilde U\EEE) \le \eps^3 \#( (E_{\eta_\eps} \setminus E) \cap \tilde U\EEE)$.  
\EEE Therefore, from now on we can assume that $E$ is an $\eta_\varepsilon$-scale set.
\end{step}\\
\begin{step}{2}(Construction of the set at scale $\eta_\varepsilon=1$) Let $E\subset \mathbb{Z}^3$ be a finite set and (according to the notation we introduced in \eqref{def:Aeps} and the comments 
thereafter)  \EEE let $Q(E) := \bigcup_{i \in E}Q(i)\,.$
\\ Let $(\zeta_\sigma)_{\sigma>0}$ be a family of smooth symmetric mollifiers in $\mathbb{R}^3$, i.e., there exists a radially symmetric function $\zeta \in C_c^\infty(\mathbb{R}^3; [0,+\infty))$ such that
\begin{align}\label{eq:propmolifier}
\int_{\mathbb{R}^3} \zeta(x)\,\mathrm{d}x=1\,, \quad \mathrm{supp}\,\zeta \subset\subset B_1\,, \quad \zeta_\sigma(x) = \sigma^{-3} \zeta(\sigma^{-1} x)\,. \end{align}
For every $\sigma>0$ consider the function $f_\sigma(x):=\left(\chi_{Q(E)}\ast \zeta_\sigma\right)(x)$ and for $t \in (0,1) $ consider the super-level set $E_{\sigma,t}$ defined as $E_{\sigma,t}:=\{x \in \mathbb{R}^3 \colon f_\sigma(x) >t\}$. (Here, for notational convenience, we have suppressed the dependence of $f_\sigma$ on $E$.) Our goal is to choose $c_0>0$, $\sigma_0>0$, and $t_0 \in (0,1)$ independently of $E$ such that
\begin{itemize}
\item[(i)] $\partial E_{\sigma_0,t_0}$ is a regular $2$-dimensional manifold;
\item[(ii)] $\mathrm{dist}_{\mathcal{H}}(E_{\sigma_0,t_0},Q(E)) \leq \sigma_0$\,;
\item[(iii)] $\displaystyle\inf_{x\in \mathbb{R}^3 \setminus E_{\sigma_0,t_0}} \mathrm{dist}(x,Q(E)) \geq c_0$\,.
\end{itemize} 
To see these properties\EEE, we define the $N_0 := 2^{27}$ different sets  \EEE $(E^k)_{k=1,\ldots,N_0}$ by $E^k := \bigcup_{j \in I_k} Q(j)$, $I_k \subset \{-1,0,1\}^3$. We note that  for each   $i \in \mathbb{Z}^3$ we find $k$ such that
\begin{align}\label{eq:localLambda}
Q(E) \cap Q_3(i) = E^k +i\,.
\end{align}
In other words, there are only $N_0$ different possibilities for the shape of $Q(E) \cap Q_3(i)$, $i \in \mathbb{Z}^3$. For $\sigma_0>0$ small enough, by \eqref{eq:localLambda}, we have that $f_{\sigma_0}^k(x):=(\chi_{E^k} \ast \zeta_{\sigma_0})(x) =(\chi_{Q(E)} \ast \zeta_{\sigma_0})(x)$ for all $x\in Q_{\frac{3}{2}}(i)$. Now let $\mathcal{N}_k$ be the set of critical values of $f_{\sigma_0}^k$, which by the Morse-Sard Lemma (cf. Lemma 13.15 in \cite{maggi2012sets}) is a set of $\mathcal{L}^1$-measure zero. In particular, for $\mathcal{N}:=\bigcup_{k=1}^{N_0}\mathcal{N}_k$ we have that $\mathcal{L}^1(\mathcal{N})=0$. Thus, we can choose $t_0 \in (0,1)\EEE\setminus \mathcal{N}$ such that (i) is satisfied. In order to obtain (iii), due to \eqref{eq:localLambda}, it suffices to choose $t_0$ and $c_0>0$ small enough depending on $\zeta_{\sigma_0}$ such that (iii) is satisfied for all $E^k$, $k \in \{1,\ldots,N_0\}$, in place of $E$. Then, as $E_{\sigma_0,t_0} \supset Q(E)$, (ii) follows from the fact that $\mathrm{supp}\,\zeta_{\sigma_0} \subset\subset B_{\sigma_0}$.
\end{step}\\
\begin{step}{3}(Surface energy estimate in the case  $\eta_\varepsilon=1$)
Similarly to the proof of Lemma \ref{lemma : eta lattice}, we will consider the two families of good and bad cubes with respect to $E$, namely 
\begin{align}\label{def:good cubes_eta_scale}
\mathcal{Q}^{\mathrm{good}}_{E} =\left\{Q(i) \colon i \in E,\ E \ \text{simple coordinate laminate in } Q_3(i) \right\}\,, \  \mathcal{Q}^{\mathrm{bad}}_{E}=\{Q(i)\colon i \in E \}\setminus\mathcal{Q}^{\mathrm{good}}_{E}\, .
\end{align}
Let us denote by
\begin{align} \label{def:bdrycubes}
\partial\mathcal{Q}:=\big\{Q(i)\colon i\in E, \BBB \mathcal{H}^2(\partial Q(i)\cap \partial{Q(E)})>0 \EEE \big\}
\end{align}
the collection of boundary cubes. For a cube $Q:=Q(i)\in \partial\mathcal{Q}$, let us denote by $\mathcal{T}_Q$ the (nonempty) collection of those faces of $Q$ that also belong to $\partial Q(E)$. For each $T\in \mathcal{T}_Q$, let $\nu_T\in \{\pm e_i\}_{i=1,2,3}$ be the constant outward pointing unit normal to $\partial Q(E)$ at the face $T$. For the set $E_{\sigma_0,t_0}$ we denote its outward unit normal at a point $x\in \partial E_{\sigma_0,t_0}$  by $\nu_{\sigma_0,t_0}(x)$.\\
\begin{step}{3.1}(Good cubes) Let $Q=Q(i) \in \mathcal{Q}^{\mathrm{good}}_{E}\cap \partial\mathcal{Q}$. Fix a boundary face  $T\in \mathcal{T}_Q$. Provided that $\sigma_0>0$ is small enough, we can check that $E_{\sigma_0,t_0}$ is also a coordinate laminate in $(T)_{2\sigma_0}$, (cf.\ \eqref{eq: thick-def}), in the sense that $\nu_{\sigma_0,t_0}(x)=\nu_T$ for every $x\in \partial E_{\sigma_0,t_0}\cap (T)_{2\sigma_0}$. Indeed, suppose without restriction (after possibly a rotation by a multiple of $\tfrac{\pi}{2}$ with respect to a coordinate axis and a reflection) that $\nu_T=e_3$. Then, since $\partial E_{\sigma_0,t_0}=\{f_{\sigma_0}=t_0\}$ and $\nabla f_{\sigma_0}$ is nowhere vanishing on $\partial E_{\sigma_0,t_0}$ (this is again a consequence of the choice of $\sigma_0$ and the Morse-Sard Lemma), for every $x\in \partial E_{\sigma_0,t_0}\cap(T)_{2\sigma_0}$ and for $l=1,2,3$, we have
\begin{align}\label{long}
\begin{split}
\nu_{\sigma_0,t_0}(x)\cdot e_l&=-\frac{\nabla f_{\sigma_0}}{|\nabla f_{\sigma_0}|}(x)\cdot e_l=-\frac{1}{|\nabla f_{\sigma_0}(x)|}\int_{\mathbb{R}^3}\chi_{Q(E)}(y)\nabla\zeta_{\sigma_0}(x-y)\cdot e_l\, \mathrm{d}y \\
&=-\frac{1}{|\nabla f_{\sigma_0}(x)|}\int_{{Q(E)}\cap B_{\sigma_0}(x)}\partial_l\zeta_{\sigma_0}(x-y)\, \mathrm{d}y 
\\&=\frac{1\EEE}{|\nabla f_{\sigma_0}(x)|}\int_{\partial(Q(E)\cap B_{\sigma_0}(x))}\zeta_{\sigma_0}(x-y)\nu_l(y)\, \mathrm{d}\mathcal{H}^2(y)\,,
\end{split}
\end{align}
where $\nu_l$ denotes the $l$-th component of the outer unit normal to $\partial(Q(E)\cap B_{\sigma_0}(x))$, which satisfies $\partial(Q(E)\cap B_{\sigma_0}(x)) =(\partial B_{\sigma_0}(x)\cap \overline{Q(E)} ) \cup (\partial(Q(E))\cap \overline{B_{\sigma_0}(x)})$. Due to \eqref{eq:propmolifier}, we have $\zeta_{\sigma_0}(x-y)=0$ on $\partial B_{\sigma_0}(x)$. On the other hand, as a consequence of the laminate structure of $E$ in  $Q_3(i)$, for $\sigma_0>0$ small enough, we get that $\partial(Q(E))\cap \overline{B_{\sigma_0}(x)}$ is either empty or is contained in the union of $T$ and the other 8 faces with outer normal $e_3$ of neighboring cubes to $Q(i)$ that share either an edge or a vertex with $T$. (Note that these neighboring cubes also belong to $Q(E)$.) Therefore, we have that $\nu_l=\delta_{3l}$ on $\partial(Q(E))\cap \overline{B_{\sigma_0}(x)}$, where $\delta_{3l}$ denotes the Kronecker delta. In view of \eqref{long},  this implies
 \begin{align}\label{eq:nue3}
\nu_{\sigma_0,t_0}(x) =\nu_T = e_3\,.
\end{align}
For good boundary cubes $Q  \in  \mathcal{Q}_E^{\mathrm{good}}\cap \partial\mathcal{Q}\EEE$, the laminate structure implies that  \EEE  $\# \mathcal{T}_Q\in \{1,2\}$ and if $\#\mathcal{T}_Q =2$, then two opposite faces are in $\mathcal{T}_Q$. Therefore, the outer unit normal vector to $\partial Q(E)\EEE$ of boundary faces is well-defined up to sign, and denoted  by $\nu_Q$. We define $$(Q)_{2\sigma_0}^{\mathrm{vert}}:=\{x+s\nu_Q\colon x\in Q, -2\sigma_0\leq s\leq 2\sigma_0\}\,.$$ Due to \eqref{eq:nue3}, the fact that $\nu_T = \pm \nu_Q$, and property (ii) established in Step 2\EEE, we have 
\begin{align*}
Q \in  \mathcal{Q}_E^{\mathrm{good}}\cap \partial\mathcal{Q}\EEE \quad  \implies \quad \partial (E_{\sigma_0,t_0})\cap(Q)_{2\sigma_0}^{\mathrm{vert}} =\bigcup_{T \in \mathcal{T}_Q} (T+s_T\nu_Q) \text{ for some } -\sigma_0<s_T< \sigma_0\,.
\end{align*}
Hence, we obtain 
\begin{align}\label{ineq:perimeterflatcubes}
\mathrm{Per}_{\varphi}(E_{\sigma_0,t_0},(Q)_{2\sigma_0}^{\mathrm{vert}}) = \mathrm{Per}_{\varphi}(Q(E),(Q)_{2\sigma_0}^{\mathrm{vert}}) \,,
\end{align}
while, by the flat laminate structure of $E_{\sigma_0,t_0}$ in $(Q)_{2\sigma_0}^{\mathrm{vert}}$, it is clear that
 \begin{align}\label{ineq:curvatureflatcubes}
\int_{\partial E_{\sigma_0,t_0}\cap(Q)_{2\sigma_0}^{\mathrm{vert}}}|\bm{A}|^q\, \mathrm{d}\mathcal{H}^2=0\,.
\end{align}	
Finally, by the choice of $\sigma_0>0$ and the definition of $\mathcal{Q}^\mathrm{good}_{E}$, see \eqref{def:good cubes_eta_scale}, we have 
\begin{align}\label{eq:emptyintersectionflat}
(Q)_{2\sigma_0}^{\mathrm{vert}} \cap (Q')_{2\sigma_0}^{\mathrm{vert}}=\emptyset\, \text{ for all } Q, Q' \in \mathcal{Q}^\mathrm{good}_E
 \,.
\end{align}
\end{step}\\
\begin{step}{3.2}(Bad cubes) Let $Q=Q(i) \in \mathcal{Q}^{\mathrm{bad}}_{E}$. As noted in \eqref{eq:localLambda}, there are at most $N_0$ different possibilities for the local shape of $E$. In particular, there exists a universal constant $C>0$ (only depending on $\sigma_0,t_0$, and these $N_0$ different configurations $(E^k)_{k=1,\ldots,N_0}$) so that for every $Q \in \mathcal{Q}^{\mathrm{bad}}_{E}$,
\begin{equation}\label{ineq:uniform_area_bound}
\mathcal{H}^{2}(\partial E_{\sigma_0,t_0}\cap Q_{\frac{3}{2}}(i))\leq C  \quad \quad  \text{ and }  \quad \quad  \int_{\partial E_{\sigma_0,t_0}\cap Q_{\frac{3}{2}}(i)}|\bm{A}|^q\, \mathrm{d}\mathcal{H}^2\leq C\,.
\end{equation}
We also note that $\partial E_{\sigma_0,t_0}$ is covered by the union of $\partial E_{\sigma_0,t_0} \cap \EEE (Q)_{2\sigma_0}^{\mathrm{vert}}$, $Q\in \mathcal{Q}^{\mathrm{good}}_{E}\cap \partial\mathcal{Q}\EEE$, together with $\partial E_{\sigma_0,t_0} \cap  \EEE Q_{\frac{3}{2}}(i)$, $Q(i)\in \mathcal{Q}^{\mathrm{bad}}_{E}$.
\end{step}
\end{step}\\
\begin{step}{4}($\eta_\varepsilon$-scale construction) We now use the previous steps in order to construct a regular set $\hat{E}$ from a set $E \subset \mathbb{Z}_\varepsilon(\Omega)$. By Step 1, we can assume that $E$ is an $\eta_\varepsilon$-scale set in the sense of \eqref{def: Eeta}. 
Now, let $E_1 := \eta_\varepsilon^{-1} 
E \EEE $, which is a finite subset of $\mathbb{Z}^3$. We then choose $(E_1)_{\sigma_0,t_0}$ as constructed in Step 2 and define $\hat{E}$ by $\hat{E}:= \eta_\varepsilon (E_1)_{\sigma_0,t_0} \cap U$. By standard properties of scaling of sets, 
we have $Q_\eps(E) = \eta_\eps Q(E_1)$ (recall \eqref{def:Aeps}), as well as \EEE  
\begin{itemize}
\item[(i)] $\partial \hat{E}\cap U\EEE$ is a regular $2$-dimensional manifold;
\item[(ii)] $\mathrm{dist}_{\mathcal{H}}(\hat{E},Q_{\eps}(E)) \leq \sigma_0\eta_\varepsilon$\,;
\item[(iii)] $\displaystyle\inf_{x\in\R^3\setminus\hat{E}} \mathrm{dist}(x,Q_\eps(E)) \geq c_0\eta_\varepsilon$\,.
\end{itemize} 
The last property directly implies \eqref{lemma:distance-cont}, and we also get $ Q_\varepsilon(E)\cap U\EEE\subset\hat E\subset U\EEE$. Now, denote by  
\begin{align}\label{eq: much not}
\begin{split}
\partial\mathcal{Q}_{\eta_\varepsilon} &:= \{ Q \colon Q=Q_{\eta_\varepsilon}(i), i \in \eta_\varepsilon\mathbb{Z}^3, \eta_\varepsilon^{-1}Q \in \partial\mathcal{Q}\}\,;\\
\mathcal{Q}^{\mathrm{good}}_{\eta_{\varepsilon}, \tilde U\EEE}
&:= \{Q \colon Q=Q_{\eta_\varepsilon}(i), i \in \eta_\varepsilon\mathbb{Z}^3, \eta_\varepsilon^{-1}Q \in \mathcal{Q}^\mathrm{good}_{E_1}, Q \cap \tilde U\EEE \neq \emptyset\}\,;\\
\mathcal{Q}^{\mathrm{bad}}_{\eta_{\varepsilon},\tilde U\EEE}
&:= \{Q \colon Q=Q_{\eta_\varepsilon}(i), i \in \eta_\varepsilon\mathbb{Z}^3, \eta_\varepsilon^{-1}Q \in \mathcal{Q}^\mathrm{bad}_{E_1}, Q \cap \tilde U\EEE \neq \emptyset\}\,,\\
\end{split}
\end{align}
where we have used the same notation as in \eqref{def:good cubes_eta_scale}, \EEE\eqref{def:bdrycubes} for the families $\mathcal{Q}^\mathrm{good}_{E_1}, \mathcal{Q}^\mathrm{bad}_{E_1}$, $\partial \mathcal{Q}$ (with respect to $E_1$), and for notational convenience we have suppressed the dependence on the original set $E$ in the notation for the families $\mathcal{Q}^{\mathrm{good}}_{\eta_{\varepsilon},\tilde U\EEE}
$ and $\mathcal{Q}^{\mathrm{bad}}_{\eta_{\varepsilon},\tilde U\EEE}
$. 
By \eqref{ineq:perimeterflatcubes}, \eqref{ineq:curvatureflatcubes}, and \eqref{eq:emptyintersectionflat}, we have
\begin{align}\label{eq:perimeterepsilon}
\mathrm{Per}_{\varphi}(\hat{E},(Q)_{2\eta_\varepsilon\sigma_0}^{\mathrm{vert}}) = \mathrm{Per}_{\varphi}(Q_{\varepsilon}(E),(Q)_{2\eta_\varepsilon\sigma_0}^{\mathrm{vert}}), \ \ \   \int_{\partial \hat{E}\cap(Q)_{2\eta_\varepsilon\sigma_0}^{\mathrm{vert}}}|\bm{A}|^q\, \mathrm{d}\mathcal{H}^2=0
\end{align}
for all \EEE $Q \in \mathcal{Q}^{\mathrm{good}}_{\eta_{\varepsilon},\tilde U\EEE}$, 
where \EEE we set $(Q)_{2\eta_\varepsilon\sigma_0}^{\mathrm{vert}} := \eta_\varepsilon (\eta_\varepsilon^{-1}Q)_{2\sigma_0}^{\mathrm{vert}}$,  and 
\begin{align}\label{eq:intersectionflatepsilon}
(Q)_{2\eta_\varepsilon\sigma_0}^{\mathrm{vert}} \cap (Q')_{2\eta_\varepsilon\sigma_0}^{\mathrm{vert}}=\emptyset\, \text{ for all } Q, Q' \in \mathcal{Q}^{\mathrm{good}}_{\eta_{\varepsilon},\tilde U\EEE}
\,.
\end{align}
On the other hand, due to \eqref{ineq:uniform_area_bound} and elementary scaling properties of $\mathcal{H}^2$ and the second fundamental form, for $Q:=Q_{\eta_{\varepsilon}}(i)\in  \mathcal{Q}^{\mathrm{bad}}_{\eta_{\varepsilon},\tilde U\EEE}
$, we have 
\begin{equation}\label{ineq:uniform_area_bound_eta}
\mathcal{H}^{2}(\partial \hat{E}\cap Q_{
\frac{3}{2}\eta_\varepsilon}\EEE)\leq C\eta_\varepsilon^2 \quad \quad  \text{ and } \quad \quad   \int_{\partial \hat{E}\cap Q_{
\frac{3}{2}\eta_\varepsilon}\EEE}|\bm{A}|^q\, \mathrm{d}\mathcal{H}^2\leq C\eta_\varepsilon^{2-q}\,.
\end{equation}
Finally, by the simple observation we made at the end of Step 3 and by scaling, we get 
\begin{align}\label{incl:boundarylambdahat}
\partial\hat E \cap \tilde U\EEE \subset \bigcup_{Q \in \mathcal{Q}^{\mathrm{good}}_{\eta_{\varepsilon},\tilde U\EEE}
\cap \partial\Q_{\eta_\eps}} (\partial\hat E \cap(Q)_{2\eta_\varepsilon\sigma_0}^{\mathrm{vert}}) \cup \bigcup_{Q \in \mathcal{Q}^{\mathrm{bad}}_{\eta_{\varepsilon},\tilde U\EEE}
}  (\partial\hat E \cap Q_{\frac{3}{2}\eta_\varepsilon})\,.
\end{align}
\\
\begin{step}{4.1}(Proof of \eqref{lemma:perimeterdis-cont}) By \eqref{incl:boundarylambdahat} we can estimate
\begin{equation}\label{ineq:perimeter_estimate_smoothened_set}
\mathrm{Per}_{\varphi}(\hat{E},\tilde U\EEE)\leq \sum_{Q\in \mathcal{Q}^{\mathrm{good}}_{\eta_{\varepsilon},\tilde U\EEE}
\cap \partial\Q_{\eta_\eps}}\int_{\partial \hat{ E}\cap(Q)_{2\eta_\varepsilon\sigma_0}^{\mathrm{vert}}}\varphi(\nu)\, \mathrm{d}\mathcal{H}^2+ \sum_{Q\in \mathcal{Q}^{\mathrm{bad}}_{\eta_{\varepsilon},\tilde U\EEE}
}\int_{\partial \hat{E}\cap Q_{\frac{3}{2}\eta_\varepsilon}}\varphi(\nu)\, \mathrm{d}\mathcal{H}^2\,.
\end{equation}
We start with the good cubes. We use the same notation $\mathcal{T}_Q$ and $\nu_T$ as before in Step 3.1. \EEE Using \eqref{def: discrete perimeter}, \eqref{def:varphi}, \eqref{eq:perimeterepsilon}, and \eqref{eq:intersectionflatepsilon}, we have
\begin{align}\label{surface_equality_flat_part}
\begin{split}
\sum_{Q\in \mathcal{Q}^{\mathrm{good}}_{\eta_{\varepsilon},\tilde U\EEE}
\cap \partial\Q_{\eta_\eps}}\int_{\partial \hat{ E}\cap(Q)_{2\eta_\varepsilon\sigma_0}^{\mathrm{vert}}}\varphi(\nu)\, \mathrm{d}\mathcal{H}^2&= \sum_{Q\in \mathcal{Q}^{\mathrm{good}}_{\eta_{\varepsilon},\tilde U\EEE}
\cap \partial\Q_{\eta_\eps}}\int_{\partial Q_\eps(E)\cap(Q)_{2\eta_\varepsilon\sigma_0}^{\mathrm{vert}}}\varphi(\nu)\, \mathrm{d}\mathcal{H}^2\\&= \sum_{Q\in \mathcal{Q}^{\mathrm{good}}_{\eta_{\varepsilon},\tilde U\EEE}
\cap \partial\Q_{\eta_\eps}}\sum_{T\in\mathcal{T}_Q}\frac{\eta_{\varepsilon}^2}{\varepsilon^2}\underset{\xi\cdot \nu_T >0}{\sum_{\xi \in V}} \varepsilon^2c_\xi  \leq F_\varepsilon^{\mathrm{per}}(E)\,,
\end{split}
\end{align}
where we also used \eqref{locality of the argument} and the fact that  $\eta_{\varepsilon}/\varepsilon\in \mathbb{N}$, see \eqref{eq: ratio}. Now, let $Q \in \mathcal{Q}^{\mathrm{bad}}_{\eta_\varepsilon,\tilde U}$. By the negation of Lemma \ref{lemma : laminate} there exists $k\in \varepsilon\mathbb{Z}^3$ such that for $Q_{2\varepsilon}(\hat{k}) \in \hat{\mathcal{Q}}_{\eps,2}$,  
$Q_{3\eta_{\varepsilon}}(i)\cap Q_{2\varepsilon}(\hat k)\neq \emptyset$ 
and 
$E\cap Q_{2\varepsilon}(\hat{k})$ does not satisfy  any of the properties (i)-(iii) of Definition  \ref{locally_flat_configurations}. By \eqref{eq: ratio}, we actually have that $k\in \mathbb{Z}_{\varepsilon}\big(Q_{\frac{7}{2}\eta_{\varepsilon}}(i)\big)$, provided that we take $\varepsilon_0>0$ small enough.

Arguing as in the proof of Lemma \ref{lemma : eta lattice}, for all $j\in \mathbb{Z}_{\varepsilon}\big(Q_{\frac{\eta_{\varepsilon}}{4}}(k)\big)$ we have that $Q_{2\varepsilon}(\hat{k}) \subset Q_{\frac{\eta_{\varepsilon}}{2}}(j)$ and therefore, due to property (ii) below \eqref{def: curvature energy}, we get
\begin{align*}
W_{\varepsilon,\mathrm{cell}}^\mathrm{curv}(j,E) \geq c\gamma_{\varepsilon} \eta_{\varepsilon}^{-1-q}\,.
\end{align*}
Notice that, since $j\in \mathbb{Z}_{\varepsilon}\big(Q_{\frac{\eta_{\varepsilon}}{4}}(k)\big)$ and $k\in \mathbb{Z}_{\varepsilon}\big(Q_{\frac{7}{2}\eta_{\varepsilon}}(i)\big)$, we get that $j\in \mathbb{Z}_{\varepsilon}\big(Q_{4\eta_{\varepsilon}}(i)\big)$. This, together with the fact that \EEE $\# \mathbb{Z}_\varepsilon(Q_{\frac{\eta_{\varepsilon}}{4}}(k)) \geq C\frac{\eta^3_{\varepsilon}}{\varepsilon^3}$ for an absolute constant $C>0$, leads to \begin{align}\label{ineq:curv2}
F_\varepsilon^\mathrm{curv}\big(E,Q_{4 \eta_{\varepsilon}}(i)\big) & \geq \sum_{j \in \mathbb{Z}_\varepsilon(Q_{\frac{\eta_{\varepsilon}}{4}}(k))} \varepsilon^3 W_{\varepsilon,\mathrm{cell}}^\mathrm{curv}(j,E) \geq \# \mathbb{Z}_\varepsilon(Q_{\frac{\eta_{\varepsilon}}{4}}(k))\,\varepsilon^3\cdot c\gamma_{\varepsilon}\eta_{\varepsilon}^{-1-q} \geq c\gamma_{\varepsilon}\eta_{\varepsilon}^{2-q}\,.
\end{align}
In view of \eqref{ineq:uniform_area_bound_eta} and \eqref{ineq:curv2}, we can simply estimate 
\begin{align}\label{surface_inequality_non_flat_part}
\begin{split}
\sum_{Q\in \mathcal{Q}^{\rm{bad}}_{\eta_\eps,\tilde U\EEE}}
\int_{\partial \hat{E}\cap Q_{\frac{3}{2}\eta_\varepsilon}}\varphi(\nu)\, \mathrm{d}\mathcal{H}^2&\leq \varphi_{\mathrm{max}} \sum_{Q\in \mathcal{Q}^{\rm{bad}}_{\eta_\eps,\tilde U\EEE}}
\mathcal{H}^{2}(\partial \hat{E}\cap Q_{\frac{3}{2}\eta_\varepsilon})\\&
\leq C\eta_{\varepsilon}^2\,\# \mathcal{Q}^{\rm{bad}}_{\eta_\eps,\tilde U\EEE}
\leq C\eta_{\varepsilon}^2\cdot\gamma_{\varepsilon}^{-1} \eta_{\varepsilon}^{-2+q}  \sum_{Q \in \mathcal{Q}^{\rm{bad}}_{\eta_\eps,\tilde U\EEE}}
 F_\varepsilon^\mathrm{curv}(E,Q_{4\eta_{\varepsilon}}(i)) \\
&\leq C\gamma_{\varepsilon}^{-1}\eta_{\varepsilon}^qF_\varepsilon^\mathrm{curv}(E)\,,
\end{split}
\end{align}
where we again used \eqref{ineq:Qintersectbound} and \eqref{locality of the argument}, and for brevity we set $\varphi_{\mathrm{max}} :=\max_{\mathbb{S}^{2}}\varphi$. 
Here,  $C:=C(\{c_\xi\}_{\xi \in V})>0$, \EEE since $\varphi$ depends on $\{c_\xi\}_{\xi\in V}$.  Now, \eqref{ineq:perimeter_estimate_smoothened_set}, \eqref{surface_equality_flat_part}, and \eqref{surface_inequality_non_flat_part} imply \eqref{lemma:perimeterdis-cont} for $\omega(\varepsilon):=C\gamma_{\varepsilon}^{-1}\eta_{\varepsilon}^q\to 0$ as $\varepsilon\to 0$ (see \eqref{eq:rate gammadelta}).
\end{step}\\
\begin{step}{4.2}(Proof of \eqref{lemma:curvaturedis-cont}) By using \eqref{eq:perimeterepsilon}, \eqref{ineq:uniform_area_bound_eta}, \eqref{incl:boundarylambdahat}, and \eqref{ineq:curv2} we can estimate 
\begin{align*}
\begin{split}
\gamma_{\varepsilon}\int_{\partial \hat {E}\cap\tilde U\EEE} |\bm{A}|^q\,\mathrm{d}\mathcal{H}^{2}& \leq  \gamma_{\varepsilon}  \sum_{Q\in \mathcal{Q}^{\rm{bad}}_{\eta_\eps,\tilde U\EEE}}
\int_{\partial \hat {E}\cap Q_{\frac{3}{2}\eta_\varepsilon}}|\bm{A}|^q\, \mathrm{d}\mathcal{H}^2\leq C\gamma_{\varepsilon}\eta_{\varepsilon}^{2-q}\# \mathcal{Q}^{\rm{bad}}_{\eta_\eps,\tilde U\EEE}
\\
&\leq C\gamma_{\varepsilon} \eta_\eps^{2-q}\gamma_{\varepsilon}^{-1}\eta_{\varepsilon}^{-2+q} \sum_{Q\in\mathcal{Q}^{\rm{bad}}_{\eta_\eps,\tilde U\EEE}}
F_\varepsilon^\mathrm{curv}(E,Q_{4\eta_{\varepsilon}}(i))\leq CF_\varepsilon^{\mathrm{curv}}(E)\,,
\end{split}
\end{align*}
which proves \eqref{lemma:curvaturedis-cont}, where the last step follows as in \eqref{surface_inequality_non_flat_part}.
\end{step}\\
\begin{step}{4.3}(Proof of \eqref{lemma:measuredis-cont}) By the construction of the set $\hat {E}$ we have that 
\begin{align*}
\left(\hat{E} \setminus Q_\varepsilon(E)\right) \cap \tilde U\EEE \subset \bigcup_{Q \in \partial\mathcal{Q}_{\eta_\varepsilon, \tilde U\EEE}}Q_{3\eta_\varepsilon}\,,
\end{align*}
where we recall the notation in \eqref{eq: much not}, and set $\partial\mathcal{Q}_{\eta_\varepsilon,\tilde U\EEE}:= \lbrace Q \in \partial\mathcal{Q}_{\eta_\varepsilon}\colon Q\cap \tilde U\EEE \neq \emptyset\rbrace$. 
By the fact that we have reduced to the case that $E$ is an $\eta_\varepsilon$-scale set in the sense of \eqref{def: Eeta}, we have $F_\varepsilon^\mathrm{per}(E, Q_{3\eta_\varepsilon})\geq c\eta_\varepsilon^2$  for all $Q \in \partial\mathcal{Q}_{\eta_\varepsilon,\tilde U\EEE}$ 
for some \EEE constant $c>0$ depending on $\{c_\xi\}_{\xi \in V}$. Therefore, by \eqref{ineq:Qintersectbound} and \eqref{locality of the argument} we obtain 
\begin{align*}
\mathcal{L}^3\big((\hat {E}\setminus Q_\varepsilon(E))\cap\tilde U\EEE\big) \leq C\eta^3_\varepsilon \, \#\partial\mathcal{Q}_{\eta_\varepsilon,\tilde U\EEE} &\leq C c^{-1} \eta_\varepsilon \sum_{Q\in\partial\mathcal{Q}_{\eta_\varepsilon,\tilde U\EEE}}F_\varepsilon^\mathrm{per}(E,Q_{3\eta_\varepsilon})\leq  C\eta_\varepsilon F_\varepsilon^\mathrm{per}(E)\,,
\end{align*}
where $C>0$ only depends on $\{c_\xi\}_{\xi \in V}$. This finishes the proof of \eqref{lemma:measuredis-cont}. 
\end{step}
\end{step}
\end{proof}
We now have all the necessary ingredients to prove the first part of  our compactness theorem.  The second part on the validity of the \textit{Cauchy-Born rule for discrete symmetric gradients}, i.e., \eqref{symmetric_Cauchy-Born}, will be postponed to the next section.

\begin{proof}[Proof of Theorem \ref{theorem:compactness}\,{\rm (i)}] 
Having fixed the boundary condition $u_0\in W^{1,\infty}(\R^3;\R^3)$, let us consider  $(u_\varepsilon,E_\varepsilon)_{\varepsilon>0}$, $E_\varepsilon\subset\mathbb{Z}_{\varepsilon}(\Omega)$ and $u_\varepsilon \in \mathcal{U}_\varepsilon(u_0,\partial_D\Omega,E_\eps)$, such that
\begin{align}\label{ineq:equiboundedness}
\sup_{\varepsilon>0} \mathcal{F}_\varepsilon(u_\varepsilon,E_\varepsilon) <+\infty\,.
\end{align}
In particular, $u_\varepsilon=u_0$ in $\Z_\eps(U\setminus\Omega)$ (cf.\ Definition \ref{boundary values}). 
Consider also a sequence of smooth open \EEE sets $(U_m)_{m\in \mathbb{N}}$, with $U_m\EEE\subset\subset U$ and \EEE
\begin{equation}\label{compact_exhaustion}
\lim_{m\to\infty}\mathcal{L}^3(U\setminus
U_m)=0, \quad \sup_{m\in \mathbb{N}}\mathcal{H}^2(\partial
U_m)<+\infty\,.
\end{equation}
Let $\hat{E}_\varepsilon\in \mathcal{A}_{\mathrm{reg}}(U)$ be the smooth set obtained from $E_\varepsilon$ through Lemma \ref{lemma:eta scale smoothening}, applied for $(U,U_m)$ in the place of $(U,\tilde U\EEE)$ here. This lemma guarantees that $Q_\varepsilon(E_{\varepsilon})\cap U\subset\hat {E}_\varepsilon\subset U$, and that we can find a decreasing sequence $(\varepsilon_m)_{m\in \mathbb{N}}$, $\varepsilon_m:=\varepsilon (U_m,U)\in (0,1)$ such that for every $m\in \mathbb{N}$ and every $\varepsilon\in(0,\varepsilon_m]$, 
\begin{align}\label{disc_cont_est_combined}
\begin{split}
\rm{(i)} &\quad \mathrm{Per}_{\varphi}(\hat {E}_\varepsilon,U_m) 
\leq F_\varepsilon^\mathrm{per}(E_\varepsilon) + \omega(\varepsilon) F_\varepsilon^\mathrm{curv}(E_\varepsilon)\EEE\,,\\
\rm{(ii)} &\quad \hat{\gamma}_{\varepsilon} \int_{\partial\hat {E}_\varepsilon\cap U_m} |\bm{A}|^q\,\mathrm{d}\mathcal{H}^{2} 
\leq F_\varepsilon^\mathrm{curv}(E_{\varepsilon})\,\EEE,\\
\rm{(iii)} &\quad \mathcal{L}^3\big((\hat {E}_\varepsilon\setminus Q_\varepsilon(E_{\varepsilon}))\cap U_m\big)\leq C\eta_{\varepsilon} \left(F_\varepsilon^\mathrm{per}(E_{\varepsilon}) + F_\varepsilon^\mathrm{curv}(E_{\varepsilon})\EEE\right)\EEE\,, \\
\rm{(iv)} &\quad \dist(U_m \setminus \hat{E}_\eps, Q_\eps(E_\eps) \cap U) \ge c_0\eta_\eps\,,
\end{split}
\end{align}
where $\omega(\varepsilon)\to 0$ as $\varepsilon\to 0$ and $\hat{\gamma}_{\varepsilon} := \gamma_\eps/C$ for a sufficiently large constant $C>0$, depending only on the coefficients $\{c_\xi\}_{\xi\in V}$. We also \EEE recall that $F_\eps^{\rm{per}}(E)\,, F_\eps^{\rm{curv}}(E)$ are given by \eqref{def: discrete perimeter} and \eqref{def: curvature energy} respectively.  

\BBB By $\tilde {u}_\eps$ we denote the piecewise affine interpolation of $u_\eps$ with respect to a standard triangulation subordinate to the lattice $\varepsilon\mathbb{Z}^3$, see also Subsection 3.2 in \cite{Schmidt:2009} for more details on the definition of the interpolation. By construction, it is clear that $\tilde {u}_\eps|_{U\setminus \hat{E}_\eps}\in H^1(U\setminus \hat{E}_\eps; \R^3)$. By \eqref{cubic_set_of_A}, \eqref{eq: ratio}, and \eqref{disc_cont_est_combined}(iv) we find that \EEE $Q \subset U \setminus Q_\eps(E_\eps)$ for every $Q \in \hat\Q_{\eps,U_m\setminus \hat{E}_\eps}$. Moreover, for each such \EEE $Q$, we get 
$$\fint_{Q} {\rm dist}^2({\rm Id}+ \delta_\eps \nabla\tilde u_\eps, SO(3))\, \mathrm{d}x \le C {\rm dist}^2(Z + \delta_\eps \overline{\nabla}u_\eps|_{Q}, \bar{SO}(3))$$
due to \cite[Lemma \EEE 3.6(i)]{Schmidt:2009}. Thus, \EEE  by our structural assumptions on the discrete elastic energy (cf.\ Subsection \hyperref[Assumptions on the elastic energy]{2.2}), which are the same ones as in \cite{Schmidt:2009}, by \cite[Lemma 3.7]{Schmidt:2009}, as well as \eqref{ineq:equiboundedness} and \eqref{disc_cont_est_combined}, for every $m\in \mathbb{N}$ we deduce that 
\begin{equation}\label{continuum_energy_bounded}
\sup_{\varepsilon\in (0,\varepsilon_m]}\left(\delta_\varepsilon^{-2} \int_{
U_m\setminus \hat{E}_{\varepsilon}}\mathrm{dist}^2(\mathrm{Id}+\delta_{\varepsilon}\nabla\tilde{u}_{\varepsilon}, SO(3))
+ \mathrm{Per}_\varphi(\hat{E}_{\varepsilon},
U_m) + \hat{\gamma}_{\varepsilon} \int_{\partial\hat{E}_{\varepsilon}\cap
U_m} |\bm{A}|^q\right) \le {C}\,, \EEE
\end{equation}
where the constant $C>0$ is independent of $m$. 

Regarding the boundary conditions, we observe the following: let $\tilde u_{0,\eps}$ be the piecewise affine interpolation of the boundary datum $u_0\in W^{1,\infty}(\R^3;\R^3)$ with respect to the same triangulation of the lattice $\eps\Z^3$. Then \EEE $\tilde u_{0,\eps}\to u_0$ locally uniformly on $\R^3$ as $\eps\to 0$. The fact that $\tilde u_\eps\equiv \tilde u_{0,\eps} \text{ on } 
\{x\in U\setminus \Omega\colon \mathrm{dist}(x,\partial(U\setminus \Omega))\geq \sqrt{3}\eps\}\,,$ along with \eqref{eq:rate gammadelta}, that $\eta_\varepsilon\to 0$ as $\varepsilon\to0$, \cite[Remark 3.10]{KFZ:2021}, \eqref{continuum_energy_bounded}, and \eqref{compact_exhaustion}, \EEE shows that we can apply the compactness result \cite[Proposition 3.1]{KFZ:2021} (with $\hat{\gamma}_\eps$ in place of $\gamma_\delta$, and ${U}_m$ in place of $\Omega$) and a diagonal argument in order to extract a (non-relabeled) subsequence $(u_\varepsilon,E_\varepsilon)_{\varepsilon>0}$ for which the following holds:

There exists $u \in GSBD^2(U)$,  sets of finite perimeter $E \in \M(U)$,  $(E_\varepsilon^\ast)_{\varepsilon>0} \subset \M(\R^3)$ with  $\hat{E}_\varepsilon\subset E_\varepsilon^\ast$, as well as sets (of finite perimeter) $\hat{\omega}_u , (\hat{\omega}_u^\varepsilon)_{\varepsilon>0} \subset \M(U)$ with $\H^2(\partial^* \hat\omega_u) + \sup_{\eps>0}\H^2(\partial^* \hat\omega_u^\eps) \le C$ \EEE such that $u\equiv 0$ on $E \cup \hat{\omega}_u$, and as $\varepsilon\to 0$ we have  
\begin{align}\label{eq:lsc0}
\begin{split}
{\rm (i)} & \ \ \tilde{u}_\varepsilon \to u \text{ in measure on $U\setminus \hat{\omega}_u$}\,;\\
{\rm (ii)} &  \ \ \chi_{U \setminus (E^\ast_\varepsilon \cup \hat{\omega}^\varepsilon_u)}e(\tilde{u}_\varepsilon) \rightharpoonup \chi_{U \setminus (E \cup \hat{\omega}_u)} e(u)  \ \ \text{weakly in $L^2_{\rm loc}(U;\R^{3\times 3}_{\rm sym})$}\,;\\
{\rm (iii)} & \ \ \mathcal{L}^3(\lbrace |\nabla\tilde{u}_\varepsilon|>\kappa_{\varepsilon} \rbrace \setminus \hat{\omega}_u)  \to 0\,;\\
{\rm (iv)} & \ \   \liminf_{\varepsilon \to 0}   \int_{\partial{E_{\varepsilon}^{\ast}}\cap U} \varphi(\nu_{E^\ast_{\varepsilon}}) \, {\rm d}\mathcal{H}^{2} \le  \liminf_{\varepsilon \to 0} \mathcal{F}_{\mathrm{surf}}^{\varphi,\hat{\gamma}_{\varepsilon},q}(\hat{E}_{\varepsilon\EEE},U)\,;\\
{\rm (v)} & \ \ \lim_{\varepsilon \to 0}\mathcal{L}^3(\hat{E}_\eps \triangle E) = \lim_{\varepsilon \to 0 }\mathcal{L}^3(\hat{\omega}_u^\varepsilon \triangle\hat{\omega}_u) = \lim_{\varepsilon \to 0} \mathcal{L}^3(E_\varepsilon^\ast \setminus \hat{E}_\varepsilon) = 0 \,,  
\end{split}
\end{align}
where, as in \cite[Equation (3.5)]{KFZ:2021}, $(\kappa_{\varepsilon})_{\varepsilon>0}$ is such that 
\begin{equation}\label{rate_kappa_epsilon}
\delta_\varepsilon \kappa_{\varepsilon}^3 \to 0, \ \gamma_{\varepsilon}^{3/q}\kappa_{\varepsilon} \to \infty\, \ \text{as}\ \  \varepsilon\to 0\,.
\end{equation}
For \eqref{eq:lsc0}(iv), we have also  used the notation 
$$ \mathcal{F}_{\mathrm{surf}}^{\varphi,\hat{\gamma}_{\varepsilon},q}(\hat{E}_{\varepsilon},U)= \mathrm{Per}_\varphi(\hat{E}_{\varepsilon},U) + \hat{\gamma}_{\varepsilon}  \int_{\partial\hat{E}_{\varepsilon}\cap U} |\bm{A}|^q\,,$$
see  \cite[Equation  (2.1)]{KFZ:2021}. In particular, 
the first two \EEE estimates in \eqref{disc_cont_est_combined} imply that 
\begin{align}\label{eq: fflat}
\liminf_{\varepsilon \to 0}  \mathcal{F}_{\mathrm{surf}}^{\varphi,\hat{\gamma}_{\varepsilon},q}(\hat{E}_{\varepsilon},U) 
& \le \liminf_{\varepsilon \to 0}\big( F_\varepsilon^{\mathrm{per}}(E_\varepsilon)+F_\varepsilon^{\mathrm{curv}}(E_\varepsilon) \big)\,.
\end{align}
Let $\bar{u}_\varepsilon$ be the interpolation of $u_\eps$ being constant on cubes of sidelength $\eps$ in the shifted lattice $\eps(\Z^3+(\tfrac{1}{2},\tfrac{1}{2},\tfrac{1}{2}))$ \EEE (see the comments before Definition~\ref{def:convergence}). One can directly check that the measure convergence of $\tilde{u}_\varepsilon$ and $\bar{u}_\varepsilon$ are equivalent, as commented below \cite[Definition 2.5]{Schmidt:2009}. 
For instance, \EEE this can be done using \cite[Proposition A.1 and Remark A.2]{AliFocGel}, together with the fact that for every $i\in\eps\Z^3$ the piecewise affine interpolation $\tilde u_\eps$ is constructed in such a way that
$$\tilde u_\eps(\hat i)=\bar u_\eps(\hat i)=\fint_{Q_\eps(\hat i)}\tilde u_\eps\, \mathrm{d}x\,,$$
cf. \cite[Equation (6)]{Schmidt:2009}.  \EEE  Thus, \eqref{eq:lsc0}(i) implies that $\bar{u}_\varepsilon \to u$ in measure on $\Omega \setminus \omega_u$, where $\omega_u:=\hat{\omega}_u\cap\Omega$.  
Moreover, by the fact $E_{\varepsilon}\subset\mathbb{Z}_{\varepsilon}(\Omega)$, 
\eqref{disc_cont_est_combined}(iii), \EEE and the $L^1(U)$-convergence of $\hat{E}_{\varepsilon}$ to $E$, we indeed obtain  $E \subset \Omega$ and $\chi_{Q_\varepsilon(E_\varepsilon)\cap\Omega}\to\chi_E$ in $L^1(\Omega)$. This finishes the proof of the first part of the theorem.  
\end{proof} 

\section{The validity of the Cauchy-Born rule for discrete symmetric gradients}\label{sec: 4}
In view of our compactness result \cite[Proposition 3.1]{KFZ:2021}, in particular \eqref{eq:lsc0} above, the aim of this section is to show that, loosely speaking, weak $L^2$-convergence of the symmetric gradients of the piecewise affine interpolations of $(u_\eps)_{\eps>0}$ (outside the void set) implies the weak $L^2$-convergence of the discrete symmetric gradients $(\bar e(u_\eps))_{\eps>0}$ (outside a slightly modified void set) towards the same continuum \EEE limit. As already mentioned, since we believe that this result can be useful also in other discrete-to-continuum problems for elastic materials with surface discontinuities, we phrase the statement in any dimension. 

For its formulation, we adopt all the notations introduced in the last subsection of the Introduction, as well as Subsection~\ref{subsection 2.1}. Moreover, as before, for a discrete map $u\colon\eps\Z^d\to \R^d$ we denote by $\tilde u$ its piecewise affine interpolation according to a triangulation subordinate to the lattice $\eps\Z^d$, constructed as in \cite[Section~3.2]{Schmidt:2009}. The collection of $d$-dimensional simplices used in this construction will be denoted by $\mathcal{S}_\varepsilon$, and we also define
\begin{align}\label{def:simplices}
\mathcal{S}_{\varepsilon,Q}:=\{S \in \mathcal{S}_\eps \colon \, S \subset Q \rbrace \ \ \ \text{for all }  Q\in \hat\Q_\eps\,,
\end{align}
where we recall the notation in and below \eqref{cubic_set_of_A}.  Moreover, note  that by  construction all $d$-simplices in $\mathcal{S}_\eps$  have the same $\L^d$-measure. 

\begin{theorem}[Cauchy-Born rule\EEE]\label{discrete_Cauchy_Born_any_d}
Let $d\in \N$, $d\geq 2$, and $\Omega\subset \R^d$ be a bounded open \EEE Lipschitz set\EEE. Consider a sequence of discrete displacements $(
u_\eps)_{\eps>0}$, $u_\eps\colon \eps\Z^d
\mapsto \R^d$\,,
for which the following holds true. 
There exists $u\in GSBD^2(\Omega)$, a set of finite perimeter $W
\in \mathfrak{M}(\Omega)$ with $u\equiv 0$ on $W,$
as well as sets of finite perimeter $(W_\eps)_{\eps>0}\subset \mathfrak{M}(\R^d)$ such that 
\begin{enumerate}[label=\textrm{{\rm (H.\arabic*)}}, leftmargin=3.2cm]
\item\label{H1}
$\lim_{\varepsilon \to 0 }\mathcal{L}^d(W_\eps\triangle W
)=0\,, \quad \sup_{\eps>0}\H^{d-1}(\partial^* W_\eps
)  <+\infty\,;$\\[-5pt]
\item
\label{H2}
$\tilde{u}_\varepsilon \to u \text{ in measure on } \Omega\setminus W
\,;$\\[-5pt]
\item
\label{H3}
$\chi_{\Omega\setminus W_\eps
}e(\tilde{u}_\varepsilon) \rightharpoonup \chi_{\Omega\setminus W
} e(u)  \ \ \text{weakly in }L^2_{\rm loc}(\Omega;\R^{d\times d}_{\rm sym})\,.$
\end{enumerate}
Then, there exist sets of finite perimeter $(V_\eps)_{\eps>0}\subset \mathfrak{M}(\R^d)$, with $W_\eps\subset V_\eps$, such that  
\begin{align}\label{conclusions_of_symmetric_Cauchy-Born}
\begin{split} 
{\rm (i)}&\quad \mathcal{L}^d(Q \setminus  V_\eps) = 0 \text{ or }   \mathcal{L}^d(S \setminus  V_\eps) \ge  \frac{1}{2}  \mathcal{L}^d(S) \ \forall \, S \in \mathcal{S}_{\varepsilon,Q}\,, \forall \, Q\in  \hat\Q_\eps\,;\\ 
{\rm (ii)}&\quad \lim_{\varepsilon \to 0 }\mathcal{L}^d(V_\eps\setminus W_\eps \EEE)=0\,, \  \sup\nolimits_{\eps>0}\H^{d-1}(\partial^* V_\eps
)<+\infty\,;\\
{\rm (iii)}&\quad \chi_{\Omega\setminus V_\eps}\bar{e}(u_\varepsilon) \rightharpoonup \chi_{\Omega \setminus W
} e(u)Z  \ \ \text{weakly in }L^2_{\rm loc}(\Omega;\R^{d\times 2^d})\,,
\end{split}
\end{align}
where $Z$ is defined in \eqref{Z_points}.  
\end{theorem}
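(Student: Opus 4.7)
The plan is to enlarge $W_\eps$ into a ``cube-compatible'' set $V_\eps$ so that, on cubes outside $V_\eps$, the piecewise affine interpolation $\tilde u_\eps$ behaves like an $H^1$-Sobolev map and a discrete Cauchy--Born analysis can be implemented in the spirit of \cite{Schmidt:2009}; the weak convergence in (iii) will then follow from a cell-averaging and test-function argument together with hypothesis (H.3).

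For the construction I would set
\[
V_\eps \;:=\; W_\eps \,\cup\, \bigcup\Big\{Q \in \hat\Q_\eps \colon \exists\, S \in \mathcal{S}_{\eps,Q}\text{ with }\mathcal{L}^d(S\setminus W_\eps) < \tfrac{1}{2}\mathcal{L}^d(S)\Big\},
\]
so that property (i) is immediate: if $Q \not\subset V_\eps$ up to a null set, then no $S \in \mathcal{S}_{\eps,Q}$ is among the selected simplices and $\mathcal{L}^d(S\setminus V_\eps) = \mathcal{L}^d(S\setminus W_\eps) \geq \tfrac{1}{2}\mathcal{L}^d(S)$. For (ii), the estimate $\mathcal{L}^d(V_\eps\setminus W_\eps) \to 0$ would be obtained by splitting the added cubes into those where $W_\eps$ and $W_\eps^c$ occupy comparable fractions of $Q$ (for which the relative isoperimetric inequality in $Q$ gives $\mathcal{H}^{d-1}(\partial^* W_\eps \cap Q) \gtrsim \eps^{d-1}$, so their number is at most $C\,\mathcal{H}^{d-1}(\partial^* W_\eps)/\eps^{d-1}$) and cubes nearly filled by $W_\eps$ (for which a H\"older combination with the isoperimetric inequality yields $\sum_Q \mathcal{L}^d(Q\setminus W_\eps) \lesssim \eps\,\mathcal{H}^{d-1}(\partial^* W_\eps)$); both cases combined give $\mathcal{L}^d(V_\eps\setminus W_\eps) \leq C\eps\,\mathcal{H}^{d-1}(\partial^* W_\eps) \to 0$ by (H.1).

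On any cube $Q = Q_\eps(\hat i) \in \hat\Q_\eps$, the matrix $\bar e(u_\eps)(i)\,Z^T$ is automatically symmetric (because $\bar e(u_\eps)(i) \perp SZ$ for every skew $S$), and by a symmetric-gradient version of the discrete Cauchy--Born rule of \cite[Lemmata 3.6--3.7]{Schmidt:2009} it coincides, up to a normalizing factor $2^{d-2}$ (coming from $ZZ^T = 2^{d-2}\mathrm{Id}$), with $\fint_Q e(\tilde u_\eps)\,\mathrm{d}x$ modulo an error that vanishes weakly in $L^2$ as $\eps \to 0$. To pass to the limit in (iii), I would test $\chi_{\Omega\setminus V_\eps}\,\bar e(u_\eps)$ against an arbitrary $\Phi \in C_c^\infty(\Omega;\R^{d\times 2^d})$, decomposed pointwise as $\Phi = \phi Z + \Psi$ with $\phi := 2^{2-d}\,\Phi Z^T$ and $\Psi := \Phi - \phi Z$ satisfying $\Psi Z^T = 0$: the $\phi Z$-contribution reduces via $\bar e(u_\eps):\phi Z = (\bar e(u_\eps)Z^T):\phi$ and the identity above to integrals of the cell average of $e(\tilde u_\eps)$ against $\phi$, which converge to $\int\chi_{\Omega\setminus W}e(u):\phi\,\mathrm{d}x$ by (H.3), $\mathcal{L}^d(V_\eps\setminus W_\eps)\to 0$, and $\mathcal{L}^d(W_\eps\triangle W)\to 0$, while the $\Psi$-contribution vanishes identically on the limit side (since $e(u)Z:\Psi = e(u):\Psi Z^T = 0$) and in the limit on the discrete side by a cell-fluctuation argument exploiting that $e(\tilde u_\eps)$ minus its $\eps$-cell average converges weakly to zero in $L^2$.

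The main difficulty I expect will be the uniform perimeter bound $\sup_\eps\mathcal{H}^{d-1}(\partial^* V_\eps) < +\infty$ in (ii): cubes nearly filled by $W_\eps$ that are not fully contained in $W_\eps$ can accumulate in potentially large numbers, and their boundary area is not directly controlled by $\mathcal{H}^{d-1}(\partial^* W_\eps)$; the proof will require a careful partition of the added cubes according to the local density of $W_\eps$ and an argument charging each group either to a portion of $\mathcal{H}^{d-1}(\partial^* W_\eps)$ or absorbing it into $W_\eps$ via a small modification of the construction.
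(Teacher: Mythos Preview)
Your construction of $V_\eps$ and the sketch for (i) and the volume part of (ii) are close to the paper's (there the cubes are classified by thresholds on $\H^{d-1}(\partial^* W_\eps\cap Q)$ and on $\L^d(Q\setminus W_\eps)$ rather than by your simplex criterion, but the isoperimetric mechanism is the same). You correctly flag the perimeter bound as the delicate point; the paper handles it with a separate geometric lemma (Lemma~\ref{lemma: boundary-fp}) controlling $\H^{d-1}(\partial^*P\cap\partial Q)$ by $\H^{d-1}(\partial^*P\cap\mathrm{int}(Q))$ for sets $P\subset Q$ of small volume.

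The genuine gap is in (iii), in the $\Psi$-component of your decomposition. Since $\bar e(u_\eps)=\overline\nabla u_\eps-A_\eps Z$ with $A_\eps$ skew (the translation part already vanishes by the definition of $\overline\nabla$) and $\Psi Z^T=0$, one has $\bar e(u_\eps):\Psi=\overline\nabla u_\eps:\Psi$, a combination of the lattice values $u_\eps(\hat i+\eps z_l)$ that annihilates every affine map. Showing that this vanishes weakly in $L^2$ on $\Omega\setminus V_\eps$ \emph{is} the Cauchy--Born statement; it is not reducible to ``$e(\tilde u_\eps)$ minus its cell average $\rightharpoonup 0$'', which is a fluctuation living in $\R^{d\times d}_{\rm sym}$ and carries altogether different information. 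In Schmidt's Sobolev proof this step is done by summation by parts, shifting the discrete difference onto the smooth test function; with a void set present, the same manipulation produces boundary terms on $\partial V_\eps$ that involve the \emph{values} of $u_\eps$, and these are not bounded. The same objection applies to your claim that $\bar e(u_\eps)Z^T$ equals $2^{d-2}\fint_Q e(\tilde u_\eps)$ ``modulo an error vanishing weakly'': that error is again the Cauchy--Born content, and Lemmata~3.6--3.7 of \cite{Schmidt:2009} are pointwise estimates, not the weak-convergence argument you need.

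The paper's route for (iii) is quite different and avoids integration by parts on the perforated domain entirely. After establishing $L^2_{\rm loc}$-boundedness of $\chi_{\Omega\setminus V_\eps}\bar e(u_\eps)$ (Lemma~\ref{weak_limit_for_discrete_multies}), it identifies the weak limit $\xi$ by a blow-up at a.e.\ $x_0\in\Omega\setminus W$ (Lemma~\ref{blow-up}): the rescaled maps $\tilde z_\eps\in GSBD^2(B_1)$ have vanishing jump set and converge in measure to $\nabla u(x_0)y$. The Korn--Poincar\'e inequality in $GSBD^2$ \cite{ChaCagSca} then yields Sobolev approximants $v_\eps\in H^1(B_1)$ agreeing with $\tilde z_\eps$ off small sets, and a sampling argument produces discrete maps $v_\eps^\eta$ whose affine interpolations are bounded in $H^1$ and converge weakly to $\nabla u(x_0)y$. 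Only at this point is Schmidt's Sobolev Cauchy--Born rule \eqref{eq: bernd} invoked, giving $\overline\nabla v_\eps^\eta\rightharpoonup\nabla u(x_0)Z$; the discrepancy to $\overline\nabla\zeta_\eps$ is controlled on the good lattice points (Lemma~\ref{lemma: weak limit F}), which identifies $\xi(x_0)=e(u)(x_0)Z$.
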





The second part of the compactness result is a direct consequence of Theorem \ref{discrete_Cauchy_Born_any_d}. 
\begin{proof}[Proof of Theorem \ref{theorem:compactness}\,{\rm (ii)}] Having in mind \eqref{disc_cont_est_combined} and \eqref{eq:lsc0}, and the definition $\omega_u := \hat{\omega}_u \cap \Omega$, we consider the sets 
$(W_\eps)_{\eps>0}$ and $W$ given by 
\begin{align}\label{eq: tildeVdef}
W_\eps:=(E_\eps^*\cup \hat\omega_u^\eps 
\cup \hat{\omega}_u\EEE)\cap \Omega\,,\quad W:=E\cup \omega_u\,.
\end{align} 
The first condition in \textit{\ref{H1}} \EEE is clearly satisfied. The second one 
follows by the construction of the sets. Indeed,  by $\min_{\S^2}\varphi>0$, \eqref{eq:lsc0}(iv), \eqref{eq: fflat}, and the fact that $\sup_{\eps>0}\mathcal{F}_\eps(u_\eps,E_\eps)<+\infty$, we get $\sup_{\eps>0}\H^2(\partial E_\eps^\ast\cap \Omega)<+\infty$. Moreover, we have $\sup_{\eps>0}\H^2(\partial^* \hat\omega_u^\eps)<+\infty$, see before \eqref{eq:lsc0}.  
Conditions \textit{\ref{H2}} and \textit{\ref{H3}} are also satisfied because of \eqref{eq:lsc0}(i) and (ii), respectively. Then, we can  apply Theorem \ref{discrete_Cauchy_Born_any_d} (for $d=3$) to obtain sets of finite perimeter $(V_\eps)_{\eps>0}\subset \mathfrak{M}(\R^3)$ such that the desired properties in \eqref{ineq:small difference in measures} and \eqref{symmetric_Cauchy-Born} are satisfied, where we also use \eqref{disc_cont_est_combined}(iv), \eqref{eq:lsc0}(v), and \eqref{conclusions_of_symmetric_Cauchy-Born}(ii),(iii). 
\end{proof}

The main point is that \textit{\ref{H3}} 
implies \eqref{conclusions_of_symmetric_Cauchy-Born}{(iii)}, up to a modification of the void set which is negligible in the limit (see \eqref{conclusions_of_symmetric_Cauchy-Born}{(ii)}). A property of this form has been established by {\sc Schmidt} in \cite[Theorem~2.6]{Schmidt:2009}: given discrete displacements $u_\eps \colon \eps \Z^d \EEE \to \R^d$ and corresponding piecewise affine interpolations $\tilde{u}_\eps$ such that $\tilde{u}_\eps \rightharpoonup u$ weakly in $H^1(\Omega;\R^d)$ for some $u \in H^1(\Omega;\R^d)$, then it holds that
\begin{align}\label{eq: bernd}
\overline{\nabla} u_\eps \rightharpoonup \nabla u Z \text{ weakly in $L^2_{\rm loc}(\Omega;\R^{d \times 2^d} \EEE )$}\,.
\end{align}
The proof particularly uses integration by parts for Sobolev functions and cannot be reproduced in our setting with voids. Therefore, a delicate blow-up argument and an approximation of $GSBD^2$ functions by Sobolev functions needs to be employed to reduce our problem to the setting in \eqref{eq: bernd}  for the corresponding symmetric gradients. The remainder of this section is devoted to the proof of Theorem \ref{discrete_Cauchy_Born_any_d}. The reader is invited to skip the following subsections on first reading and to proceed directly with the lower bound in Section~\ref{sec: 5}.

\subsection{Proof of Theorem \ref{discrete_Cauchy_Born_any_d}}

We give an outline of the  main ideas of the proof, stating the key intermediate results as preliminary lemmata. The proofs of these lemmata are postponed to Subsection \ref{sec: int lemmata}.  We start by showing     the existence of a weak $L^2$-limit for the discrete symmetric gradients.


\begin{lemma}[Compactness for discrete symmetric gradients]\label{weak_limit_for_discrete_multies}
Let $d\in \N$, $d\geq 2$, let $\Omega\subset \R^d$ be a bounded open \EEE Lipschitz set\EEE, and let $(u_\eps)
_{\eps>0}$ and \EEE $(W_\eps)_{\eps>0}$ satisfy all hypotheses of Theorem~\ref{discrete_Cauchy_Born_any_d}. There exist sets of finite perimeter $(V_\eps)_{\eps>0}\subset \mathfrak{M}(\R^d)$ with $
W_\eps \subset V_\eps$ satisfying \eqref{conclusions_of_symmetric_Cauchy-Born}{\rm (i),(ii)}, and for each bounded open \EEE Lipschitz subset \EEE $\tilde\Omega\subset\subset\Omega$ there exists $\eps_0:=\eps_0(\tilde\Omega,\Omega\EEE)\in (0,1)$ such that
\begin{equation}\label{uniform_L^2_bound_symmetric gradients}
\sup_{\eps\in(0,\eps_0)}\big\|\chi_{\Omega\setminus V_\eps}\bar{e}(u_\eps)\big\|_{L^2(\tilde \Omega)}<+\infty\,.
\end{equation}
In particular, there exists $\xi\in L^2_{\rm loc}(\Omega;\R^{d\times 2^d})$ such that, up to subsequence (not relabeled), 
\begin{equation}\label{abstract_weak_convergence}
\chi_{\Omega\setminus V_\eps}\bar{e}(u_\varepsilon)\rightharpoonup \xi\quad \text{weakly in } L^2_{\rm loc}( \Omega;\R^{d\times 2^d})\,. 
\end{equation}
\end{lemma}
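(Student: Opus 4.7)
The plan is to construct $V_\eps$ by adjoining to $W_\eps$ exactly those cubes of $\hat\Q_\eps$ in which $W_\eps$ occupies too large a volume fraction for a simplex-wise bound on the discrete symmetric gradient to be available. With $\alpha := (2d!)^{-1}$, I would set
\[
V_\eps := W_\eps \,\cup \bigcup\nolimits_{Q \in B_\eps} Q, \qquad B_\eps := \bigl\{Q \in \hat\Q_\eps \colon \mathcal{L}^d(Q \cap W_\eps) \ge \alpha\,\eps^d\bigr\}.
\]
Since $\hat\Q_\eps$ partitions $\R^d$ up to null sets, each $Q\in\hat\Q_\eps$ is either wholly in $V_\eps$ (when $Q\in B_\eps$) or else satisfies $V_\eps\cap Q=W_\eps\cap Q$, in which case every $S\in\mathcal{S}_{\eps,Q}$ obeys $\mathcal{L}^d(S\cap V_\eps)\le\mathcal{L}^d(Q\cap W_\eps)<\alpha\eps^d=\tfrac12\mathcal{L}^d(S)$. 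This at once yields \eqref{conclusions_of_symmetric_Cauchy-Born}{\rm(i)}.

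To establish \eqref{conclusions_of_symmetric_Cauchy-Born}{\rm(ii)}, write $a_Q:=\mathcal{L}^d(Q\cap W_\eps)$, $b_Q:=\eps^d-a_Q$, and fix $\delta_\eps=\eps^{1/2}$. The bad cubes with $b_Q\le\delta_\eps\eps^d$ contribute at most $C\delta_\eps$ to $\mathcal{L}^d(V_\eps\setminus W_\eps)$; for the remaining bad cubes both $a_Q$ and $b_Q$ are fractions of $\eps^d$, so the relative isoperimetric inequality in the convex cube $Q$ gives $\mathcal{H}^{d-1}(\partial^*W_\eps\cap Q)\gtrsim\delta_\eps^{(d-1)/d}\eps^{d-1}$, which, combined with \ref{H1}, bounds their cardinality by $C\eps^{-(d-1)}\delta_\eps^{-(d-1)/d}$ and hence their total contribution by $C\eps\,\delta_\eps^{-(d-1)/d}$; both terms vanish for $\delta_\eps=\eps^{1/2}$. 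For the perimeter, note that $\partial^*V_\eps\subset\partial^*W_\eps\cup\partial^*\bigl(\bigcup_{Q\in B_\eps}Q\bigr)$, and that each face of the second set separates a pair of neighbouring cubes $Q\in B_\eps$, $Q'\notin B_\eps$ with $a_Q\ge\alpha\eps^d$ and $b_{Q'}>(1-\alpha)\eps^d$; the relative isoperimetric inequality on the convex set $Q\cup Q'$ therefore forces $\mathcal{H}^{d-1}(\partial^*W_\eps\cap(Q\cup Q'))\ge c\eps^{d-1}$, and summing with uniformly bounded overlap bounds the number of such faces by $C\eps^{-(d-1)}\mathcal{H}^{d-1}(\partial^*W_\eps)$, whence the extra $\mathcal{H}^{d-1}$-measure is absolutely bounded by \ref{H1}.

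The key ingredient for \eqref{uniform_L^2_bound_symmetric gradients} will be the cube-wise algebraic inequality
\[
|\bar e(u)|_Q|^2 \le C(d)\sum_{S\in\mathcal{S}_{\eps,Q}}|e(\tilde u)|_S|^2,
\]
valid for every discrete $u$ defined on the vertices of $Q$. Both sides are non-negative quadratic forms in the $d\cdot 2^d$ vertex values; both are invariant under the addition of an infinitesimal rigid motion (by the definition of $P$ on the left, and elementary on the right), and both vanish precisely on this $\tfrac{d(d+1)}{2}$-dimensional subspace---the left-hand side by the defining property of $\R^{d\times 2^d}_{\mathrm{s,t}}$, and the right-hand side because the vanishing of $e(\tilde u)|_S$ on each simplex forces the constant gradient $\nabla\tilde u|_S$ to be skew-symmetric, and across shared $(d-1)$-faces these constant gradients must then coincide (since a skew-symmetric rank-one matrix is zero), so that $\tilde u|_Q$ is a single infinitesimal rigid motion. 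Finite-dimensional norm equivalence on the quotient yields the inequality with $C$ depending only on $d$ (in particular not on $\eps$, by scaling). Applying it to each $Q\in\hat\Q_{\eps,\tilde\Omega}\setminus B_\eps$ (for $\eps$ small enough that $Q$ and its neighbours lie inside $\Omega$), combined with the bound $|e(\tilde u_\eps)|_S|^2\le 2\mathcal{L}^d(S)^{-1}\int_{S\setminus W_\eps}|e(\tilde u_\eps)|^2\,\mathrm{d}x$ that follows from \eqref{conclusions_of_symmetric_Cauchy-Born}{\rm(i)} and the constancy of $e(\tilde u_\eps)$ on simplices, gives
\[
\int_{Q\setminus V_\eps}|\bar e(u_\eps)|^2\,\mathrm{d}x \le \eps^d\,|\bar e(u_\eps)|_Q|^2 \le C\int_{Q\setminus W_\eps}|e(\tilde u_\eps)|^2\,\mathrm{d}x.
\]
Summing over such cubes and invoking the uniform $L^2$-bound for $\chi_{\Omega\setminus W_\eps}e(\tilde u_\eps)$ inherited from \ref{H3} proves \eqref{uniform_L^2_bound_symmetric gradients}. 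Weak compactness in $L^2(\tilde\Omega;\R^{d\times 2^d})$, together with a diagonal extraction along a Lipschitz exhaustion $\tilde\Omega_m\nearrow\Omega$, then produces the limit $\xi$ in \eqref{abstract_weak_convergence}.

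The hardest step will be the perimeter bound in \eqref{conclusions_of_symmetric_Cauchy-Born}{\rm(ii)}: the threshold $\alpha$ must be chosen uniformly away from both $0$ and $1$ (hence the value $(2d!)^{-1}$), so that on every face of $\partial^*V_\eps$ lying outside $\partial^*W_\eps$ the relative isoperimetric inequality on the neighbouring pair of cubes charges a fixed share of $\mathcal{H}^{d-1}(\partial^*W_\eps)$, thereby preventing blow-up as $\eps\to 0$; the simultaneous balance with the volume estimate, controlled by the auxiliary scale $\delta_\eps$, is delicate but goes through in any dimension $d\ge 2$.
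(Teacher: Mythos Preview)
Your argument is correct, and in several places it is a genuine simplification of what the paper does. The paper classifies the cubes of $\hat\Q_{\eps,W_\eps}$ by \emph{two} thresholds---first a perimeter threshold $\theta_d\eps^{d-1}$ (``good'' cubes), then a volume threshold $\beta_d\eps^d$ on the remaining ``bad'' cubes---and sets $V_\eps:=W_\eps\cup\bigcup_{Q\in\hat\Q^{\rm good}\cup\hat\Q^{\rm bad,2}}Q$. Your construction uses only the volume threshold (your $\alpha$ is exactly the paper's $\beta_d=1/(2n_d)$, since the Kuhn triangulation has $n_d=d!$), and you compensate by introducing the auxiliary scale $\delta_\eps=\eps^{1/2}$ in the volume estimate. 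Both routes work; the paper's splitting gives the volume bound $\mathcal{L}^d(V_\eps\setminus W_\eps)\le C\eps$ directly, while yours yields a slower rate $C\eps^{(d+1)/(2d)}$, which is irrelevant here. For the perimeter bound the difference is more substantial: the paper invokes a separate geometric lemma (Lemma~\ref{lemma: boundary-fp}, proved by induction on $d$) to control $\H^{d-1}(\partial^*P\cap\partial Q)$ by $\H^{d-1}(\partial^*P\cap{\rm int}(Q))$, whereas your face-by-face argument---applying the relative isoperimetric inequality on the convex box $Q\cup Q'$ straddling each face of $\partial\bigl(\bigcup_{B_\eps}Q\bigr)$---bypasses that lemma entirely and is more elementary. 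Finally, for the $L^2$ bound the paper combines the discrete-to-affine gradient comparison \eqref{eq: equivalence gradients} with Korn's inequality on each cube; your single finite-dimensional norm equivalence $|\bar e(u)|_Q|^2\le C\sum_S|e(\tilde u)|_S|^2$ packages exactly the same content (both quadratic forms share the kernel $\R^{d\times 2^d}_{\rm s,t}$, and your rank-one rigidity argument across shared $(d-1)$-faces correctly identifies the kernel of the right-hand side). The paper's version has the advantage of citing \cite{Schmidt:2009} directly; yours is self-contained.
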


The core idea for the proof of the lemma is that discrete gradients can be related to the gradients of the corresponding piecewise affine \EEE interpolations, in the sense that there exists an absolute constant $C_*>0$ such that for all $Q \in \hat\Q_{\eps}$ it holds that 
\begin{equation}\label{eq: equivalence gradients}
\big|\overline{\nabla}u_\eps|_{Q}\big|^2\leq C_*\fint_{Q}|\nabla\tilde u_\eps|^2\, \mathrm{d}x\,,
\end{equation}
see \cite[Lemma 3.5(i)\EEE]{Schmidt:2009}.  (Note that at this point we are dealing with discrete displacements defined on the whole lattice $\eps\Z^d$, and hence \cite[Lemma 3.5
(ii),(iii)\EEE
]{Schmidt:2009} do not appear in our setting.) As this result needs to be applied on cubes of sidelength $\eps$, we need to regularize the void sets $W_\eps$ by introducing $V_\eps$, which are, loosely speaking, \textit{$\eps$-cubic approximations} of $W_\eps$. Then, \eqref{uniform_L^2_bound_symmetric gradients} follows from 
\textit{\ref{H3}}, 
\eqref{eq: equivalence gradients}, and Korn's inequality on each cube.

As Lemma \ref{weak_limit_for_discrete_multies} already shows  \eqref{conclusions_of_symmetric_Cauchy-Born}(i),(ii), it will remain to prove \eqref{conclusions_of_symmetric_Cauchy-Born}(iii). To this end, in view of \eqref{abstract_weak_convergence}, we need to show that 
$$\xi(x_0)\equiv [\chi_{\Omega\setminus W 
}e(u)](x_0)Z \quad \text{for }\L^d\text{-}\text{a.e. } x_0\in \Omega\,.$$ 
Since by \textit{\ref{H1}} and \eqref{conclusions_of_symmetric_Cauchy-Born}{(ii)} \EEE we have that $\chi_{\Omega\setminus V_\eps}\to\chi_{\Omega\setminus W
}$ strongly in $L^2(\Omega)$, it suffices to identify the weak limit $\xi$ for $\mathcal{L}^d$-a.e.\ Lebesgue point $x_0\in{\Omega}\setminus W
$ of $u$. It is not restrictive to further  suppose that the \emph{approximate gradient} of $u$ at $x_0$ exists. In fact, recall \EEE that for any $u\in GSBD^2(\Omega)$,  for $\mathcal{L}^d$-a.e.\ $x_0\in \Omega$ there exists $\nabla u(x_0) \in \R^{d \times d}$ such that
\begin{align}\label{lemma: approx-grad}
\lim_{\rho\to 0} \  \rho^{-d} \mathcal{L}^d\Big(\Big\{x \in B_\rho(x_0) \colon \,  \frac{|u(x) - u(x_0) - \nabla u(x_0)(x-x_0)|}{|x - x_0|}  > \lambda \Big\} \Big)  = 0 \text{ for all $\lambda >0$},
\end{align} 
see \cite[Corollary 5.2]{ChaCagSca} or  \cite[Theorem 2.9]{Friedrich:15-4}.

We can now perform a \textit{standard blow-up procedure} around such a point $x_0$. To this end, we introduce some further notation. Given a sequence $(\rho_\eps)_{\eps>0}\subset (0,+\infty)$, and setting $\hat\eps:=\eps/\rho_\eps>0$, \EEE consider 
the rescaled discrete maps $\zeta_{\eps}\colon\hat\eps \Z^d \EEE \to \R^d$, defined via
\begin{equation}\label{discrete_blow_ups}
\zeta_{\eps}(i):=\frac{u_\varepsilon(\rho_\varepsilon i+x_{0,\eps})-u(x_0)}{\rho_\eps}  \quad \forall i\in\hat\eps\Z^d\,, \ 
\end{equation}
where $x_{0,\varepsilon}\in \eps \Z^d \EEE $ are chosen such that, for an absolute constant $C>0$, there holds
\begin{equation}\label{approximation_by_lattice_points}
|x_{0,\eps}-x_0|\leq C\eps \quad \forall \eps>0\,. 
\end{equation}
The piecewise affine interpolations of $\zeta_{\eps}$ (again according to \cite[Section~3.2]{Schmidt:2009}) will be denoted by $\tilde{\zeta}_{\eps}$. \AAA We mention again that we identify $\bar{e}(u_\eps)$ and $\bar{e}(\zeta_{\eps})$ with their piecewise constant interpolations (being constant on each cell 
in the shifted lattice), i.e., for every $i\in \hat{\eps}\Z^d$ (using again the notation $\hat{i}:=i+\hat\eps(\tfrac{1}{2},\dots,\tfrac{1}{2})$), we have $\bar{e}(\zeta_\eps)|_{Q_{\hat\eps}(\hat i)}:=\bar{e}(\zeta_\eps)(i)$, where the fact that \EEE $\eps = \hat{\eps}\rho_\eps$ and \eqref{discrete_gradient_def} imply that \EEE
\begin{align}\label{eq: new equation}
\bar{e}(\zeta_\eps)(i)=\bar{e}(u_\eps)(\rho_\eps i+x_{0,\eps})\,.
\end{align}
As in the notations in the Introduction, we  set 
$V_\eps^{\rm c} := \Omega \setminus V_\eps$, and we define \EEE 
\begin{equation}\label{final_blow_ups}
\tilde z_{\eps}:=\chi_{V_{\eps}^{\rm c, bl}}\tilde\zeta_\eps|_{B_1}\in GSBD^2(B_1),\quad \text{where \ } V_{\eps}^{\rm c, bl} := \frac{1}{\rho_\eps}(V_\eps^{\rm c}-x_{0,\eps}).
\end{equation}
(Here and in the following, the superscript bl indicates rescaled sets and quantities in the blow-up procedure.) 
With this at hand, we have the following. 


\begin{lemma}[Blow-up]\label{blow-up}
For $\mathcal{L}^d$-a.e.\ point $x_0\in \Omega \setminus W$ the following holds true. There exists a sequence $(\rho_\eps)_{\eps>0}\subset (0,+\infty)$, with $\rho_\eps\to 0$ and $\eps/\rho_\eps\to 0$ as $\eps\to 0$, so that (up to a non-relabeled subsequence),
\begin{align}\label{blow_up_properties}
\begin{split}
\text{\rm{(i)}}&\quad \lim_{\eps\to 0}\fint_{B_{\rho_\eps}(x_0)}\chi_{V_\eps^{\rm c}}\bar{e}(u_\eps)\, \mathrm{d}x=\xi(x_0)\,, \quad \lim_{\eps\to 0}\fint_{B_{\rho_\eps}(x_0)}\chi_{V_\eps^{\rm c}}e(\tilde u_\eps)\, \mathrm{d}x=e(u)(x_0)\,;
\\
\text{\rm{(ii)}}&
\ \  \limsup_{\eps\to 0} \Big(\fint_{B_{2\rho_\eps(x_0)}}\chi_{V_\eps^{\rm c}}|\bar{e}(u_\eps)|^2\, \mathrm{d}x+\fint_{B_{2\rho_\eps(x_0)}}\chi_{V_\eps^{\rm c}}|e(\tilde u_\eps)|^2\, \mathrm{d}x\Big)<+\infty\,;\\
\text{\rm{(iii)}}&\quad \tilde z_{\eps}(y)\to \nabla u(x_0)y \text{\ \ in measure on } B_1 \text{ as }\eps\to 0;\\
\text{\rm{(iv)}}&\quad \lim_{\eps\to 0}\mathcal{H}^{d-1}(J_{\tilde{z}_{\eps}}\cap B_1)=0\,;\\
\text{\rm{(v)}}&\quad e(\tilde z_{\eps}) \rightharpoonup e(u)(x_0) \text{ weakly in } L^2(B_1;\R^{d \times d}_{\rm{sym}})\,;\\
\text{\rm{(vi)}}&\quad \chi_{V_\eps^{\rm c, bl}}\bar{e}(\zeta_{\eps})\rightharpoonup F \text{ weakly in } L^2(B_1;\R^{d\times 2^d}) \text{ for some }F  \in L^2(B_1;\R^{d\times 2^d})\,,
\end{split}
\end{align}
where $F$ satisfies $\fint_{B_1}F(y)\, \mathrm{d}y=\xi(x_0)$, and $\zeta_{\eps}, \tilde z_{\eps}$ are defined in \eqref{discrete_blow_ups}, \eqref{final_blow_ups}.  \EEE
\end{lemma}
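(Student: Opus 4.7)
The plan is a classical blow-up at an a.e.\ base point $x_0 \in \Omega \setminus W$, combined with a Lebesgue-point and diagonal-selection procedure. The candidate full-measure set of admissible $x_0$ is obtained by intersecting: the Lebesgue points of $\xi$, $e(u)$, $|\xi|^2$, $|e(u)|^2$; the density-$0$ points of $W$; the points where $\nabla u(x_0)$ exists in the sense of \eqref{lemma: approx-grad}; the points of finite upper $d$-density for the weak-$*$ limit $\mu$ of the bounded Radon measures $(\chi_{V_\eps^{\rm c}}|\bar e(u_\eps)|^2 + \chi_{V_\eps^{\rm c}}|e(\tilde u_\eps)|^2)\,\mathcal{L}^d$ (uniformly bounded by Lemma~\ref{weak_limit_for_discrete_multies} and \textit{\ref{H3}}); and the points at which the weak-$*$ limit $\nu$ of $\mathcal{H}^{d-1}\mres \partial^* V_\eps$ satisfies $\nu(B_r(x_0)) = o(r^{d-1})$. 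The last property holds $\mathcal{L}^d$-a.e.\ because any finite Radon measure on $\R^d$ has $\mathcal{L}^d$-a.e.\ finite upper $d$-density, which gives $\nu(B_r(x_0))/r^{d-1} = O(r) \to 0$.

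For each such $x_0$, I would first fix $\rho > 0$ outside an $\mathcal{L}^1$-null set of ``charged'' radii and pass to the limit $\eps \to 0$ using weak-$*$ convergence of the measures above together with hypotheses \textit{\ref{H1}}--\textit{\ref{H3}} and \eqref{abstract_weak_convergence}. This yields $\fint_{B_\rho(x_0)}\chi_{V_\eps^{\rm c}}\bar e(u_\eps) \to \fint_{B_\rho(x_0)}\xi$ and $\fint_{B_\rho(x_0)}\chi_{V_\eps^{\rm c}}e(\tilde u_\eps) \to \fint_{B_\rho(x_0)}\chi_{\Omega\setminus W}e(u)$, along with bounded $L^2$-averages, $\mathcal{L}^d(V_\eps \cap B_\rho(x_0))/\rho^d \to \mathcal{L}^d(W \cap B_\rho(x_0))/\rho^d$, and $\mathcal{H}^{d-1}(\partial^* V_\eps \cap B_\rho(x_0))/\rho^{d-1} \to \nu(B_\rho(x_0))/\rho^{d-1}$. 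Letting $\rho \to 0$ afterwards, the listed a.e.\ properties pin down the right-hand sides to $\xi(x_0)$, $e(u)(x_0)$, finite constants, $0$, and $0$, respectively. A Cantor diagonal then produces $\rho_\eps \to 0$ with $\eps/\rho_\eps \to 0$ realising (i), (ii), and $\mathcal{H}^{d-1}(\partial^* V_\eps \cap B_{\rho_\eps}(x_0))/\rho_\eps^{d-1} \to 0$. Since $\tilde\zeta_\eps$ is continuous (piecewise-affine), $J_{\tilde z_\eps} \cap B_1 \subset \partial^* V_{\eps}^{\rm c, bl} \cap B_1$, and the scaling identity $\mathcal{H}^{d-1}(\partial^* V_{\eps}^{\rm c, bl} \cap B_1) = \rho_\eps^{1-d}\mathcal{H}^{d-1}(\partial^* V_\eps \cap B_{\rho_\eps}(x_{0,\eps}))$ immediately gives (iv).

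For (iii), decompose $\tilde z_\eps(y) = \chi_{V_{\eps}^{\rm c, bl}}(y)\,[\tilde u_\eps(x_{0,\eps}+\rho_\eps y)-u(x_{0,\eps}+\rho_\eps y)]/\rho_\eps + \chi_{V_{\eps}^{\rm c, bl}}(y)\,[u(x_{0,\eps}+\rho_\eps y)-u(x_0)]/\rho_\eps$. The second summand converges to $\nabla u(x_0)y$ in measure on $B_1$ by \eqref{lemma: approx-grad}, using $|x_{0,\eps}-x_0| \le C\eps = o(\rho_\eps)$ from \eqref{approximation_by_lattice_points} to replace $x_{0,\eps}$ by $x_0$. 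For the first summand, given $\delta > 0$, Egoroff's theorem supplies a set $A_\delta \subset \Omega \setminus W$ of complementary measure less than $\delta$ on which $\tilde u_\eps \to u$ uniformly; at Lebesgue density points of $A_\delta$ one has $\mathcal{L}^d(B_{\rho_\eps}(x_0)\setminus A_\delta)/\rho_\eps^d \to 0$, so a further diagonalisation in $\delta \downarrow 0$ kills the first summand in measure on $B_1$. Properties (v) and (vi) then follow by rescaling (ii): the uniform $L^2(B_1)$-bound on $e(\tilde z_\eps) = \chi_{V_{\eps}^{\rm c, bl}} e(\tilde\zeta_\eps)$ and on $\chi_{V_{\eps}^{\rm c, bl}}\bar e(\zeta_\eps)$ gives weak compactness; (i) identifies the averages over $B_1$ as $|B_1|\,e(u)(x_0)$ and $|B_1|\,\xi(x_0)$; finally, the pointwise identification of the weak limit in (v) as the constant $e(u)(x_0)$ follows by passing (iii) and (iv) to the distributional symmetric gradient via the $GSBD$ closure theorem, together with the fact that $y \mapsto \nabla u(x_0) y$ has symmetric gradient equal to $e(u)(x_0)$.

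The main obstacle will be (iii): we have only measure convergence $\tilde u_\eps \to u$ on $\Omega \setminus W$, and dividing by $\rho_\eps$ destroys any $L^p$-rate that a Chebyshev-type argument would produce. The Egoroff-based density argument outlined above, together with the freedom to let $\rho_\eps \to 0$ arbitrarily slowly along the Cantor diagonal, is the essential ingredient that bypasses this difficulty; a secondary delicate point is ensuring in (iv) that the perimeter measure of the modified voids $V_\eps$ has a superlinear decay $\nu(B_{\rho_\eps}(x_0)) = o(\rho_\eps^{d-1})$, which is achieved through the a.e.\ finite upper $d$-density of $\nu$ and the same diagonalisation.
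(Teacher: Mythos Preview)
Your proposal is correct and follows the same blow-up scheme as the paper: restrict $x_0$ to a full-measure set of good points (Lebesgue points of $\xi$ and $e(u)$, density-$0$ for $W$, approximate differentiability of $u$, finite upper densities for the weak-$*$ limits of the energy and perimeter measures), establish the double limits $\lim_{\rho\to 0}\lim_{\eps\to 0}$, diagonalize via metrizability of convergence in measure, and conclude (v)--(vi) from the $L^2$-bound in (ii) together with the $GSBD$ closure theorem.

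The one substantive difference is in (iii). You split $\tilde z_\eps$ into a $(\tilde u_\eps-u)/\rho_\eps$ part and a $(u-u(x_0))/\rho_\eps$ part and control the former via Egoroff sets $A_\delta$, density points, and a slowdown of $\rho_\eps$. This works, but only after a small repair: the Egoroff sets must be fixed for a countable increasing sequence $\delta_k\downarrow 0$ \emph{before} selecting $x_0$, so that a.e.\ $x_0$ is a density point of some $A_{\delta_{k_0}}$; one then uses that single $k_0$ rather than a ``further diagonalisation in $\delta$'', which as written is ambiguous about the order of quantifiers. The paper avoids this machinery entirely: at \emph{fixed} $\rho$ the inner limit $\chi_{V^{\rm c,bl}_{\eps,\rho}}\bigl(\tilde u_\eps(x_{0,\eps}+\rho\,\cdot)-u(x_0)\bigr)/\rho \to \chi_{\Omega\setminus W}(x_0+\rho\,\cdot)\,u_{x_0,\rho}$ holds in measure directly from \textit{\ref{H1}}--\textit{\ref{H2}} (division by a fixed $\rho$ is harmless for measure convergence), and the outer limit uses approximate differentiability and $x_0\in W^0$; one diagonalisation then suffices. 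In short, your decomposition introduces an artificial rate problem (dividing $\tilde u_\eps-u$ by a vanishing $\rho_\eps$) that you then overcome, whereas the paper sidesteps it by exchanging the order of limits. Your treatment of (iv) via finite upper $d$-density of the surface limit measure is a pleasant alternative to the paper's direct $(d{-}1)$-density argument.
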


Properties (i)--(v) essentially follow by standard blow-up arguments for $GSBD^2$-functions around points where the approximate gradient exists. For convenience of the reader, we will include the main arguments in the proof below. Eventually, property (vi) essentially follows from (i) and \eqref{eq: new equation} \EEE by a compactness and rescaling argument.

The remaining step is the following lemma. 

\begin{lemma}[Identification of the weak limit $F$]\label{lemma: weak limit F}
	In the setting of Lemma \ref{blow-up}, the function $F$ as introduced  in \eqref{blow_up_properties}\rm{(vi)} satisfies
	\begin{equation}\label{identification_of_blow_up_limit}
	F(y)=e(u)(x_0)Z \ \ \text{for } \L^d\text{-a.e.\ } y\in B_1\,.
	\end{equation}
\end{lemma}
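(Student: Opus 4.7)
\smallskip
\noindent\emph{Proof plan.} The strategy is to reduce the statement to the Cauchy--Born rule of \cite[Theorem 2.6]{Schmidt:2009} (cf.\ \eqref{eq: bernd}) for piecewise-affine interpolations of Sobolev maps. Set $A := \nabla u(x_0) \in \R^{d\times d}$ and $\ell(y) := Ay$, so that $e(\ell) \equiv e(u)(x_0)$, and note that $P(AZ) = e(u)(x_0)Z$ since $\mathrm{skew}(A)Z \in \R^{d\times 2^d}_{\mathrm{s,t}}$. Hence it suffices to realize $F$ as the weak limit of $P(\overline\nabla v_\eps)$ for some discrete maps $v_\eps$ whose piecewise-affine interpolations converge weakly in $H^1$ to $\ell$.

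To this end, I would first exploit properties \eqref{blow_up_properties}(iii)--(v), namely $\tilde z_\eps \to \ell$ in measure, $\mathcal{H}^{d-1}(J_{\tilde z_\eps} \cap B_1) \to 0$, and the weak $L^2$-convergence of $e(\tilde z_\eps)$, together with the $GSBD$-to-Sobolev blow-up techniques from \cite{Conti-Focardi-Iurlano:15, Crismale-Friedrich-Solombrino, FriSolPer, newvito}, in order to construct on a slightly smaller ball $B_\rho \subset\subset B_1$ maps $w_\eps \in H^1(B_\rho;\R^d)$ satisfying $\mathcal{L}^d(\{w_\eps \neq \tilde z_\eps\}) \to 0$, $\sup_\eps \|e(w_\eps)\|_{L^2(B_\rho)} < +\infty$, and $w_\eps \to \ell$ in measure on $B_\rho$. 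By Korn--Poincar\'e (up to subtracting a rigid motion, which is invisible to $e(\cdot)$), one then infers $w_\eps \rightharpoonup \ell$ weakly in $H^1(B_\rho;\R^d)$. Picking $\rho' < \rho$ and defining $v_\eps \colon \hat\eps\Z^d \to \R^d$ by nodal averages of $w_\eps$ over $\hat\eps$-cells, standard finite-element interpolation estimates yield $\tilde v_\eps \rightharpoonup \ell$ weakly in $H^1(B_{\rho'};\R^d)$. Applying \cite[Theorem 2.6]{Schmidt:2009} gives $\overline\nabla v_\eps \rightharpoonup AZ$ weakly in $L^2(B_{\rho'};\R^{d\times 2^d})$, and therefore $\bar e(v_\eps) = P(\overline\nabla v_\eps) \rightharpoonup e(u)(x_0)Z$ weakly.

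It then remains to match $\bar e(v_\eps)$ with $\chi_{V_\eps^{\mathrm{c,bl}}}\bar e(\zeta_\eps)$. Let $\mathcal{Q}_\eps^{\mathrm{good}}$ denote the family of cells $Q = Q_{\hat\eps}(\hat i) \subset B_{\rho'}$ whose $2^d$ vertices all belong to $V_\eps^{\mathrm{c,bl}}\setminus\{w_\eps \neq \tilde z_\eps\}$. At each such vertex $\tilde z_\eps$ and $\tilde\zeta_\eps$ coincide, so (after harmless nodal identifications) $v_\eps(\hat i+\hat\eps z_l) = \zeta_\eps(\hat i+\hat\eps z_l)$ for every $l=1,\dots,2^d$, giving $\bar e(v_\eps)|_Q = \bar e(\zeta_\eps)|_Q$. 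The cells outside $\mathcal{Q}_\eps^{\mathrm{good}}$ have total volume bounded by $\mathcal{L}^d(\{w_\eps\neq\tilde z_\eps\}) + \mathcal{L}^d(V_\eps^{\mathrm{bl}}\cap B_{\rho'})$ plus an $O(\hat\eps)$-boundary layer, which vanishes as $\eps\to 0$: the first term by the previous step and the second because $x_0$ is a Lebesgue point of $\chi_{\Omega\setminus W}$ with value one. Equi-integrability of $\bar e(v_\eps)$ and of $\chi_{V_\eps^{\mathrm{c,bl}}}\bar e(\zeta_\eps)$, stemming from the uniform $L^2$-bounds and from \eqref{uniform_L^2_bound_symmetric gradients} after rescaling, guarantees that contributions from the bad cells vanish in the weak-$L^2$ limit. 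Combined with \eqref{blow_up_properties}(vi), this yields $F = e(u)(x_0)Z$ a.e.\ on $B_{\rho'}$, and the arbitrariness of $\rho'<1$ concludes \eqref{identification_of_blow_up_limit}.

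The main obstacle is the compatibility between the Sobolev approximation $w_\eps$ produced in Step~1 and the lattice structure of $\tilde\zeta_\eps$: standard $GSBD$-approximation constructions rely on mollification or smooth extensions that would destroy the pointwise agreement of nodal values. One must therefore adapt these techniques so as to preserve $\tilde\zeta_\eps$ at all lattice vertices outside an exceptional set of vanishing measure---a delicate discrete-scale refinement of the recent blow-up arguments for $GSBD$ functions.
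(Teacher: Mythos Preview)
Your overall strategy---pass to a Sobolev approximation of $\tilde z_\eps$ via a $GSBD$ Korn--Poincar\'e inequality, then invoke \eqref{eq: bernd}---is exactly the route taken in the paper. The gap lies in the matching step. You claim that the total volume of bad cells is bounded by $\mathcal{L}^d(\{w_\eps\neq\tilde z_\eps\})+\mathcal{L}^d(V_\eps^{\mathrm{bl}}\cap B_{\rho'})$ plus a boundary layer, but this is false: a measurable set of small Lebesgue measure can contain arbitrarily many lattice points, so the number of cells having a vertex in $\{w_\eps\neq\tilde z_\eps\}$ is in no way controlled by the measure of that set. Moreover, since $w_\eps$ is merely $H^1$, point evaluation at lattice nodes is ill-defined, and defining $v_\eps$ by cell averages (as you do) destroys the exact nodal identity $v_\eps=\zeta_\eps$ you later invoke. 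You yourself flag this obstacle in the closing paragraph, but do not resolve it.

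The paper's fix is a \emph{sampling argument with an auxiliary small parameter $\eta\in(0,\tfrac12)$}: one averages over shifts $\tau\in Q_{\eta\hat\eps}(0)$ and selects $\tau_\eps^\eta$ so that (i) the number of shifted lattice points $i+\tau_\eps^\eta$ falling into the exceptional set $\omega_\eps$ is at most $C(\eta\hat\eps)^{-d}\mathcal{L}^d(\omega_\eps)$ (a Fubini-type pigeonhole), and (ii) the discrete map $v_\eps^\eta(i):=v_\eps(i+\tau_\eps^\eta)$ has affine interpolation bounded in $H^1$ by $C\eta^{-d/2}$. At ``good'' points one then has $v_\eps^\eta(i)=\tilde z_\eps(i+\tau_\eps^\eta)$, which differs from $\zeta_\eps(i)=\tilde z_\eps(i)$ by at most $C\eta\hat\eps\|\nabla\tilde z_\eps\|_{L^\infty(Q_{2\hat\eps}(\hat i))}$; this yields an $O(\eta)$ discrepancy between $\overline\nabla v_\eps^\eta$ and $\overline\nabla\zeta_\eps$ on good cells, while the bad-cell contribution vanishes as $\eps\to 0$ for each fixed $\eta$. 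Sending first $\eps\to 0$ and then $\eta\to 0$ gives the result. This two-parameter shift-and-approximate scheme is precisely the ``delicate discrete-scale refinement'' your proposal leaves open.
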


The key idea in the proof of the lemma is to approximate the functions  $(\tilde{z}_\eps)_{\eps >0}$ \EEE by a sequence of Sobolev functions $(v_\eps)_{\eps >0}$ also converging to $\nabla u(x_0)y$. This approximation is achieved by a deep result in the theory of $GSBD^2$-functions, namely a \emph{Korn inequality for functions with small jump set}. From this sequence, we define discrete maps $w_\eps$ (later denoted by $v^\eta_\eps$, but simplified here for convenience) by a \emph{sampling argument}. We also introduce  the corresponding piecewise affine interpolations $\tilde{w}_\eps$ and we prove that $\tilde{w}_\eps$ are close to $\tilde{z}_\eps$, $\overline{e}(w_\eps)$ are close to $\bar{e}(\zeta_{\eps})$, and $\tilde{w}_\eps$ are bounded in $H^1(B_1;\R^d)$  independently of $\eps>0$\EEE. In particular, $\nabla \tilde{w}_\eps \rightharpoonup \nabla u(x_0)$ weakly in $L^2$.  We then conclude by applying \eqref{eq: bernd} on the sequence of Sobolev functions $(\tilde{w}_\eps)_{\eps >0}$ which allows us to deduce $\overline{\nabla} w_\eps \to \nabla u(x_0)Z$.

%
%
%
%
%
%

With these preliminary technical lemmata at hand, Theorem \ref{discrete_Cauchy_Born_any_d} follows.

\begin{proof}[Proof of Theorem \ref{discrete_Cauchy_Born_any_d}] Properties \eqref{conclusions_of_symmetric_Cauchy-Born}{(i),(ii)} are established in Lemma \ref{weak_limit_for_discrete_multies}. In view of Lemma \ref{blow-up} \EEE and Lemma \ref{lemma: weak limit F}, we get that $\xi(x_0)=\fint_{B_1} \EEE
F(y)\,\mathrm{d}y=e(u)(x_0)Z$ for a.e.\ $x_0\in  \Omega\setminus W
$. Therefore, \eqref{conclusions_of_symmetric_Cauchy-Born}{(iii)} follows from \eqref{abstract_weak_convergence}.
\end{proof}

\EEE

\subsection{Proofs of technical lemmata}\label{sec: int lemmata}


In this subsection we collect the proofs of the lemmata.
\EEE
\begin{proof}[Proof of Lemma \ref{weak_limit_for_discrete_multies}]
We first define the sets $(V_\eps)_{\eps>0}$ such that \eqref{conclusions_of_symmetric_Cauchy-Born}{(i),(ii)} are satisfied. Loosely speaking, the sets are constructed as  \textit{$\eps$-cubic approximations of $(W_\eps)_{\eps>0}$}. Then we prove \eqref{uniform_L^2_bound_symmetric gradients} and \EEE \eqref{abstract_weak_convergence}.\\
\begin{step}{1}(Construction)
We start by introducing some extra notation. Recall the definition of the simplices in \eqref{def:simplices} and the comment below it. 
In particular, $n_d:=\# \mathcal{S}_{\varepsilon,Q}$ is a purely dimensional constant, $Q=\bigcup_{S\in \mathcal{S}_{\varepsilon,Q}}S$, and $\nabla\tilde {u}_\eps|_S$ is constant for all $S\in \mathcal{S}_{\varepsilon,Q}$ and all $Q\in \hat\Q_\eps\EEE$.  Recalling also the notation introduced in \eqref{cubic_set_of_A}, let us set
\begin{align}\label{theta_good_cubes}
\begin{split}
\hat\Q^{\rm{good}}&:=\big\{Q\in \hat\Q_{\eps,W_\eps}\colon \H^{d-1}\big(\partial^\ast W_\eps\cap Q\big)\geq \theta_d\eps^{d-1}\big\}\,,\quad \hat\Q^{\rm{bad}}:=\hat\Q_{\eps,W_\eps}\setminus \hat\Q^{\rm{good}}\,;\\
\hat\Q^{\rm{bad},1}&:=\big\{Q\in \hat\Q^{\rm{bad}}\colon \L^{d}\big(Q\setminus W_\eps \big)\geq (1-\beta_d)\eps^{d}\big\} \,,\quad \hat\Q^{\rm{bad},2}:=\hat\Q^{\rm{bad}}\setminus\hat\Q^{\rm{bad},1}\,, 
\end{split}
\end{align} 
where $\beta_d:=\tfrac{1}{2n_d}>0$,  $\theta_d:=(\tfrac{\beta_d}{2C_d})^{(d-1)/d}$, and $C_d>0$ denotes the relative isoperimetric constant of the $d$-dimensional cube (see \cite[(3.43)]{Ambrosio-Fusco-Pallara:2000} for a version stated on balls instead of cubes). 
We now define
\begin{equation}\label{modified_set}
V_\eps:=W_\eps \cup\underset{Q\in \hat\Q^{\rm{good}}\cup\hat\Q^{\rm{bad},2}}{\bigcup} Q\,.
\end{equation}
Clearly, by construction and the definition of $\beta_d>0$ we get \eqref{conclusions_of_symmetric_Cauchy-Born}(i) since for each $Q \in\hat\Q
^{\rm{bad},1}$ and for each $S\in \mathcal{S}_{\varepsilon,Q}$ it holds that  \EEE
$$\L^d(S\setminus V_\eps)\ge \L^{d}(S) - \L^d(Q\cap W_\eps)\geq \frac{\eps^d}{n_d} -  \beta_d\eps^d =\frac{\eps^d}{2n_d} = \frac{1}{2}\L^{d}(S)\,.$$
The key point of the decomposition \eqref{theta_good_cubes} and the modification in \eqref{modified_set} lies in the fact that 
\begin{equation}\label{energy_estimate_on_mildly_bad_cubes}
\int_{Q}|e(\tilde u_\eps)|^2\, \mathrm{d}x\leq 2\int_{Q\setminus W_\eps}|e(\tilde u_\eps)|^2\, \mathrm{d}x \ \text{ for every $Q \subset \subset \Omega$ with } Q\in \hat\Q_{\eps, V^{\rm{c}}_\eps}\,,
\end{equation}
and that \eqref{conclusions_of_symmetric_Cauchy-Born}{(ii)} holds. \EEE  
\end{step}\\
\begin{step}{2}(Proof of \eqref{energy_estimate_on_mildly_bad_cubes} and \eqref{conclusions_of_symmetric_Cauchy-Born}{(ii)})  Let us  show \eqref{energy_estimate_on_mildly_bad_cubes}. First,  \EEE $Q\in\hat\Q_{\eps, V^{\rm{c}}_\eps}$ implies $Q\in \hat\Q
^{\rm{bad},1} \cup (\hat\Q_{\eps,\Omega} \setminus\hat\Q_{\eps,W_\eps})$. For cubes in $\hat\Q_{\eps,\Omega} \setminus \hat\Q_{\eps,W_\eps}$ \eqref{energy_estimate_on_mildly_bad_cubes} is clear, and for $Q \in \hat\Q
^{\rm{bad},1}$ we use \eqref{conclusions_of_symmetric_Cauchy-Born}{(i)} and the fact that $e(\tilde u_\eps)$ is constant on each \EEE $S\in \mathcal{S}_{\varepsilon,Q}$ to obtain
\begin{align*}
\int_{Q}|e(\tilde u_\eps)|^2\, \mathrm{d}x=\sum_{S\in \mathcal{S}_{\varepsilon,Q}}\L^d(S)|e(\tilde u_\eps)|_S|^2\leq 2\sum_{S\in \mathcal{S}_{\varepsilon,Q}}\L^d(S\setminus W_\eps)|e(\tilde u_\eps)|_S|^2=2\int_{Q\setminus W_\eps}|e(\tilde u_\eps)|^2\, \mathrm{d}x\,.
\end{align*} 
We now show \eqref{conclusions_of_symmetric_Cauchy-Born}{(ii)}. To this end, we first observe that by \textit{\ref{H1}} 
we have 
\begin{equation}\label{cardinality_of_good_cubes}
\#\hat\Q
^{\rm{good}}\leq \frac{\eps^{1-d}}{\theta_d}\sum_{Q\in \hat\Q
^{\rm{good}}}\H^{d-1}(\partial^\ast W_\eps \cap Q)\leq C\mathcal{H}^{d-1}(\partial^\ast W_\eps) \eps^{1-d} \leq C \eps^{1-d}
\end{equation}
 for some constant  $C>0$ depending only on {\rm (H.1)} and $d$. Regarding $\hat\Q
^{\rm{bad},2}$, we claim for all $Q\in \hat\Q
^{\rm{bad},2}$ that
\begin{equation}\label{last_family_small_volume}
\L^{d}(Q\setminus  W_\eps)\leq \L^{d}(Q\cap  W_\eps)  \ \text{ and } \  \L^d(Q\setminus  W_\eps )\leq C_d \big(\H^{d-1}(\partial^*W_\eps  \cap Q)\big)^{\frac{d}{d-1}} \le \tfrac{\beta_d}{2}\EEE \eps^d\,. 
\end{equation}
Indeed, if for some $Q\in \hat\Q
^{\rm{bad},2}$ we had $\L^{d}(Q\cap W_\eps)<\L^{d}(Q\setminus W_\eps)$, we would get that 
$$\beta_d\eps^d\leq \L^{d}(Q\cap  W_\eps )\leq\frac{1}{2}\eps^d\,,$$ and by applying the relative isoperimetric inequality (cf.\ \cite[(3.43)]{Ambrosio-Fusco-Pallara:2000}) on the pair $(W_\eps,Q)$, we would obtain
\begin{equation}\label{eq: repeat argument}
\beta_d\eps^d\leq \L^d(Q\cap  W_\eps )\leq C_d\big(\H^{d-1}(\partial^* W_\eps \cap Q)\big)^{\tfrac{d}{d-1}}\leq C_d(\theta_d\eps^{d-1})^{\tfrac{d}{d-1}}=\tfrac{\beta_d}{2}\eps^{d}\,, 
\end{equation}
which would yield a contradiction. Hence, $\L^{d}(Q\setminus W_\eps)\leq \L^{d}(Q\cap W_\eps)$, and by the relative isoperimetric inequality,  we also deduce the second inequality in \eqref{last_family_small_volume}. 
		
Now, \EEE for the first assertion in \eqref{conclusions_of_symmetric_Cauchy-Born}{(ii)}, by  \textit{\ref{H1}}, 
\eqref{cardinality_of_good_cubes} and \eqref{last_family_small_volume}, we can estimate
\begin{align*}
\L^d(V_\eps\setminus W_\eps)&\leq \sum_{Q\in \hat\Q
^{\rm{good}}\cup\Q
^{\rm{bad},2}}\L^d(Q\setminus W_\eps)
\leq \eps^d\#\hat\Q^{\rm{good}}+C_d\sum_{Q\in \hat\Q
^{\rm{bad},2}}\big(\H^{d-1}(\partial^*W_\eps\cap Q)\big)^{\frac{d}{d-1}}\\
&\leq C\eps+ C_d (\theta_d
\eps^{d-1})^{\frac{1}{d-1}} \sum_{Q\in \hat\Q
^{\rm{bad},2}}\H^{d-1}(\partial^*W_\eps \cap Q)\leq C\eps\,,
\end{align*}
and the claim follows. Regarding the second assertion in \eqref{conclusions_of_symmetric_Cauchy-Born}{(ii)}, by construction   we have 
\begin{align*}
\partial^\ast V_\eps \subset \partial^\ast W_\eps \cup \bigcup_{Q \in \hat\Q^{\rm{good}}} \partial^\ast Q \cup \bigcup_{Q\in \hat\Q
^{\rm{bad},2}} (\partial^\ast Q \setminus W_\eps) \cup \mathcal{N}\,,
\end{align*}
where $\mathcal{H}^{d-1}(\mathcal{N})=0$, see \cite[Theorem 16.3]{maggi2012sets}. Therefore, noting that for $Q \in \hat\Q
^{\rm{bad},2}$ , due to  \cite[Theorem 16.3]{maggi2012sets} (again up to a set of zero $\mathcal{H}^{d-1}$ measure), there holds  $\partial^\ast Q\setminus W_\eps =\partial^\ast (Q\setminus W_\eps)\cap \partial Q $, we have \EEE
\begin{align*}
\H^{d-1}(\partial^\ast V_\eps)&\leq \H^{d-1}(\partial^\ast W_\eps)+ 2d\eps^{d-1}\#\hat\Q^{\rm{good}}+\sum_{Q\in \hat\Q
^{\rm{bad},2}}\H^{d-1}\big(\partial^\ast (Q\setminus W_\eps)\cap \partial Q\big)\,.
\end{align*}
In view of \eqref{last_family_small_volume}, we can apply the the geometric Lemma \ref{lemma: boundary-fp} stated at the end of the section for $s=\frac{1}{2}$ on $P = Q \setminus W_\eps$ for $Q \in \hat\Q^{\rm{bad},2}$. By this, and by {\ref{H1}} and   \eqref{cardinality_of_good_cubes} we conclude
\begin{align*}
\H^{d-1}(\partial^\ast V_\eps)&\leq \H^{d-1}(\partial^\ast W_\eps) +  C+ C\sum_{Q\in \hat\Q
^{\rm{bad},2}} \H^{d-1}\big(\partial^\ast (Q\setminus W_\eps) \cap {\rm int}(Q)\big) \\& \le C(\H^{d-1}(\partial^\ast W_\eps) + 1)\leq C\,,
\end{align*}
where we used that $\partial^\ast (Q\setminus W_\eps) \cap  {\rm int}(Q) = \partial^\ast W_\eps \cap  {\rm int}(Q)$ by construction.
Thus, the second assertion in \eqref{conclusions_of_symmetric_Cauchy-Born}{(ii)} follows.  
\end{step}\\
\begin{step}{3}(Proof of \eqref{uniform_L^2_bound_symmetric gradients}--\eqref{abstract_weak_convergence}) \EEE 
It remains to prove \eqref{uniform_L^2_bound_symmetric gradients}--\eqref{abstract_weak_convergence}. \EEE For this purpose, let $\tilde\Omega\subset\subset\Omega\EEE$ be an open Lipschitz subset, and with a slight abuse of notation in this step, let us denote now $V_\eps^{\rm c}\EEE:=\tilde\Omega\setminus V_\eps$. 
Let $\eps_0:=\eps_0(\tilde\Omega,\Omega)\in (0,1)$ be small enough so that 
\begin{align}\label{eq: uuu}
\bigcup_{Q\in \hat\Q_{\eps,\tilde \Omega}}Q\subset\subset \Omega
\end{align}
for all $\eps\in (0,\eps_0]$. For every $Q\in \hat\Q_{\eps,V_\eps^{\rm c}}$, by the classical Korn's inequality, there exists an affine mapping $r_Q(x)\EEE:=A_Qx$, where $A_Q\in \R^{d\times d}_{\mathrm{skew}}$, \EEE so that for a purely dimensional constant $C>0$ it holds that 
$$\int_Q |\nabla(\tilde u_\eps-r_Q)|^2\, \mathrm{d}x\leq C \int_Q |e(\tilde u_\eps)|^2\, \mathrm{d}x\,.$$ 
This inequality, together with the fact that $\bar{e}(r_Q)=0$,  \eqref{eq: equivalence gradients} and \eqref{energy_estimate_on_mildly_bad_cubes}, allows us to estimate 
\begin{align*}
\int_{\tilde\Omega}|\chi_{\Omega\setminus V_\eps}\bar{e}(u_\eps)|^2\, \mathrm{d}x&\leq \sum_{Q\in\hat\Q_{\eps,V_\eps^{\rm c}}}\int_Q |\bar{e}(u_\eps)|^2\, \mathrm{d}x=\sum_{Q\in\hat\Q_{\eps,V_\eps^{\rm c}}}\int_Q |\bar{e}(u_\eps-r_Q)|^2\, \mathrm{d}x \\
&\leq \sum_{Q\in\hat\Q_{\eps,V_\eps^{\rm c}}\EEE}\int_Q |\overline{\nabla}(u_\eps-r_Q)|^2\, \mathrm{d}x\leq C\sum_{Q\in\hat\Q_{\eps,V_\eps^{\rm c}}\EEE}\int_Q |\nabla(\tilde u_\eps-r_Q)|^2\, \mathrm{d}x\\
&\leq C\sum_{Q\in\hat\Q_{\eps,V_\eps^{\rm c}}\EEE}\int_Q |e(\tilde u_\eps)|^2\, \mathrm{d}x \leq C \sum_{Q\in\hat\Q_{\eps,\tilde \Omega}} \int_{Q} \EEE |\chi_{\Omega\setminus W_\eps}e(\tilde u_\eps)|^2\, \mathrm{d}x\,. 
\end{align*}
Here, we used that the affine interpolation $\tilde{r}_Q\EEE$ coincides with $r_Q$. \EEE This along with \textit{\ref{H3}} \EEE
and \eqref{eq: uuu}  implies \EEE \eqref{uniform_L^2_bound_symmetric gradients}. Eventually,   \eqref{abstract_weak_convergence} follows from weak compactness  and \eqref{uniform_L^2_bound_symmetric gradients}, along with a suitable diagonal argument. \EEE 
\end{step}
\end{proof}




\begin{proof}[Proof of Lemma \ref{blow-up}]
\begin{step}{1} We again let $V_\eps^{\rm c} := \Omega \setminus V_\eps$. Choose a subsequence (not relabeled) such that \eqref{abstract_weak_convergence} holds. Without loss of generality, we can assume that $x_0\in \Omega\cap W^0$ (cf.\ \cite[Definition 3.60]{Ambrosio-Fusco-Pallara:2000} for the definition of density points), 
that $x_0$
is a Lebesgue point of both $e(u)$ and $\xi$, and that the approximate gradient $\nabla u(x_0)$ exists. We claim that 
\begin{align}\label{non_blow_up_properties_1}
\begin{split}
\text{(i)}& \lim_{\rho\to 0^+}\lim_{\eps\to 0}\fint_{B_{\rho}(x_0)}\chi_{V_\eps^{\rm c}}\bar{e}(u_\eps)\, \mathrm{d}x=\xi(x_0)\,, \quad  \lim_{\rho\to 0^+} \EEE\lim_{\eps\to 0}\fint_{B_{\rho}(x_0)}\chi_{V_\eps^{\rm c}}e(\tilde u_\eps)\, \mathrm{d}x=e(u)(x_0)\,;\\ 
\text{(ii)}&\ \underset{\rho\to 0^+}{\liminf
}\ \underset{\varepsilon\to 0}{\lim
}\Big(\fint_{B_{2\rho}(x_0)}\chi_{V_\eps^{\rm c}}|\bar{e}(u_\varepsilon)|^2\, \mathrm{d}x+\fint_{B_{2\rho}(x_0)}\chi_{V_\eps^{\rm c}}|{e}(\tilde u_\varepsilon)|^2\,\mathrm{d}x\Big)<+\infty\,; \\
\text{(iii)}&\ u_{x_0,\rho}(y):=\frac{u(x_0+\rho y)-u(x_0)}{\rho}\to \nabla u(x_0)y \text{\ \ in measure on } B_1 \text{ as }\rho\to 0^+\,;\\
\text{(iv)}& \  \underset{\rho\to 0^+}{\liminf
}\ \underset{\varepsilon\to 0}{\lim
}\  \rho^{-(d-1)}\mathcal{H}^{d-1}(\partial^*{V_\eps^{\rm c}}\cap B_{2\rho}(x_0))=0 \,.
\end{split}
\end{align} 
In fact, choose any open Lipschitz subset $\tilde \Omega \subset \subset \Omega$ such that $x_0 \in \tilde \Omega$. Regarding \eqref{non_blow_up_properties_1}(i), \eqref{abstract_weak_convergence} implies that for  $\rho>0$ sufficiently small, we have
$$ \lim_{\eps\to 0}\fint_{B_\rho(x_0)}\chi_{V_\eps^{\rm c}}\bar{e}(u_\eps)\, \mathrm{d}x=\fint_{B_{\rho}(x_0)}\xi\,\mathrm{d}x\,.$$ 
Since $x_0$ is a Lebesgue point of $\xi$, we get the first property in \eqref{non_blow_up_properties_1}(i). The second one follows analogously, by using \textit{\ref{H3}}  and \eqref{conclusions_of_symmetric_Cauchy-Born}{(ii)} instead of \eqref{abstract_weak_convergence}, by $x_0\in \tilde\Omega\cap W^0$, as well as the fact that $x_0$ is a Lebesgue point of $e(u)$. Moreover, \eqref{non_blow_up_properties_1}(iii) follows from the definition of the \EEE approximate gradient $\nabla u(x_0)$, see \eqref{lemma: approx-grad}, which we assumed to exist at $x_0$. \EEE
		
For \eqref{non_blow_up_properties_1}(ii), we notice that \eqref{uniform_L^2_bound_symmetric gradients} implies that for the family of Radon measures defined by $\mu^{\rm{bulk}}_{\eps}:=\chi_{V_\eps^{\rm c}}|\bar{e}(u_\eps)|^2\,\mathrm{d}\L^d$ we have that $\sup_{\eps\in(0,\eps_0)}\mu_{\eps}^{\rm{bulk}}(\tilde \Omega)<+\infty$. \EEE Hence, there exists a finite positive Radon measure $\mu^{\rm{bulk}}\EEE$ on $\tilde \Omega$  such that (up to a  subsequence, not relabeled) $\mu_{\eps}^{\rm{bulk}}\overset{\ast}{\rightharpoonup}\mu^{\rm{bulk}}$ in the sense of measures, as $\eps\to 0$. In particular, by the Radon-Nikodym theorem we can choose $x_0$ with the additional property that
\begin{equation*}
\lim_{\rho\to 0^+}\frac{\mu^{\rm{bulk}}(B_{2\rho}(x_0))}{\rho^d}<+\infty\,. 
\end{equation*} 
Moreover, since $\mu^{\rm{bulk}}$ is finite, except for a countable set of $\rho \in (0,1)$, we  have $\mu^{\rm{bulk}}(\partial B_{2\rho}(x_0))=0$. This along with weak convergence of measures shows the first part of \eqref{non_blow_up_properties_1}(ii). The proof of the second part is the same, by using $V_\eps \supset W_\eps$ and  \textit{\ref{H3}} \EEE in place of \eqref{uniform_L^2_bound_symmetric gradients}. \EEE

The proof of \eqref{non_blow_up_properties_1}(iv) is similar:  by  \eqref{conclusions_of_symmetric_Cauchy-Born}{(ii)}, up to a subsequence (not relabeled) there exists a finite positive Radon measure $\mu^{\rm{surf}}$ on $\Omega$ such that
the measures $\H^{d-1}|_{\partial^* V_\eps} \overset{\ast}{\rightharpoonup} \mu^{\rm{surf}}$ on $\Omega$ in the sense of measures, as $\eps\to 0$. Notice now that for $\L^d$-a.e.\ $x_0$ we have that
\begin{equation}\label{d-1_density_of_jump_goes_to_0}
\lim
_{\rho\to 0^+} \rho^{1-d}\mu^{\rm{surf}}(B_{2\rho}(x_0))=0\,,
\end{equation}
and we can thus assume without restriction that $x_0$ satisfies also \EEE \eqref{d-1_density_of_jump_goes_to_0}. \EEE
Indeed, suppose by contradiction that there exists a Borel set $C\subset \Omega$ with $\L^d(C)>0$ and $t>0$ such that
$$\limsup_{\rho\to 0^+} \rho^{1-d}\mu^{\rm{surf}}(B_{2\rho}(x))>t \quad \forall x\in C\,.$$
Then, as a consequence of \cite[Theorem 2.56]{Ambrosio-Fusco-Pallara:2000}, we would get $\mu^{\rm{surf}}(C)\geq t\H^{d-1}(C)$,
implying that $\mu^{\rm{surf}}(C)=+\infty$. But this is a contradiction to $\mu^{\rm{surf}}$ being a finite measure. We now derive \eqref{non_blow_up_properties_1}(\AAA iv\EEE) from \eqref{d-1_density_of_jump_goes_to_0} by using that $\mu^{\rm{surf}}(\partial B_{2\rho}(x_0)) = 0$ up to a countable number of values for $\rho$. \EEE  
%
\end{step}\\
\begin{step}{2}
With the properties of \eqref{non_blow_up_properties_1} at hand, we can now choose a suitable diagonal sequence $(\rho_\varepsilon)_{\varepsilon>0}$, with $\rho_\varepsilon\to 0$ and $\eps/\rho_{\eps}\to 0$ as $\eps\to 0$, for which 
\begin{align}\label{non_blow_up_properties_2}
\begin{split}
\text{(i) }& \underset{\varepsilon\to 0}{\lim}\ \fint _{B_{\rho_\eps}(x_0)}\chi_{V_\eps^{\rm c}}\bar{e}(u_\varepsilon)\, \mathrm{d}x=\xi(x_0);\ \ \
\underset{\varepsilon\to 0}{\lim}\ \fint _{B_{\rho_\eps}(x_0)}\chi_{V_\eps^{\rm c}}e(\tilde u_\varepsilon)\, \mathrm{d}x=e(u)(x_0);\\
\text{(ii) }& \underset{\varepsilon\to 0}{\limsup}\Big(\fint_{B_{2\rho_\eps}(x_0)}\chi_{V_\eps^{\rm c}}|\bar{e}(u_\varepsilon)|^2\, \mathrm{d}x+\fint_{B_{2\rho_\eps}(x_0)}\chi_{V_\eps^{\rm c}}|e(\tilde u_\varepsilon)|^2\, \mathrm{d}x\Big)<+\infty\,; \\
\text{(iii) }& \tilde {z}_{\eps}
(y)=\chi_{V_{\eps}^{\rm c, bl}}(y)\frac{\tilde u_\eps(x_{0,\eps}+\rho_\eps y)-u(x_0)}{\rho_\eps}\to \nabla u(x_0)y \text{\ \ in measure on } B_1 \text{ as }\eps\to 0;\\
\text{(iv) }& \underset{\varepsilon\to 0}{\lim}\  \rho_\eps^{-(d-1)}\mathcal{H}^{d-1}(\partial^* V_\eps^{\rm c}\cap B_{2\rho_\eps}(x_0))=0\,,
\end{split}
\end{align}
where we recall that $V_{\eps}^{\rm c, bl} = \frac{1}{\rho_\eps}(V_\eps^{\rm c}-x_{0,\eps})$. 
Indeed, recall that $\tilde{u}_\eps \to u$ in measure on $\Omega \setminus W
$ (see \ref{H2}) \EEE  
and that $\mathcal{L}^d(V_\eps
\triangle W
)\to 0$ by \textit{\ref{H1}} 
and \eqref{conclusions_of_symmetric_Cauchy-Born}(ii). Thus, denoting also $V_{\eps,\rho}^{\rm c, bl} = \frac{1}{\rho}(V_\eps^{\rm c}-x_{0,\eps})$, \eqref{non_blow_up_properties_1}(iii) and the fact that $x_{0,\eps}\to x_0$ with $x_0\in W^0$, imply that for $\L^d$-a.e. $y\in B_1$ it holds that
\begin{align*}
\lim_{\rho\to 0^+}\lim_{\eps\to 0} \left(\chi_{V_{\eps, \rho}^{\rm{c},bl}}(y)\frac{\tilde u_\eps(x_{0,\eps}+\rho y)-u(x_0)}{\rho}\right)&=\lim_{\rho\to 0^+}\chi_{\Omega \setminus W}(x_0+\rho y)u_{x_0,\rho}(y)=\nabla u(x_0)y\,,
\end{align*}
up to taking  a subsequence in $\eps>0$ (not relabeled).  
In combination with \eqref{non_blow_up_properties_1}(i), (ii), (iv), we can indeed extract a diagonal sequence \EEE $(\rho_\eps)_{\eps>0}$ with the properties claimed in \eqref{non_blow_up_properties_2}, where we use that measure convergence is metrizable.\\
We now verify \eqref{blow_up_properties}. Properties \eqref{blow_up_properties}(i)-(iii) are precisely \eqref{non_blow_up_properties_2}(i)-(iii), respectively. 
Regarding \eqref{blow_up_properties}(iv), recall 
the definition of $(\tilde{z}_{\eps})_{\eps>0}$ in \eqref{final_blow_ups}. We have that $$J_{\tilde z_{\eps}}
=\frac{1}{\rho_\eps}((J_{\chi_{V_\eps^{\rm c}}\tilde u_\eps}\cap B_{\rho_\eps}(x_{0,\eps}))-x_{0,\eps})\,.$$
Therefore, the fact that 
$\eps/\rho_\eps\to 0$, we get that $|x_{0,\eps}-x_0| \le C\eps \leq \rho_\eps$ for  $\eps>0$ small enough (see \eqref{approximation_by_lattice_points}), \EEE  and we can estimate
\begin{align*}
\H^{d-1}(J_{\tilde z_{\eps}} \cap B_1
)= \rho_\eps^{-(d-1)}\H^{d-1}(J_{\chi_{V_\eps^{\rm c}}\tilde u_\eps}\cap B_{\rho_\eps}(x_{0,\eps}))\leq \rho_\eps^{-(d-1)}\H^{d-1}\big(\partial^* V_\eps^{\rm c}\cap B_{2\rho_\eps}(x_0)\big)\,.
\end{align*}
Then, \eqref{blow_up_properties}(iv) follows from \eqref{non_blow_up_properties_2}(iv). 
		
As a preparation for  \eqref{blow_up_properties}(v),(vi), we observe that $e(\tilde{z}_{\eps})(y)=\chi_{V_{\eps}^{\rm c,bl}
}(y)e(\tilde{u}_\eps)(x_{0,\eps}+\rho_\eps y)$ for $\mathcal{L}^d$-a.e.\ $y \in B_1$ (see \eqref{final_blow_ups} and argue as in \eqref{eq: new equation}), \EEE and therefore, for all $\eps>0$ sufficiently  small so that $|x_{0,\eps}-x_0|\leq\rho_\eps$, we obtain 
\begin{equation*}
\int_{B_1}\hspace{-0.1cm}|e(\tilde z_{\eps})|^2=\int_{B_1}\hspace{-0.1cm}\chi_{V_\eps^{\rm c, bl}}(y)|e(\tilde u_\eps)|^2(x_{0,\eps}+\rho_\eps y)\, \mathrm{d}y=\frac{1}{\rho_\eps^d}\int_{B_{\rho_\eps}(x_{0,\eps})}\hspace{-0.2cm}\chi_{V_\eps^{\rm c}}|e(\tilde u_\eps)|^2\leq\frac{1}{\rho_\eps^d}\int_{B_{2\rho_\eps}(x_0)}\hspace{-0.2cm}\chi_{V_\eps^{\rm c}}|e(\tilde u_\eps)|^2\,.
\end{equation*}
In a similar fashion, again using \eqref{eq: new equation} we have \EEE
\begin{equation*}
\int_{B_1}\chi_{V_\eps^{\rm c, bl}}|\bar{e}(\zeta_{\eps})|^2
\leq\frac{1}{\rho_\eps^d} \int_{B_{2\rho_\eps}(x_0)}\chi_{V_\eps^{\rm c}}|\bar{e}(u_\eps)|^2\,.
\end{equation*}
In combination with \eqref{non_blow_up_properties_2}(ii), this yields
\begin{align*}
\limsup_{\varepsilon \to 0} \left(\|e(\tilde{z}_{\eps})\|_{L^2(B_1)} + \|\chi_{V_\eps^{\rm c, bl}} \overline{e}(\zeta_{\eps})\|_{L^2(B_1)} \right) <+\infty\,.
\end{align*}
In particular, in view of \eqref{blow_up_properties}(iv), we have that 
$$\limsup_{\eps \to 0} \Big(\int_{B_1}|e(\tilde z_\eps)|^2\,\mathrm{d}x+\H^{d-1}(J_{\tilde z_{\eps}}\cap B_1)\Big)<+\infty\,.$$
Since we already know that $\tilde z_{\eps}\EEE\to \nabla u(x_0)y$ in measure on $B_1$,  we can infer from   \cite[Theorem~1.1, (1.5b)]{Crismale-Cham} that for a suitable subsequence   we indeed have 
$$e(\tilde z_{\eps}) \rightharpoonup e(u)(x_0) \text{ weakly in } L^2(B_1;\R^{d\times d}_{\rm sym})\,,$$
i.e., \eqref{blow_up_properties}(v) holds. In a similar fashion, we get $\chi_{V_\eps^{\rm c, bl}}\bar{e}(\zeta_{\eps})\rightharpoonup F \text{ weakly in } L^2(B_1; \R^{d\times 2^d})$ for some $F \in L^2(B_1; \R^{d\times 2^d})$. Therefore, to see  \eqref{blow_up_properties}(vi), it suffices to \EEE check that $\fint_{B_1} F(y)\, \mathrm{d}y=\xi(x_0)$. We can argue as follows. By \eqref{approximation_by_lattice_points} we get \EEE 
$$\mathcal{L}^d\big(B_{\rho_\eps}(x_{0,\eps})\triangle B_{\rho_\eps}(x_0)\big)\leq C|x_{0,\eps}-x_0|\rho_\eps^{d-1}\leq C\eps\rho_\eps^{d-1}\,.$$ 
By this, along with the fact that $\eps/\rho_\eps\to 0$ as $\eps \to 0$, and \eqref{blow_up_properties}(ii), for $\eps>0$ small we can estimate
\begin{align}\label{limits_are_the_same}
\begin{split}
\left|\frac{1}{\rho_\eps^d}\int_{B_{\rho_{\eps}}(x_{0,\eps})\triangle B_{\rho_\eps}(x_0)}\chi_{V_\eps^{\rm c}}\bar{e}(u_\eps)
\right|&\leq \frac{1}{\rho_\eps^d}\int_{B_{\rho_\eps}(x_{0,\eps})\triangle B_{\rho_\eps}(x_0)}\chi_{V_\eps^{\rm c}}|\bar{e}(u_\eps)|\, \mathrm{d}x
\\
&\leq C\Big(\frac{\mathcal{L}^d(B_{\rho_\eps}(x_{0,\eps})\triangle
B_{\rho_\eps}(x_0))}{\rho_\eps^d}\Big)^{\frac{1}{2}}\Big(\fint_{B_{2\rho_\eps}(x_0)}\chi_{V_\eps^{\rm c}}|\bar{e}(u_\eps)|^2\Big)^{\frac{1}{2}}
\\
&\leq C\sqrt{\eps /\rho_\eps} \to 0\,. \EEE
\end{split}
\end{align}
Therefore, by the fact that weak $L^2$-convergence implies the convergence of averages, the change of variables $x:=x_{0,\eps}+\rho_\eps y$, \eqref{eq: new equation}, \EEE \eqref{blow_up_properties}(i),  and  \eqref{limits_are_the_same}  we obtain
\begin{align*}
\fint_{B_{1}}F(y)\, \mathrm{d}y&= \lim_{\eps\to0}\fint_{B_1}\chi_{V_\eps^{\rm c, bl}}(y)\bar{e}(\zeta_{\eps})(y)\, \mathrm{d}y=\lim_{\eps\to 0}\fint_{B_{\rho_{\eps}}(x_{0,\eps})}\chi_{V_\eps^{\rm c}}(x)\bar{e}(u_\eps)(x)\, \mathrm{d}x \\
&=\lim_{\eps\to 0}\fint_{B_{\rho_{\eps}}(x_0)}\chi_{V_\eps^{\rm c}}(x)\bar{e}(u_\eps)(x)\, \mathrm{d}x=\xi(x_0)\,.
\end{align*}
This concludes the proof.
\end{step}
\end{proof}



\begin{proof}[Proof of Lemma \ref{lemma: weak limit F}]
Let $x_0$ and $(\rho_\eps)_{\eps>0}\EEE$ be as in  Lemma \ref{blow-up}, and recall that $\hat\eps:=\eps/\rho_\eps\to 0$ as $\eps\to 0$. In the following, we choose $\eps_0 >0$ such that $B_{2\rho_{\eps_0}}(x_0) \subset \subset \Omega$, and always suppose that $\eps\in (0,\eps_0)\EEE$ without further notice. \EEE  In view of the blow-up properties established in Lemma \ref{blow-up}, we introduce the \textit{blow-up} versions of our lattice, the cubic decomposition, and the  $d$-dimensional simplices (cf.\ \eqref{def:simplices}) as
\begin{align*}
\begin{split}
\Z_\eps^{\rm{bl}}:=\rho_{\eps}^{-1}(\eps\Z^d-x_{0,\eps})\,,&\ \ \  \hat \Q_\eps^{\rm{bl}}:=\Big\{Q: Q=\rho_\eps^{-1}(Q'-x_{0,\eps})\,;\  Q'\in \hat\Q_{\eps}\Big\}\,;\\
\mathcal S_\eps^{\rm{bl}}:=\Big\{S: S=\rho_\eps^{-1}(S'-x_{0,\eps})\,;\   S'\in \mathcal{S}_\eps\Big\}\,,&\  \ \ 
\mathcal S_{\eps,Q}^{\rm{bl}}:=\Big\{S\in\mathcal S_\eps^{\rm{bl}}: S\subset Q\Big\} \  \  \forall Q\in \hat\Q_\eps^{\rm{bl}}\,.
\end{split}
\end{align*}
Since \eqref{conclusions_of_symmetric_Cauchy-Born}{(i)} is translation and scaling invariant, we also have that 
\begin{equation}\label{good_volume_bound_blow}
\mathcal{L}^d(Q\cap V_\eps^{\rm{c}, bl}) = 0 \text{ or }   \mathcal{L}^d(S\cap V_\eps^{\rm{c}, bl}) \ge  \frac{1}{2}  \mathcal{L}^d(S) \quad \forall \, S \in \mathcal{S}^{\rm{bl}}_{\varepsilon,Q}\,, \forall \, Q\in  \hat\Q_\eps^{\rm{bl}}\,.
\end{equation}
In order to localize our estimates, we fix from now on a small constant $\delta\in(0,1)$, with $\delta>\tilde C\hat\eps$, which will eventually be sent to $0$. (Here $\tilde C >0$ is a sufficiently large dimensional constant, so that all our subsequent estimates and set inclusions are true.) 
\\
\begin{step}{1\EEE}(\EEE Application of the Korn-Poincar\'e inequality  in $GSBD^2$\EEE) 
We claim that we can find an absolute \EEE
constant $C>0$, sets of finite perimeter $(\omega_\varepsilon)_{\eps>0
} \subset \EEE \mathfrak{M}(B_1)$, and Sobolev functions $(v_\varepsilon)_{\eps>0}
\subset H^1(B_1;\R^d)$ such that $v_\varepsilon = \tilde{z}_\varepsilon$ on $B_1\setminus \omega_\varepsilon$, and
\begin{align}\label{ineq:H1bound}
\begin{split}
\mathcal{H}^{ d-1 \EEE}(\partial^\ast\omega_\varepsilon)\leq C\mathcal{H}^{ d-1 \EEE}(J_{{ \tilde z\EEE}_\varepsilon}\cap B_1)\,, \quad \mathcal{L}^{ d \EEE}(\omega_\varepsilon)\leq C\mathcal{H}^{ d-1 \EEE}(J_{{ \tilde z\EEE}_\varepsilon}\cap B_1)^{{ d/(d-1) \EEE}}\,;\\
\|v_\varepsilon\|_{L^2(B_1)}+\|\nabla v_\eps\|_{L^2(B_1)} \leq C\left(\|e({ \tilde z\EEE}_\varepsilon)\|_{L^2(B_1)}+1\right)\,.
\end{split}
\end{align}
Moreover, we can find sets of finite perimeter $({T}_\varepsilon)_{\eps>0
}\subset\mathfrak{M}(B_1)$, with ${T}_\varepsilon =\bigcup_{S \in \mathcal{S}_\varepsilon^\mathrm{bad}}S$, where $\mathcal{S}^\mathrm{bad}_\varepsilon \subset \mathcal{S}_\varepsilon^{\rm{bl}}$ is to be specified in what follows, 
such that 
\begin{align}\label{ineq:gradientbound}
\begin{split}
\mathcal{L}^{d}({T}_\varepsilon)\leq C\mathcal{H}^{d-1}(J_{{\tilde z}_\varepsilon}\cap B_1)^{{d/(d-1)}}\,, \quad  
\|\nabla{\tilde z}_\varepsilon\|_{L^2(B_{1-\delta}\setminus {T}_\varepsilon)}
\leq C\left(\|e({\tilde z}_\varepsilon)\|_{L^2(B_1)}+1\right)\,.
\end{split}
\end{align} 
To see this, we first apply the Korn-Poincar\'e inequality for functions in $GSBD^2(B_1)$ with small jump set in the version of \cite[Theorem 1.2]{ChaCagSca}: \EEE there exists a  dimensional constant $C>0$ such that for every $\eps>0
$ there exists a set of finite perimeter $\omega_\varepsilon \subset B_1$ with 
\begin{align}\label{ineq:measureomegaeps}
\mathcal{H}^{d-1}(\partial^{\ast} \omega_\varepsilon) \leq C\mathcal{H}^{ d-1}(J_{\tilde{z}_\varepsilon}\cap B_1)\,, \quad \mathcal{L}^{d}(\omega_\varepsilon)\leq C\mathcal{H}^{d-1}(J_{\tilde{z}_\varepsilon}\cap B_1)^{{d/(d-1)}}\,,
\end{align}
and $v_\varepsilon\in H^1(B_1;\R^d)$ satisfying $v_\varepsilon= {\tilde z}_\varepsilon$ on $B_1\setminus \omega_\varepsilon$ and
$$ \| e(v_\varepsilon)\|_{L^2(B_1)} \leq C\|e(\tilde{z}_\varepsilon)\|_{L^2(B_1)}\,. $$
Now, \EEE 
by the classical Korn-Poincar\'e inequality, there exists an infinitesimal rigid motion $r_{\varepsilon} (y)=A_{\varepsilon}y+b_{\varepsilon}$, $A_{\varepsilon}\in \mathbb{R}^{{d}\times {d}}_{\mathrm{skew}}$, $b_{\varepsilon}\in \mathbb{R}^{d}$, \EEE 
such that \EEE
\begin{align}\label{ineq:symmetricveps}
\|v_\varepsilon -r_{\varepsilon}\|_{H^1(B_1)}\leq C \| e(v_\varepsilon)\|_{L^2(B_1)} \leq C\|e(\tilde{z}_\varepsilon)\|_{L^2(B_1)}\,.
\end{align}
By \eqref{ineq:measureomegaeps} the first part of \eqref{ineq:H1bound} follows. Now we claim that
\begin{align}\label{ineq:boundAeps}
|A_{\varepsilon}|+|b_{\varepsilon}|\leq C\left(\|e({ \tilde z\EEE}_\varepsilon)\|_{L^2(B_1)}+1\right)\,.
\end{align}
Assuming for the moment that  \eqref{ineq:boundAeps} holds, this implies 	$\|r_{\varepsilon}\|_{H^1(B_1)}\le C(\|e({ \tilde z\EEE}_\varepsilon)\|_{L^2(B_1)}+1)$ \EEE  and thus, by the triangle inequality and \eqref{ineq:symmetricveps}, also the last inequality in \eqref{ineq:H1bound} follows. 
Let us now show \eqref{ineq:boundAeps}. We define
\begin{equation}\label{ineq:Aetaeps}
K_\varepsilon:= D_\varepsilon \EEE \cup (-D_\varepsilon)\cup\omega_\varepsilon\cup(-\omega_\varepsilon)\,, \quad \text{where} \quad D_\varepsilon:= \{y\in B_1 \colon |{\tilde z}_\varepsilon(y) -\nabla u(x_0)y| >1\}  \,,
\end{equation}
and we note by \eqref{blow_up_properties}(iii),(iv) \EEE and  \eqref{ineq:measureomegaeps} that \EEE 
\begin{align}\label{eq: vanivoli}
\mathcal{L}^{d}(K_\varepsilon)\leq 2(\mathcal{L}^{ d }(D_\varepsilon \EEE )+\mathcal{L}^{ d }(\omega_\varepsilon))\leq C\big(\mathcal{L}^{ d }(D_\eps)+\mathcal{H}^{ d-1}(J_{\tilde z_\eps}\cap B_1)^{{ d/(d-1)}}\big) \to 0 \text{ \  as }\varepsilon\to 0\,.
\end{align}
By \eqref{ineq:symmetricveps}, the fact that \EEE $v_\varepsilon= {\tilde z}_\varepsilon$ on $B_1\setminus \omega_\varepsilon$, \EEE and \eqref{ineq:Aetaeps} we can estimate
\begin{align*}
\|r_{\varepsilon}\|_{L^2(B_1\setminus K_\varepsilon)}&\leq \|r_{\varepsilon}-{\tilde z}_\eps\|_{L^2(B_1\setminus K_\varepsilon)}+\|{ \tilde z}_\eps-\nabla u(x_0)\mathrm{id}\|_{L^2(B_1\setminus K_\varepsilon)}+\|\nabla u(x_0)\mathrm{id}\|_{L^2(B_1\setminus K_\varepsilon)} \\
&\leq C(\|e({\tilde z}_\varepsilon)\|_{L^2(B_1)}+1+|\nabla u(x_0)|)\,,
\end{align*}
i.e., $\| r_{\varepsilon}\|_{L^2(B_1\setminus K_\varepsilon)} \leq C(\|e({ \tilde z\EEE}_\varepsilon)\|_{L^2(B_1)}+1)$, where the last constant $C>0$ depends additionally on 
$\nabla u(x_0)\EEE$. Next, note that if $y \in B_1 \setminus K_\varepsilon$, then also $-y \in B_1\setminus K_\varepsilon$, and therefore 
\begin{align*}
\int_{B_1 \setminus K_\varepsilon} y\,\mathrm{d}y = 0\,.
\end{align*}
Combining the last  two  observations, we get 
\begin{align*}
\int_{B_1\setminus K_\varepsilon\EEE}\big(|A_{\varepsilon}y|^2 + |b_{\varepsilon}|^2\big)\,\mathrm{d}y=
\| r_{\varepsilon}\|^2_{L^2(B_1\setminus K_\varepsilon)} \leq C\left(\int_{B_1}|e({ \tilde z\EEE}_\varepsilon)|^2\, \mathrm{d}x+1\right)
\,.
\end{align*}
In view of \eqref{eq: vanivoli}, \eqref{ineq:boundAeps} follows. \\
Next, let us construct the sets $(T_\varepsilon\big)_{\eps>0
}$. We define  
\begin{align}\label{def:tilde_omega}
\mathcal{S}_{\omega_\varepsilon}^{\rm{bl}} : = \left\{ S \in \mathcal{S}_\varepsilon^{\rm{bl}}\colon\mathcal{L}^{d}( S \cap B_1\cap \omega_\varepsilon)\geq \frac{1}{4}\mathcal{L}^{d}(S\cap B_1)\right\}\,, \ {T}_\varepsilon:= \bigcup_{S \in\mathcal{S}_{\omega_\varepsilon}^{\rm{bl}}}S\cap
B_1\,. 
\end{align}
Let us first prove that
\begin{align}\label{int:gradientubound}
\|\nabla{ \tilde z}_\varepsilon\|_{L^2(B_{1-\delta}\setminus T_\eps)}\leq C(\|e({ \tilde z}_\varepsilon)\big\|_{L^2(B_1)}+1)\,.
\end{align}
In that respect, note that by construction, for every $S \in \mathcal{S}_\varepsilon^{\rm{bl}}$ there exists $ M_S^\eps\in \mathbb{R}^{{d}\times { d}}$ such that 
$\nabla \tilde{z}_\varepsilon= M_S^\eps$ on $ S\cap V_\eps^{\rm{c, bl}}%
$ and $\nabla \tilde{z}_\varepsilon=0$ on $ S\setminus V_{\eps}^{\rm{c, bl}}
$. Therefore, for every $S\in \mathcal{S}_\varepsilon^{\rm {bl}} \setminus \mathcal{S}_{\omega_\varepsilon}^{\rm{bl}}$ such that $S\cap B_{1-\delta}\neq\emptyset$ and $\mathcal{L}^d(S\cap V_\eps^{\rm{c}, \rm bl \EEE })>0$, \EEE we use the fact that $\delta>0$ is fixed large enough with respect to $\hat\eps$ (so that $S\subset B_1$) and \eqref{good_volume_bound_blow} to find
\begin{align*}
\begin{split}
\int_{S} |\nabla \tilde{z}_\varepsilon|^2\,\mathrm{d}y&\leq | M_S^\eps|^2\mathcal{L}^{d}(S)\leq 4| M_S^\eps|^2\mathcal{L}^{ d}((S \cap V_\eps^{\rm{c}, \rm bl \EEE }) 
\setminus \omega_\eps)=\EEE 4\int_{S\setminus \omega_\varepsilon}|\nabla \tilde{z}_\varepsilon|^2\,\mathrm{d} y\,.
\end{split}
\end{align*}
\EEE Summing over all $S \in \mathcal{S}_\varepsilon^{\rm{bl}} \setminus \mathcal{S}_{\omega_\varepsilon}^{\rm{bl}}$  such that $S\cap B_{1-\delta} \cap V_\eps^{\rm{c}, \rm bl \EEE } \neq \emptyset$, 
we obtain
\begin{align*}
\int_{B_{1-\delta} \setminus {T}_\varepsilon}|\nabla { \tilde z}_\varepsilon|^2\,\mathrm{d}y \leq\sum_{\underset{S\cap B_{1-\delta} \cap V_\eps^{\rm{c}, \rm bl \EEE } \neq \emptyset}{S \in \mathcal{S}_\varepsilon^{\rm{bl}} \setminus \mathcal{S}_{\omega_\varepsilon}^{\rm{bl}}}} \int_{S} |\nabla { \tilde z}_\varepsilon|^2\,\mathrm{d}y&\leq 4 \sum_{\underset{S\cap B_{1-\delta}\neq \emptyset}{S \in \mathcal{S}_\varepsilon^{\rm{bl}} \setminus \mathcal{S}_{\omega_\varepsilon}^{\rm {bl}}}}
\int_{S\setminus \omega_\varepsilon}|\nabla {\tilde z}_\varepsilon|^2\,\mathrm{d}y \leq4\int_{B_1\setminus \omega_\varepsilon}|\nabla {\tilde z}_\varepsilon|^2\,\mathrm{d} y 
\,.
\end{align*}
The last inequality, together with \eqref{ineq:H1bound} and the fact that $v_\varepsilon= {\tilde z}_\varepsilon$ on $B_1\setminus \omega_\varepsilon$, \EEE yields \eqref{int:gradientubound}. Moreover, by the definition of the sets $(T_\eps)_{\eps>0
}$, it is clear that $$\mathcal{L}^{ d \EEE}({T}_\varepsilon)\leq 4\mathcal{L}^{ d \EEE}(\omega_\varepsilon)\leq C\mathcal{H}^{d}(J_{\tilde{ z}_\varepsilon}\cap B_1)^{d/(d-1)}\,,$$
which concludes the proof of \eqref{ineq:gradientbound} and of this step. 
\end{step}\\
\noindent\begin{step}{2}(Construction of a good sample  for the discrete maps\EEE) In this step we define discrete mappings associated to the Sobolev functions $v_\eps$. To this end, apart from the parameter $\delta\in(0,1)$ that we fixed in the beginning of the previous step (recall that $\delta\gg \hat\eps$), we also fix $ \eta\in (0,\frac{1}{2})$. Eventually, in Step 5 of the proof, we will pass to the limits $\delta,\eta \to 0$. From now on, $v_\eps$ shall denote a fixed representative of the corresponding $L^1$-equivalence class. In this step, we \EEE show that 
there exists $\tau_\varepsilon^\eta \in Q_{\eta\hat\varepsilon}(0)$ such that 
\begin{align}\label{ineq:goodtauchoice}
\#\big\{ i\in\mathbb{Z}_\eps^{\rm{bl}}(B_{\AAA 1-\delta \EEE
})\colon \tilde{z}_\varepsilon(i+\tau_\varepsilon^\eta)\neq v_\varepsilon( i+\tau_\varepsilon^\eta)\big\} \leq \frac{2}{(\eta\hat\eps)^{ d}}\mathcal{L}^{d}(\omega_\varepsilon)\,,
\end{align}
and that for the maps $v_\varepsilon^\eta \colon\Z_{\eps}^{\rm{bl}}
\to \mathbb{R}^{d}$, defined by $v_\varepsilon^\eta(i) :=v_\varepsilon(i+\tau_\varepsilon^\eta)$ for $i \in \Z^{\rm bl}_\eps$, \EEE 
we have
\begin{align}\label{ineq:affineboundvepstilde}
\int_{B_{1-\delta}} |\nabla \tilde{v}_\varepsilon^\eta|^2\,\mathrm{d}y\leq \frac{C}{\eta^{ d \EEE}}\int_{B_1}|\nabla v_\varepsilon|^2\,\mathrm{d}y\,,
\end{align}
where $\tilde{v}_\varepsilon^\eta$ denotes the piecewise affine interpolation of $v_\varepsilon^\eta$, subordinate to the lattice $\Z_\eps^{\rm{bl}}$. 
To this end, let us consider the set $G_\varepsilon^\eta \subset Q_{\eta\hat\varepsilon}(0)$ defined as 
\begin{align*}
G_\eps^\eta:=\Big\{\tau\in Q_{\eta\hat\eps}(0)\colon \#\big\{ i\in \mathbb{Z}_\eps^{\rm{bl}}(B_{1-\delta
})\colon \tilde{z}_\varepsilon(i+\tau)\neq v_\varepsilon(i+\tau)\big\} \leq \frac{2}{(\eta\hat\varepsilon)^{d}}\mathcal{L}^{ d}(\omega_\varepsilon)\Big\}\,,
\end{align*}
for which we claim that 
\begin{align}\label{volume_of_good_tauset}
\L^d({G_{\eps}^\eta})\geq\frac{1}{2}(\eta\hat\eps)^d\,.
\end{align}
Indeed, consider the function $g_\eps\colon Q_{\eta\hat\eps}(0)\to\R$, defined by
$$g_\varepsilon(\tau) := \sum_{i\in \Z_\varepsilon^{\rm{bl}}(B_{1-\delta
})} \chi_{\omega_\varepsilon}(i+\tau)\,.$$
On the one hand, since $\tilde z_\eps=v_\eps$ on $B_1\setminus \omega_\eps$, i.e., $\{\tilde z_\eps\neq v_\eps\}\subset \omega_\eps$,  
we have
\begin{align*}
\int_{Q_{\eta\hat\varepsilon}(0)}g_\varepsilon\geq \int_{Q_{\eta\hat\eps}(0)\setminus G_\eps^\eta} \#\{i\in \mathbb{Z}_\eps^{\rm{bl}}(B_{1-\delta
})\colon \tilde{z}_\varepsilon(i+\tau)\neq v_\varepsilon(i+\tau)\}\, \mathrm{d}\tau>\frac{2}{(\eta\hat\varepsilon)^{d}} \mathcal{L}^{d}(\omega_\varepsilon)\mathcal{L}^{d}(Q_{\eta\hat\eps}(0)\setminus G_\eps^\eta)\,.
\end{align*}
On the other hand, for $\eta\in (0,\frac{1}{2})$, by exchanging summation and integration (note that 
the sum in the definition of the function $g_{\eps}$ is finite)\EEE, we have
\begin{align*}
\int_{Q_{\eta\hat\varepsilon}(0)}g_\varepsilon = \sum_{i\in \Z_\varepsilon^{\rm{bl}}(B_{1-\delta
})} \int_{Q_{\eta\hat\varepsilon}(0)}\chi_{\omega_\varepsilon}(i+\tau) \,\mathrm{d} \tau= \sum_{i\in \Z_\varepsilon^{\rm{bl}}(B_{1-\delta
})} \mathcal{L}^{d}(\omega_\varepsilon \cap Q_{\eta\hat\varepsilon}(i))\leq \mathcal{L}^{d}(\omega_\varepsilon)\,.
\end{align*}
Combining the last two inequalities, we arrive at \eqref{volume_of_good_tauset}.
\EEE\\ 
Now, let $\tau \in G_\varepsilon^\eta$ and $S\in \mathcal{S}_\eps^{\rm{bl}}$, say $S=\mathrm{conv}\{y_1, \dots ,y_{ d+1}\}$, where $ (y_l)_{l=1,\dots,{d+1}}\subset \EEE \Z_\eps^{\rm{bl}}
$.  Let $l\in \{1,\dots,d\}$
. By the theory of slicing of Sobolev functions (cf.\ \cite[Proposition~3.105]{Ambrosio-Fusco-Pallara:2000}), for $\mathcal{L}^{d}$-a.e.\ $\tau \in G_\varepsilon^\eta$ 
we have that $v_\varepsilon|_{[y_{l}+\tau,y_{ l+1}+\tau]} \in H^1([y_{l}+\tau, y_{ l+1}+\tau];\R^d)$, and 
\begin{align}\label{eq: slici}
{\frac{\d}{\d t} v_\varepsilon \big(y_{l} +\tau+ t(y_{l+1}-y_{l})\big) = \nabla v_\varepsilon\big(y_{l}+\tau + t(y_{l+1}-y_{l})\big)  (y_{l+1}-y_{l}) \ \  \text{ for a.e. } t\in(0,1)\,.}
\end{align}
Note that $|y_{l+1}-y_{l}|\leq C\hat\varepsilon$, that the unit vectors $\big(\frac{y_{l+1}-y_{l}}{|y_{l+1}-y_{l}|}\big)_{l=1,\dots,d}$ form a basis of $\R^d$, and that $|A|\leq C$ for a purely dimensional constant $C>0$, where $A\in \R^{d\times d}$ denotes here the corresponding transition matrix from the canonical basis $\{e_1,\dots,e_d\}$ of $\R^d$ to this basis. Let us also denote by $\tilde{v}_\varepsilon( \ZZZ \cdot \EEE +\tau)$ the piecewise affine function with $\tilde{v}_\varepsilon(y_l+\tau):= v_\varepsilon(y_l+\tau)$ for $l=1,\ldots, d+1$, so that by construction, $\nabla \tilde v_\eps|_S(\cdot+\tau)$ is constant on $S\in\mathcal{S}_\eps^{\rm{bl}}$. Then, using Jensen's inequality and Fubini's Theorem, \eqref{eq: slici}, and recalling the definition in \eqref{eq: thick-def}, we can estimate
\begin{align*}
\int_{G_\eps^\eta}\int_S |\nabla \tilde v_\eps (y+\tau)|^2\, \mathrm{d}y\, \mathrm{d}\tau&\leq C\hat\eps^d\sum_{l=1}^d\int_{G_\eps^\eta} \left|\nabla \tilde v_\eps|_S(\cdot+\tau)\Big(\frac{y_{l+1}-y_l}{|y_{l+1}-y_l|}\Big)\right|^2\, \mathrm{d}\tau 
\\
&= C\hat\eps^d\sum_{l=1}^d\frac{1}{|y_{l+1}-y_l|^2}\int_{G_{\eps}^\eta}|v_{\eps}(y_{l+1}+\tau)-v_\eps(y_l+\tau)|^2\, \mathrm{d}\tau\\
&\le C\hat\eps^d\sum_{l=1}^d\int_{G_\varepsilon^\eta}\int_0^1\left|\nabla v_\varepsilon\big(y_l+\tau +t(y_{l+1}-y_l)\big)\right|^2\mathrm{d}t\,\mathrm{d}\tau\leq C\hat\eps^d \int_{(S)_{\hat\eps}}|\nabla v_\eps|^2\,.
\end{align*}
Summing over all simplices $S\in \mathcal{S}_\eps^{\rm{bl}}$ intersecting $B_{1-\delta}$ (and using that $\delta\gg\hat \eps$), we infer  
\begin{align*}
\int_{G_\eps^\eta}\int_{B_{1-\delta}} |\nabla \tilde v_\eps(y+\tau)|^2\, \mathrm{d}y\, \mathrm{d}\tau & \le \int_{G_\eps^\eta}  \underset{S\cap B_{1-\delta}\neq\emptyset}{\sum_{S \in \mathcal{S}_\varepsilon^{\rm{bl}}}} \int_{S} |\nabla \tilde v_\eps(y+\tau)|^2\, \mathrm{d}y\, \mathrm{d}\tau  \le \underset{S\cap B_{1-\delta}\neq\emptyset}{\sum_{S \in \mathcal{S}_\varepsilon^{\rm{bl}}}}  C\hat\eps^d \int_{(S)_{\hat\eps}}|\nabla v_\eps|^2   \\
& \le C\hat\eps^d \int_{B_1} |\nabla v_\eps|^2. 
\end{align*}
Here, we used that for every  $S \in \mathcal{S}_\varepsilon^{\rm{bl}}$ there is only a bounded number of $S'\in \mathcal{S}_\varepsilon^{\rm{bl}}$, $S'\neq S$,  such that $(S)_{\hat\varepsilon} \cap (S')_{\hat\varepsilon}\neq \emptyset$. Thus, the constant $C>0$ in the last inequality is  purely dimensional. Using the fact that $\L^d(G_{\eps}^\eta)\geq\frac{1}{2}(\eta\hat\eps)^d$, we can choose $\tau_\varepsilon^\eta \in G_\varepsilon^\eta$ such that  
$${\int_{B_{1-\delta}} |\nabla \tilde v_\eps (y+ \tau^\eta_\eps \EEE )|^2\, \mathrm{d}y \le \frac{1}{\L^d(G_{\eps}^{\eta})}\int_{G_\eps^\eta}\int_{B_{1-\delta}} |\nabla \tilde v_\eps(y+\tau)|^2\, \mathrm{d}y\, \mathrm{d}\tau \leq \frac{C}{\eta^{ d}}\int_{B_1}|\nabla v_\varepsilon|^2\,\mathrm{d}y\,.}$$ 
This finishes the proof of \eqref{ineq:affineboundvepstilde}. As by definition of $G_{\eps}^\eta$ also \eqref{ineq:goodtauchoice} holds, the step is concluded. \EEE 
\end{step}\\
\begin{step}{3}(Estimate of the bad lattice points)
Let $\mathcal{L}_\eps := \lbrace i\in \Z_\eps^{\rm{bl}}\colon Q_{2\hat\varepsilon}( \hat{i}) \cap B_{1-2\delta} \neq \emptyset \rbrace $, and \EEE define the set of ``good'' lattice points $\mathcal{L}_\varepsilon^\mathrm{good} \subset \mathcal{L}_\eps \EEE $  by
\begin{align}\label{def:Lepsgood}
\hspace{-0.5em}\begin{split}
\mathcal{L}_\varepsilon^\mathrm{good}  := \big\{& i\in \mathcal{L}_\eps \EEE\colon T_\varepsilon \cap Q_{2\hat\varepsilon}( \hat{i})\EEE=\emptyset,\ { \tilde z}_\varepsilon( i+\hat\varepsilon\xi+\tau_\varepsilon^\eta)=v_\varepsilon( i+\hat\varepsilon\xi+\tau_\varepsilon^\eta)\ \ \forall\xi \in \mathbb{Z}^{ d}\cap[-1,1]^d\big\}\,.
\end{split}
\end{align}	
The other lattice points are considered to be ``bad''. We claim that
\begin{align}\label{ineq:badsetestimate}
\#\big(\mathcal{L}_\eps \EEE \setminus  \mathcal{L}_\varepsilon^\mathrm{good}\big) \leq \frac{C}{\hat\varepsilon^{d}}\left(\eta^{ -d}\mathcal{L}^{d}(\omega_\varepsilon) +\mathcal{L}^{d}(T_\varepsilon)\right)\,.
\end{align}
In order to prove this estimate, we note that
\begin{align}\label{incl:badsetlattice}
\mathcal{L}_\eps \EEE \setminus  \mathcal{L}_\varepsilon^\mathrm{good} \subset  \L_\varepsilon^1 \cup  \L_\varepsilon^2\,,
\end{align}
where 
\begin{align*}
& \L_\varepsilon^1:=\{i\in \mathcal{L}_\eps \EEE \colon T_\varepsilon\cap Q_{2\hat\varepsilon}(\hat i)\neq \emptyset\} \,;\\
&  \L_\varepsilon^2:= \{i\in \mathcal{L}_\eps \EEE \colon  \tilde{z}_\varepsilon(i+\hat\varepsilon\xi+\tau_\varepsilon^\eta)\neq v_\varepsilon(i+\hat\varepsilon\xi+\tau_\varepsilon^\eta)\text{ for some } \xi \in \mathbb{Z}^{d}\cap[-1,1]^d\}\,.
\end{align*}
Recall that,  by its definition in \eqref{def:tilde_omega}, $T_\varepsilon$ is a union of simplices (relative to $B_1$). Therefore, if $T_\varepsilon \cap Q_{2\hat\varepsilon} (\hat i)\neq \emptyset$, then $\mathcal{L}^{d}(T_\varepsilon \cap Q_{2\hat\varepsilon}(\hat i))\geq c\hat\varepsilon^{d}$ for some  purely dimensional constant $c\in (0,1)$. 
Therefore,  by \eqref{ineq:Qintersectbound} we obtain 
\begin{align}\label{donotremove1}
\#\L_\varepsilon^1 \leq  \frac{C}{\hat\varepsilon^{d}}  \sum_{i\in \mathcal{L}_\eps \EEE } \mathcal{L}^{d}\big(T_\varepsilon \cap Q_{2\hat\varepsilon}(\hat i)\big)  \le   \frac{C}{\hat\varepsilon^{d}}\mathcal{L}^{d}(T_\varepsilon)\,.
\end{align}
\EEE 
On the other hand, if $ i\in \L_\varepsilon^2$, there exists $\xi \in \mathbb{Z}^{ d} \cap [-1,1]^d$ such that $\tilde z_\varepsilon( i+\hat\varepsilon\xi+\tau_\varepsilon^\eta)\neq v_\varepsilon( i+\hat\varepsilon\xi+\tau_\varepsilon^\eta)$. Thus, as $\delta \gg \hat{\eps}$, \EEE  by \eqref{ineq:goodtauchoice} we obtain 
\begin{align}\label{ineq:B2bound}
\#\L_\varepsilon^2\leq C\#\big\{i\in \Z_\eps^{\rm{bl}}(B_{ 1-\delta\EEE
}) \colon { \tilde z}_\varepsilon(i+\tau_\varepsilon^\eta)\neq v_\varepsilon( i+\tau_\varepsilon^\eta)\big\} \leq \frac{C}{(\eta\hat\varepsilon)^{d}}\mathcal{L}^{ d}(\omega_\varepsilon)\,.
\end{align}
Combining \eqref{incl:badsetlattice}--\eqref{ineq:B2bound}, we obtain \eqref{ineq:badsetestimate}.
\end{step}\\
\begin{step}{4\EEE}(Weak convergence of $\tilde{v}_\varepsilon^\eta$) 
The goal of this step is to show that,  as $\eps\to 0$,
\begin{align}\label{eq:weakconvergencevtilda}
\hspace{-0.2cm}\tilde{v}_\varepsilon^\eta\EEE \rightharpoonup \nabla u(x_0)y \text{ weakly in } H^1(B_{1-2\delta}\EEE;\R^d)\,, \ \overline{\nabla}v_\varepsilon^\eta \rightharpoonup \nabla u(x_0)Z \text{ weakly in } L^2_{\rm loc}(\AAA B_{1-2\delta}\EEE;\R^{d\times 2^d})\,.
\end{align}
These convergence properties will be achieved by showing that
\begin{align}\label{ineq:L2differencegood}
\hspace{-1.5em} \sum_{i \in \mathcal{L}_\varepsilon^\mathrm{good}} \int_{Q_{\hat\varepsilon}(\hat{i})} \big(\hat\varepsilon^{-2}|\tilde{v}_\varepsilon^\eta-{\tilde z\EEE}_\varepsilon|^2
+|\nabla\tilde{v}_\varepsilon^\eta-\nabla{ \tilde z\EEE}_\varepsilon|^2\big)\, \mathrm{d}y 
+ \EEE \sum_{ i \EEE \in \mathcal{L}_\varepsilon^\mathrm{good}} \int_{Q_{\hat{\eps}}(\hat{i})}  \chi_{V_\eps^{\rm c,  bl}}   \big|\overline{\nabla} v_\varepsilon^\eta -  \overline{\nabla} \zeta_\varepsilon  \EEE \big|^2 \, {\rm d}y \EEE \leq C\eta^2\,,
\end{align}
%
as well as that
\begin{align}\label{ineq:H1boundvepstilda}
\sup_{\varepsilon>0\EEE
} \|\tilde{v}_\varepsilon^\eta\|_{H^1(\AAA B_{1-\AAA\delta\EEE}\EEE)} \leq \frac{C}{\eta^{{ d\EEE}/2}}  \ \ \text{ and } \ \  \sup_{\eps>0\EEE}\EEE\|\overline{\nabla}v_\varepsilon^\eta\|_{L^2(\AAA B_{1-2\delta}\EEE)} \leq \frac{C}{\eta^{{ d\EEE}/2}}\,.
\end{align}
To this  end, we first prove that for each $ i \in \mathcal{L}_\varepsilon^\mathrm{good}$ it holds that \EEE
\begin{equation}\label{ineq:closenessgood}
\hat\eps^{-1}\|\tilde{v}_\varepsilon^\eta-\tilde{z}_\varepsilon\|_{L^\infty({Q_{\hat\eps\EEE}(\hat i)\EEE})}+\|\nabla\tilde{v}_\varepsilon^\eta-\nabla\tilde{z}_\varepsilon\|_{L^\infty({Q_{\hat\eps\EEE}(\hat i)})}\leq C\eta\| \nabla \tilde{z}_\varepsilon\|_{L^\infty({Q_{2\hat\eps\EEE}(\hat i)})}\,.
\end{equation}
\EEE Indeed, for $ i \EEE \in \mathcal{L}_\varepsilon^\mathrm{good}$,  by \eqref{def:Lepsgood} we have $v_\varepsilon^\eta( j\EEE):= v_\varepsilon( j\EEE+\tau_\varepsilon^\eta) = { \tilde z}_\varepsilon( j\EEE+\tau_\varepsilon^\eta)$  for every $j\in \Z_\eps^{\rm{bl}}(\overline{ Q_{\hat\eps\EEE}(\hat{i})})$. \EEE Therefore, for all such $j$, since ${ \tilde z}_\varepsilon$ is piecewise affine in $Q_{\hat\varepsilon}( j) \subset Q_{2\hat\eps\EEE}(\hat i)$, 
\begin{align*}
|\tilde v_\varepsilon^\eta(j)-{ \tilde z}_\varepsilon(j)| = |{ \tilde z\EEE}_\varepsilon( j+\tau_\varepsilon^\eta)-{ \tilde z\EEE}_\varepsilon(j)| \leq |\tau_\varepsilon^\eta| \|\nabla \tilde{z}_\varepsilon\|_{L^\infty({Q_{\hat\eps\EEE}(j)})}\EEE\leq C\eta\hat\varepsilon\EEE\|\nabla \tilde{z}_\varepsilon\|_{L^\infty({Q_{2\hat\eps\EEE}(\hat i)})}\,.
\end{align*}
Thus, \EEE by the definition of the piecewise affine interpolation (cf.\ \cite[Section~3.2, Equation (6)]{Schmidt:2009}) \EEE and \eqref{def:Lepsgood}, we have that for $ i\EEE \in \mathcal{L}_\varepsilon^\mathrm{good}$
$${\hat\eps\EEE^{-1}\|\tilde{v}_\varepsilon^\eta-\tilde{z}_\varepsilon\|_{L^\infty( Q_{\hat\eps\EEE}(\hat i)\EEE)}
\leq C\eta\| \nabla \tilde{z}_\varepsilon\|_{L^\infty(Q_{2\hat\eps\EEE}(\hat i)\EEE)}}\,.$$ 
\EEE In order to finish the proof of \eqref{ineq:closenessgood}, we consider a point $i\in \L_\eps^{\mathrm{good}}$ and a $d$-simplex $S\in \mathcal{S}_{\eps}^{\rm{bl}\EEE}$ with $S \subset Q_{\hat\varepsilon\EEE}(\hat{ i\EEE})$, say $S=\mathrm{conv}\{ x_1, \dots , x_{ d+1 \EEE}\}$, where $ (x_l)_{l=1,\dots,{ d+1 \EEE}}\subset\EEE\Z_\eps^{\rm{bl}}\EEE$. 
Arguing as in the proof of \eqref{ineq:affineboundvepstilde} in Step 2, using that 
$c\hat\varepsilon\EEE  \leq |x_{l+1}-x_{l}|\leq C\hat\eps\EEE \ \text{for } l=1,\dots,d\EEE$, 
we \EEE deduce that
\begin{align*}
\big|(\nabla \tilde v_\eps^\eta-\nabla \tilde z_\eps)|_S\big|^2
\leq C \sum_{l=1}^d\frac{| \tilde v_\varepsilon^\eta(x_{ l+1\EEE})-{\tilde z\EEE}_\varepsilon(x_{ l+1})|^2 +| \tilde v\EEE_\varepsilon^\eta(x_{l})-{ \tilde z\EEE}_\varepsilon(x_{ l\EEE})|^2}{\hat\eps^2\EEE} \leq C\eta^{2} \| \nabla \tilde{z}_\varepsilon\|^2_{L^\infty({Q_{2\hat\eps\EEE}(\hat i)})}\,.
\end{align*}
\EEE This yields the desired estimate on each $S \subset Q_{\hat\eps\EEE}(\hat i)$, and \eqref{ineq:closenessgood} follows. 
		
\EEE Let us now justify \eqref{ineq:L2differencegood}. For this, we use \EEE the facts that  ${ \tilde z\EEE}_\varepsilon$ is piecewise affine (implying the equivalence between the $L^2$ and $L^\infty$ norms), \EEE  \eqref{ineq:closenessgood},  the definition of $\mathcal{L}_\varepsilon^\mathrm{good}$ in \eqref{def:Lepsgood},  \eqref{ineq:gradientbound}, and \eqref{blow_up_properties}(v), \EEE 
\EEE to estimate 
\begin{align*}
\sum_{ i\EEE \in \mathcal{L}_\varepsilon^\mathrm{good}}\int_{Q_{\hat\eps\EEE}
(\hat{ i\EEE})}\big({\hat\eps\EEE}
^{-2}\EEE|\tilde{v}_\varepsilon^\eta-{ \tilde z\EEE}_\varepsilon|^2  +|\nabla\tilde{v}_\varepsilon^\eta-\nabla{ \tilde z\EEE}_\varepsilon|^2\big)\EEE\,\mathrm{d}y &\leq C\eta\EEE^2\sum_{ i\EEE \in \mathcal{L}_\varepsilon^\mathrm{good}}\int_{Q_{2{\hat\eps\EEE}
}(\hat{ i\EEE})} |\nabla { \tilde z\EEE}_\varepsilon|^2 \leq C\eta\EEE^2\int_{B_{1-\delta} \EEE \setminus { T\EEE}_\varepsilon}|\nabla { \tilde z \EEE}_\varepsilon|^2 \\&\leq C\eta\EEE^2\sup_{\varepsilon>0\EEE
\EEE}\big(\EEE\|e({ \tilde z\EEE}_\varepsilon)\|_{L^2(B_1)}^2+1\big)\EEE \leq C\eta\EEE^2\,.
\end{align*}
The bound for the second addend \EEE in the left hand side of \eqref{ineq:L2differencegood}, which involves the discrete gradients, follows from the corresponding bound of the gradients of the piecewise affine interpolations. Indeed, let $Q:=Q_{\hat\eps}(\hat i)\,,\  i\in \mathcal{L}^{\rm good}_\eps\EEE$. If $\mathcal{L}^d(Q \cap V^{\rm c, bl}_\eps) =0$, then  $\chi_{V_\eps^{\rm c,  bl}}|\overline{\nabla} v_\varepsilon^\eta -\overline{\nabla} \zeta_\varepsilon | = 0$ (a.e.) \EEE on $Q$.  On the other hand, if $\mathcal{L}^d(Q \cap V^{\rm c, bl}_\eps)>0$, then  \eqref{good_volume_bound_blow} and an argument similar to \eqref{energy_estimate_on_mildly_bad_cubes} (with $\tilde{v}_\varepsilon^\eta-{ \tilde \zeta}_\varepsilon$ instead of $\tilde u_\eps$)  imply 
$$\int_{Q} |\nabla\tilde{v}_\varepsilon^\eta-\nabla{ \tilde \zeta}_\varepsilon|^2  \le C\int_{Q}|\nabla\tilde{v}_\varepsilon^\eta-\nabla{ \tilde z}_\varepsilon|^2\,.      $$ 
Thus,  by applying  \eqref{eq: equivalence gradients} on each cube  and using the first part of \eqref{ineq:L2differencegood}  we arrive at the desired estimate.


\EEE Next, we verify \eqref{ineq:H1boundvepstilda}. \EEE Note first that \EEE by \eqref{blow_up_properties}(iv), \eqref{ineq:H1bound}, \eqref{ineq:gradientbound}, and \eqref{ineq:badsetestimate}, we obtain  
\begin{align*}
\lim_{\varepsilon\to 0}\L^{ d\EEE}\Big(B_{1-2\delta \EEE} \setminus \bigcup_{ i\EEE \in \mathcal{L}_\varepsilon^\mathrm{good}} Q_{\hat\eps\EEE}
(\hat{ i\EEE})\Big) \leq \lim_{\varepsilon\to 0}\L^{ d\EEE}\Big(\bigcup_{ i\EEE\in \mathcal{L}_\eps \EEE\setminus \mathcal{L}_\varepsilon^\mathrm{good}}Q_{\hat\eps\EEE}(\hat{ i\EEE})\Big) \leq \lim_{\varepsilon \to 0} C (\eta^{-{ d\EEE}}\L^{ d\EEE}(\omega_\varepsilon)+\L^{ d\EEE}({ T\EEE}_\varepsilon)) =0\,.
\end{align*}
This, together with \eqref{ineq:L2differencegood}  and \eqref{blow_up_properties}(iii), implies that $\tilde{v}_\varepsilon^\eta$ converges to the linear function $\nabla u( x_0\EEE)y$ in measure on $B_{1-2\delta}\EEE$  as $\eps\to 0$. This yields 
$$\limsup_{\eps\to 0}\L^{ d}(\AAA \tilde K\EEE_\varepsilon)\le C\delta\,, \quad \text{where} \quad \tilde K\EEE_\varepsilon:=\{ y\EEE \in B_{1-\delta}  \EEE \colon |\tilde{v}_\varepsilon^\eta(y)\EEE-\nabla u(x_0) y\EEE| >1 \EEE \} \,.$$ 
Setting for brevity \EEE $b_\varepsilon^\eta:=\fint_{ B_{1-\delta}} \tilde{v}^\eta_\varepsilon(y)\,\mathrm{d}y$, we can estimate  
\begin{align*}
|b_\eps^\eta|\L^{ d\EEE}(B_{1-\delta})&\leq  \int_{B_{1-\delta}\setminus \AAA\tilde K\EEE_\eps}|\tilde v_\eps^\eta(y)-\nabla u(x_0)y|\, \mathrm{d}y+|\nabla u(x_0)|\L^{ d\EEE}(B_1)+\int_{\AAA\tilde K\EEE_\eps}|\tilde v_\eps ^\eta(y)|\, \mathrm{d}y \\
&\leq  1+|\nabla u(x_0)|+\int_{B_{1-\delta}}| \tilde v_\eps ^\eta(y)-b_\eps^\eta|\, \mathrm{d}y+|b_\eps^\eta|\L^{ d}(\tilde K_\eps)\,.
\end{align*}
Therefore, by Poincar\'e's and H\"older's inequality for $\tilde v_\eps^\eta$ in $B_{1-\delta}$  we have
\begin{align*}
|b_\varepsilon^\eta|\L^{ d\EEE}(B_{1-\delta} \setminus \AAA\tilde K\EEE_\varepsilon)&\leq C\big(1+|\nabla u(x_0)|\big)+\|\tilde{v}_\varepsilon^\eta-b_\varepsilon^\eta \|_{L^1(B_{1-\delta})}\leq C(1 +\|\nabla \tilde{v}_\varepsilon^\eta\|_{L^2(B_{1-\delta})})\,,
\end{align*}
where $C>0$ depends also on $\nabla u(x_0)$. 
Hence,  for $\varepsilon>0$ and $\delta >0$ small enough, we get \EEE
$$|b_\varepsilon^\eta| \leq C(1+\|\nabla \tilde{v}_\varepsilon^\eta\|_{L^2(B_{1-\delta})})\,.$$
Moreover, by  \eqref{blow_up_properties}(v), \eqref{ineq:H1bound}, and \eqref{ineq:affineboundvepstilde}  we conclude \EEE
$$\int_{B_{1-\delta}} |\nabla \tilde{v}_\varepsilon^\eta|^2\,\mathrm{d}y\leq  C\eta^{-d}\|\nabla v_\varepsilon\|^2_{L^2(B_1)}\leq C\eta^{-d}(\|e(\tilde{ z\EEE}_\varepsilon)\|_{L^2(B_1)}+1)\EEE^2\leq C\eta^{-d}\,.$$
This  \EEE implies \eqref{ineq:H1boundvepstilda} for the piecewise affine interpolations $(\tilde v_{\eps}^{\eta})_{\eps>0}$ on $B_{1-\delta}$. We now apply \eqref{eq: equivalence gradients} in order to obtain \eqref{ineq:H1boundvepstilda} also for $\overline{\nabla}v_\varepsilon^\eta$ on the  slightly smaller ball $B_{1-2\delta}$.

Finally, we can now \EEE conclude this step by showing \eqref{eq:weakconvergencevtilda}. \EEE Indeed, by \eqref{ineq:H1boundvepstilda}, and having already established the measure convergence of $\tilde v_\eps^\eta$ towards $\nabla u(x_0)y$ in $\AAA B_{1-2\delta}\EEE$, we can infer that (up to a subsequence, not relabeled) it holds that  $ \tilde{v}_\varepsilon^\eta \rightharpoonup \nabla u( x_0\EEE)y\EEE$ weakly in $H^1(\AAA B_{1-2\delta}; \R^d\EEE)$.  By \eqref{eq: bernd} we can then also infer the weak convergence of $\overline{\nabla}v_\varepsilon^\eta \rightharpoonup \nabla u(x_0)Z$ in $L^2_{\rm loc}(\AAA B_{1-2\delta}; \R^{d\times 2^d}\EEE)$.\\
\end{step}
\begin{step}{5}(Weak convergence of $\chi_{\AAA V_\eps^{\rm{c, bl}}}\EEE\overline{e}(\zeta_\varepsilon)\EEE$) We are finally in the position to prove \eqref{identification_of_blow_up_limit}. To this end, let $\varphi \in C_c^\infty(\AAA B_{1-2\delta}\EEE;\mathbb{R}^{{d}\EEE\times {2^d}\EEE})$, extended by zero outside of $\AAA B_{1-2\delta}\EEE$, \EEE and write
\begin{align}\label{eq:decompsymmetricgradient}
\hspace{-0.4em}\int_{B_1}\hspace{-0.4em}\left( \chi_{V_\eps^{\rm c, bl \EEE}}\overline{e}( \zeta_\varepsilon)-e(u)(x_0)Z\EEE\right):\varphi
=\int_{B_1}\hspace{-0.4em}\chi_{V_\eps^{\rm c, bl \EEE}} \EEE \left( \overline{e}(\zeta_\varepsilon)-\bar{e}(v_\varepsilon^\eta)\EEE\right):\varphi
+\int_{B_1}\hspace{-0.4em}\left(\chi_{V_\eps^{\rm c, bl \EEE}} \EEE \overline{e}(v_\varepsilon^\eta)-e(u)(x_0)\EEE Z \right):\varphi
\,.
\end{align}
Now by Step 4,  recalling the notation $P$ below \eqref{infinitesimal_translation_rotation} for the orthogonal projection from $\mathbb{R}^{d\times 2^d}$ onto the space orthogonal to $\mathbb{R}_{\mathrm{s,t}}^{d\times 2^d},$ so that $P^\top=P\in \R^{(d\times 2^d)\times (d\times 2^d)}$,  we have that $\overline{e}(v_\varepsilon^\eta) = P(\overline\nabla v_\varepsilon^\eta)$ and  $e(u)Z = P(\nabla uZ)$. Moreover, as $x_0 \in W^0$, it is not restrictive to suppose that  $\chi_{V_\eps^{\rm c, bl}} \to \chi_{B_1}$ strongly in $L^2(B_1)$ by \ref{H1}  and  \eqref{conclusions_of_symmetric_Cauchy-Born}(ii). 
Thus, by the second part of \eqref{eq:weakconvergencevtilda} 
\EEE it holds that     
\begin{align*}
\lim_{\varepsilon\to 0} \int_{B_1}\EEE\left(\chi_{V_\eps^{\rm c, bl}} \overline{e}(v_\varepsilon^\eta)-e(u)(x_0)Z \right):\varphi\,\mathrm{d}y =\lim_{\varepsilon\to 0} \int_{\AAA B_{1-2\delta}\EEE}\left(\chi_{V_\eps^{\rm c, bl}} \EEE\overline{\nabla}v_\varepsilon^\eta-\nabla u(x_0)\EEE Z \right):P\varphi\,\mathrm{d}y =0\,,
\end{align*}
where we used that 
\begin{align*}
\Big|\int_{B_{1-2\delta}}\left(\chi_{V_\eps^{\rm c, bl}} \overline{\nabla}v_\varepsilon^\eta-\overline{\nabla}v_\eps^\eta\right):P\varphi\,\mathrm{d}y\Big|\leq\|\varphi\|_{L^{\infty}(B_1)}\sup_{\eps>0}\|\overline{\nabla}v_\varepsilon^\eta\|_{L^2(B_{1-2\delta})}\cdot\|\chi_{V_\eps^{\rm{c, bl}}}-\chi_{B_1}\|_{L^2(B_1)}\to 0\,.
\end{align*}
\EEE
On the other hand,   we get 
\begin{align*}
\left|\int_{B_1}\chi_{V_\eps^{\rm c, bl}}\left(\overline{e}(\zeta_\varepsilon)-\bar{e}(v_\varepsilon^\eta)\right):\varphi\right| &\leq \|\varphi\|_{L^\infty(B_1)}\sum_{ i\in \mathcal{L}_\varepsilon^\mathrm{good}}\int_{Q_{\hat\varepsilon}(\hat{ i})} \chi_{V_\eps^{\rm c, bl}} |\overline{\nabla} \zeta_\varepsilon - \overline{\nabla}v_\varepsilon^\eta| \\& \ \ \  +\|\varphi\|_{L^\infty(B_1)}\sum_{ i\in \mathcal{L}_\eps\setminus\mathcal{L}_\varepsilon^\mathrm{good}}\int_{Q_{\hat\varepsilon}(\hat{ i})\cap B_{1-2\delta}} \chi_{V_\eps^{\rm c, bl}}|\overline{e}(\zeta_\varepsilon) - \overline{e}(v_\varepsilon^\eta)|\,. \notag
\end{align*}
The first integral on the right hand side can be estimated via Hölder's inequality 
and \eqref{ineq:L2differencegood}, namely
\begin{align}\label{do not remove2}
\begin{split}
\sum_{ i\EEE \in \mathcal{L}_\varepsilon^\mathrm{good}}\int_{Q_{\hat\varepsilon\EEE}(\hat{ i\EEE})} \chi_{V_\eps^{\rm c,  bl}}\EEE |\overline{\nabla} \zeta\EEE_\varepsilon - \overline{\nabla}v_\varepsilon^\eta| 
\leq \L^{d}(B_1)^{\frac{1}{2}}\EEE\Big(\sum_{ i\EEE \in \mathcal{L}_\varepsilon^\mathrm{good}}\int_{Q_{\hat\varepsilon\EEE}(\hat{ i\EEE})} \chi_{V_\eps^{\rm c,  bl}}\EEE|\overline{\nabla}\zeta_\varepsilon-\overline{\nabla}v_\varepsilon^\eta|^2\EEE\Big)^{1/2}\leq C\eta\,.
\end{split}
\end{align}
\EEE For the second integral, 
we can similarly use H\"older's inequality, \EEE 
\EEE \eqref{ineq:badsetestimate}, \eqref{ineq:H1boundvepstilda}, \eqref{blow_up_properties}(vi), and the fact that the domain of integration is nonzero only for $x\in \AAA B_{1-2\delta}\EEE$, 
\EEE to estimate
\begin{align}\label{ineq:secondintegral}
\begin{split}
\sum_{ i \in  \mathcal{L}_\eps\setminus\mathcal{L}_\varepsilon^\mathrm{good}} \EEE &\int_{Q_{\hat\varepsilon\EEE}(\hat{ i\EEE})\cap \AAA B_{1-2\delta}\EEE}  \hspace{-0.2cm} \chi_{V_\eps^{\rm c,bl\EEE}}\EEE | \overline{e}( \zeta\EEE_\varepsilon) - \overline{e}(v_\varepsilon^\eta)|
\\ 
&\leq C(\hat\eps^d\EEE\#(\mathcal{L}_\eps \EEE \setminus  \mathcal{L}_\varepsilon^\mathrm{good}))^{\frac{1}{2}}\left(\|\chi_{V_\eps^{\rm c, bl\EEE}}\EEE\overline{e}( \zeta\EEE_\varepsilon)\|_{L^2(B_1)}+\|\overline{\nabla}v_\varepsilon^\eta\|_{L^2(\AAA B_{1-2\delta}\EEE)} \right) \\
&\leq C\Big(\frac{\sqrt{\L^{ d\EEE}(\omega_\varepsilon)}}{\eta^{{ d\EEE}/2}}+\sqrt{\L^{ d\EEE}({ T\EEE}_\eps)}\Big)\left(\|\chi_{V_\eps^{\rm c, bl\EEE}}\EEE\overline{e}( \zeta\EEE_\varepsilon)\|_{L^2(B_1)}+\|\overline{\nabla}v_\varepsilon^\eta\|_{L^2(\AAA B_{1-2\delta}\EEE)} \right) \\&\leq C\eta^{-d}\left(\L^{ d\EEE}(\omega_\varepsilon)^{1/2}+\L^{ d\EEE}({T}_\varepsilon\EEE)^{1/2}\right)\,.
\end{split}
\end{align}
Now recalling \eqref{ineq:H1bound}, \eqref{ineq:gradientbound}, and  \eqref{blow_up_properties}(iv), \eqref{eq:decompsymmetricgradient}--\eqref{ineq:secondintegral} imply for all $\varphi \in C^\infty_c(\AAA B_{1-2\delta}\EEE; \EEE \R^{d \times 2^d})$ 
$$\limsup_{\eps \to 0} \Big|\int_{B_1}( \chi_{V_\eps^{\rm c, bl\EEE}}\overline{e}( \zeta_\varepsilon)-e(u)(x_0)Z):\varphi  \Big| \le C\eta \Vert \varphi\Vert_{L^\infty(B_1)}.$$
Then, the desired property \eqref{identification_of_blow_up_limit} \EEE follows by \eqref{blow_up_properties}(vi), \EEE   by sending $\eta \to 0$, the arbitrariness of $\delta$, the boundedness of $\chi_{V_\eps^{\rm c, bl\EEE}}\overline{e}( \zeta_\varepsilon)-e(u)(x_0)Z$ in $L^2$, \EEE and the fact that the set $C_c^\infty(B_{1};\R^{d \times 2^d})$ is strongly dense in $L^2(B_1;\R^{d \times 2^d})$. 
	\end{step}
\end{proof}

We finish this section with the proof a geometric fact that we used in the proof of Lemma \ref{weak_limit_for_discrete_multies}.

\begin{lemma}\label{lemma: boundary-fp}
Let $s \in (0,1)$ and let $Q \subset \R^d$ be a cube. Let $P \subset Q$ be a set of finite perimeter with $\mathcal{L}^d(P) \le s \mathcal{L}^d(Q)$. Then, there exists a constant $C>0$ \EEE only depending on $s$ and $d$ such that
$$\mathcal{H}^{d-1}(\partial^*P \cap \partial Q) \le C \mathcal{H}^{d-1}(\partial^*P \cap {\rm int}(Q))\,.$$
\end{lemma}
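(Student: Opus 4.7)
The strategy is to combine two classical tools: the BV trace inequality applied to $\chi_P \in BV(Q)$, and the relative isoperimetric inequality on the cube $Q$. Set $\ell := \mathcal{L}^d(Q)^{1/d}$ and write $A := \mathcal{H}^{d-1}(\partial^*P \cap \mathrm{int}(Q))$, $B := \mathcal{H}^{d-1}(\partial^*P \cap \partial Q)$ and $m := \mathcal{L}^d(P)$. The standard BV trace theorem (see, e.g., \cite[Theorem 3.87]{Ambrosio-Fusco-Pallara:2000}), once rescaled to a cube of sidelength $\ell$, yields
\[
B \le C_d\big(\ell^{-1}m + A\big),
\]
so the whole task reduces to absorbing the volume term $\ell^{-1}m$ into $A$ by a multiplicative constant depending only on $s$ and $d$.

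\textbf{Case analysis.} I would split according to whether $m \le \mathcal{L}^d(Q)/2$ or $m > \mathcal{L}^d(Q)/2$. In the first case, the classical relative isoperimetric inequality on $Q$ gives $m^{(d-1)/d} \le C_d A$, hence $m \le C_d' A^{d/(d-1)}$. If additionally $A \ge \ell^{d-1}$, the trivial bound $B \le \mathcal{H}^{d-1}(\partial Q) = 2d\,\ell^{d-1} \le 2d\,A$ already closes the argument; while if $A \le \ell^{d-1}$, then $A^{1/(d-1)} \le \ell$, so $\ell^{-1}m \le C_d' A^{d/(d-1)}/\ell \le C_d'\,A$, which plugged into the trace inequality yields the claim. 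In the complementary case $m > \mathcal{L}^d(Q)/2$ (which can only occur for $s > 1/2$), the hypothesis $m \le s\mathcal{L}^d(Q)$ forces $\mathcal{L}^d(Q)-m \ge (1-s)\mathcal{L}^d(Q)$, and applying the relative isoperimetric inequality to $Q \setminus P$ (whose interior perimeter in $Q$ coincides with that of $P$) yields the uniform lower bound
\[
A \ge C_d^{-1}(1-s)^{(d-1)/d}\,\ell^{d-1},
\]
so that $B \le 2d\,\ell^{d-1} \le \tfrac{2dC_d}{(1-s)^{(d-1)/d}}\,A$.

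\textbf{Main difficulty.} Neither tool suffices in isolation: the BV trace inequality alone carries the spurious additive term $\ell^{-1}m$, while the relative isoperimetric inequality alone sees nothing of the trace on $\partial Q$. The delicate regime is when $P$ nearly fills $Q$ (i.e.\ $s$ close to $1$); there the relative isoperimetric inequality has to be applied to the complement $Q \setminus P$ rather than to $P$, and it is precisely the strict inequality $s < 1$ in the hypothesis that turns this into an effective lower bound on $A$. The resulting constant blows up as $(1-s)^{-(d-1)/d}$ as $s \uparrow 1$, in agreement with the obvious limiting counterexample $P = Q$ (for which $B > 0 = A$).
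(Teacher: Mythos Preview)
Your proof is correct and takes a genuinely different route from the paper's. The paper proceeds by induction on the dimension: it settles $d=2$ by invoking \cite[Lemma~4.6]{FriSol} on indecomposable components, and then passes from $d$ to $d+1$ by slicing $P$ with hyperplanes $\{x_{d+1}=t\}$, verifying via the coarea formula that almost every slice again satisfies the volume hypothesis (with a slightly worse constant $s'=(s+1)/2$), applying the inductive hypothesis on each slice, and integrating back. Your argument instead combines the BV trace inequality with the relative isoperimetric inequality directly in any dimension, and is entirely self-contained: it avoids both the induction and the external input from \cite{FriSol}. The paper's approach has the mild advantage of yielding, as a byproduct, uniform control on slices; your approach is shorter, more transparent, and makes the blow-up rate $C \sim (1-s)^{-(d-1)/d}$ explicit.
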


\begin{proof}
Without loss of generality (after possibly a rigid motion and a rescaling) we can assume that $Q$ is \EEE the (open) unit cube, i.e., $Q= (-\frac{1}{2},\frac{1}{2})^d$. Thus, we suppose that $\mathcal{L}^d(P) \le s$. Moreover, we can assume without restriction that
\begin{align}\label{eq wr}
\mathcal{H}^{d-1}(\partial^*P \cap Q) \le \delta
\end{align}
for a small constant $\delta> 0$ depending only on $s$, to be specified along the proof. Indeed, otherwise the desired inequality clearly holds for a constant depending on $\delta$, as $\mathcal{H}^{d-1}(\partial^*P \cap \partial Q) \le 2d$.
	
We show the statement by induction over the dimension $d$. The statement for $d=2$ can be deduced from \cite[Lemma 4.6]{FriSol} as follows: suppose without restriction that $P$ is connected (more precisely, indecomposable, see e.g.\ \cite[Example~4.18]{Ambrosio-Fusco-Pallara:2000}), as otherwise we argue separately for all indecomposable components. By \cite[Lemma 4.6]{FriSol}, there exists $\delta_{ 0} \in (0,1)$ sufficiently small such that, if $\mathcal{H}^1(\partial^*P\cap Q)\leq \delta_0$, \EEE then  $$\mathcal{L}^2(P) > s \quad \text{or} \quad {\rm diam}(P) \le C_2\mathcal{H}^1(\partial^* P \cap Q)\,,$$ for an absolute constant $C_2 >0$, where  ${\rm diam}(P) := {\rm ess \, sup}\lbrace |x-y|\colon\, x,y \in P\rbrace$. (There, the proof has been performed for $s= \frac{1}{2}$, but it holds true for any $s \in (0,1)$, provided that $\delta_{0}$ is chosen sufficiently small depending on $s$.) \EEE Since $Q$ is a cube, we get $\mathcal{H}^{1}(\partial^*P \cap \partial Q)  \le C{\rm diam}(P)$, and since $\mathcal{L}^2(P) \le s $ by assumption, we deduce 
$$
\mathcal{H}^{1}(\partial^*P \cap \partial Q) \le  C\,{\rm diam}(P) \le C\mathcal{H}^1(\partial^* P \cap Q). $$
This concludes the proof for $d=2$. Let us now assume for the inductive hypothesis that the statement holds true for some $d \ge 2$, and let us prove it also for $d+1$. We will only show that
\begin{align}\label{eq wr2}
\mathcal{H}^{d-1}\big(\partial^*P \cap \partial Q \cap  \lbrace -1/2 < x_{d+1} < 1/2 \rbrace\big) \le C \mathcal{H}^{d-1}(\partial^*P \cap Q)\,,
\end{align}
where $x := (x',x_{d+1}) \in Q$, since the surface measure in the two remaining faces of $\partial Q$ can be controlled by repeating the argument after a rotation of the cube. We claim that $\mathcal{L}^{d+1}(P) \le s$ implies that \EEE for  $s' := \frac{s+1}{2} \in (0,1)$,  
\begin{align}\label{eq wr3}
\mathcal{L}^d(P \cap Q^d_t) \le s'
\end{align}
for $\mathcal{L}^1$-a.e.\ $t \in (-\frac{1}{2},\frac{1}{2})$, where $Q^{d}_{t}:=(-\tfrac{1}{2},\tfrac{1}{2})^{d}\times\{t\}$. In fact, we define $\delta:=\delta(s):= s' - s=\frac{1-s}{2}$, and suppose that \eqref{eq wr} holds. Then,  for a.e.\ $-\frac{1}{2} < t_1 < t_2 < \frac{1}{2}$, we have
$$\big|\mathcal{L}^d(P \cap Q^d_{t_2}) -  \mathcal{L}^d(P \cap Q^d_{t_1}) \big|  \le \delta\,,
$$
since otherwise by the coarea formula we find
\begin{align*}
\mathcal{H}^{d-1}\big( \partial^* P \cap \big((-\tfrac{1}{2},\tfrac{1}{2})^d \times [t_1,t_2]\big)  \big) \ge \big|\mathcal{L}^d(P \cap Q^d_{t_2}) -  \mathcal{L}^d(P \cap Q^d_{t_1}) \big|  > \delta\EEE\,,
\end{align*}
which contradicts our assumption \eqref{eq wr}. This observation along with $\mathcal{L}^{d+1}(P) \le s$ implies   for $\mathcal{L}^1$-a.e.\ $t \in (-\frac{1}{2},\frac{1}{2})$ that
\begin{align*}
\mathcal{L}^d(P \cap Q^d_t) \le s +\delta
= s'.
\end{align*} 
This shows \eqref{eq wr3}. By slicing properties of sets of finite perimeter we have that for a.e.\ $t\in (-\tfrac{1}{2},\frac{1}{2})$ the set  $P_t:=P\cap Q^{d}_{t}$ has finite perimeter. Thus, by the inductive hypothesis, we can find a constant $C>0$ depending on $d$ and $s'$ such that  
\begin{equation}\label{inductive_hypothesis}
\H^{d-1}(\partial^*P_t\cap \partial Q^{d}_{t})\leq C\H^{d-1}(\partial^*P_t\cap Q^{d}_{t})
\end{equation}
for a.e.\ $t \in (-\frac{1}{2},\frac{1}{2})$. By applying the coarea formula (cf.\ \cite[(18.25)]{maggi2012sets} for $g:=\chi_{Q}$ and slicing direction $e_{d+1}$), denoting by $\nu$ the generalized normal to $\partial^\ast P$, and using \eqref{inductive_hypothesis}, we can estimate
\begin{align*}
\int_{-1/2}^{1/2} \mathcal{H}^{d-1}(\partial^* P_t \cap Q^d_t) \, {\rm d}t & = \int_{-1/2}^{1/2}\mathrm{d}t\int_{{\partial^\ast P\cap\{x_{d+1}=t\}}}\chi_{Q}\, \mathrm{d}\H^{d-1}\\
& = \int_{\partial^\ast P}\chi_{Q}\sqrt{1-(\nu\cdot e_{d+1})^2}\, \mathrm{d}\H^d \le  \H^d(\partial^\ast P \cap Q)\,. 
\end{align*}
In a similar fashion, the coarea formula also implies that        
$$\mathcal{H}^{d\EEE}\big(\partial^*P \cap \partial Q \cap  \lbrace -1/2 < x_{d+1} < 1/2 \rbrace\big) = \int_{-1/2}^{1/2} \mathcal{H}^{d-1}(\partial^* P_t \cap \partial Q^d_t) \, {\rm d}t.$$
	These two estimates along with \eqref{inductive_hypothesis} conclude the proof of     \eqref{eq wr2}.
\end{proof}

\section{Lower Bound}\label{sec: 5}
In this section we return to the case $d=3$ and focus on the proof of the $\Gamma$-liminf inequality of Theorem \ref{Main G-convergence theorem}. In particular, we aim at proving the following result.  
\begin{proposition}\label{proposition:liminf} 
Let $(u_\varepsilon,E_\varepsilon)_{\varepsilon>0}$, where $E_\varepsilon\subset\mathbb{Z}_{\varepsilon}(\Omega)$ and $u_\varepsilon \in \mathcal{U}_\varepsilon(u_0,\partial_D\Omega,E_\eps)\EEE$, be such that $(u_\varepsilon,E_\varepsilon) \overset{\mathrm{d}}{\to} (u,E)$ as $\varepsilon\to 0$, in the sense of Definition~\ref{def:convergence}. Then,
\begin{align*}
\liminf_{\varepsilon \to 0}\mathcal{F}_\varepsilon(u_\varepsilon,E_\varepsilon) \geq \mathcal{F}_0(u,E)\,.
\end{align*} 
\end{proposition}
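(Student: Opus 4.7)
The plan is to establish the liminf inequality separately for the elastic, surface, and boundary contributions to $\mathcal{F}_0$, exploiting the compactness result (in particular the Cauchy--Born rule for symmetrized discrete gradients) and the smoothening procedure of Lemma~\ref{lemma:eta scale smoothening}, which lets us reduce the surface part to the already established continuum $\Gamma$-liminf of \cite[Theorem 3.2]{KFZ:2021}. We may extract a subsequence along which the liminf is attained and is finite (otherwise there is nothing to prove), and we invoke Theorem~\ref{theorem:compactness} to obtain $u\in GSBD^2(\Omega)$, the set of finite perimeter $E\subset\Omega$, the set $\omega_u$ from Definition~\ref{def:convergence}, and sets $(V_\eps)_{\eps>0}$ satisfying \eqref{ineq:small difference in measures}--\eqref{symmetric_Cauchy-Born}.

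For the elastic part, I would follow the Taylor-expansion strategy of \cite{Schmidt:2009}. By the structural assumptions (i)--(iv) on $W^{\mathrm{el}}_{\mathrm{bulk}}$, in particular frame indifference, the $C^3$-regularity near $\bar{SO}(3)$ and the $Q_{\mathrm{bulk}}$-positivity on the complement of $\R^{3\times 8}_{\mathrm{s,t}}$, one has for any sequence $G_\eps\in \R^{3\times 8}$ with $\delta_\eps|G_\eps|\to 0$ the expansion
\begin{equation*}
\delta_\eps^{-2} W^{\mathrm{el}}_{\mathrm{bulk}}(Z+\delta_\eps G_\eps) \;=\; \tfrac{1}{2}Q_{\mathrm{bulk}}(P(G_\eps))+o(|P(G_\eps)|^2),
\end{equation*}
while the quadratic coercivity (iv) controls the integrand on the region $\{\delta_\eps|\overline\nabla u_\eps|\geq \tau\}$ for fixed $\tau>0$. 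A standard truncation argument (decomposing $\Omega$ into $\{|\overline\nabla u_\eps|\le \kappa_\eps\}$ and its complement, cf.~\eqref{rate_kappa_epsilon} and \eqref{eq:lsc0}(iii)) then shows
\begin{equation*}
\liminf_{\eps\to 0} F^{\mathrm{el}}_\eps(u_\eps,E_\eps) \;\geq\; \tfrac{1}{2}\liminf_{\eps\to 0}\int_{\Omega\setminus V_\eps}Q_{\mathrm{bulk}}(\bar e(u_\eps))\dx.
\end{equation*}
Since $Q_{\mathrm{bulk}}\geq 0$ is a quadratic form, the right-hand side is weakly $L^2$-lower semicontinuous, so that \eqref{symmetric_Cauchy-Born} yields $\liminf_{\eps\to 0} F^{\mathrm{el}}_\eps(u_\eps,E_\eps)\geq \tfrac{1}{2}\int_{\Omega\setminus(E\cup\omega_u)}Q_{\mathrm{bulk}}(e(u)Z)\dx=\mathcal{F}^{\mathrm{el}}(u,E)$, using $u\equiv 0$ on $\omega_u$.

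For the surface and boundary contributions, I would associate to each $E_\eps$ the regular set $\hat E_\eps\in\mathcal{A}_{\mathrm{reg}}(U)$ provided by Lemma~\ref{lemma:eta scale smoothening} (applied to a suitable Lipschitz exhaustion $U_m\subset\subset U$). The key estimates \eqref{lemma:perimeterdis-cont}--\eqref{lemma:measuredis-cont} then yield, with $\hat\gamma_\eps:=\gamma_\eps/C$,
\begin{equation*}
F^{\mathrm{per}}_\eps(E_\eps)+F^{\mathrm{curv}}_\eps(E_\eps) \;\geq\; \mathrm{Per}_\varphi(\hat E_\eps,U_m)+\hat\gamma_\eps\!\int_{\partial\hat E_\eps\cap U_m}|\bm A|^q\,\dH^2 \;-\;o(1),
\end{equation*}
while \eqref{disc_cont_est_combined}(iii) and Definition~\ref{def:convergence} imply $\chi_{\hat E_\eps\cap\Omega}\to\chi_{E\cup\omega_u}$ in $L^1$ and the convergence of the associated displacements is the one required by \cite[Theorem 3.2]{KFZ:2021}. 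Applying that theorem on $U_m$ with $\hat E_\eps$ and $\tilde u_\eps$ (adjusted on $\hat E_\eps$ in the standard way to match the boundary datum $u_0$ on $U\setminus\Omega$), and then letting $m\to\infty$ using \eqref{compact_exhaustion}, produces
\begin{equation*}
\liminf_{\eps\to 0}\bigl(F^{\mathrm{per}}_\eps(E_\eps)+F^{\mathrm{curv}}_\eps(E_\eps)\bigr) \;\geq\; \mathcal{F}^{\mathrm{surf}}(u,E)+\mathcal{F}^{\partial_D}(u,E),
\end{equation*}
where the boundary term arises naturally from the Dirichlet-preserving smoothening across $\partial_D\Omega$, exactly as in the continuum setting (note that the outer unit normal to half-spaces with coordinate normals recovers $\varphi$ via \eqref{def:varphi}, so no microscopic relaxation occurs). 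Adding the elastic bound concludes the proof.

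The main obstacle is the compatibility between the discrete Cauchy--Born rule (which forces us to work modulo the enlarged void set $V_\eps$) and the surface-energy bound (which is phrased in terms of $\hat E_\eps$): since $\mathcal{L}^3(V_\eps\triangle(E\cup\omega_u))\to 0$ and $\hat E_\eps\supset Q_\eps(E_\eps)\cap U$ with $\mathcal{L}^3(\hat E_\eps\triangle E)\to 0$, these two enlargements are asymptotically compatible, so the elastic lower bound can indeed be integrated on $\Omega\setminus E$ and the surface lower bound can be taken over $\partial^\ast E$ and $J_u$ without double counting. The only delicate point is verifying that the ``extra'' set $\omega_u$ coming from graph-theoretic disconnections of $\Z_\eps(U)\setminus E_\eps$ is absorbed into the Dirichlet boundary term $\mathcal{F}^{\partial_D}$, which is handled exactly as in \cite[Theorem 3.2]{KFZ:2021} by observing that on each such component $\mathrm{tr}(u)\neq \mathrm{tr}(u_0)$ contributes the doubled surface energy $2\varphi(\nu_\Omega)$ on the relevant part of $\partial_D\Omega$.
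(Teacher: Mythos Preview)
Your proposal is correct and follows essentially the same route as the paper. The paper likewise splits into a surface lemma (Lemma~\ref{lemma:liminfsurf}) and an elastic lemma (Lemma~\ref{lemma:elastic liminf}): the surface part is obtained exactly by feeding the smoothened sets $\hat E_\eps$ from Lemma~\ref{lemma:eta scale smoothening} into the continuum liminf of \cite{KFZ:2021} (via \eqref{eq: fflat} and \cite[Equation~(3.28), Remark~3.9]{KFZ:2021}), and the elastic part is the Taylor expansion plus truncation at level $\kappa_\eps$ plus weak $L^2$-lower semicontinuity of $Q_{\mathrm{bulk}}$ together with \eqref{symmetric_Cauchy-Born}, just as you outline. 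One small point you should make explicit: before expanding $W^{\mathrm{el}}_{\mathrm{bulk}}$ you must ensure that on the complement of (a cubic version of) $V_\eps$ every cell has $\mathcal{I}_\eps(i,E_\eps)=\{1,\dots,8\}$, so that $W^{\mathrm{el}}_{\mathrm{cell}}=W^{\mathrm{el}}_{\mathrm{bulk}}$; the paper does this via \eqref{eq: Vcube-def} and the inclusion chain $V_\eps\supset W_\eps\supset \hat E_\eps\cap\Omega$ combined with \eqref{lemma:distance-cont}, which you have implicitly assumed.
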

We split the proof of this proposition into two separate lemmata, which prove the $\Gamma$-liminf inequality for the surface part and the elastic part of the energy separately. It is not restrictive to assume that $\liminf_{\varepsilon \to 0}\mathcal{F}_\varepsilon(u_\varepsilon,E_\varepsilon)<+\infty$, since otherwise there is  nothing to prove. For the next lemmata, recall the definitions in \eqref{def: discrete elastic energy} and \eqref{eq: bdaypart}--\eqref{def:contsurf}.


\begin{lemma}\label{lemma:liminfsurf} Let $(u_\varepsilon,E_\varepsilon)_{\varepsilon>0}$ be such that $(u _\varepsilon,E_\varepsilon) \overset{\mathrm{d}}{\to} (u,E)$ as $\varepsilon\to 0$, in the sense of Definition~\ref{def:convergence}. Then,
\begin{align*}
\liminf_{\varepsilon \to 0}\left(F_\varepsilon^\mathrm{\mathrm{per}}(E_\varepsilon)+ F_\varepsilon^\mathrm{curv}(
E_\varepsilon)\right) \geq \mathcal{F}^\mathrm{surf}(u,E)+\mathcal{F}^{\rm \partial_D}(u,E)\,.
\end{align*} 
\end{lemma}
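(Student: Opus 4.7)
The plan is to reduce the discrete surface lower bound to the continuum $\Gamma$-liminf established in \cite[Theorem 3.2]{KFZ:2021}, via the smoothening procedure of Lemma \ref{lemma:eta scale smoothening}.
Assume without loss of generality that the liminf is finite and extract a (non-relabeled) subsequence realizing it; thus the compactness machinery developed in Section \ref{sec: 3} applies.
In particular, with $(U_m)_{m\in\mathbb{N}}$ the exhaustion of $U$ from \eqref{compact_exhaustion}, Lemma \ref{lemma:eta scale smoothening} produces regular sets $\hat E_\varepsilon \in \mathcal{A}_{\rm reg}(U)$ with $Q_\varepsilon(E_\varepsilon)\cap U \subset \hat E_\varepsilon$ satisfying \eqref{disc_cont_est_combined}(i)--(iv). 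Together with the continuum compactness \eqref{eq:lsc0}, these give for every $m\in\mathbb{N}$ sets $E^*_\varepsilon \supset \hat E_\varepsilon$ and $\hat\omega_u^\varepsilon$ with
\begin{equation*}
\liminf_{\varepsilon\to 0} \int_{\partial E^*_\varepsilon\cap U_m} \varphi(\nu_{E^*_\varepsilon})\,{\rm d}\mathcal{H}^2 \le \liminf_{\varepsilon\to 0} \mathcal{F}^{\varphi,\hat\gamma_\varepsilon,q}_{\rm surf}(\hat E_\varepsilon,U_m) \le \liminf_{\varepsilon\to 0}\bigl(F_\varepsilon^{\rm per}(E_\varepsilon)+F_\varepsilon^{\rm curv}(E_\varepsilon)\bigr),
\end{equation*}
where the second inequality follows from \eqref{disc_cont_est_combined}(i)--(ii) after absorbing the vanishing factor $\omega(\varepsilon)$ (recall $\omega(\varepsilon)\to 0$ and $\hat\gamma_\varepsilon F_\varepsilon^{\rm curv}$ is bounded by a constant multiple of $F_\varepsilon^{\rm curv}$).

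The core step is then to invoke the continuum liminf inequality of \cite[Theorem 3.2]{KFZ:2021} on the open Lipschitz set $U_m$ with the pair $(\tilde u_\varepsilon, E^*_\varepsilon)$ and limiting pair $(u, E\cup\hat\omega_u)$, using convergences \eqref{eq:lsc0}(i),(ii),(v). This yields
\begin{equation*}
\liminf_{\varepsilon\to 0} \int_{\partial E^*_\varepsilon \cap U_m}\varphi(\nu_{E^*_\varepsilon})\,{\rm d}\mathcal{H}^2 \ge \int_{\partial^*(E\cup\hat\omega_u)\cap U_m}\varphi(\nu)\,{\rm d}\mathcal{H}^2 + \int_{(J_u\setminus\partial^*(E\cup\hat\omega_u))\cap U_m} 2\varphi(\nu_u)\,{\rm d}\mathcal{H}^2.
\end{equation*}
Recalling $\hat\omega_u\cap\Omega = \omega_u$, $u\equiv 0$ on $E\cup\omega_u$, and that $u=u_0$ is enforced on $U\setminus\Omega$ via \eqref{def:Dirichlet_boundary_values}(i), standard $GSBD$ trace-jump accounting identifies the contributions coming from $U_m\setminus\overline{\Omega}$ and from a neighborhood of $\partial_D\Omega$ with precisely the boundary term $\mathcal{F}^{\partial_D}(u,E)$ in \eqref{eq: bdaypart}: a crack of $u$ on $\partial_D\Omega$ (i.e., a point where $\mathrm{tr}(u)\ne \mathrm{tr}(u_0)$) that is not already on $\partial^*E$ is seen from both sides in $J_u$ inside $U_m$ and thus contributes with the factor $2\varphi(\nu_\Omega)$, whereas points on $\partial^*E\cap\partial_D\Omega$ contribute only once with $\varphi(\nu_E)$.

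Passing $m\to\infty$ and exploiting \eqref{compact_exhaustion} together with $\sup_m \mathcal{H}^2(\partial U_m)<\infty$ allows monotone convergence of all three surface measures to the integrals over $\partial^*E\cap\Omega$, $J_u\setminus\partial^*E$, and $\partial_D\Omega$, respectively. Combining the two displayed inequalities then gives
\begin{equation*}
\liminf_{\varepsilon\to 0}\bigl(F_\varepsilon^{\rm per}(E_\varepsilon)+F_\varepsilon^{\rm curv}(E_\varepsilon)\bigr) \ge \mathcal{F}^{\rm surf}(u,E) + \mathcal{F}^{\partial_D}(u,E).
\end{equation*}
The main obstacle is the bookkeeping in the second step: one must verify that the smoothened sets $\hat E_\varepsilon$, the enlarged sets $E^*_\varepsilon$, and the auxiliary sets $\hat\omega_u^\varepsilon$ converge to the correct limiting object $E\cup\hat\omega_u$ in $U$ (using \eqref{disc_cont_est_combined}(iii) and \eqref{eq:lsc0}(v)), and that the Dirichlet-boundary contributions are counted with the exact double-factor prescribed by \eqref{eq: bdaypart} — i.e., that the smoothening does not create spurious interface near $\partial_D\Omega$ nor destroy the jump created by the boundary mismatch. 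This is precisely why $U\supsetneq\Omega$ was introduced and why the compactness of Section \ref{sec: 3} was performed on the enlarged domain; the anisotropic density $\varphi$ being the one induced by the discrete neighbor system $V$ is also essential, as \cite[Theorem 3.2]{KFZ:2021} requires the same $\varphi$ on both sides.
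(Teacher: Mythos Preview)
Your plan is correct and follows essentially the same route as the paper: reduce to the continuum surface energy of the smoothened sets $\hat E_\varepsilon$ via Lemma \ref{lemma:eta scale smoothening} (this is \eqref{eq: fflat}), and then invoke the continuum lower bound from \cite{KFZ:2021}. The paper's proof is simply a two-line version of yours: it cites \cite[Remark~3.9, Equation~(3.28)]{KFZ:2021} directly to obtain $\mathcal{F}^{\rm surf}(u,E)+\mathcal{F}^{\partial_D}(u,E)\le \liminf_{\varepsilon\to 0}\mathcal{F}^{\varphi,\hat\gamma_\varepsilon,q}_{\rm surf}(\hat E_\varepsilon,U)$, and then chains this with \eqref{eq: fflat}. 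Your more explicit treatment---working on $U_m$, passing through the auxiliary sets $E^*_\varepsilon$, unpacking the trace-jump accounting near $\partial_D\Omega$, and taking $m\to\infty$---is exactly what is hidden inside that citation (Remark~3.9 in \cite{KFZ:2021} records precisely this surface lower-semicontinuity statement with the boundary term, as a by-product of the compactness Proposition~3.1 there). One minor point: you invoke \cite[Theorem~3.2]{KFZ:2021} on the pair $(\tilde u_\varepsilon,E^*_\varepsilon)$, but the $\Gamma$-liminf part of that theorem is phrased for regular sets, whereas $E^*_\varepsilon$ are only of finite perimeter; the cleaner reference is the lower-semicontinuity estimate packaged in Remark~3.9/Equation~(3.28), which is stated at the level of the modified sets $E^*_\varepsilon$ coming out of the compactness argument and already incorporates the boundary-datum bookkeeping you describe.
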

\begin{proof} Let $(u_\varepsilon,E_\varepsilon)_{\varepsilon>0}$ be a subsequence (not relabeled) that achieves the liminf. Adopting all the notations that have been introduced so far, the desired inequality follows directly from the outcomes of our compactness  result, i.e., Theorem~\ref{theorem:compactness} together with \cite[Remark 3.9]{KFZ:2021}. Indeed, we use \cite[Equation (3.28)]{KFZ:2021} along with \eqref{eq: bdaypart} and \eqref{def:contsurf}, to get $$\mathcal{F}^\mathrm{surf}(u,E)+\mathcal{F}^{\rm \partial_D}(u,E) \le  \liminf_{\varepsilon \to 0}  \mathcal{F}_{\mathrm{surf}}^{\varphi,\hat{\gamma}_{\varepsilon},q}(\hat{E}_{\varepsilon},U).$$ This, combined with \eqref{eq: fflat} concludes the proof. 
\end{proof}
\begin{lemma}\label{lemma:elastic liminf} Let $(u_\varepsilon,E_\varepsilon)_{\varepsilon>0}$, where $E_\varepsilon\subset\mathbb{Z}_{\varepsilon}(\Omega)$ and $u_\varepsilon \in \mathcal{U}_\varepsilon(u_0,\partial_D\Omega,E_\eps)\EEE$, be such that $(u_\varepsilon,E_\varepsilon) \overset{\mathrm{d}}{\to} (u,E)$ as $\varepsilon\to 0$, in the sense of Definition \ref{def:convergence}. Then,
\begin{align*}
\liminf_{\varepsilon \to 0} F_\varepsilon^{\mathrm{el}}(\mathrm{id}+\delta_\varepsilon u_\varepsilon,E_\varepsilon) \geq \mathcal{F}^\mathrm{el}(u,E)\,.
\end{align*}
\end{lemma}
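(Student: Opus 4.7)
My plan is to follow the strategy of Schmidt \cite{Schmidt:2009}, with the novel ingredient being the Cauchy--Born rule for discrete symmetrized gradients established in Theorem \ref{theorem:compactness}(ii), which replaces the classical statement for Sobolev functions (cf.\ \eqref{eq: bernd}) in the presence of voids. Assume that the liminf is finite and realized along a (non-relabeled) subsequence. Fix any $\tilde\Omega\wcont\Omega$ open and Lipschitz, and let $(V_\eps)_{\eps>0}$ be the sets provided by Theorem \ref{theorem:compactness}(ii), so that $V_\eps\supset Q_\eps(E_\eps)\cap\Omega$, $\L^3(V_\eps\triangle(E\cup\omega_u))\to 0$, and $\chi_{\Omega\setminus V_\eps}\bar e(u_\eps)\rightharpoonup \chi_{\Omega\setminus(E\cup\omega_u)}e(u)Z$ weakly in $L^2_{\mathrm{loc}}(\Omega;\R^{3\times 8})$. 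As a first reduction, since both $W^{\mathrm{el}}_{\mathrm{bulk}}$ and $W^{\mathrm{el}}_{\mathrm{surf}}$ in \eqref{definition_elastic_cell_energy} are non-negative, I drop all surface cell contributions and retain only bulk lattice points $i$ whose associated cube $Q_\eps(\hat i)$ lies in $\tilde\Omega\setminus V_\eps$, thereby discarding only non-negative terms.

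Next, I perform a truncation at the elastic-energy level: fix $\kappa_\eps\to\infty$ with $\delta_\eps\kappa_\eps\to 0$ and $\delta_\eps\kappa_\eps^3\to 0$, and let $\mathcal{G}_\eps$ denote the set of retained indices at which $W^{\mathrm{el}}_{\mathrm{bulk}}(\overline\nabla y_\eps(i))\le\delta_\eps^2\kappa_\eps^2$. On the complement of $\mathcal{G}_\eps$ the contribution is non-negative and may be dropped. On $\mathcal{G}_\eps$, the quadratic growth (iv) together with the frame indifference (i) and the minimality (ii) of $W^{\mathrm{el}}_{\mathrm{bulk}}$ at $\bar{SO}(3)$ gives $\mathrm{dist}(\overline\nabla y_\eps(i),\bar{SO}(3))\le C\delta_\eps\kappa_\eps$, so I may write $\overline\nabla y_\eps(i)=R_\eps(i)Z+\eta_\eps(i)$ with $R_\eps(i)\in SO(3)$ and $|\eta_\eps(i)|\le C\delta_\eps\kappa_\eps$. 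Frame indifference yields $W^{\mathrm{el}}_{\mathrm{bulk}}(\overline\nabla y_\eps(i))=W^{\mathrm{el}}_{\mathrm{bulk}}(Z+R_\eps(i)^T\eta_\eps(i))$, and a Taylor expansion at $Z$, using $DW^{\mathrm{el}}_{\mathrm{bulk}}(Z)=0$ and the rotational invariance of $Q_{\mathrm{bulk}}$, produces
\begin{equation*}
W^{\mathrm{el}}_{\mathrm{bulk}}(\overline\nabla y_\eps(i))\ge \tfrac{1}{2}Q_{\mathrm{bulk}}\bigl(P(\eta_\eps(i))\bigr)-C|\eta_\eps(i)|^3\,,
\end{equation*}
where I used that $Q_{\mathrm{bulk}}$ annihilates $\R^{3\times 8}_{\mathrm{s,t}}$ (see the remarks below Subsection \ref{Assumptions on the elastic energy}). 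Writing $\eta_\eps(i)=(I-R_\eps(i))Z+\delta_\eps\overline\nabla u_\eps(i)$ and $I-R_\eps(i)=-A_\eps(i)+O(|A_\eps(i)|^2)$ with $A_\eps(i)$ skew and $|A_\eps(i)|\le C\delta_\eps\kappa_\eps$, the fact that $A_\eps(i)Z\in\R^{3\times 8}_{\mathrm{s,t}}$ yields $P(\eta_\eps(i))=\delta_\eps\bar e(u_\eps)(i)+O(\delta_\eps^2\kappa_\eps^2)$. Dividing by $\delta_\eps^2$ and collecting errors, one arrives uniformly on $\mathcal{G}_\eps$ at
\begin{equation*}
\delta_\eps^{-2}W^{\mathrm{el}}_{\mathrm{bulk}}(\overline\nabla y_\eps(i))\ge \tfrac{1}{2}Q_{\mathrm{bulk}}(\bar e(u_\eps)(i))-\omega(\eps)\bigl(1+|\bar e(u_\eps)(i)|^2\bigr)
\end{equation*}
for some $\omega(\eps)\to 0$.

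Summing over $\mathcal{G}_\eps$ and using that $\bar e(u_\eps)$ is piecewise constant on $\eps$-cubes, this becomes
\begin{equation*}
F_\eps^{\mathrm{el}}(y_\eps,E_\eps)\ge \tfrac{1}{2}\int_{\tilde\Omega\setminus V_\eps}Q_{\mathrm{bulk}}\bigl(\chi_{\Omega\setminus V_\eps}\bar e(u_\eps)\bigr)\,\d x-\omega(\eps)\Bigl(\L^3(\Omega)+\|\chi_{\Omega\setminus V_\eps}\bar e(u_\eps)\|^2_{L^2(\tilde\Omega)}\Bigr),
\end{equation*}
with the last $L^2$-norm uniformly bounded by Lemma \ref{weak_limit_for_discrete_multies}. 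Since $Q_{\mathrm{bulk}}$ is a non-negative symmetric quadratic form, the functional $\xi\mapsto\frac{1}{2}\int_{\tilde\Omega}Q_{\mathrm{bulk}}(\xi)\,\d x$ is sequentially weakly lower semicontinuous on $L^2(\tilde\Omega;\R^{3\times 8})$. Combining this with the Cauchy--Born convergence of Theorem \ref{theorem:compactness}(ii) and the fact that $u\equiv 0$ (hence $e(u)=0$ a.e.) on $\omega_u$, it follows that
\begin{equation*}
\liminf_{\eps\to 0}F_\eps^{\mathrm{el}}(y_\eps,E_\eps)\ge \tfrac{1}{2}\int_{\tilde\Omega\setminus E}Q_{\mathrm{bulk}}(e(u)Z)\,\d x\,,
\end{equation*}
and letting $\tilde\Omega\uparrow\Omega$ concludes.

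The main obstacle is that, linearizing around the identity, $\overline\nabla u_\eps(i)$ may be large in the infinitesimal rigid-motion directions $\R^{3\times 8}_{\mathrm{s,t}}$ without contributing to the elastic energy. Consequently, the truncation must be performed in terms of the elastic energy itself (equivalently, of $\mathrm{dist}(\overline\nabla y_\eps(i),\bar{SO}(3))$) rather than of $|\overline\nabla u_\eps(i)|$, and the Taylor expansion must be carried out after rotating the cell back to the identity via a local rotation $R_\eps(i)$; the resulting cubic remainder must then be absorbed using the uniform $L^2$-bound on $\chi_{\Omega\setminus V_\eps}\bar e(u_\eps)$. Identifying the weak $L^2$-limit of $\chi_{\Omega\setminus V_\eps}\bar e(u_\eps)$ as $\chi_{\Omega\setminus(E\cup\omega_u)}e(u)Z$, rather than some larger object, is exactly the content of the Cauchy--Born rule of Theorem \ref{theorem:compactness}(ii), and is the key ingredient not available in the classical Sobolev setting of \cite{Schmidt:2009} due to the presence of voids.
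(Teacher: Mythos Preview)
Your overall strategy is the same as the paper's, but there is a genuine gap in the linearization step. You truncate on the energy level $W^{\rm el}_{\rm bulk}(\overline\nabla y_\eps(i))\le\delta_\eps^2\kappa_\eps^2$, deduce $|\eta_\eps(i)|=\dist(\overline\nabla y_\eps(i),\bar{SO}(3))\le C\delta_\eps\kappa_\eps$, and then assert $|A_\eps(i)|\le C\delta_\eps\kappa_\eps$ for the skew part of $I-R_\eps(i)$. This last claim is unjustified and in fact false: the energy truncation controls $\eta_\eps(i)=(I-R_\eps(i))Z+\delta_\eps\overline\nabla u_\eps(i)$, but \emph{not} the two summands separately. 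If $\overline\nabla u_\eps(i)$ has a large component in $\R^{3\times 8}_{\rm s,t}$ (which the energy does not see), then $R_\eps(i)$ can be $O(1)$ away from $I$ while $\eta_\eps(i)$ stays small. Consequently $P((I-R_\eps(i))Z)=\mathrm{sym}(I-R_\eps(i))Z$ is only $O(|I-R_\eps(i)|^2)$, not $O(\delta_\eps^2\kappa_\eps^2)$, and the identity $P(\eta_\eps(i))=\delta_\eps\bar e(u_\eps)(i)+O(\delta_\eps^2\kappa_\eps^2)$ fails. A related issue is the ``rotational invariance of $Q_{\rm bulk}$'': frame indifference gives $D^2W^{\rm el}_{\rm bulk}(RZ)[RG,RG]=Q_{\rm bulk}(G)$, not $Q_{\rm bulk}(RG)=Q_{\rm bulk}(G)$, so after rotating you get $Q_{\rm bulk}(P(R_\eps^T\eta_\eps))$, which involves $P(R_\eps^T\overline\nabla u_\eps)$ rather than $\bar e(u_\eps)$.

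The paper resolves this not by rotating but by truncating on $|\overline\nabla u_\eps|$ itself, via the cutoff $\vartheta_\eps=\chi_{[0,C_*\kappa_\eps]}(|\overline\nabla u_\eps|)$. The key point is that this cutoff is harmless in the limit because
\[
\mathcal{L}^3\big(\{|\overline\nabla u_\eps|>C_*\kappa_\eps\}\setminus V_\eps\big)\to 0,
\]
which is \emph{not} a direct consequence of the energy bound but is inherited from the compactness result (specifically from \eqref{eq:lsc0}(iii) for the affine interpolations, transferred to discrete gradients via \eqref{eq: equivalence gradients} and \eqref{conclusions_of_symmetric_Cauchy-Born}(i)). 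With this in hand one can Taylor-expand $W^{\rm el}_{\rm bulk}(Z+\delta_\eps\overline\nabla u_\eps)$ directly around $Z$, use $Q_{\rm bulk}(\overline\nabla u_\eps)=Q_{\rm bulk}(\bar e(u_\eps))$, and bound the cubic remainder by $C\delta_\eps\kappa_\eps^3\to 0$; then $\vartheta_\eps\chi_{\Omega\setminus V_\eps}\to\chi_{\Omega\setminus(E\cup\omega_u)}$ boundedly in measure, and weak lower semicontinuity together with \eqref{symmetric_Cauchy-Born} concludes. In short, the missing ingredient in your argument is precisely the pointwise control of $|\overline\nabla u_\eps|$ (not just $|\bar e(u_\eps)|$) outside $V_\eps$, which the paper obtains from the compactness analysis before linearizing.
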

\begin{proof}
The proof is a combination of the arguments for the $\Gamma$-liminf inequality in \cite{Schmidt:2009} and \cite{KFZ:2021}, respectively. \EEE 
By our compactness result, we can suppose that \eqref{disc_cont_est_combined}--\eqref{rate_kappa_epsilon} hold, up to passing to a subsequence. We can apply Theorem \ref{discrete_Cauchy_Born_any_d} for the sets $W_\eps$ introduced in \eqref{eq: tildeVdef} to get $V_\eps \supset W_\eps$ satisfying \eqref{conclusions_of_symmetric_Cauchy-Born}.  \EEE

As a preparation, we need that \eqref{eq:lsc0}(iii) also holds for the discrete gradients in place of $\nabla \tilde{u}_\eps$. To see this, we first observe that the fact that $\tilde{u}_\eps$ is affine on each $S \in \mathcal{S}_{\varepsilon,Q\EEE}$ (cf.\ \eqref{def:simplices}) \EEE implies that there exists an absolute constant $C>0$ such that 
$${\eps^3\EEE\# \hat\Q\EEE_\eps^{\kappa_\eps\EEE} \le C\sum_{Q \in   \hat{\Q}_{\eps, V_\eps^{\rm c}\EEE}} \mathcal{L}^3(Q \cap \lbrace |\EEE\nabla \tilde{u}_\eps|\EEE > \kappa_\eps\rbrace), \quad \text{where } \hat\Q\EEE_\eps^{\kappa_\eps\EEE} :=  \lbrace Q \in   \hat{\Q}_{\eps,{V_\eps^{\rm c}}\EEE} \EEE \colon \, \Vert \nabla \tilde{u}_\eps \Vert_{L^\infty(Q)} > \kappa_\eps \rbrace\,.}$$ 
For each $Q \in \hat{\Q}_{\eps, V_\eps^{\rm c}\EEE}\EEE$,  by \eqref{conclusions_of_symmetric_Cauchy-Born}{(i)}\EEE, the fact that $\tilde{u}_\eps$ is piecewise affine, and $\omega_u = \EEE \hat{\omega}_u\cap \Omega \EEE \subset W_\eps \EEE \subset V_\eps$ for all $\eps$, 
we deduce that
\begin{align*}
\mathcal{L}^3(Q \cap  \lbrace  |\nabla \tilde{u}_\eps| > \kappa_\eps \rbrace) \le 2 \mathcal{L}^3( (Q \setminus V_\eps) \cap  \lbrace  |\nabla \tilde{u}_\eps| > \kappa_\eps \rbrace)  \le 2 \mathcal{L}^3( (Q \setminus \hat{\omega}_u) \cap  \lbrace  |\nabla \tilde{u}_\eps| > \kappa_\eps \rbrace)\,.  
\end{align*}
By \eqref{eq: equivalence gradients}, \EEE 
there exists an absolute constant $C_*\ge 1$ such that for all $Q \in \hat{\Q}_{\eps, V_\eps^{\rm c}} \EEE \setminus \hat Q\EEE_\eps^{\kappa_\eps\EEE}$ it holds that 
\begin{equation*}
\big|\overline{\nabla}u_\eps|_{Q}\big|^2\leq C_*\fint_{Q}|\nabla\tilde u_\eps|^2\, \mathrm{d}x\leq C_*\kappa_\eps^2 \leq C_*^2\kappa_\eps^2\,. \EEE
\end{equation*}
Combining the previous estimates, we conclude
\begin{align*}
\mathcal{L}^3(\lbrace |\overline{\nabla} u_\eps| > C_*\kappa_\eps \rbrace \setminus V_\eps  ) \le \eps^{3\EEE} \# \hat\Q\EEE_\eps^{\kappa_\eps} \EEE \le C \mathcal{L}^3(   \lbrace  |\nabla \tilde{u}_\eps| > \kappa_\eps \rbrace \setminus \hat{\omega}_u). 
\end{align*}
This along with \eqref{eq:lsc0}(iii) shows that
\begin{equation}\label{small difference for_measures_new_kappa est}
\mathcal{L}^3(\lbrace |\overline{\nabla} u_\eps| > C_*\kappa_\eps \rbrace \setminus V_\eps  ) \to 0 \quad \text{as $\eps\to0$}\,. 
\end{equation}
Now, \EEE let us consider an arbitrary open subset $\tilde \Omega\subset \subset \Omega$, and $\varepsilon_0:=\varepsilon_0(\tilde\Omega,\Omega)\in (0,1)$ small enough so that 
\AAA for all $\varepsilon\in (0,\varepsilon_0]$ we have 
\begin{align}\label{eq: Vcube-def}
{\EEE(V_\eps^{\rm{c}}
)_{\rm{cub}}\EEE:=\bigcup_{Q\in  \hat\Q_{\eps, \tilde \Omega\setminus V_\eps  }} Q\subset\subset \Omega, \quad \text{and \ }  {\rm dist}\big((V_\eps^{\rm{c}})_{\rm{cub}}, E_\varepsilon\big) \ge 2\eps\,.}
\end{align}
\EEE Indeed, the second property follows from the last property in \eqref{disc_cont_est_combined}, 
the fact that  $V_\eps \supset W_\eps
\supset E_\eps^*  
\cap \Omega\EEE \supset \hat{E}_\eps 
\cap \Omega\EEE$ (see \eqref{eq: tildeVdef} and before \eqref{eq:lsc0}), \AAA and that $\eps/\eta_\eps\to 0$\,. \EEE  Thus, recalling the notation in \eqref{def_set_of_neighbours}, we have
\begin{equation*}
\mathcal{I}_\eps(i,E_\varepsilon) \EEE =\{1,\dots,8 \} \quad \forall\ i\in \mathbb{Z}_{\varepsilon}\big((V_\eps^{\rm{c}})_{\rm{cub}}\big)
\,.
\end{equation*}
Hence, by \eqref{discrete_gradient_def} 
and \EEE \eqref{definition_elastic_cell_energy}, for $\varepsilon \in (0,\varepsilon_0]$ we can estimate
\begin{align}\label{elastic_lower_bd_1}
F^{\mathrm{el}}_\varepsilon(\mathrm{id}+\delta_\varepsilon u_\varepsilon,E_\varepsilon)&\geq \delta_\varepsilon^{-2}\sum_{i\in \mathbb{Z}_{\varepsilon}((V_\eps^{\rm{c}})_{\rm{cub}})}\varepsilon^3W^{\mathrm{el}}_{\mathrm{bulk}}\big(Z+\delta_\varepsilon\overline{\nabla}u_\varepsilon(i)\big)\,.
\end{align}
As a result of the minimality and regularity assumptions (ii) and (iii) in the definition of the discrete elastic cell energy (cf.\ Subsection \hyperref[Assumptions on the elastic energy]{2.2}), we have that $W_{\mathrm{bulk}}^{\mathrm{el}}(Z)=0$, $DW_{\mathrm{bulk}}^{\mathrm{el}}(Z)=0$, and for every $F\in \mathbb{R}^{3\times 8}$ we can expand
\begin{equation}\label{expansion_elastic_energy_comp}
W_{\mathrm{bulk}}^{\mathrm{el}}(Z+F)=\frac{1}{2}Q_{\mathrm{bulk}}(F)+\Phi(F)\,,
\end{equation}
where we recall that $Q_{\mathrm{bulk}}:=D^2 W_{\mathrm{bulk}}^{\mathrm{el}}(Z)$ and $\Phi\colon\R^{3\times 8}\to  \R $ satisfies $|\Phi(F)|  \le C|F|^3$  for all $F \in \R^{3\times 8}$ with $|F| \le 1$. Again in view of 
assumption (iii) \EEE  for the elastic cell energy, and recalling the definition of $\bar{e}(u_\varepsilon)$ as the projection of $\overline{\nabla} u_\varepsilon$ on the orthogonal complement of $\R^{3\times 8}_{\mathrm{s,t}}$ (cf.~\eqref{infinitesimal_translation_rotation}), we have $Q_{\mathrm{bulk}}(\overline{\nabla}u_\varepsilon)=Q_{\mathrm{bulk}}(\bar{e}(u_\varepsilon))$. Let us also define $\vartheta_\eps = \chi_{ [0, \EEE C_*\kappa_\eps]}(|\overline{\nabla} u_\eps|)$ 
with $C_*$ from \eqref{small difference for_measures_new_kappa est}. \EEE Having identified $\overline{\nabla}u_\varepsilon$ with its piecewise constant interpolation, being equal to $\overline{\nabla}u_\varepsilon(i)$ on each $Q_\varepsilon(\hat{i})$, by 
\eqref{eq: Vcube-def}, \EEE \eqref{elastic_lower_bd_1}, \eqref{expansion_elastic_energy_comp},  and the fact that $\delta_\eps \kappa_\eps \to 0$ (see \eqref{rate_kappa_epsilon}), \EEE we infer that
\begin{align*}
\begin{split}
F^{\mathrm{el}}_\varepsilon(\mathrm{id}+\delta_\varepsilon u_\varepsilon,E_\varepsilon)&\geq \sum_{i\in \mathbb{Z}_{\varepsilon}((V_\eps^{\rm{c}})_{\rm{cub}})}\int_{Q_{\varepsilon}(\hat{i})} \vartheta_\eps \EEE\left(\frac{1}{2}Q_{\mathrm{bulk}}(\bar{e}(u_\varepsilon))+\delta_\varepsilon^{-2}\Phi(\delta_\eps\overline{\nabla}u_\varepsilon)\right)\, \mathrm{d}x\\
&=\int_{(V_\eps^{\rm{c}})_{\rm{cub}}} \vartheta_\eps \EEE \left(\frac{1}{2}Q_{\mathrm{bulk}}(\bar{e}(u_\varepsilon))+\delta_\varepsilon^{-2}\Phi(\delta_\eps\overline{\nabla}{u}_\varepsilon)\right)\, \mathrm{d}x\\
&\geq \frac{1}{2}\int_{\tilde \Omega} \vartheta_\eps Q_{\mathrm{bulk}}\big(\chi_{\Omega\setminus V_\eps}\bar{e}(u_\varepsilon)\big)\, \mathrm{d}x+\delta_{\varepsilon}^{-2}\int_{\tilde\Omega\setminus V_\eps}\vartheta_\eps \EEE\Phi(\delta_\eps\overline{\nabla}u_\varepsilon)\, \mathrm{d}x\\
&\geq \frac{1}{2}\int_{\tilde\Omega} \vartheta_\eps Q_{\mathrm{bulk}}\big(\chi_{\Omega\setminus V_\eps}\bar{e}(u_\varepsilon)\big)\, \mathrm{d}x-C\delta_{\varepsilon}\kappa_\eps^3\,.
\end{split}
\end{align*}
By \eqref{small difference for_measures_new_kappa est}, and $\chi_{\Omega\setminus V_\eps}\to\chi_{\Omega\setminus (E\cup\omega_u)}$ (cf.\ \eqref{eq: tildeVdef}, \eqref{eq:lsc0}(v), and \eqref{conclusions_of_symmetric_Cauchy-Born}{(ii)}), we have that $\vartheta_\eps\chi_{\Omega\setminus V_\eps}\to\chi_{\Omega\setminus (E\cup\omega_u)}$ boundedly in measure. This observation, together with \eqref{rate_kappa_epsilon}, the Cauchy-Born rule \EEE (i.e., the weak convergence property in \eqref{conclusions_of_symmetric_Cauchy-Born}{(iii)}), the convexity of $Q_{\mathrm{bulk}}$, and \EEE the fact that $u \equiv 0$ on $\omega_u$, yields $$\liminf_{\eps \to 0 } F^{\mathrm{el}}_\varepsilon(\mathrm{id}+\delta_\varepsilon u_\varepsilon,E_\varepsilon)  \ge  \frac{1}{2}\int_{\tilde\Omega\setminus E} Q_\mathrm{\mathrm{bulk}}(e(u)Z)\,\mathrm{d}x\,. $$
Recalling the definition of the elastic energy in \eqref{def:contel}, and  exhausting $\Omega$ with compactly supported open subsets $\tilde \Omega\subset\subset\Omega$, the statement follows. 
\end{proof}

\section{Upper Bound}\label{sec: 6}
This final section is devoted to the proof of the $\Gamma$-limsup inequality. 
\begin{proposition}\label{prop:limsup} Let $u=\chi_{\Omega\setminus E}u \in GSBD^2(\Omega)$ and $E\in \mathfrak{M}(\Omega)$ be a set of finite perimeter. Then, there exists $(u_\varepsilon,E_\varepsilon)_{\varepsilon>0}$, $E_\varepsilon\subset\mathbb{Z}_{\varepsilon}(\Omega)$ and $u_\varepsilon \in \mathcal{U}_\varepsilon(u_0,\partial_D\Omega,E_\eps)$, such that $(u_\varepsilon,E_\varepsilon) \overset{\mathrm{d}}{\to} (u,E)$ in the sense of Definition \ref{def:convergence} and
\begin{align}\label{prop ineq: limsup}
\limsup_{\varepsilon \to 0}\mathcal{F}_\varepsilon(u_\varepsilon,E_\varepsilon) \leq \mathcal{F}_0(u,E)\,.
\end{align} 
\end{proposition}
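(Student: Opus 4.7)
\medskip

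The plan is to proceed by a density reduction followed by an explicit construction on a restricted, regular class of pairs. As a first step, I would invoke density results for the relaxed continuum functional $\mathcal{F}_0$ (see \cite{Crismale, Crismale-Friedrich-Solombrino, FriSolPer}) to reduce the statement to pairs $(u,E)$ enjoying the following simplified structure: $E$ is a polyhedral subset of $\Omega$ whose faces are all parallel to the coordinate planes; $J_u$ is a finite union of closed axis-aligned rectangles contained in $\Omega\setminus\overline{E}$; $u\in W^{1,\infty}(\Omega\setminus(\overline{E}\cup\overline{J_u});\R^3)$; and (up to a $\mathcal{H}^2$-null set) $\{\mathrm{tr}(u)\neq\mathrm{tr}(u_0)\}\cap\partial_D\Omega$ is likewise a coordinate-aligned polyhedral set. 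The reduction to coordinate-aligned objects is compatible with the target energy because the crystalline structure of $\varphi(\nu)=\sum_{k=1}^3 c_k|\nu_k|$ (see \eqref{def:varphi}) and the Cavalieri principle imply that for any smooth piece of surface with unit normal $\nu$, its axis-aligned polyhedral approximation (a coordinate staircase) has the \emph{same} $\varphi$-perimeter in the limit; hence the approximation in $\mathcal{F}_0$ is exact at the level of the anisotropic surface energy.

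For such a simplified pair $(u,E)$, I would construct the recovery sequence explicitly. Set $\tilde{E}:=E\cup N_{\eta_\eps}(J_u)\cup N_{\eta_\eps}(\{\mathrm{tr}(u)\neq\mathrm{tr}(u_0)\}\cap(\partial_D\Omega\setminus\partial^*E))$, where $N_{\eta_\eps}(\cdot)$ denotes a thin coordinate-aligned slab of thickness $\eta_\eps$ about the set in parentheses. Because $E$ and the slabs are coordinate polyhedral, $\tilde{E}$ is itself a coordinate polyhedron whose total edge length $L$ is bounded independently of $\eps$. Define $E_\eps:=\mathbb{Z}_\eps(\tilde{E})\cap\mathbb{Z}_\eps(\Omega)$ and
\begin{equation*}
u_\eps(i):=\begin{cases} u_0(i) & \text{if } i\in\mathbb{Z}_\eps(U\setminus\Omega),\\ 0 & \text{if } i\in E_\eps\cup(\mathbb{Z}_\eps(\R^3)\setminus\mathbb{Z}_\eps(U)),\\ u(i) & \text{otherwise}.\end{cases}
\end{equation*}
By construction, $u_\eps\in\mathcal{U}_\eps(u_0,\partial_D\Omega,E_\eps)$, the pair $(u_\eps,E_\eps)$ converges to $(u,E)$ in the $\mathrm{d}$-sense (the slabs carry vanishing volume), and $E_\eps$ is cubic at scale $\eta_\eps$ in the sense of Definition \ref{corners}, being locally flat outside tubular neighborhoods of thickness $\eta_\eps$ around the axis-aligned edges of $\tilde{E}$.

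The energy convergence then decouples into three essentially independent estimates. For the elastic term, a Taylor expansion of $W^{\mathrm{el}}_{\mathrm{bulk}}$ around $Z$ (cf.\ \eqref{expansion_elastic_energy_comp} and Subsection \ref{Assumptions on the elastic energy}) combined with the smoothness of $u$ on $\Omega\setminus\overline{E}$ and the projection identity $Q_{\mathrm{bulk}}(\nabla u\, Z)=Q_{\mathrm{bulk}}(e(u)\,Z)$ yields
\begin{equation*}
\lim_{\eps\to 0}F_\eps^{\mathrm{el}}(\mathrm{id}+\delta_\eps u_\eps,E_\eps)=\tfrac{1}{2}\int_{\Omega\setminus E}Q_{\mathrm{bulk}}(e(u)Z)\,\mathrm{d}x=\mathcal{F}^{\mathrm{el}}(u,E),
\end{equation*}
as in \cite{Schmidt:2009}. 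For the perimeter term, the crystalline matching of $\varphi$ with the coordinate-aligned boundary $\partial E_\eps$ gives $F_\eps^{\mathrm{per}}(E_\eps)\to\int_{\partial^*E\cap\Omega}\varphi(\nu_E)\,\mathrm{d}\mathcal{H}^2+2\int_{J_u\setminus\partial^*E}\varphi(\nu_u)\,\mathrm{d}\mathcal{H}^2+\mathcal{F}^{\partial_D}(u,E)$, the factor $2$ arising from the two sides of each slab contributing a full coordinate face. For the curvature term, since $E_\eps$ is cubic and non-flat only in a tubular neighborhood of total volume $\sim L\eta_\eps^2$ about the axis-aligned edges of $\tilde{E}$, assumption (iii) below \eqref{def: curvature energy} gives
\begin{equation*}
F_\eps^{\mathrm{curv}}(E_\eps)\leq C\cdot\tfrac{L\eta_\eps^2}{\eps^3}\cdot\eps^3\cdot\gamma_\eps\eta_\eps^{-1-q}=CL\gamma_\eps\eta_\eps^{1-q}\xrightarrow[\eps\to 0]{}0,
\end{equation*}
by the second condition in \eqref{eq:rate gammadelta}.

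The principal obstacle is the density reduction of Step 1. While density of smooth displacements outside smooth void sets in $GSBD^2$ is classical, here one must simultaneously approximate (a) the void $E$, (b) the discontinuity set $J_u\setminus\partial^*E$, and (c) the Dirichlet mismatch $\{\mathrm{tr}(u)\neq\mathrm{tr}(u_0)\}\cap\partial_D\Omega$ by coordinate-aligned polyhedral objects while controlling all three surface contributions in $\mathcal{F}^{\mathrm{surf}}+\mathcal{F}^{\partial_D}$. This requires a careful diagonal argument coupling the density results of \cite{Crismale} for the void-displacement pair with a crystalline polyhedral approximation compatible with $\varphi$; the key observation enabling the latter is that, thanks to the specific coordinate form of $\varphi$, a general smooth surface with normal $\nu$ and its coordinate staircase approximation carry the same $\varphi$-perimeter in the limit, so that no microscopic relaxation is needed (cf.\ the figure at the end of Subsection \ref{subsec: main result}). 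Once the reduction is achieved, the construction and estimates in Steps 2--3 are essentially mechanical and rely crucially on the scaling \eqref{eq:rate gammadelta}, which is designed precisely so that the curvature energy of an $\eta_\eps$-cubic axis-aligned polyhedral set vanishes in the limit.
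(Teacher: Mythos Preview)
Your strategy is sound and would work, but it diverges from the paper's proof in one structurally important way, and carries one technical oversight.

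\textbf{The main difference.} You keep $J_u$ and the Dirichlet mismatch set $\{\mathrm{tr}(u)\neq\mathrm{tr}(u_0)\}\cap\partial_D\Omega$ as separate objects and recover them via thin $\eta_\eps$-slabs. The paper instead invokes \cite[Theorem~2.2]{Crismale} at the very start, which approximates the pair $(u,E)$ by pairs $(u_k,E_k)$ with $\partial E_k\cap\Omega$ smooth, $u_k\in W^{1,\infty}(\Omega\setminus\overline{E_k};\R^3)$ having \emph{no} jump set, and $\mathrm{tr}(u_k)=\mathrm{tr}(u_0)$ on $\partial_D\Omega\setminus\overline{E_k}$, while achieving energy convergence in the sense of \eqref{eq: limsupVito1}. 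Both $J_u$ and the Dirichlet mismatch are thus absorbed into $\partial E_k$, and the remainder of the argument handles only a single smooth void $E$ (subsequently reduced to a coordinate polyhedron, then to $E_\sigma$ built of $\sigma$-cubes). This removes the need for slabs, for separate coordinate-aligned approximation of $J_u$, and for the factor-$2$ bookkeeping. What you identify as ``the principal obstacle'' --- the coordinate-aligned density reduction of $(u,E,J_u,\{\mathrm{tr}\neq\mathrm{tr}(u_0)\})$ --- is in the paper's route a single black-box citation; your route trades that black box for a more explicit but longer construction.

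\textbf{A technical gap.} Your definition $E_\eps:=\Z_\eps(\tilde E)\cap\Z_\eps(\Omega)$ is an $\eps$-scale discretization, not an $\eta_\eps$-scale one. The faces of the fixed polyhedron $E$ sit at positions unrelated to the $\eta_\eps$-grid, so $E_\eps$ need not be cubic in $Q_{\eta_\eps}(i)$ in the sense of Definition~\ref{corners} at every point near $\partial\tilde E$; without cubicity, assumption (iii) below \eqref{def: curvature energy} is unavailable and your curvature bound breaks. The paper avoids this by defining the recovery set directly at the mesoscale, $E_\eps:=\bigcup_{i\in\eta_\eps\Z^3\cap E}\Z_\eps(Q_{\eta_\eps}(i))\cap\Omega$ (see \eqref{def:recoverysequence}), which is manifestly cubic everywhere. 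Your argument is easily repaired by adopting the same $\eta_\eps$-scale discretization of $\tilde E$, but as written the claim that ``$E_\eps$ is cubic at scale $\eta_\eps$'' is not justified.
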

\begin{proof} 
We proceed in several steps. We first reduce to showing \eqref{prop ineq: limsup} for pairs of regular sets and displacement maps. Then, we replace these sets with polyhedral sets whose outer unit normal (at the points where this is defined) is a coordinate vector. Subsequently, we perform the approximation on the discrete level. The last step shows the convergence of the discrete energies to the respective continuum one.\\
\begin{step}{1}(Density of regular sets and displacements)
By a density result, 
we \EEE can assume that $\partial E\cap\Omega\in C^\infty
$ and that $u \in W^{1,\infty}(\Omega \setminus \overline E;\mathbb{R}^3)$. 
Indeed, \EEE by \cite[Theorem~2.2]{Crismale}, for each $E\in\M(\Omega)$ with $\mathcal{H}^{2}(\partial^* E \cap \Omega)<+\infty$ and each $u=\chi_{\Omega \setminus E}u\in GSBD^2(\Omega)$, there exists a sequence of  sets $(E_k)_{k\in \mathbb{N}}$ with $\partial E_k\cap\Omega\in C^{\infty}$,  $\chi_{E_k} \to \chi_E$ in $L^1(\Omega)$ and a sequence  $(u_k)_{k\in \mathbb{N}}$ with $u_k|_{\Omega\setminus \overline{E_k}} \in W^{1,\infty}(\Omega \setminus \overline{E_k};\R^3)$, $u_k|_{E_k} = 0$, and ${\rm tr}(u_k) = {\rm tr}(u_0)$ on $\partial_D \Omega \setminus \overline{E_k}$, such that $u_k \to u$ in measure on $\Omega$  and
\begin{align}\label{eq: limsupVito1}
\lim_{k \to \infty} \Big(\frac{1}{2}  \int_{\Omega \setminus \overline {E_k}}  Q_{\rm bulk}(e(u_k)Z)\, {\rm d}x + \int_{\partial E_k\cap(\Omega \cup\partial_D \Omega)}  \varphi(\nu_{E_k}) \, {\rm d}\mathcal{H}^{2}  \Big)  = \mathcal{F}_0(u,E)\,.
\end{align}
Strictly speaking, \cite[Theorem 2.2]{Crismale} only ensures that $\partial E_k
$ is Lipschitz and $u_k$ is in $H^1$, but in the proof it is shown that $\partial E_k
$ can be chosen of class $C^\infty$, see \cite[Proposition 5.4]{Crismale}, and $u_k$ can be approximated by smooth functions by standard approximation arguments. From now on, we therefore assume that $\partial E\cap \Omega \in C^\infty$ and that $u \in W^{1,\infty}(\Omega \setminus \overline E;\mathbb{R}^3)$.  
\end{step}\\
\begin{step}{2}(Reduction to polyhedral void sets with coordinate normals) By the regularity of $E$, the continuity of $\varphi$ (cf.\ \eqref{def:varphi}), and a covering argument, we can reduce to the case of $\partial E$ being polyhedral. Indeed, up to a localization argument,  we can consider a sequence $(E_k)_{k\in \mathbb{N}} \subset \subset E$ with polyhedral boundary and, for each $k\in\mathbb{N}$, extend $u$ to a function $u_k\in W^{1,\infty}(\Omega \setminus \overline E_k;\mathbb{R}^3)$ such that convergence in \eqref{eq: limsupVito1} holds. As ${\rm tr}(u) = {\rm tr}(u_0)$ on $\partial_D \Omega \setminus \overline{E}$ and $E_k \subset \subset \Omega$, this can be achieved in such a way that ${\rm tr}(u_k) = {\rm tr}(u_0)$ on $\partial_D \Omega$. Assuming now that $E \subset \subset \Omega$ has polyhedral boundary and ${\rm tr}(u) = {\rm tr}(u_0)$ on $\partial_D \Omega$,  
we \EEE prove that we can further restrict to the case of $\nu_E \in \{\pm e_1,\pm e_2,\pm e_3\}$. To this end, we fix $\sigma >0$ and define
\begin{align}\label{def:Esigma}
E_\sigma =  \bigcup_{i \in \sigma\mathbb{Z}^3,\ Q_\sigma(i) \subset E}Q_\sigma(i)\,.
\end{align}
 It is obvious that $\mathcal{H}^2$-a.e.\ it holds that  $\nu_{E_\sigma}\in \{\pm e_1,\pm e_2,\pm e_3\}$. First, it is elementary to check that $E_\sigma \subset E$ and $\chi\EEE_{E_\sigma} \to \chi\EEE_{E}$ in $L^1(\Omega)$  as $\sigma\to 0$, since 
\begin{align}\label{eq: inclusi}
E \setminus E_\sigma \subset \lbrace x \in E \colon \, {\rm dist}(x,\partial E) \le \sqrt{3}\sigma \rbrace\,.
\end{align}
 Therefore, it is  clear that
\begin{align}\label{eq: incluspartial}
\partial E_\sigma \subset (\partial E)_{2\sigma}\,.
\end{align}

We now fix an arbitrary face $S \subset \partial E$ with outer \EEE normal vector $\nu$. We show that  
\begin{align}\label{ineq:limsupcoordinatenormal}
\limsup_{\sigma \to 0} \int_{\partial E_\sigma \cap (S)_{2\sigma}} \varphi(\nu_{E_\sigma})\,\mathrm{d}\mathcal{H}^2 \leq  \int_S \varphi(\nu)\,\mathrm{d}\mathcal{H}^2 = \varphi(\nu)\mathcal{H}^2(S)\,. \EEE
\end{align}
Without restriction we assume that $\nu_k\geq 0$ for all $k=1,2,3$.  \BBB For convenience, we define $$T_{k}:=\{y\in \partial E_\sigma\cap(S)_{ 2\sigma}\colon \nu_{E_\sigma}(y)=e_k\}\,$$ 
and note that  by the definition of $E_\sigma$ in \eqref{def:Esigma}, due to the cubes at the boundary of the face,  we have 
\begin{align}\label{eq: smallli}
\mathcal{H}^2\big( (\partial E_\sigma\cap(S)_{ 2\sigma}) \setminus (T_1 \cup T_2 \cup T_3)\big) \le  C_S \sigma
\end{align}
for a constant $C_S>0$ depending only on $S$.   \EEE   
We claim that for all $k=1,2,3$ we have
\begin{align}\label{ineq:cardcubeslimsup}
\mathcal{H}^2(T_k)\leq \nu_k \mathcal{H}^2(S) + C_S\sigma\,.
\end{align}
Let us denote by $\Pi_k$ the orthogonal projection of $\R^3$ onto the subspace orthogonal to $e_k$. Fix  $y'\in \Pi_k((S)_{ 2\sigma})$. Then, for $\sigma>0$ sufficiently small,  by the definition of $E_\sigma$ in \eqref{def:Esigma} it follows that there exists at most one $t\in \R$ for which $y:=y'+te_k$ belongs to $\partial E_\sigma\cap   (S)_{ 2\sigma} $ and is such that $\nu_{E_\sigma}(y)=e_k$. Therefore, the coarea formula implies 
\begin{align*}
\mathcal{H}^2(\{y \in \partial E_\sigma \cap(S)_{2\sigma}\colon \nu_{E_\sigma}(y) =e_k\}) \leq \mathcal{H}^2(\Pi_k((S)_{ 2\sigma}))\leq \nu_k\mathcal{H}^2(S) +C_S\sigma\,.
\end{align*}
This shows  \eqref{ineq:cardcubeslimsup}. Now, due to \eqref{def:varphi}, by our assumption that $\nu_k\geq 0$ for all $k=1,2,3$, we have that $\varphi(\nu) =  \sum_{k=1}^3c_k\nu_k$, where $c_k=\sum_{\xi\in V, \xi_k>0}c_\xi$. By using \eqref{eq: smallli} and \eqref{ineq:cardcubeslimsup} we have  
\begin{align*}
\int_{\partial E_\sigma \cap(S)_{ 2\sigma} }\varphi(\nu_{E_\sigma})\,\mathrm{d}\mathcal{H}^2  \le  \sum_{k=1}^3 \int_{T_k} \varphi(e_k)\,\mathrm{d}\mathcal{H}^2 + C\sigma =\sum_{k=1}^3c_k\mathcal{H}^2(T_k)+ C\sigma\leq \varphi(\nu)\mathcal{H}^2(S) + C\sigma \,,
\end{align*}
where $C>0$ here depends on $\{c_{\xi}\}_{\xi\in V}$ and $S$. This shows \eqref{ineq:limsupcoordinatenormal}. Now, summing the above estimate over all faces $S$ of $\partial E$, \EEE and recalling \eqref{eq: incluspartial}, we find that $\limsup_{\sigma \to 0} \mathrm{Per}_\varphi(E_\sigma) \le \mathrm{Per}_\varphi(E)$. Similarly as before, we can also extend $u$ to a function $u_\sigma \in  W^{1,\infty}(\Omega \setminus \overline {E_\sigma};\mathbb{R}^3)$ with ${\rm tr}(u) = {\rm tr}(u_0)$ on $\partial_D \Omega$  without affecting the elastic energy as $\sigma \to 0$. Summarizing, from now on we can assume that $E \subset \subset \Omega$ is a polyhedral set and $\partial E$ has coordinate normals, and $u \in  W^{1,\infty}(\Omega \setminus \overline E;\mathbb{R}^3)$ with ${\rm tr}(u) = {\rm tr}(u_0)$ on $\partial_D \Omega$. \EEE
\end{step}\\
\begin{step}{3}(Construction of a recovery sequence) 
We are now in the position to construct the recovery sequence. We set 
\begin{align}\label{def:recoverysequence}
E_\varepsilon := \bigcup_{i \in \eta_\varepsilon\mathbb{Z}^3\cap E} \mathbb{Z}_\varepsilon(Q_{\eta_\varepsilon}(i))\cap \Omega\,.
\end{align}
We show that $\chi_{Q_\varepsilon(E_\varepsilon)\cap\Omega}\to \chi_E$ in $L^1(\Omega)$ as $\varepsilon \to 0$. Note that for $\eps>0$ small enough (and thus also $\eta_\varepsilon>0$ small accordingly), \EEE we have that
$${(Q_\varepsilon(E_\varepsilon)\cap\Omega)\triangle E=Q_\varepsilon(E_\varepsilon)\triangle E\EEE\subset (\partial E)_{\BBB \sqrt{3}(\eta_{\varepsilon}+\eps)/2\EEE}\,.}
$$ 
In fact, in this case, \EEE $x \in E \setminus 
Q_\varepsilon(E_\varepsilon)\EEE$ only if $i_x \notin E$, where $i_x \in \eta_\varepsilon\mathbb{Z}^3$ is such that $x\in Q_{\eta_\varepsilon+\eps\EEE}(i_x)$. Therefore, $\mathrm{dist}(x,\partial E) \leq \BBB\sqrt{3}(\eta_{\varepsilon}+\eps)/2\EEE$. Similarly, if $x\in Q_\varepsilon(E_\varepsilon)\EEE\setminus E$, then again $\mathrm{dist}(x,\partial E) \leq \BBB\sqrt{3}(\eta_{\varepsilon}+\eps)/2\EEE$.   
Hence, \EEE for $\varepsilon>0$ sufficiently small, we have that
\begin{align*}
\mathcal{L}^3((Q_\varepsilon(E_\varepsilon)\cap\Omega)\triangle E) \leq  \mathcal{L}^3((\partial E)_{\BBB\sqrt{3}(\eta_{\varepsilon}+\eps)/2\EEE}) \leq C\eta_\varepsilon\mathcal{H}^2(\partial E)\,, 
\end{align*}
where $C>0$ depends on $E$. Since $\eta_\varepsilon\to 0$ as $\varepsilon\to 0$, we deduce that $\chi_{Q_\varepsilon(E_\varepsilon)\cap \Omega}\to \chi_E$ in $L^1(\Omega)$. Moreover, we define a sequence of discrete displacements $u_\varepsilon \in \mathcal{U}_\varepsilon(u_0,\partial_D\Omega,E_\eps)$ via
$${u_\varepsilon(i):=\fint_{Q_\eps(i)} u(y) \, {\rm d}y \quad \text{for every \ } i\in \mathbb{Z}_\varepsilon(\Omega\EEE)\setminus E_\eps\EEE
\,,}$$
 $u_{\varepsilon}(i)=u_0(i)$ for every $i\in \mathbb{Z}_\varepsilon(U\setminus \Omega\EEE
)$, and $u_{\varepsilon}(i)=0$ for every $ i\in   \big( \mathbb{Z}_\varepsilon( \R^3 ) \setminus  \mathbb{Z}_\varepsilon(U\EEE
\big)  \cup E_\eps$. By the regularity of $u$ we can check that the piecewise constant interpolations $\bar{u}_\eps$ (being equal to $\bar{u}_{\varepsilon}(i)$ on each cube $Q_{\varepsilon}(\hat{i})$, see \eqref{discrete_gradient_def} and the comments below) are such that $\bar{u}_\eps \to u$ in measure on $\Omega$. We refer to \cite[Lemma 4.4]{Schmidt:2009} for details. Therefore, $(u_\varepsilon,E_\varepsilon) \overset{\mathrm{d}}{\to} (u,E)$ in the sense of Definition \ref{def:convergence} (with $\omega_u=\emptyset$ in this case).\EEE \end{step}\\ 
\begin{step}{4}(Energy convergence)
The upper bound for the elastic energy, namely  
$$\limsup_{\varepsilon\to 0} F_\varepsilon^{\mathrm{el}}(\mathrm{id}+\delta_\varepsilon u_\varepsilon, E_\varepsilon)\leq \frac{1}{2}\int_{\Omega\setminus E} Q_\mathrm{\mathrm{bulk}}(e(u)Z)\,\mathrm{d}x\,,$$
follows from the compatibility of $W^{\mathrm{el}}_{\mathrm{surf}}$ and $W^{\mathrm{el}}_{\mathrm{bulk}}$ (cf.\ assumption (vi) in the definition of the discrete elastic cell energy in Subsection \ref{Assumptions on the elastic energy}) and the assumed 
a priori \EEE Lipschitz bound on $u$, see \cite[Lemma 4.4]{Schmidt:2009}  and the final argument in the proof of \cite[Theorem 2.6]{Schmidt:2009}. 
In view of \eqref{eq: bdaypart}--\eqref{def:F}, by $E \subset \subset \Omega$, $u \in  W^{1,\infty}(\Omega \setminus \overline E;\mathbb{R}^3)$, and   ${\rm tr}(u) = {\rm tr}(u_0)$ on $\partial_D \Omega$, it therefore suffices to prove that
\begin{align}\label{ineq:limsupeta}
\limsup_{\varepsilon\to 0}\left( F_\varepsilon^\mathrm{per}(E_\varepsilon) +F_\varepsilon^\mathrm{curv}(E_\varepsilon) \right)\leq \mathrm{Per}_\varphi(E)\,.
\end{align}
We split the proof into two estimates. First, we show that
\begin{align}\label{ineq:limsupetasurf}
\limsup_{\varepsilon\to 0}F_\varepsilon^\mathrm{per}(E_\varepsilon) \leq \mathrm{Per}_{\varphi}(E) \,.
\end{align}
We prove this inequality locally.  Let $S \subset \partial Q_\eps(E_\varepsilon) \EEE$ be a face of a cube $Q_{\eta_\varepsilon}(i)+(-\frac{\eps}{2},-\frac{\eps}{2},-\frac{\eps}{2})$ and 
let $\nu \in \{\pm e_1, \pm e_2, \pm e_3\}$ be the outer unit normal vector to that face. (Note that by the definition of the set $E_\eps$ in \eqref{def:recoverysequence}, $Q_\eps(E_\eps)$ is also an $\eta_\eps$-scale cubic set, i.e., it consists of the union of cubes of sidelength $\eta_\eps$.)\EEE 
\ Then, 
\begin{align*}
\int_{S} \varphi(\nu) \,\mathrm{d}\mathcal{H}^2   =  \eta_\varepsilon^2 \sum_{\xi \in V,\ \nu \cdot\xi  >0} c_\xi\geq  \sum_{j \in \mathbb{Z}_\varepsilon(Q_{\eta_\varepsilon}(i))} \quad  \sum_{\xi \in V,\  \nu \cdot \xi   >0}\varepsilon^2 c_\xi (1-\chi_{E_\varepsilon}( \BBB j \EEE +\varepsilon\xi))\,.
\end{align*}
Then, in view of \eqref{def: discrete perimeter} and the fact that $Q_\eps(E_\varepsilon) \subset \subset \Omega$ for $\eps>0$ small (see \eqref{def:recoverysequence} and recall that $E \subset \subset \Omega$), \EEE  by taking the union of all faces of $\partial Q_\eps(E_\eps)$, we get \EEE
\begin{align*}
\limsup_{\varepsilon\to 0} F_\varepsilon^\mathrm{per}(E_\varepsilon) \leq \limsup_{\varepsilon\to 0}\mathrm{Per}_\varphi(Q_\eps(E_\varepsilon))\,.
\end{align*}
Hence, in order to obtain \eqref{ineq:limsupetasurf}, it suffices to prove that
\begin{align*}
\limsup_{\varepsilon\to 0} \mathrm{Per}_\varphi(Q_\varepsilon(E_\varepsilon))\leq \mathrm{Per}_\varphi(E)\,.
\end{align*}
This claim follows exactly as the proof in Step 2, by noting that for every $k=1,2,3$
$$|\mathcal{H}^2(\{x\in \partial E\colon \nu_E(x)=\pm e_k\})-\mathcal{H}^2(\{x\in \partial 
Q_{\varepsilon}(E_\eps) \EEE \colon \nu_{Q_{\varepsilon}(E)}(x)=\pm e_k\})|\to 0 \quad \text{as} \quad \varepsilon\to 0.$$
\EEE Therefore, \eqref{ineq:limsupetasurf} follows. It finally remains to prove that
\begin{align}\label{eq:limsupcurv}
\limsup_{\varepsilon \to 0} F_\varepsilon^\mathrm{curv}(E_\varepsilon) =0\,.
\end{align}
To this end, we first estimate the number of vertices $i \in \varepsilon\mathbb{Z}^3$ such that $E_\varepsilon$ is not flat in $Q_{\eta_{\varepsilon}}(i)$. Recall that  $\partial E$ is polyhedral, and let $\Sigma\subset \partial E$ be the union of edges such that for every $x\in \Sigma$ the tangent space $\mathrm{Tan}_{x}\partial E$ does not exist. Since $\nu_E \in \{\pm e_1, \pm e_2, \pm e_3\}$ 
$\mathcal{H}^2$-a.e., \EEE it holds that for all $i \in \varepsilon\mathbb{Z}^3 \setminus (\Sigma)_{\sqrt{3}\eta_\varepsilon}$ the set $\partial E_\varepsilon$ is flat in $Q_{\eta_\varepsilon}(i)$. Moreover, for $\eta_\varepsilon>0$ small enough, we can estimate
\begin{align}\label{ineq:cardestimatelimsupcurv}
\#\{i \in \varepsilon\mathbb{Z}^3 \colon \mathrm{dist}(i,\Sigma)\leq \sqrt{3}\eta_\varepsilon\} \leq \varepsilon^{-3}  \mathcal{L}^3((\Sigma)_{\sqrt{3}\eta_\varepsilon}) \leq  C\varepsilon^{-3}\eta_\varepsilon^2\mathcal{H}^1(\Sigma)\,.
\end{align}
Note that, due to \eqref{def:recoverysequence} and the structural assumption (iii) after the definition of the discrete curvature energy in \eqref{def: curvature energy}, we have that
\begin{align}\label{ineq:celllimsup}
W_{\varepsilon,\mathrm{cell}}^\mathrm{curv}(i,E_\varepsilon) \leq C\gamma_{\varepsilon} \eta_\varepsilon^{-1-q} \text{ for all such } i\,.
\end{align}
Therefore, using also assumption (i) below \eqref{def: curvature energy}, together with \eqref{ineq:cardestimatelimsupcurv} and \eqref{ineq:celllimsup}, we obtain
\begin{align*}
F_\varepsilon^\mathrm{curv}(E_\varepsilon) =\sum_{i \in 
\Z_\eps(U\EEE) \EEE} \varepsilon^3 W_{\varepsilon,\mathrm{cell}}^\mathrm{curv}(i,E_\varepsilon)  \leq C\varepsilon^3 \gamma_{\varepsilon}\eta_{\varepsilon}^{-1-q}\#\{i \in \varepsilon\mathbb{Z}^3 \colon \mathrm{dist}(i,\Sigma)\leq \sqrt{3}\eta_\varepsilon\}  \leq C\gamma_{\varepsilon}\eta_{\varepsilon}^{1-q}\mathcal{H}^1(\Sigma)\,.
\end{align*}
Due to \eqref{eq:rate gammadelta}, $\gamma_\varepsilon\eta_\varepsilon^{1-q}\to 0$ as $\varepsilon\to 0$, and hence we obtain \eqref{eq:limsupcurv}. Now \eqref{ineq:limsupetasurf} and \eqref{eq:limsupcurv} imply \eqref{ineq:limsupeta}. This concludes the proof.
\end{step}
\end{proof}

\section*{Acknowledgements} 
This work was supported by the DFG project FR 4083/3-1 and by the Deutsche Forschungsgemeinschaft (DFG, German Research Foundation) under Germany's Excellence Strategy EXC 2044 -390685587, Mathematics M\"unster: Dynamics--Geometry--Structure.
\EEE 

\typeout{References}


\begin{thebibliography}{50}
	
	
	
	
\bibitem{AliCic}
{\sc  R.~Alicandro, M.~Cicalese}. 
\newblock {\em A general integral representation result for continuum limits of discrete energies with superlinear growth}. 
\newblock  SIAM J.\ Math.\ Anal.\ 
\newblock {\bf 36} (2004), 1--37.
	
\bibitem{AliFocGel}
{\sc  R.~Alicandro, M.~Focardi, M.S.~Gelli}. 
\newblock {\em Finite-difference approximation of energies in fracture mechanics}. 
\newblock Ann.\ Scuola Norm.\ Sup.\  
\newblock {\bf 29} (2000), 671--709.
	
	%
	%
	
	
\bibitem{Ambrosio-Fusco-Pallara:2000} 
{\sc L.~Ambrosio, N.~Fusco, D.~Pallara}.
\newblock {\em Functions of bounded variation and free discontinuity problems}. 
\newblock Oxford University Press, Oxford 2000. 
	
\bibitem{BraBacCic}
{\sc  A.~Bach, A.~Braides,  M.~Cicalese}. 
\newblock {\em Discrete-to-Continuum Limits of Multibody Systems with Bulk and Surface Long-Range Interactions}. 
\newblock  SIAM J.\ Math.\ Anal.\ 
\newblock {\bf 52} (2020), 3600--3665.
	
	
\bibitem{Bourdin-Francfort-Marigo:2008}
{\sc B.~Bourdin, G.A.~Francfort, J.J.~Marigo}. 
\newblock {\em The variational approach to fracture}.
\newblock J.\ Elasticity\ 
\newblock {\bf 91} (2008), 5--148. 
	
\bibitem{Braides:02}
{\sc A.~Braides}.
\newblock {\em $\Gamma$-convergence for Beginners}.
\newblock Oxford University Press, Oxford 2002.
	
	
	
	
	
	
\bibitem{BraGel}
{\sc A.~Braides, M.S.~Gelli}.
\newblock {\em From discrete to continuum: a variational approach}.
\newblock Lecture Notes, SISSA, Trieste
\newblock  (2000).
	
	
	
	
\bibitem{BraKre}
{\sc A.~Braides, L.~Kreutz}.
\newblock {\em An integral-representation result for continuum limits of discrete energies with multibody interactions}.
\newblock SIAM J.\ Math.\ Anal.\ 
\newblock {\bf 50} (2018), 1485--1520.
	
	
\bibitem{BraLewOrt}
{\sc A.~Braides, A.~Lew, M.~Ortiz}.
\newblock {\em Effective cohesive behavior of layers of interatomic planes}.
\newblock Arch. Ration. Mech. Anal.\
\newblock {\bf 180} (2006), 151--182.
	
	
	
\bibitem{Braides-Solci-Vitali:07}
{\sc A.~Braides, M.~Solci, E.~Vitali}.
\newblock {\em A derivation of linear elastic energies from pair-interaction atomistic systems}.
\newblock Netw.\ Heterog.\ Media 
\newblock {\bf 2} (2007), 551--567.
	
\bibitem{BraSch}
{\sc J.~Braun, B.~Schmidt}.
\newblock {\em On the passage from atomistic systems to nonlinear elasticity theory for general multi-body potentials with p-growth}.
\newblock Netw.\ Heterog.\ Media 
\newblock {\bf 8} (2013), 879--912.
	

	
\bibitem{ChaCagSca}
{\sc F.~Cagnetti, A.~Chambolle,  L.~Scardia}.
\newblock {\em Korn and Poincar\'e-Korn inequalities for functions with small jump set}. 
\newblock  Math.\ Ann.\ (2021). \EEE
	
	

\bibitem{Crismale-Cham}
{\sc A.~Chambolle, V.~Crismale}.
\newblock {\em Compactness and lower semicontinuity in $GSBD$}. 
\newblock     J.\ Eur.\ Math.\ Soc.\ (JEMS)
\newblock {\bf 23} (2021), 701--719.
	
\bibitem{newvito}
{\sc A.~Chambolle, V.~Crismale}.
\newblock {\em Equilibrium configurations for nonhomogeneous linearly elastic materials with surface discontinuities}. 
\newblock  Preprint, 2020. Available at:~{\tt https://arxiv.org/abs/2006.00480}. 
	
	
	
\bibitem{kirchheim}
{\sc S.~Conti, G.~Dolzmann, B.~Kirchheim, S.~M\"uller}.
\newblock {\em Sufficient conditions for the validity of the Cauchy-Born rule close to $SO(n)$}.
\newblock J.\ Eur.\ Math.\ Soc.\ (JEMS)
\newblock {\bf 8} (2006), 515--539.
	
	

\bibitem{Conti-Focardi-Iurlano:15}
{\sc S.~Conti, M.~Focardi, F.~Iurlano}.
\newblock {\em Integral representation for functionals defined on $SBD^p$ in dimension two}. 
\newblock Arch.\ Ration.\ Mech.\ Anal.\  
\newblock {\bf 223} (2017), 1337--1374.
	
	
	

\bibitem{Crismale}
{\sc V.~Crismale, M.~Friedrich}.
\newblock {\em Equilibrium configurations for epitaxially strained films and  
material voids in three-dimensional linear elasticity}. 
\newblock  Arch. Ration. Mech. Anal.\
\newblock {\bf 237} (2020), 1041--1098. 
	
\bibitem{Crismale-Friedrich-Solombrino}
{\sc V.~Crismale, M.~Friedrich, F.~Solombrino}.
\newblock {\em Integral representation for energies in linear elasticity with surface discontinuities.}
Adv.\ Calc.\ Var., to appear. Available at:~{\tt https://arxiv.org/abs/2005.06866}.  
	
	
\bibitem{Crismale-Scilla-Solombrino}
{\sc V.~Crismale, G.~Scilla, F.~Solombrino}.
\newblock {\em A derivation of Griffith functionals from discrete finite-difference models}. 
\newblock  Calc. Var. Partial Differential Equations\
\newblock {\bf 59(6)} (2020), 1-46. 
	
\bibitem{DalMaso:93}
{\sc G.~Dal Maso}.
\newblock {\em An introduction to $\Gamma$-convergence}.
\newblock Birkh{\"a}user, Boston $\cdot$ Basel $\cdot$ Berlin \AAA(\EEE1993\AAA)\EEE. 
	
	
	
\bibitem{DalMaso:13}
{\sc G.~Dal Maso}.
\newblock {\em Generalized functions of bounded deformation}.
\newblock J.\ Eur.\ Math.\ Soc.\ (JEMS)\
\newblock {\bf 15} (2013), 1943--1997.
	
	
	
\bibitem{Daw1}
{\sc M.~S.~Daw, M.~I.~Baskes}.
\newblock {\em Semiempirical, Quantum Mechanical Calculation of Hydrogen Embrittlement in
Metals}.
\newblock Phys.\ Rev.\ Lett.\ 
\newblock {\bf 50} (1983), 1285--1288.
	
	
	
\bibitem{Daw2}
{\sc M.~S.~Daw, M.~I.~Baskes}.
\newblock {\em Embedded-atom method: Derivation and application to impurities, surfaces and
other defects in metals}.
\newblock Phys.\ Rev.\ B
\newblock {\bf 29} (1984), 6443--6453.
	
	
\bibitem{FonFusLeoMil11}
\newblock {\sc I.~Fonseca, N.~Fusco, G.~Leoni, V.~Millot}.
\newblock {\em  Material voids in elastic solids with anisotropic surface
energies}.
\newblock J.\ Math.\ Pures Appl.\
\newblock {\bf 96} (2011), 591--639.
	
	
\bibitem{FonFusLeoMor07}
\newblock {\sc I.~Fonseca, N.~Fusco, G.~Leoni, M.~Morini}.
\newblock {\em Equilibrium configurations of epitaxially strained crystalline
films: existence and regularity results}.
\newblock Arch.\ Ration.\ Mech.\ Anal.\
\newblock {\bf 186} (2007), 477--537.
	
	
	
\bibitem{Friedrich:15-4}
{\sc M.~Friedrich}.
\newblock {\em A piecewise Korn inequality in SBD and applications to embedding and density results}. 
\newblock  SIAM J.\ Math.\ Anal.\
\newblock {\bf 50} (2018), 3842--3918.
	
	
\bibitem{KFZ:2021} 
{\sc M.~Friedrich, L.~Kreutz, K.~Zemas}. 
\newblock {\em Geometric rigidity in variable domains and derivation of linearized models for elastic materials with free surfaces}. 
\newblock Preprint,  2021. Available at:~{\tt https://arxiv.org/abs/2107.10808}.
	
\bibitem{FriSolPer}
\newblock {\sc M.~Friedrich, M.~Perugini, F.~Solombrino}.
\newblock {\em  $\Gamma $-convergence for free-discontinuity problems in linear elasticity: Homogenization and relaxations}.
\newblock Preprint, (2020). Available at:~{\tt https://arxiv.org/abs/2010.05461}.
	
	
\bibitem{FriSch1}
\newblock {\sc M.~Friedrich, B.~Schmidt}.
\newblock {\em  An atomistic-to-continuum analysis of crystal cleavage in a two-dimensional model problem}
\newblock J. Nonlin. Sci.
\newblock {\bf 24} (2014), 145--183.
	
	
\bibitem{FriSch2}
\newblock {\sc M.~Friedrich, B.~Schmidt}.
\newblock {\em  An analysis of crystal cleavage in the passage from atomistic models to continuum theory}
\newblock Arch.\ Ration.\ Mech.\ Anal.\
\newblock {\bf 217} (2015), 263--308.
	
	
	
\bibitem{FriSch3}
\newblock {\sc M.~Friedrich, B.~Schmidt}.
\newblock {\em  On a discrete-to-continuum convergence result for a two dimensional brittle material in the small displacement regime}
\newblock Netw.\ Heterog.\ Media 
\newblock {\bf 10} (2015), 321--342.
	
	
\bibitem{FriSol}
\newblock {\sc M.~Friedrich, F.~Solombrino}.
\newblock {\em  Quasistatic crack growth in 2d-linearized elasticity}.
\newblock Ann. Inst. H. Poincar\'e
Anal. Non Lin\'eaire
\newblock {\bf 35} (2018), 27--64.


\bibitem{FrieseckeJamesMueller:02}
{\sc G.~Friesecke, R.~D.~James, S.~M{\"u}ller}.
\newblock {\em A theorem on geometric rigidity and the derivation of nonlinear plate theory from three-dimensional elasticity}. 
\newblock Comm.\ Pure Appl.\ Math.\ 
\newblock {\bf 55} (2002), 1461--1506. 



\bibitem{FT}
{\sc G.~Friesecke, F.~Theil}.
\newblock {\em Validity and failure of the Cauchy-Born hypothesis \EEE in a two-dimensional mass-spring lattice}. 
\newblock J.\ Nonlinear Sci.\
\newblock {\bf 12} (2002),  445--478.
	
	
	
	
\bibitem{Grin86}
{\sc M.A.~Grinfeld}.
\newblock {\em Instability of the separation boundary between a non-hydrostatically stressed
elastic body and a melt}. 
\newblock Soviet Physics Doklady
\newblock {\bf 31} (1986), 831--834.
	
	
\bibitem{Grin93}
{\sc M.A.~Grinfeld}.
\newblock {\em The stress driven instability in elastic crystals:
mathematical models and physical manifestations}. 
\newblock J.\ Nonlinear Sci.\
\newblock {\bf 3} (1993), 35--83.
	

\bibitem{KhoPio19}
\newblock {\sc S.~Kholmatov, P.~Piovano}.
\newblock {\em  A unified model for stress-driven rearrangement instabilities}
\newblock Arch.\ Ration.\ Mech.\ Anal.\
\newblock {\bf 238} (2020), 415--488.

	
\bibitem{KhoPio20} 
{\sc S.~Kholmatov, P.~Piovano}. 
\newblock {\em Existence of minimizers for the SDRI models}. 
\newblock Preprint,  2020. Available at:~{\tt https://arxiv.org/abs/2006.06096}.

	
\bibitem{KrePio19}
{\sc L.~Kreutz, P.~Piovano}.
\newblock {\em Microscopic validation of a variational model of epitaxially strained crystalline films}. 
\newblock  SIAM J.\ Math.\ Anal.\ 
\newblock {\bf 53} (2021), 453--490.
	
	
\bibitem{maggi2012sets}
\newblock {\sc F.~Maggi}.
\newblock Sets of finite perimeter and geometric variational problems: an introduction to Geometric Measure Theory
\newblock 135, Cambridge University Press, 2012.
	
	
\bibitem{Neg}
{\sc M.~Negri}.
\newblock {\em A finite element approximation of the Griffith’s model in fracture mechanics}. 
\newblock  Numer. Math.
\newblock {\bf 95} (2003), 653--687.
	
	
\bibitem{Ruf}
{\sc M.~Ruf}.
\newblock {\em Discrete stochastic approximations of the Mumford--Shah functional}. 
\newblock Ann. Inst. H.
Poincar\'e Anal. Non Lin\'eaire
\newblock {\bf 36} (2019), 887--937.
	
	
\bibitem{Scardia:2010}
{\sc L.~Scardia, A.~Schlömerkemper, C.~ Zanini}.
\newblock {\em Boundary layer energies for nonconvex discrete systems}. 
\newblock Math. Models Methods Appl. Sci. 
\newblock {\bf 21} (2010), 777-817.
	
\bibitem{Schmidt:2009} 
{\sc B.~Schmidt}. 
\newblock {\em On the derivation of linear elasticity from atomistic models}. 
\newblock Netw.\ Heterog.\ Media 
\newblock {\bf 4} (2009), 789--812.
	
	
\bibitem{SieMikVoo04}
{\sc M.~Siegel, M.J.~Miksis, P.W.~Voorhees}.
\newblock {\em Evolution of material voids for highly anisotropic surface
energy}. 
\newblock J.\ Mech.\ Phys.\ Solids
\newblock {\bf 52} (2004), 1319--1353.
	
	
	
\end{thebibliography}
 \end{document}